\def\lN{\mathop{\mathcal{L}_{\mathbb{N}}}\nolimits}	 		
\def\ln{\mathop{\mathcal{L}_{[n]}}\nolimits}					
\def\lS{\mathop{\mathcal{L}_{S}}\nolimits}					
\def\Lip{\mathop{\mathrm{Lip}}\nolimits}					
\def\equalinlaw{\mathop{=_{\mathcal{D}}}\nolimits}			
\def\Xbf{\mathop{\mathbf{X}_{}}\nolimits}					
\def\age{\mathop{\text{age}}\nolimits}					
\newcommand{\xnorm}[1]{ \Vert #1 \Vert }
\def\Nb{\mathop{\mathbb{N}_{}}\nolimits}					
\def\im{\mathop{\text{im}}\nolimits}						
\def\partitionsN{\mathop{\mathcal{P}_{\mathbb{N}}}\nolimits}		
\def\ar{\mathop{\mathrm{ar}}\nolimits}	
\def\XS{\mathop{\mathcal{X}_S}\nolimits}	
\def\XN{\mathop{\mathcal{X}_{\Nb}}\nolimits}	
\def\Xn{\mathop{\mathcal{X}_{[n]}}\nolimits}		
\def\Borel{\mathop{\subseteq_{\text{Borel}}}\nolimits}	
\def\idn{\mathop{\text{id}_{[n]}}\nolimits}		
\def\idN{\mathop{\text{id}_{\Nb}}\nolimits}		
\def\id{\mathop{\text{id}_{}}\nolimits}
\def\rng{\mathop{\text{support}}\nolimits}		
\def\graphsN{\mathop{\mathcal{G}_{\Nb}}\nolimits}	
\def\graphsn{\mathop{\mathcal{G}_{[n]}}\nolimits}	
\def\coag{\mathop{\text{Coag}}\nolimits}		
\def\idn{\mathop{\text{id}_{[n]}}\nolimits}
\def\idN{\mathop{\text{id}_{\Nb}}\nolimits}
\def\0{\mathop{\mathbf{0}_{}}\nolimits}
\def\1{\mathop{\mathbf{1}_{\mathbb{N}}^{\mathcal{L}}}\nolimits}
\def\finite{\mathop{\subset_f}\nolimits}
\def\E{\mathop{\mathcal{E}_{}}\nolimits}
\def\odap{\mathord{<}\omega}
\newtheorem{theorem}{Theorem}[section]
\newtheorem{lemma}[theorem]{Lemma}
\newtheorem{prop}[theorem]{Proposition}
\newtheorem{cor}[theorem]{Corollary}
\theoremstyle{definition}
\newtheorem{definition}[theorem]{Definition}
\newtheorem{remark}[theorem]{Remark}
\newtheorem{example}[theorem]{Example}
\newtheorem{claim}{Claim}
 \newenvironment{claimproof}{\begin{proof}}{\end{proof}}
\def \dom{\operatorname{dom}}
\def\cod{\operatorname{cod}}
\def \rng{\operatorname{rng}}
\def\dotminussym#1#2{%
  \setbox0=\hbox{$\m@th#1-$}%
  \kern.5\wd0%
  \hbox to 0pt{\hss\hbox{$\m@th#1-$}\hss}%
  \raise.6\ht0\hbox to 0pt{\hss$\m@th#1.$\hss}%
  \kern.5\wd0}
\mathchardef\mhyphen="2D
\begin{document}

\title{The structure of combinatorial Markov processes}
\author{Harry Crane and Henry Towsner}
\address {Department of Statistics \& Biostatistics, Rutgers University, 110 Frelinghuysen Avenue, Piscataway, NJ 08854, USA}
\email{hcrane@stat.rutgers.edu}
\urladdr{\url{http://stat.rutgers.edu/home/hcrane}}
\address {Department of Mathematics, University of Pennsylvania, 209 South 33rd Street, Philadelphia, PA 19104-6395, USA}
\email{htowsner@math.upenn.edu}
\urladdr{\url{http://www.math.upenn.edu/~htowsner}}
\thanks{H.\ Crane is partially supported by NSF grants CAREER-DMS-1554092, CNS-1523785, and DMS-1308899.}
\thanks{ H.\ Towsner is partially supported by NSF grant DMS-1340666.}

\date{\today}

\begin{abstract}
Every exchangeable Feller process taking values in a suitably nice combinatorial state space can be constructed by a system of iterated random Lipschitz functions.
In discrete time, the construction proceeds by iterated application of independent, identically distributed functions, while in continuous time the random functions occur as the atoms of a time homogeneous Poisson point process.
We further show that every exchangeable Feller process projects to a Feller process in an appropriate limit space, akin to the projection of partition-valued processes into the ranked-simplex and graph-valued processes into the space of graph limits.
Together, our main theorems establish common structural features shared by all exchangeable combinatorial Feller processes, regardless of the dynamics or resident state space, thereby generalizing behaviors previously observed for exchangeable coalescent and fragmentation processes as well as other combinatorial stochastic processes.
If, in addition, an exchangeable Feller process evolves on a state space satisfying the $n$-disjoint amalgamation property for all $n\geq1$, then its jump measure can be decomposed explicitly in the sense of L\'evy--It\^o--Khintchine.
\end{abstract}

\maketitle

\section{Introduction}\label{section:introduction}

Many combinatorial Markov processes exhibit common mathematical behaviors regardless of the resident state space.
Initial observations come from the field of mathematical population genetics, where Kingman \cite{Kingman1982} introduced his {\em coalescent process} as a model for the evolution of ancestral lineages looking backwards in time.
The simple dynamics of Kingman's coalescent---each pair of lineages merges independently at unit exponential rate---produce the family of {\em $[n]$-coalescents}, where for each $n\geq1$ the $[n]$-coalescent is a Markov process on partitions of $[n]:=\{1,\ldots,n\}$.
Each  $[n]$-coalescent is {\em exchangeable}, that is, invariant under relabeling $[n]$ by any permutation, and together the family of all $[n]$-coalescents is {\em sampling consistent}, that is, the process obtained by removing all elements in $[n]\setminus[m]$ from an $[n]$-coalescent behaves as an $[m]$-coalescent.
{\em Kingman's coalescent} is the projective limit of $[n]$-coalescents to an exchangeable Feller process on partitions of $\Nb:=\{1,2,\ldots\}$.

Since its inception, coalescent theory has played a significant role in population genetics and stochastic process theory.
Pitman \cite{Pitman1999a} and Schweinsberg \cite{Schweinsberg2000} later expanded upon Kingman's coalescent by allowing multiple blocks to merge at once.
Together, Kingman's coalescent and multiple merger coalescents describe the behavior of all exchangeable, consistent coalescent processes, recalling the representation of L\'evy processes \cite[Chapter 1]{BertoinLevy} as an independent superposition of Brownian motion with drift, compound Poisson process, and pure jump martingale.
There are analogous decompositions for homogeneous fragmentation processes \cite{Bertoin2001a} and exchangeable fragmentation-coalescent processes \cite{Berestycki2004}, which combine the dynamics of coalescence and fragmentation.
See \cite{Bertoin2006,Pitman2005} for many other deep connections between random partitions and stochastic process theory.

The transitions of coalescent, fragmentation, and fragmentation-coalescent processes are specified most compactly in terms of the {\em coagulation} and {\em fragmentation operators}, which determine a class of Lipschitz continuous functions on the space of partitions.
The ensuing descriptions of these processes by an iterated composition of Lipschitz continuous functions imply the Feller property, and ultimately Poissonian structure.
As our main theorems show, L\'evy--It\^o structure occurs much more generally as a consequence the exchangeability and sampling consistency conditions without any further assumption on the dynamics or state space.
These outcomes establish a link between exchangeable Feller processes on a broad class of combinatorial spaces and systems of iterated random functions, which arise much more broadly in statistics and applied probability problems on more general state spaces; see \cite{DiaconisFreedmanIterate} for a general survey.
Exchangeable combinatorial stochastic processes also play an important role in certain Bayesian nonparametrics and hidden Markov modeling applications; see, for example, \cite{Crane2016ESF,BNP}.

Given the common behaviors exhibited by these processes with different dynamics on different state spaces, it is natural to ask the extent to which the observed behavior reflects a universal property of combinatorial Markov processes.
This consideration strips away specific attributes of the aforementioned processes.
For example, the coalescent and fragmentation-type processes are defined to have specific semigroup behavior, while cut-and-paste and graph-valued processes are defined on state spaces with sufficiently tractable structure.

The discussion establishes shared elements of combinatorial stochastic processes obeying minimal regularity conditions, namely exchangeability, c\`adl\`ag sample paths, and the Feller property, but which can otherwise evolve on exotic state spaces.
Our main theorems prove a generic representation of discrete and continuous time Markov processes that evolve on a {\em Fra\"{i}ss\'e space}, 
culminating in a refined description for processes on a space with the additional {\em $\odap$-disjoint amalgamation property} ($\odap$-DAP).
We borrow the terms Fra\"{i}ss\'e space and $\odap$-disjoint amalgamation property from the model theory literature on homogeneous structures.
The former term describes a broad class of combinatorial spaces satisfying the basic property that all elements of the state space embed into a single universal object, called the {\em Fra\"iss\'e limit}.
By the Feller property, the dynamics of these processes are determined by the behavior at the Fra\"{i}ss\'e limit, mimicking the structure of L\'evy processes, which are characterized by their behavior at the origin, and eliciting a nice representation of the infinitesimal jump rates.
Though the terminology of Fra\"{i}ss\'e limits and other notions from model theory may be unfamiliar to many readers, the setting is quite natural and merely extends many comfortable ideas to a more general setting.
Each of the familiar examples mentioned above, whether processes valued in the space of set partitions, graphs, or hypergraphs, and many other processes of natural interest in applications, such as those evolving on orderings, $k$-colorings, or more intricate structures, evolve on a Fra\"iss\'e space.

Though our main theorems emphasize universal structural properties among combinatorial Markov processes, our main discussion also highlights key differences between certain common spaces that arise.
The nature of our representation boils down to the way in which substructures fit together to form larger structures, a notion we make precise in due course.
A well known but poorly understood illustration of this discrepancy is seen by comparing the representations of exchangeable coalescent and fragmentation processes \cite{Bertoin2006} to those of cut-and-paste \cite{Crane2014AOP} and graph-valued processes \cite{Crane2014GraphsI}.
Comparing Theorems \ref{thm:continuous-Lambda} and \ref{thm:Levy-Ito} reveals this distinction as part of a larger phenomenon of combinatorial stochastic processes.

Ultimately, our main results draw a connection between the L\'evy--It\^o--Khintchine representation for L\'evy processes and infinitely divisible distributions in probability theory \cite[Chapter 1]{BertoinLevy}, Fra\"isse's theorem and countably categorical structures in model theory \cite[Chapter 6]{HodgesShorter}, and graph limits and limits of more general combinatorial structures in combinatorial theory \cite{AroskarCummings2014,LovaszSzegedy2006}.

\subsection{Outline}

We organize the rest of the article as follows. 
We provide preliminary definitions, notation, and exposition in Section \ref{section:preliminaries}.
We summarize our most general theorems in Section \ref{section:summary}, delaying the more nuanced L\'evy--It\^o representation to Section \ref{section:Levy-Ito}.
In Sections \ref{section:Fraisse} and \ref{section:relative exchangeability}, we collect the necessary background on combinatorial state spaces and relatively exchangeable structures.
In Section \ref{section:discrete}, we prove Theorem \ref{thm:discrete} and other theorems for discrete time Markov chains.
In Section \ref{section:continuous time}, we prove Theorem \ref{thm:continuous-Lambda} for continuous time processes.
In Section \ref{section:Levy-Ito}, we prove the L\'evy--It\^o--Khintchine representation for combinatorial Markov processes, our main result.
In Section \ref{section:limits}, we discuss the projection of exchangeable combinatorial Feller processes to a Feller process in an appropriate space of limit objects.

\section{Preliminaries}\label{section:preliminaries}

To establish results in the above generality, we draw on concepts from model theory and first-order logic.
Understanding that the reader may come from any one of several backgrounds, we often spell things out more explicitly than a specialist would require.
The conditions imposed, though abstract in appearance, are actually quite natural and nonrestrictive, as many examples make apparent throughout the text.
We avoid specialized terminology as much as possible.

\subsection{Combinatorial structures}\label{section:combinatorial structures}

A \emph{signature} is a finite set of relation symbols $\mathcal{L}=\{R_1,\ldots,R_r\}$ together with, for every $j=1,\ldots,r$, a positive integer $\ar(R_j)$, called the \emph{arity} of $R_j$.  
For any set $S$, an {\em $\mathcal{L}$-structure (over $S$)} is a collection $\mathfrak{M}=(S,\mathcal{R}_1,\ldots,\mathcal{R}_r)$, where $\mathcal{R}_j\subseteq S^{\ar(R_j)}$ for each $j\in[1,r]:=[r]$.  
For any $\mathcal{L}$-structure $\mathfrak{M}=(S,\mathcal{R}_1,\ldots,\mathcal{R}_r)$, we write $\dom\mathfrak{M}:=S$ to denote the {\em domain} or {\em universe} of $\mathfrak{M}$ and $R_j^{\mathfrak{M}}:=\mathcal{R}_j$, $j\in[r]$, to denote the {\em interpretation} of $R_j$ in $\mathfrak{M}$.
For any $\mathfrak{M}=(\Nb,R_1^{\mathfrak{M}},\ldots,R_r^{\mathfrak{M}})$, we also write 
\[R_j^{\mathfrak{M}}(\vec x):=\left\{\begin{array}{cc} 1, & \vec x\in R_j^{\mathfrak{M}},\\ 0,& \text{otherwise.}\end{array}\right.\]

We write $\mathcal{L}_S$ to denote the set of all $\mathcal{L}$-structures $\mathfrak{M}$ for which $\dom\mathfrak{M}=S$.
Specifically, $\lN$ denotes $\mathcal{L}$-structures with $\dom\mathfrak{M}=\mathbb{N}$ and $\ln$ denotes $\mathcal{L}$-structures with $\dom\mathfrak{M}=[n]$.
Writing $|S|$ to denote the {\em cardinality} of a set $S$, we call $\mathfrak{M}$ {\em finite} if $|{\dom\mathfrak{M}}|<\infty$.
If $\dom\mathfrak{M}$ is countable then we call $\mathfrak{M}$ {\em countable} and without loss of generality we assume $\dom\mathfrak{M}=\Nb$.

\begin{example}[Common examples]\label{common examples}
The concept of an $\mathcal{L}$-structure generalizes many common combinatorial structures, for example, subsets, partitions, orderings, directed and undirected graphs, $k$-ary hypergraphs, and composite structures.
\begin{itemize}
	\item {\bf Sets}: a subset $A\subseteq S$ can be represented as an $\mathcal{L}$-structure $\mathfrak{M}=(\Nb,\mathcal{R})$ with $\mathcal{L}=\{R\}$ having $\ar(R)=1$ and $\mathcal{R}=A$.
	\item {\bf Partitions}: a {\em partition} $\pi$ of $S$ is a collection of nonempty, disjoint subsets $\{B_1,B_2,\ldots\}$, called \emph{blocks}, satisfying $\bigcup_{i\geq1} B_i=S$.
We can represent $\pi$ as an $\mathcal{L}$-structure $\mathfrak{M}=(S,\mathcal{R})$ with $\mathcal{L}=\{R\}$ and $\ar(R)=2$ by 
\[(i,j)\in \mathcal{R}\quad\Longleftrightarrow\quad i\text{ and }j\text{ are in the same block of }\pi.\]
	\item {\bf Graphs}: a {\em directed graph} $G$ with vertex set $S$ and edges $E\subseteq S\times S$ is also an $\{R\}$-structure with $\ar(R)=2$.
In this case, $\mathfrak{M}=(S,\mathcal{R})$ has
\[(i,j)\in \mathcal{R}\quad\Longleftrightarrow\quad G\text{ has an edge from }i \text{ to }j\quad\Longleftrightarrow\quad (i,j)\in E.\]
(An {\em undirected graph} can be described similarly with the additional symmetry constraint: $(i,j)\in \mathcal{R}$ if and only if $(j,i)\in \mathcal{R}$.)
	\item {\bf Orderings}: an {\em ordering} of $S$ is a binary relation $\prec_S$ such that for all $i,j\in S$ with $i\neq j$
	\begin{itemize}
		\item[(i)] either $i\prec_S j$ or $j\prec_S i$, but not both, and
		\item[(ii)] $i\prec_S j$ and $j\prec_S k$ implies $i\prec_S k$.
	\end{itemize}
 We can represent $\prec_S$ as an $\mathcal{L}$-structure $\mathfrak{M}=(S, \mathcal{R})$ having $\ar(R)=2$ such that 
	\[(i,j)\in \mathcal{R}\quad\Longleftrightarrow\quad i\prec_S j.\]
	\item {\bf Graphs with partition structure}: consider $\mathcal{L}=\{R_1,R_2\}$ with $\ar(R_1)=\ar(R_2)=2$.
	Let $\mathcal{R}_1,\mathcal{R}_2\subseteq\Nb\times\Nb$ be such that $(\Nb,\mathcal{R}_1)$ is an undirected graph and $(\Nb,\mathcal{R}_2)$ is a partition of $\Nb$.  Then $\mathfrak{M}=(\Nb,\mathcal{R}_1,\mathcal{R}_2)$ is an $\mathcal{L}$-structure representing a graph $(\Nb,\mathcal{R}_1)$ with community structure described by the partition $(\Nb,\mathcal{R}_2)$. 
	Graphs with a partition of vertices are often referred to as {\em networks with community structure} in the networks literature.
\end{itemize}

Note that, although partitions and graphs have the same signature, Markov processes behave differently on these two spaces.
It is instructive to keep these two examples in mind as we continue the general exposition.
We discuss the differences between these cases further beginning in Section \ref{section:model theory}. 
\end{example}

Every injection $\phi:S'\rightarrow S$ determines a map $\mathcal{L}_S\to\mathcal{L}_{S'}$, $\mathfrak{M}\mapsto\mathfrak{M}^{\phi}:=(S',\mathcal{R}_1^{\phi},\ldots,\mathcal{R}_r^{\phi})$, with
\begin{equation}\label{eq:phi-image}\mathcal{R}_j^{\phi}(s_1,\ldots,s_{\ar(R_j)})=\mathcal{R}_j(\phi(s_1),\ldots,\phi(s_{\ar(R_j)}))\end{equation}
for each $(s_1,\ldots,s_{\ar(R_j)})\in\Nb^{\ar(R_j)}$.
In particular, every permutation $\sigma:S\rightarrow S$ determines a {\em relabeling} of $\mathfrak{M}\in\mathcal{L}_{S}$ and, when $S'\subset S$, the inclusion map, $i\mapsto i$, determines the {\em restriction of $\mathfrak{M}$ to $\mathcal{L}_{S'}$} by
\[\mathfrak{M}|_{S'}:=(S',\mathcal{R}_1\cap S'^{\ar(R_1)},\ldots,\mathcal{R}_r\cap S'^{\ar(R_r)}).\]
For $\mathcal{L}$-structures $\mathfrak{N},\mathfrak{M}$, we call $\phi:\dom\mathfrak{N}\rightarrow\dom\mathfrak{M}$ an {\em embedding of $\mathfrak{N}$ into $\mathfrak{M}$}, denoted $\phi:\mathfrak{N}\to\mathfrak{M}$, if $\mathfrak{M}^{\phi}=\mathfrak{N}$.
We write $\mathfrak{N}\subseteq\mathfrak{M}$ to denote that $\mathfrak{N}$ is an {\em embedded substructure of $\mathfrak{M}$}, that is, $\dom\mathfrak{N}\subseteq\dom\mathfrak{M}$ and $\mathfrak{M}|_{\dom\mathfrak{N}}=\mathfrak{N}$.
Two $\mathcal{L}$-structures $\mathfrak{M}$ and $\mathfrak{N}$ are {\em isomorphic}, written $\mathfrak{M}\cong\mathfrak{N}$, if there is a bijection $\phi:\dom\mathfrak{N}\to\dom\mathfrak{M}$ such that $\mathfrak{M}^{\phi}=\mathfrak{N}$ and $\mathfrak{N}^{\phi^{-1}}=\mathfrak{M}$.

We equip $\lN$ with the product discrete topology induced by the ultrametric
\begin{equation}\label{eq:ultrametric}
d_{\lN}(\mathfrak{M},\mathfrak{M}'):=1/(1+\sup\{n\in\Nb:\,\mathfrak{M}|_{[n]}=\mathfrak{M}'|_{[n]}\}),\quad \mathfrak{M},\mathfrak{M}'\in\lN,
\end{equation}
with the convention that $1/\infty=0$.
Under this metric, $\lN$ is complete, separable, and compact. 
We equip $\lN$ with the Borel $\sigma$-field generated by the restriction maps $\cdot|_{[n]}:\lN\rightarrow\ln$, $n\in\Nb$, and we write $A\Borel\lN$ to denote that $A$ is a Borel subset.

\subsection{Combinatorial Markov processes}\label{section:CMP}

We are primarily interested in stochastic processes on subspaces of $\mathcal{L}$-structures that behave nicely with respect to the natural actions of {\em relabeling} and {\em restriction}, as for the coalescent and other processes mentioned in Section \ref{section:introduction}.
We focus specifically on processes with the Markov property.

In the following definition and throughout the article, $\mathcal{L}$ is a signature, $S$ is an at most countable set, $T$ is either $\mathbb{Z}_+:=\{0,1,\ldots\}$ (discrete time) or $\mathbb{R}_+:=[0,\infty)$ (continuous time), and $\XS\subseteq\lS$ is a set of $\mathcal{L}$-structures which is closed under isomorphism and is equipped with the trace of the Borel $\sigma$-field on $\lN$ and the topology induced by the ultrametric in \eqref{eq:ultrametric}.

\begin{definition}[Markov property]\label{defn:Markov}
A family of $\mathcal{X}_S$-valued random structures $(\mathfrak{X}_t)_{t\in T}$ satisfies the {\em Markov property} if the $\sigma$-fields $\sigma\langle\mathfrak{X}_s\rangle_{s<t}$ and $\sigma\langle\mathfrak{X}_s\rangle_{s>t}$ are conditionally independent given $\sigma\langle\mathfrak{X}_t\rangle$ for all $t\in T$, where $\sigma\langle\cdot\rangle$ is the $\sigma$-field generated by $\cdot$.
\end{definition}

\begin{definition}[Combinatorial Markov process]\label{defn:CMP}
A {\em combinatorial Markov process} on $\XS$ is a collection $\Xbf=\{\Xbf_{\mathfrak{M}}:\,\mathfrak{M}\in\XS\}$, where each $\Xbf_{\mathfrak{M}}=(\mathfrak{X}_t)_{t\in T}$ is $\XS$-valued, has $\mathfrak{X}_0=\mathfrak{M}$, and satisfies the Markov property with a common time homogeneous transition law
\begin{equation}\label{eq:tps}
P_{s}(x,A):=\mathbb{P}\{\mathfrak{X}_{t+s}\in A\mid \mathfrak{X}_t=x,\,\mathfrak{X}_0=\mathfrak{M}\},\quad s,t\in T,\quad A\Borel\XN,
\end{equation}
for all $\mathfrak{M}\in\XS$.
\end{definition}

\begin{remark}[Notation and terminology]
Combinatorial Markov processes exhibit different behaviors in discrete and continuous time.
We avoid confusion by calling $\mathbf{X}$ a {\em Markov chain} when time is discrete and a {\em Markov process} when time is continuous.
To further distinguish these cases, we index time by $m=0,1,\ldots$ in discrete time and $t\in[0,\infty)$ in continuous time.
When speaking generically about discrete and continuous time processes, as we do in this section, we employ the notation and terminology of the continuous time case.
\end{remark}

\begin{definition}[Exchangeable processes]\label{defn:exchangeable CMP}
A combinatorial Markov process $\mathbf{X}=\{\mathbf{X}_{\mathfrak{M}}:\,\mathfrak{M}\in\XS\}$ is {\em exchangeable} if its transition law \eqref{eq:tps}  satisfies
\begin{equation}\label{eq:exch-tps}
P_s(x,A)=P_s(x^{\sigma},A^{\sigma}),\quad x\in\XS,\,A\Borel\XS,
\end{equation}
for all $s\in T$ and all permutations $\sigma:S\rightarrow S$, where $A^{\sigma}:=\{x'^{\sigma}:\,x'\in A\}$.
\end{definition}

When $S$ is countable, without loss of generality $S=\Nb$, we assume the further minimal condition that every $\mathbf{X}_{\mathfrak{M}}\in\mathbf{X}$ has {\em c\`adl\`ag sample paths} with probability 1, that is, the map $t\mapsto \mathfrak{X}_t$ is right continuous and has left limits.
In the product discrete topology, the c\`adl\`ag sample paths property is equivalent to the condition that every embedded process $\mathbf{X}_{\mathfrak{M}}^{S}:=(\mathfrak{X}_t|_S)_{t\in T}$, with $S\subset\Nb$ finite, stays in each state it visits for a strictly positive hold time with probability 1.
When $S$ is finite or $T=\mathbb{Z}_+$, the c\`adl\`ag paths assumption is implicit in the Markov assumption of Definition \ref{defn:CMP}.

The stronger property of {\em projectivity}, or {\em sampling consistency}, is a common assumption in statistical applications.

\begin{definition}[Projective Markov property]\label{defn:projective}
We say that $\mathbf{X}=\{\mathbf{X}_{\mathfrak{M}}:\,\mathfrak{M}\in\XS\}$ exhibits the {\em projective Markov property}, or is {\em consistent under subsampling}, if $\mathbf{X}_{\mathfrak{M}}^{S'}:=(\mathfrak{X}_t|_{S'})_{t\in T}$ satisfies the Markov property on $\mathcal{X}_{S'}$ for every $S'\subseteq S$ and $\mathfrak{M}\in\XS$.
\end{definition}

Under the projective Markov property, $\mathbf{X}$ determines a combinatorial Markov process $\mathbf{X}^{S'}:=\{\mathbf{X}_{\mathfrak{S}}: \mathfrak{S}\in\mathcal{X}_{S'}\}$ on $\mathcal{X}_{S'}$, for every $S'\subseteq S$, by taking $\mathbf{X}_{\mathfrak{S}}\equalinlaw\mathbf{X}_{\mathfrak{M}}^{S'}$ for any $\mathfrak{M}\in\mathcal{X}_{S}$ with $\mathfrak{M}|_{S'}=\mathfrak{S}$.
Since $\Xn$ is finite for every $n\in\Nb$, the projective Markov property implies that each $\Xbf_{\mathfrak{M}}$ has c\`adl\`ag sample paths with probability 1.

Our main theorems pertain to processes that evolve on certain {\em Fra\"iss\'e spaces} $\XN\subseteq\lN$, which we define formally in Definition \ref{defn:Fraisse space} as a natural generalization of a {\em combinatorial state space}.
The key feature of a Fra\"iss\'e space is that it contains a universal representative $\mathfrak{F}\in\XN$ such that every $\mathfrak{M}\in\XN$ embeds into $\mathfrak{F}$.
In Theorem  \ref{prop:Feller} we prove that the projective Markov property is equivalent to the Feller property for exchangeable processes on Fra\"iss\'e spaces.

The {\em Markov semigroup} of an $\XS$-valued Markov process $\mathbf{X}$ is a family of operators $(\mathbf{P}_t)_{t\in T}$ which acts on bounded, measurable functions $g:\XS\to\mathbb{R}$ by
\begin{equation}\label{eq:semigroup}
\mathbf{P}_tg(\mathfrak{M}):=\mathbb{E}(g(\mathfrak{X}_t)\mid \mathfrak{X}_0=\mathfrak{M}),\quad \mathfrak{M}\in\XS.
\end{equation}

\begin{definition}[Feller property]\label{defn:Feller}
An $\XS$-valued Markov process $\mathbf{X}=\{\mathbf{X}_{\mathfrak{M}}: \mathfrak{M}\in\XS\}$ with semigroup $(\mathbf{P}_t)_{t\in T}$ has the {\em Feller property} if, for all bounded, measurable $g:\XS\rightarrow\mathbb{R}$,
\begin{itemize}
	\item $\mathfrak{M}\mapsto\mathbf{P}_tg(\mathfrak{M})$ is continuous for all $t\in T$ and
	\item $\lim_{t\downarrow0}\mathbf{P}_tg(\mathfrak{M})=g(\mathfrak{M})$ for all $\mathfrak{M}\in\XS$.
\end{itemize}
\end{definition}

Most important for our purposes is that the infinitesimal generator, which determines the behavior of $\mathbf{X}$, always exists for Feller processes.
Our main theorems characterize this infinitesimal generator under general conditions on the state space $\XN$.

From here on we refer to $\mathbf{X}$ interchangeably as an {\em exchangeable, projective Markov process}, an {\em exchangeable, consistent Markov process}, or an {\em exchangeable Feller process}.

\subsection{Lipschitz continuous functions}\label{section:Lipschitz}
A function $F:\XS\rightarrow\XS$ is {\em Lipschitz continuous} if 
\begin{equation}\label{eq:Lipschitz}
d_{\XS}(F(\mathfrak{M}),F(\mathfrak{M}'))\leq d_{\XS}(\mathfrak{M},\mathfrak{M}'),\quad\text{for all }\mathfrak{M},\mathfrak{M}'\in\XS,\end{equation}
where $d_{\XS}(\cdot,\cdot)$ is the ultrametric on $\mathcal{X}_S\subseteq\mathcal{L}_S$ induced from \eqref{eq:ultrametric}.  For any $S\subseteq\Nb$, we write $\Lip(\XS)$ to denote the set of Lipschitz continuous functions $\XS\rightarrow\XS$.

\begin{remark}
  The Lipschitz condition is natural in the context of projective Markov processes.
  In our characterization of discrete time Markov chains (Theorem \ref{thm:discrete}), Lipschitz continuous functions $F$ act as random transition operators that relate the state $\mathfrak{X}_m=\mathfrak{M}$ at time $m$ to $\mathfrak{X}_{m+1}=F(\mathfrak{M})$ at time $m+1$.  
 The ultrametric  property of \eqref{eq:ultrametric} implies that $F(\mathfrak{M})|_{[n]}$ depends only on $\mathfrak{M}|_{[n]}$, so that the Lipschitz continuous functions are those which can determine the restriction of $(\mathfrak{X}_t)_{t\in T}$ to $\Xn$ at time $m+1$ by considering only the restriction to $\Xn$ at time $m$, as is needed to fulfill the projective Markov property.
 The role of Lipschitz continuous functions in describing continuous time processes (Theorem \ref{thm:continuous-Lambda}) is more intricate but analogous to the discrete time case.
\end{remark}

We define the restriction of $F\in\Lip(\XN)$ to a function $F^{[n]}:\Xn\to\Xn$, $n\in\Nb$, by
\begin{equation}\label{eq:restrict-Lip}
F^{[n]}(\mathfrak{S}):=F(\mathfrak{M})|_{[n]},\quad\mathfrak{S}\in\Xn,
\end{equation}
where $\mathfrak{M}\in\XN$ is any structure such that $\mathfrak{M}|_{[n]}=\mathfrak{S}$.
Thus, $\Lip(\XN)$ also comes equipped with the product discrete topology and Borel $\sigma$-field induced by the analog of \eqref{eq:ultrametric}:
\begin{equation}\label{eq:d-Lip}
d_{\Lip(\XN)}(F,F'):=1/(1+\sup\{n\in\Nb:\,F^{[n]}=F'^{[n]}\}),\quad F,F'\in\Lip(\XN).\end{equation}

Since $\mathcal{X}_S$ is closed under isomorphism, any permutation $\sigma:S\to S$ acts on $F\in\Lip(\XS)$ by {\em conjugation}, $F\mapsto\sigma F\sigma^{-1}$, defined by $(\sigma F\sigma^{-1})(\mathfrak{M}):=F(\mathfrak{M}^{\sigma})^{\sigma^{-1}}$.
\begin{definition}[Conjugation invariance]\label{defn:conjugation invariant}
We call $F\in\Lip(\XS)$ {\em conjugation invariant} if $\sigma F\sigma^{-1}$ is Lipschitz continuous for all permutations $\sigma:S\to S$.
\end{definition}

\begin{remark}
As we prove in Proposition \ref{prop:conj-inv},  conjugation invariance is a natural strengthening of the Lipschitz condition, saying that $F(\mathfrak{M})|_S$ depends only on
  $\mathfrak{M}|_S$ for
  any subset $S$, whereas Lipschitz continuity only
  ensures this when $S=[n]$ for some $n\in\Nb$.  Conjugation invariance is the strongest reasonable
  requirement to ensure that the transition functions are determined {\em locally}. 
\end{remark}

\begin{definition}[Exchangeable measure]\label{defn:exch-measure}
We call a measure $\mu$ on $\Lip(\XN)$ {\em exchangeable} if
\begin{equation}\label{eq:exch-mu}
\mu(\{F\in\Lip(\XN): F^{[n]}\in A\})=\mu(\{F\in\Lip(\XN): \sigma F^{[n]}\sigma^{-1}\in A\})
\end{equation}
for all $A\Borel\Lip(\Xn)$ and all permutations $\sigma:[n]\to[n]$, for all $n\in\Nb$.
Specifically, a probability measure $\mu$ is exchangeable if $F\sim\mu$ implies $F(\mathfrak{M}^{\sigma})\equalinlaw F(\mathfrak{M})^{\sigma}$ for all permutations $\sigma:\Nb\rightarrow\Nb$ and all $\mathfrak{M}\in\XN$, where $\equalinlaw$ denotes {\em equality in distribution}.
\end{definition}

\subsection{Notation}\label{section:notation}

We adopt the following notational conventions: $\mathcal{L}$ and $\mathcal{L}'$ always are signatures, $\mathfrak{M}$ is always an $\mathcal{L}$-structure, and $\mathfrak{X}$ is always a random $\mathcal{L}$-structure.
In general, we use fraktur letters, $\mathfrak{M}$, $\mathfrak{N}$, $\mathfrak{S}$, $\mathfrak{T}$, to denote structures with the base set indicated by the corresponding plain Roman letters, $M$, $N$, $S$, $T$, respectively.
When no confusion will result, we often write $\vec s\in S$ to denote that $\vec s=(s_1,\ldots,s_k)$ is a tuple in $S$ without specifying its length.

Capital letters at the end of the alphabet $\mathfrak{X},\mathfrak{Y},\mathfrak{Z}$ denote random structures and $\mathbf{X}=(\mathfrak{X}_t)_{t\in T}$ denotes a family of random structures.
We write $\mathfrak{X}\equalinlaw\mathfrak{X}^*$ to mean that $\mathfrak{X}$ and $\mathfrak{X}^*$ are {\em equal in distribution}, and for processes we write $\mathbf{X}\equalinlaw \mathbf{X}^{*}$ to mean that $\mathbf{X}$ and $\mathbf{X}^*$ have all of the same finite-dimensional distributions.
For $\vec x=(x_1,\ldots,x_k)$, we write $\rng\vec x=\{x_1,\ldots,x_k\}$ for the set of distinct elements in $\vec x$ and we write $\vec y\sqsubseteq\vec x$ to indicate that $\vec y$ occurs as a subsequence of $\vec x$, that is, there are indices $1\leq i_1<\cdots <i_{m}\leq k$ such that $\vec y=(x_{i_1},\ldots,x_{i_m})$.

\section{Summary of main theorems} \label{section:summary}
We now state two of our main theorems, saving many technical details for later.
We delay a formal statement of our most precise theorems, including the L\'evy--It\^o--Khintchine representation and properties of the projection into a space of limit objects, until Theorems \ref{thm:conjugation-discrete}, \ref{thm:Levy-Ito}, \ref{thm:limit process}, and \ref{thm:disc}.

Throughout the section, $\mathcal{L}$ is a fixed signature and $\XN\subseteq\lN$ is closed under isomorphism and satisfies the conditions of a Fra\"{i}ss\'e space, which we define in Definition \ref{defn:Fraisse space}.
Model theorists will recognize a Fra\"{i}ss\'e space as a collection of countable combinatorial structures satisfying the hereditary property (Definition \ref{defn:HP}), joint embedding property (Definition \ref{defn:JEP}), and disjoint amalgamation property (Definition \ref{defn:DAP}).
We discuss the motivation and consequences of these properties throughout Section \ref{section:Fraisse}.

\subsection{Discrete time Markov chains}\label{section:discrete time-summary}

Given any probability measure $\mu$ on $\Lip(\XN)$, we construct the {\em standard $\mu$-process} $\mathbf{X}_{\mu}^*:=\{\mathbf{X}^*_{\mathfrak{M},\mu}:\,\mathfrak{M}\in\XN\}$ on $\XN$ by taking $F_1,F_2,\ldots$ independent, identically distributed (i.i.d.)~ from $\mu$ and putting $\mathbf{X}^*_{\mathfrak{M},\mu}:=(\mathfrak{X}^*_m)_{m\in\mathbb{Z}_+}$ with $\mathfrak{X}^*_0=\mathfrak{M}$ and 
\begin{equation}\label{eq:mu-chain}
\mathfrak{X}^*_{m+1}=F_{m+1}(\mathfrak{X}^*_m)=(F_{m+1}\circ F_m\circ\cdots\circ F_1)(\mathfrak{M}),\quad m\geq0.
\end{equation}

\begin{theorem}\label{thm:discrete}
Let $\mathbf{X}$ be a discrete time, exchangeable Markov chain on a Fra\"iss\'e space $\XN$.
Then $\mathbf{X}$ has the Feller property if and only if there exists an exchangeable probability measure $\mu$ on $\Lip(\XN)$ such that $\mathbf{X}\equalinlaw\mathbf{X}^*_{\mu}$, where $\mathbf{X}^*_{\mu}$ is the standard $\mu$-process constructed in \eqref{eq:mu-chain}.
\end{theorem}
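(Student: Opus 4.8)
The plan is to prove both implications by relating the law of $\mathbf{X}$ to the evaluation pushforwards of a measure on $\Lip(\XN)$. For any probability measure $\mu$ on $\Lip(\XN)$, the standard $\mu$-process $\mathbf{X}^*_\mu$ is automatically a time-homogeneous Markov chain: since $F_1,F_2,\ldots$ are i.i.d.\ and $\mathfrak{X}^*_m$ is a function of $F_1,\ldots,F_m$ alone, the conditional law of $\mathfrak{X}^*_{m+1}=F_{m+1}(\mathfrak{X}^*_m)$ given $\mathfrak{X}^*_m=x$ is the pushforward $A\mapsto\mu(\{F:F(x)\in A\})$, independent of $m$ and of the past. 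Hence $\mathbf{X}^*_\mu\equalinlaw\mathbf{X}$ if and only if
\[
\mu(\{F\in\Lip(\XN):F(x)\in A\})=P_1(x,A)\quad\text{for all }x\in\XN,\ A\Borel\XN,
\]
so the whole theorem reduces, for the hard direction, to the existence of an exchangeable $\mu$ whose evaluation pushforward at every state $x$ is the one-step kernel $P_1(x,\cdot)$.

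For the easy direction I would show that exchangeability of such a $\mu$ forces $\mathbf{X}^*_\mu$ to be exchangeable (directly from the displayed identity and \eqref{eq:exch-mu}, using that $y^\sigma\in A^\sigma\iff y\in A$) and Feller. The Feller property I would obtain via Theorem \ref{prop:Feller}: because each $F\in\Lip(\XN)$ satisfies that $F(\mathfrak{M})|_{[n]}$ depends only on $\mathfrak{M}|_{[n]}$, the restricted chain $(\mathfrak{X}^*_m|_{[n]})_m$ is driven by the well-defined restrictions $F^{[n]}_{m+1}$ of \eqref{eq:restrict-Lip} and is itself Markov on the finite space $\Xn$; thus $\mathbf{X}^*_\mu$ is consistent under subsampling, hence Feller on the Fra\"iss\'e space $\XN$.

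For the hard direction the real work is constructing $\mu$. First I would use Theorem \ref{prop:Feller} in the reverse direction: the exchangeable Feller chain $\mathbf{X}$ is projective, so each restriction $\mathbf{X}^{[n]}$ is a Markov chain on the finite set $\Xn$ with transition kernel $P^{[n]}_1$, these kernels are consistent (the restriction to $[m]$-events of $P^{[n]}_1(\mathfrak{S},\cdot)$ equals $P^{[m]}_1(\mathfrak{S}|_{[m]},\cdot)$), and each is invariant under conjugation by permutations of $[n]$ by \eqref{eq:exch-tps}. I would then build a consistent family of random functions $F^{[n]}:\Xn\to\Xn$ by induction on $n$: having sampled $F^{[n-1]}$ with $F^{[n-1]}(\mathfrak{T})\sim P^{[n-1]}_1(\mathfrak{T},\cdot)$ for each $\mathfrak{T}$, I extend the already-chosen value $F^{[n-1]}(\mathfrak{S}|_{[n-1]})$ to $F^{[n]}(\mathfrak{S})$ by sampling from the disintegration of $P^{[n]}_1(\mathfrak{S},\cdot)$ conditioned on its $[n-1]$-restriction. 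Consistency of the kernels makes the inductive marginal match the conditioning variable, so this yields the correct marginal $F^{[n]}(\mathfrak{S})\sim P^{[n]}_1(\mathfrak{S},\cdot)$ while enforcing $F^{[n]}(\mathfrak{S})|_{[n-1]}=F^{[n-1]}(\mathfrak{S}|_{[n-1]})$, which is exactly the Lipschitz compatibility \eqref{eq:restrict-Lip}. The Kolmogorov extension theorem, valid since $\Lip(\XN)$ is the Polish inverse limit of the $\Lip(\Xn)$ under \eqref{eq:d-Lip}, then assembles $(F^{[n]})_n$ into a single random $F\in\Lip(\XN)$, and I take $\mu:=\mathrm{law}(F)$.

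The main obstacle is ensuring this coupling can be made exchangeable, namely that the disintegration sampling in the inductive step can be carried out equivariantly under permutations of $[n]$ and compatibly across all levels, so that the resulting $\mu$ satisfies \eqref{eq:exch-mu}. The raw conditional-sampling recipe need not respect the permutation action, since a measurable selection of conditional kernels may break the symmetry of the already symmetric $P^{[n]}_1$. I expect to resolve this by driving the construction with an exchangeable source of randomness and invoking the representation theory for relatively exchangeable structures developed in Section \ref{section:relative exchangeability}: run from the Fra\"iss\'e limit $\mathfrak{F}$, the pair $(\mathfrak{X}_0,\mathfrak{X}_1)$ is a structure relatively exchangeable over $\mathfrak{F}$, and its canonical representation yields, simultaneously for every embedded substructure, an equivariant assignment of an output to each input, which is precisely an exchangeable random Lipschitz $F$ with the prescribed evaluation pushforwards. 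Once $\mu$ is exchangeable with evaluation pushforward $P_1(x,\cdot)$ at every $x$, the reduction in the first paragraph gives $\mathbf{X}\equalinlaw\mathbf{X}^*_\mu$, completing the proof.
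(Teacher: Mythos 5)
Your proposal is correct and, in its final form, takes essentially the same route as the paper: the easy direction is the paper's Proposition \ref{prop:easy way}, and your fix for the exchangeability obstacle---representing the one-step transition out of an exchangeable Fra\"iss\'e limit via relative exchangeability (Theorem \ref{thm:CTasym}) to obtain an exchangeable random Lipschitz function---is exactly the paper's argument, where Theorem \ref{thm:Lip-construct} supplies the canonical-embedding construction $\Phi(\mathfrak{N})=\mathfrak{Y}^{\rho_{\mathfrak{N}}}$ that your phrase ``equivariant assignment for every embedded substructure'' gestures at. You also correctly diagnosed that the naive disintegration/Kolmogorov-extension coupling need not respect the permutation action, which is precisely why the paper routes through the exchangeably generated Fra\"iss\'e limit (Theorem \ref{thm:AFP}) rather than an inductive conditional-sampling scheme.
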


If $\XN$ satisfies a stronger model theoretic property called {\em $\odap$-disjoint amalgamation} (Definition \ref{defn:n-DAP}), we can specify $\mu$ in Theorem \ref{thm:discrete} to concentrate on conjugation invariant functions $\XN\to\XN$.
In this case, the measure $\mu$ in Theorem \ref{thm:discrete} admits a more explicit description, which we describe further in Theorem \ref{thm:conjugation-discrete}.

\subsection{Continuous time Markov processes}\label{section:continuous-summary}
Although Theorem \ref{thm:discrete} serves as a precursor to our main theorems about continuous time processes, the continuous time case does not follow directly from the discrete time results above.
More subtle behaviors ensue because of the possibility that countably many small jumps bunch together in arbitrarily small time intervals.

We construct the {\em standard $\Lambda$-process} $\mathbf{X}_{\Lambda}^*:=\{\mathbf{X}_{\mathfrak{M},\Lambda}^*:\,\mathfrak{M}\in\XN\}$ by specifying a measure $\Lambda$ on $\Lip(\XN)$ that satisfies
\begin{equation}\label{eq:regularity-Lambda}
\Lambda(\{\idN\})=0\quad\text{and}\quad\Lambda(\{F\in\Lip(\XN):\,F^{[n]}\neq\idn\})<\infty\text{ for all }n\in\Nb,\end{equation}
where $\text{id}_S$ denotes the identity $\XS\rightarrow\XS$, and letting $\mathbf{\Phi}:=\{(t,F_t)\}\subseteq [0,\infty)\times\Lip(\XN)$ be a Poisson point process with intensity $dt\otimes\Lambda$, where $dt$ denotes Lebesgue measure on $[0,\infty)$.
We then build $\mathbf{X}_{\Lambda}^*$ as the projective limit of its finite state space processes $\mathbf{X}^{*[n]}_{\Lambda}:=\{\mathbf{X}^{*[n]}_{\mathfrak{S},\Lambda}:\,\mathfrak{S}\in\Xn\}$, where each $\mathbf{X}^{*[n]}_{\mathfrak{S},\Lambda}=(\mathfrak{X}^{*[n]}_t)_{t\in[0,\infty)}$ has
\begin{itemize}	
	\item  $\mathfrak{X}^{*[n]}_0=\mathfrak{S}$, 
	\item $\mathfrak{X}_t^{*[n]}=F_t^{[n]}(\mathfrak{X}_{t-}^{*[n]})$, if $t$ is an atom time of $\Phi$ with $F_t^{[n]}\neq\idn$, and
	\item $\mathfrak{X}_t^{*[n]}=\mathfrak{X}_{t-}^{*[n]}$ otherwise,
\end{itemize}
for $\mathfrak{X}^{*[n]}_{t-}:=\lim_{s\uparrow t}\mathfrak{X}^{*[n]}_s$ denoting the state of $\mathbf{X}^{*[n]}_{\mathfrak{S},\Lambda}$ just before time $t$.
For every $n\in\Nb$, $\mathbf{X}^{*[n]}_{\mathfrak{S},\Lambda}$ is clearly Markovian by its construction from $\mathbf{\Phi}$, for which the righthand side of \eqref{eq:regularity-Lambda} implies that only finitely many jumps occur in bounded intervals with probability 1.
By the construction of $\mathbf{X}^{*[n+1]}_{\Lambda}$ and $\mathbf{X}^{*[n]}_{\Lambda}$ from the same Poisson process, it is immediate that the restriction of each $\mathbf{X}^{*[n+1]}_{\mathfrak{S}',\Lambda}$ to $\Xn$ coincides exactly with $\mathbf{X}^{*[n]}_{\mathfrak{S}'|_{[n]},\Lambda}$, so that there is a well defined projective limit process $\mathbf{X}^*_{\Lambda}$ on $\XN$.

\begin{theorem}\label{thm:continuous-Lambda}
Let $\mathbf{X}$ be a continuous time, exchangeable Markov process on a Fra\"iss\'e space $\XN$.
Then $\mathbf{X}$ has the Feller property if and only if there exists an exchangeable measure $\Lambda$ on $\Lip(\XN)$ satisfying \eqref{eq:regularity-Lambda} such that $\mathbf{X}\equalinlaw\mathbf{X}^*_{\Lambda}$.
\end{theorem}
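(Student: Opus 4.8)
The plan is to prove the two directions separately, with essentially all of the work concentrated in the ``only if'' direction. For the ``if'' direction I would verify that the projective limit $\mathbf{X}^*_\Lambda$ is well defined, exchangeable, and satisfies the projective Markov property, which by Theorem \ref{prop:Feller} is equivalent to the Feller property on a Fra\"iss\'e space. Well-definedness and the Markov property of each finite restriction $\mathbf{X}^{*[n]}_\Lambda$ are immediate from the construction: the right-hand condition in \eqref{eq:regularity-Lambda} guarantees that only finitely many atoms of the Poisson point process $\mathbf{\Phi}$ act nontrivially on $\Xn$ in any bounded time interval, so each $\mathbf{X}^{*[n]}_\Lambda$ is an honest finite-state continuous time Markov chain, and these chains are consistent by construction from a single $\mathbf{\Phi}$. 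Exchangeability of the process descends from exchangeability of $\Lambda$, since a Poisson point process with intensity $dt\otimes\Lambda$ is invariant in law under the conjugation action of any permutation of $\Nb$.

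For the ``only if'' direction I would first pass to finite restrictions. By Theorem \ref{prop:Feller} the Feller property is equivalent to the projective Markov property, so each restricted process on $\Xn$ is a continuous time Markov chain on a finite state space; let $Q^{[n]}$ denote its generator. Exchangeability of $\mathbf{X}$ forces each $Q^{[n]}$ to be invariant under the conjugation action of the symmetric group on $[n]$, and projectivity forces the family $\{Q^{[n]}\}_n$ to be consistent, in the sense that the dynamics on $\mathcal{X}_{[m]}$ induced by $Q^{[n]}$ agree with $Q^{[m]}$ for every $m\le n$. The task is to realize all of these rate matrices simultaneously by a single exchangeable measure $\Lambda$ on $\Lip(\XN)$.

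To construct $\Lambda$ I would reuse the already established discrete time result. For each step size $h>0$, the sampled chain $(\mathfrak{X}_{kh})_{k\ge0}$ is a discrete time exchangeable Feller chain, so Theorem \ref{thm:discrete} supplies an exchangeable probability measure $\mu_h$ on $\Lip(\XN)$ encoding its one-step transition. Restricting to $\Xn$, the one-step transition matrices $P_h^{[n]}$ satisfy $h^{-1}(P_h^{[n]}-I)\to Q^{[n]}$ as $h\downarrow 0$, which suggests defining $\Lambda$ as the vague limit of the rescaled measures $h^{-1}\mu_h$, restricted to the sets $\{F: F^{[n]}\neq\idn\}$ of functions that move $[n]$, away from the identity $\idN$. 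Concretely, $\Lambda$ would be assembled level by level: on each annulus $\{F: F^{[n]}\neq\idn,\ F^{[n-1]}=\mathrm{id}_{[n-1]}\}$ of functions first moving at level $n$, its restriction is a finite measure determined by the increment that $Q^{[n]}$ contributes beyond $Q^{[n-1]}$, and gluing these pieces yields a $\sigma$-finite (generally infinite) exchangeable measure satisfying \eqref{eq:regularity-Lambda}. Lifting a function prescribed only on $\Xn$ to a genuine element of $\Lip(\XN)$ is done using the amalgamation property of the Fra\"iss\'e space, exactly as in the proof of Theorem \ref{thm:discrete}.

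The hard part will be controlling the accumulation of small jumps so that this gluing produces an honest measure and faithfully reproduces the dynamics. On any fixed level the total jump rate is finite, but the global mass of $\Lambda$ is typically infinite, and one must show both that the level-$n$ pieces are mutually consistent, so that the restriction of the resulting $\Lambda$-process to $\Xn$ recovers the original finite chain for every $n$, and that the limits in $h$ exist uniformly enough to patch across all levels at once. I would establish tightness of the family $\{h^{-1}\mu_h|_{\{F^{[n]}\neq\idn\}}\}_h$ for each fixed $n$, extract a consistent limit along a subsequence by a diagonal argument over $n$, and verify that the resulting $\Lambda$ is exchangeable and satisfies \eqref{eq:regularity-Lambda}. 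The final identification $\mathbf{X}\equalinlaw\mathbf{X}^*_\Lambda$ then reduces to matching generators on each finite $\Xn$, which holds by construction, together with the projective-limit compatibility already built into both processes.
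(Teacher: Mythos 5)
Your proposal follows essentially the same route as the paper: the ``if'' direction via the thinning property and Theorem \ref{prop:Feller}, and the ``only if'' direction by applying Theorem \ref{thm:discrete} to time-$h$ skeletons, rescaling the resulting measures by $h^{-1}$, restricting to $\{F\in\Lip(\XN): F^{[n]}\neq\idn\}$, extracting subsequential limits level by level with a diagonal (refining-subsequence) argument, gluing via Carath\'eodory's extension theorem, and matching jump rates on each $\Xn$. This is exactly the structure of the paper's Proposition \ref{prop:Poisson-cts} and Theorem \ref{thm:existence}, so the proposal is correct and not materially different.
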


Just as in discrete time, when $\XN$ also satisfies the $\odap$-disjoint amalgamation property, we can choose $\Lambda$ to concentrate on the subspace of conjugation invariant functions on $\XN$.
In this case, Theorem \ref{thm:Levy-Ito} refines Theorem \ref{thm:continuous-Lambda} with our L\'evy--It\^o--Khintchine representation of exchangeable combinatorial Feller processes, which decomposes $\Lambda$ into mutually singular measures indexed by partitions of integers $k=1,\ldots,\max_j\ar(R_j)$.
Theorems \ref{thm:limit process} and \ref{thm:disc} go on to characterize the behavior induced by projecting $\mathbf{X}$ into a suitable space of limiting objects.
These latter outcomes require much more technical notation and definitions, and so we delay their formal statements until later.
The next two sections prepare the key elements of model theory and exchangeable random structures needed to prove our main theorems.

\section{Fra\"iss\'e spaces}\label{section:Fraisse}

Throughout this section, $\mathbf{X}=\{\mathbf{X}_{\mathfrak{M}}: \mathfrak{M}\in\XN\}$ is an exchangeable, projective Markov process on $\XN\subseteq\lN$.

\subsection{Model theoretic properties}\label{section:model theory}

The connections to the Feller property in our main theorems indicate that the constituents of $\{\mathbf{X}_{\mathfrak{M}}:\ \mathfrak{M}\in\XN\}$ fit together to produce a jointly continuous flow $(\mathfrak{M},t)\mapsto\mathfrak{X}_{\mathfrak{M},t}$, where here we write $\mathfrak{X}_{\mathfrak{M},t}$ to denote the state of $\mathbf{X}_{\mathfrak{M}}$ at time $t$.
(We usually suppress the dependence on $\mathfrak{M}$ when no confusion will result.)
As we show, the structure of any such process depends on the structure of the state space, which is most naturally described in terms of model theoretic and first-order logical properties.
Here we distill the main concepts, which can be found in most model theory texts, for example, \cite{HodgesModelTheory,HodgesShorter}.

The projective Markov property facilitates the study of $\mathbf{X}$ through its restrictions to finite substructures.
In this direction, we identify an $\mathcal{L}$-structure $\mathfrak{M}=(\Nb,\mathcal{R}_1,\ldots,\mathcal{R}_r)$ with its collection of finite substructures, called the {\em age}.

\begin{definition}[Age of a structure]\label{defn:age}
The {\em age} of $\mathfrak{M}$, denoted $\age(\mathfrak{M})$, is the set of all finite $\mathcal{L}$-structures embedded in $\mathfrak{M}$, that is,
\[\age(\mathfrak{M}):=\{\mathfrak{S}\in\bigcup_{n\in\Nb}\ln:\,\text{there exists an embedding }\phi:\mathfrak{S}\to\mathfrak{M}\}.\]
\end{definition}

\begin{example}
The partition $\mathbf{0}_{\Nb}:=\{\{1\},\{2\},\ldots\}$ of $\Nb$ into singletons has $\age(\mathbf{0}_{\Nb})=\{\mathbf{0}_{[n]}: n\in\Nb\}$ consisting only of partitions of finite sets into singletons.
The age of a partition $\pi=\{B_1,\ldots,B_k\}$ of $\Nb$ for which each block $B_1,\ldots,B_k$ is infinite consists of all finite partitions with at most $k$ blocks.
The {\em countable universal partition} $\mathfrak{F}_{\Pi}=\{B_1,B_2,\ldots\}$ has infinitely many blocks of infinite size so that every finite partition embeds into $\mathfrak{F}_{\Pi}$ and $\age(\mathfrak{F}_{\Pi})$ contains all finite partitions.
\end{example}

\begin{remark}
The age of a structure is usually defined as the set of all finite substructures of $\mathfrak{M}$, perhaps identified if they are isomorphic.
For our purposes, it is more appropriate to specify the domain of structures with cardinality $n$ to be precisely $[n]$ and to regard $\mathfrak{S},\mathfrak{S}'\in\age(\mathfrak{M})$ as different even when they are isomorphic.

Take, for example, the structure $\mathfrak{M}$ corresponding to the partition of $\{1,2,3\}$ into equivalence classes $\{1,3\}$ and $\{2\}$, written $\mathfrak{M}=\{1,3\}/\{2\}$ for simplicity.
Each of the partitions $\{1,2\}/\{3\}$, $\{1,3\}/\{2\}$, $\{1\}/\{2,3\}$, $\{1,2\}$, $\{1,3\}$, $\{2,3\}$, $\{1\}/\{2\}$, $\{1\}/\{3\}$, $\{2\}/\{3\}$, $\{1\}$, $\{2\}$, and $\{3\}$ embeds into $\mathfrak{M}$.
Typically, the isomorphic structures $\{1,2\}/\{3\}$, $\{1,3\}/\{2\}$, and $\{1\}/\{2,3\}$ would be identified with their equivalence class, but for our purposes these structures are distinct.
Instead, the exchangeability assumption eliminates the need to distinguish among isomorphic structures with different base sets, such as $\{1,2\}$, $\{1,3\}$, and $\{2,3\}$, allowing us to eliminate any structure whose domain is not an initial segment of $\Nb$.
With this, we obtain
\[\age(\{1,3\}/\{2\})=\{\{1,2\}/\{3\}, \{1,3\}/\{2\}, \{1\}/\{2,3\}, \{1,2\}, \{1\}/\{2\}, \{1\}\},\]
which partitions into the possible states of any finite substructure of $\mathfrak{M}$:
\begin{eqnarray*}
\lefteqn{\age(\{1,3\}/\{2\})=}\\
&&=\{\{1,2\}/\{3\}, \{1,3\}/\{2\}, \{1\}/\{2,3\}\}\cup\{\{1,2\},\{1\}/\{2\}\}\cup\{\{1\}\}.\end{eqnarray*}

\end{remark}

We extend our definition of the age of a structure to the entire space $\XN\subseteq\lN$ by putting $\age(\XN):=\bigcup_{\mathfrak{M}\in\XN}\age(\mathfrak{M})$.
With $\age_n(\mathfrak{M}):=\age(\mathfrak{M})\cap\ln$ denoting those elements of $\age(\mathfrak{M})$ of size $n$, and likewise for $\age_n(\XN)$, we immediately see that $\age_n(\XN)=\Xn$ is the state space of the process $\mathbf{X}$ restricted to $\Xn$.
For reasons laid out below, the age of any $\XN$ harboring an exchangeable, projective Markov process satisfies the following natural structural properties.

\begin{definition}[Hereditary property]\label{defn:HP}
A collection of finite structures $K$ has the {\em hereditary property} (HP) if $\mathfrak{S}\in K$ and $\mathfrak{T}\subseteq\mathfrak{S}$ implies $\mathfrak{T}\in K$.
In this case, we say that $K$ is {\em closed under substructures}.
\end{definition}

\begin{definition}[Joint embedding property]\label{defn:JEP}
  A collection of finite structures $K$ has the {\em joint embedding property} (JEP) if, for all $\mathfrak{S},\mathfrak{T}\in K$, there exists $\mathfrak{U}\in K$ such that $\mathfrak{S}$ and $\mathfrak{T}$ both embed into $\mathfrak{U}$.
\end{definition}

\begin{definition}[Disjoint amalgamation property]\label{defn:DAP}
A collection of finite structures $K$ has the {\em disjoint amalgamation property} (DAP)
 if for any $\mathfrak{S},\mathfrak{T},\mathfrak{T}'\in K$ and embeddings $\phi:\mathfrak{S}\rightarrow\mathfrak{T}$ and $\phi':\mathfrak{S}\rightarrow\mathfrak{T}'$ there exist $\mathfrak{U}\in K$ and embeddings $\psi:\mathfrak{T}\rightarrow\mathfrak{U}$ and $\psi':\mathfrak{T}'\rightarrow\mathfrak{U}$ such that $\psi\circ\phi=\psi'\circ\phi'$ and $\im(\psi\circ\phi)=\im(\psi)\cap\im(\psi')$, where $\im(\phi)=\{t\in \dom\mathfrak{T}:\,\exists s\in\dom\mathfrak{S}\,(\phi(s)=t)\}$ is the {\em image} of $\phi$.
\end{definition}


\begin{example}\label{example:graphs-JEP}
For a concrete illustration of Definition \ref{defn:HP}-\ref{defn:DAP}, consider the space $\graphsN$ of countable undirected graphs.
The hereditary property is plain since $\age(\graphsN)$ contains all finite subgraphs.
For the joint embedding property, we can embed any $\mathfrak{G}\in\mathcal{G}_{[m]}$ and $\mathfrak{G}'\in\mathcal{G}_{[m']}$ into a common graph $\mathfrak{G}''\cong \mathfrak{G}\cup \mathfrak{G}'\in\mathcal{G}_{[m+m']}$ by putting $\mathfrak{G}''|_{[m]}=\mathfrak{G}$ and $\mathfrak{G}''|_{[m+m']\setminus[m]}\cong \mathfrak{G}'$ with no edges between the vertex sets $[m]$ and $[m+m']\setminus[m]$.
For disjoint amalgamation, suppose $\mathfrak{G}$ embeds into both $\mathfrak{H}$ and $\mathfrak{H}'$.  Then we can embed both $\mathfrak{H}$ and $\mathfrak{H}'$ into a larger structure $\mathfrak{H}''$ by performing a similar operation as for joint embedding above but with the modification that the pieces of $\mathfrak{H}$ and $\mathfrak{H}'$ corresponding to $\mathfrak{G}$ are the same part of $\mathfrak{H}''$.
\end{example}

Though abstract in appearance, the hereditary, joint embedding, and disjoint amalgamation properties align closely with the exchangeability and projectivity properties of combinatorial Markov processes $\mathbf{X}=\{\mathbf{X}_{\mathfrak{M}}: \mathfrak{M}\in\XN\}$.
By the projectivity property, we can alternatively regard $\mathbf{X}$ as a collection of finite state space Markov processes $\{\mathbf{X}_{\mathfrak{S}}: \mathfrak{S}\in\bigcup_{n\in\Nb}\Xn\}$, akin to the age of a structure defined above, so that $\mathbf{X}_{\mathfrak{S}}^{\dom\mathfrak{S}'}=(\mathfrak{X}_t|_{\dom\mathfrak{S}'})_{t\geq0}\equalinlaw\mathbf{X}_{\mathfrak{S}'}$ for every substructure $\mathfrak{S}'\subseteq\mathfrak{S}$.
The joint embedding property reflects our assumption that no two processes defined on structures with starting states $\mathfrak{S}\in\Xn$ and $\mathfrak{S}'\in\mathcal{X}_{[n']}$, $n,n'\in\Nb$, are mutually exclusive, that is, there are $\mathfrak{T}\in\age(\XN)$ and embeddings $\phi:\mathfrak{S}\to\mathfrak{T}$ and $\phi':\mathfrak{S}'\to\mathfrak{T}$ such that $\mathbf{X}^{\phi}_{\mathfrak{T}}\equalinlaw\mathbf{X}_{\mathfrak{S}}$ and $\mathbf{X}_{\mathfrak{T}}^{\phi'}\equalinlaw\mathbf{X}_{\mathfrak{S}'}$.
The disjoint amalgamation property ensures that any two processes $\mathbf{X}_{\mathfrak{T}}$ and $\mathbf{X}_{\mathfrak{T}'}$ for which there are embeddings $\phi:\mathfrak{S}\to\mathfrak{T}$ and $\phi':\mathfrak{S}\to\mathfrak{T}'$ can both be embedded into a process $\mathbf{X}_{\mathfrak{U}}$ by $\psi:\mathfrak{T}\to\mathfrak{U}$ and $\psi':\mathfrak{T}'\to\mathfrak{U}$ in such a way that the behaviors of $\mathbf{X}_{\mathfrak{U}}^{\psi}\equalinlaw\mathbf{X}_{\mathfrak{T}}$ and $\mathbf{X}_{\mathfrak{U}}^{\psi'}\equalinlaw\mathbf{X}_{\mathfrak{T}'}$ are coupled so that $\mathbf{X}_{\mathfrak{U}}^{\psi\circ\phi}=\mathbf{X}_{\mathfrak{U}}^{\psi'\circ\phi'}$ but the conditional distribution of $\mathbf{X}_{\mathfrak{U}}^{\psi}$ given $\mathbf{X}_{\mathfrak{U}}^{\psi'}$ depends only on $\mathbf{X}_{\mathfrak{U}}^{\psi\circ\phi}$.

A key outcome is that collections of structures satisfying HP, JEP, and DAP can be embedded into a single structure with a nice homogeneity property.

\begin{definition}[Ultrahomogeneity]\label{defn:ultrahomogeneity}
An $\mathcal{L}$-structure $\mathfrak{M}$ is \emph{ultrahomogeneous} if every embedding $\phi:\mathfrak{M}|_S\rightarrow\mathfrak{M}$, with $S\subseteq \dom\mathfrak{M}$ finite, extends to an automorphism $\overline{\phi}:\mathfrak{M}\rightarrow\mathfrak{M}$.
\end{definition}

\begin{theorem}[Fra\"iss\'e's theorem, \cite{HodgesShorter}, Theorem 6.1.2]\label{thm:Fraisse}
Let $\mathcal{L}$ be a signature and let $K$ be a nonempty collection of finite $\mathcal{L}$-structures that has HP, JEP, and DAP.
Then there exists an $\mathcal{L}$-structure $\mathfrak{F}_K$ that is ultrahomogeneous, unique up to isomorphism, and has $\age(\mathfrak{F}_K)=K$.
Furthermore, if $\mathfrak{N}$ is any countable structure with $\age(\mathfrak{N})\subseteq K$, then there is an embedding of $\mathfrak{N}$ into $\mathfrak{F}_K$.
If $K$ is countable, then $\dom\mathfrak{F}_K$ is countable and can be taken as $\Nb$.
\end{theorem}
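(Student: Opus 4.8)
The plan is to isolate a single combinatorial feature---the \emph{finite extension property}---and show it simultaneously yields ultrahomogeneity, uniqueness, and universality, so that the entire theorem reduces to constructing one countable structure enjoying this property. Say a countable $\mathcal{L}$-structure $\mathfrak{F}$ has the \emph{extension property} relative to $K$ if $\age(\mathfrak{F})=K$ and whenever $\mathfrak{A}\subseteq\mathfrak{B}$ are finite structures in $K$ and $\phi:\mathfrak{A}\to\mathfrak{F}$ is an embedding, then $\phi$ extends to an embedding $\psi:\mathfrak{B}\to\mathfrak{F}$. First I would observe that DAP implies the ordinary amalgamation property, since the extra requirement $\im(\psi\circ\phi)=\im(\psi)\cap\im(\psi')$ in Definition \ref{defn:DAP} only constrains the amalgam further; thus for the purposes of this theorem I may work with plain amalgamation, keeping JEP to witness that any two members of $K$ embed jointly.

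Next I would construct $\mathfrak{F}_K$ as the union of an increasing chain $\mathfrak{A}_0\subseteq\mathfrak{A}_1\subseteq\cdots$ of members of $K$. Using JEP repeatedly I arrange that every isomorphism type in $K$ embeds into some $\mathfrak{A}_n$, which together with HP forces $\age(\bigcup_n\mathfrak{A}_n)=K$: any finite substructure of the union sits inside some $\mathfrak{A}_n$ and hence lies in $K$ by HP, while the reverse inclusion is guaranteed by the joint-embedding step. To secure the extension property I maintain a running list of \emph{tasks}, each a triple $(\mathfrak{A},\mathfrak{B},\phi)$ with $\mathfrak{A}\subseteq\mathfrak{B}$ in $K$ and $\phi$ an embedding of $\mathfrak{A}$ into the current stage; at stage $n$ I discharge one task by invoking amalgamation to glue a copy of $\mathfrak{B}$ onto $\mathfrak{A}_n$ over $\im(\phi)$, producing $\mathfrak{A}_{n+1}\in K$. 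Taking $\dom\mathfrak{F}_K=\Nb$ is then immediate when $K$ is countable.

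It remains to derive the three conclusions from the extension property by back-and-forth. For ultrahomogeneity, given a finite partial isomorphism, i.e.\ an embedding $\phi:\mathfrak{F}_K|_S\to\mathfrak{F}_K$ with $S$ finite, I enumerate $\dom\mathfrak{F}_K=\{a_0,a_1,\ldots\}$ and alternately extend $\phi$ to capture the next point in its domain (``forth'') and in its range (``back''), each extension supplied by the extension property applied to the one-point-larger finite substructure; the union of these partial maps is an automorphism extending $\phi$. For uniqueness, the same back-and-forth played between two structures sharing the age $K$ and the extension property builds an isomorphism between them. For universality, a one-sided ``forth'' argument embeds any countable $\mathfrak{N}$ with $\age(\mathfrak{N})\subseteq K$ into $\mathfrak{F}_K$: enumerating $\dom\mathfrak{N}$ and extending an embedding one point at a time, each step is handled by the extension property because every finite substructure of $\mathfrak{N}$ lies in $K$.

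The main obstacle is the bookkeeping in the chain construction. The tasks to be discharged---a finite embedding $\phi:\mathfrak{A}\to\mathfrak{A}_n$ present at some stage, paired with a finite extension $\mathfrak{B}\supseteq\mathfrak{A}$ in $K$---form a countable set whose members only become available once the relevant embedding has appeared in the chain, and new embeddings appear at every stage, so no enumeration fixed in advance can list them. The fix is a dovetailing argument that interleaves the treatment of all tasks, assigning each a definite finite stage at which it is discharged once its embedding is present; verifying that this scheme discharges \emph{every} task, so that the limit genuinely has the extension property rather than merely an approximation to it, is the one delicate point. The back-and-forth arguments, by contrast, are routine once the extension property is in hand.
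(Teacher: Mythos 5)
The paper offers no proof of this statement---it is imported directly from Hodges (the cited Theorem 6.1.2)---so the only meaningful comparison is with the classical argument, and your proposal is exactly that classical argument: build an increasing chain in $K$, use JEP to realize every member of $K$ and AP (which DAP implies, as you note) to discharge extension tasks, dovetail the bookkeeping, and extract ultrahomogeneity, uniqueness, and universality from the extension property by back-and-forth. In the paper's setting the dovetailing is legitimate without further comment, because the signature is a finite set of relation symbols, so $K$ is automatically countable and your scheduling of dynamically appearing tasks is the standard and correct fix for the one delicate point you identify.

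One step is missing for the uniqueness claim as stated. Your back-and-forth proves that any two countable structures with age $K$ \emph{and the extension property} are isomorphic; but the theorem asserts uniqueness of the ultrahomogeneous structure with age $K$, so you also need the converse of your first implication, namely that every countable ultrahomogeneous $\mathfrak{F}$ with $\age(\mathfrak{F})=K$ has the extension property. This is short but not optional: given $\mathfrak{A}\subseteq\mathfrak{B}$ in $K$ and an embedding $\phi:\mathfrak{A}\to\mathfrak{F}$, choose any embedding $g:\mathfrak{B}\to\mathfrak{F}$ (one exists since $\mathfrak{B}\in\age(\mathfrak{F})$); then $\phi\circ(g\upharpoonright\dom\mathfrak{A})^{-1}$ is an embedding of the finite substructure $\mathfrak{F}|_{g(\dom\mathfrak{A})}$ into $\mathfrak{F}$, hence extends to an automorphism $\sigma$ of $\mathfrak{F}$ by ultrahomogeneity, and $\sigma\circ g:\mathfrak{B}\to\mathfrak{F}$ is an embedding extending $\phi$. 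With this lemma inserted, your argument is complete and coincides with the proof in the cited reference.
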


Theorem \ref{thm:Fraisse} holds most generally by replacing DAP with the weaker {\em amalgamation property} (AP), which drops the requirement that $\im(\psi\circ\phi)=\im(\psi)\cap\im(\psi')$ from Definition \ref{defn:DAP}.  
In the model theory literature, any set $K$ of finite structures which is closed under isomorphism and satisfies HP, JEP, and AP is called a \emph{Fra\"iss\'e class} and the unique (up to isomorphism) structure $\mathfrak{F}_K$ from Theorem \ref{thm:Fraisse} is called the {\em Fra\"iss\'e limit of $K$}.  

Within our context of exchangeable, projective Markov processes, the critical distinction between AP and DAP is that DAP implies that any two structures $\mathfrak{T},\mathfrak{T}'\in K$ can be embedded into a larger structure $\mathfrak{U}\in K$ without identifying any elements that are not already identified, while AP allows for the possibility that additional elements of $\mathfrak{T}$ and $\mathfrak{T}'$ must be identified when embedding into $\mathfrak{U}$.
A space $\XN$ with AP but not DAP need not have sufficient structure to house an exchangeable, projective Markov process, as the next example shows.

\begin{example}\label{ex:DAP-fail}
As an example of a class of objects for which AP holds but DAP fails, let $\mathcal{L}=\{R\}$ have $\ar(R)=1$ and let $K$ consist of all structures $([n],\mathcal{R})$, $n\geq1$, such that $\mathcal{R}\subseteq[n]$ is either a singleton or empty.
The collection $K$ is the age of all countable $\mathcal{L}$-structures $\mathfrak{M}$ for which $R^{\mathfrak{M}}\subseteq\Nb$ is either a singleton or is empty.

Let $l>m>n\geq1$, $\mathfrak{S}=([n],\emptyset)$, $\mathfrak{T}=([m],\{j\})$, and $\mathfrak{T}'=([l],\{j'\})$ for $j\in[m]$ and $j'\in[l]$.
Any $\mathfrak{U}$ satisfying the conditions of Definition \ref{defn:DAP} must have $\mathfrak{T}$ and $\mathfrak{T}'$ as embedded substructures; thus, for some $k\geq l$ we must have $\mathfrak{U}=([k],\{j''\})$, $j''\in[k]$.
In this case, $\psi:\mathfrak{T}\to\mathfrak{U}$ and $\psi':\mathfrak{T}'\to\mathfrak{U}$ must have $\psi(j)=j''$ and $\psi'(j')=j''$.
Any embeddings $\phi:\mathfrak{S}\to\mathfrak{T}$ and $\phi':\mathfrak{S}\to\mathfrak{T}'$ must have $\im(\phi)\cap\{j\}=\emptyset=\im(\phi')\cap\{j'\}$, but $\im(\psi\circ\phi)\cap\{j''\}=\emptyset$ and $\im(\psi)\cap\im(\psi')\supseteq\{j''\}$ prevents $\im(\psi\circ\phi)=\im(\psi)\cap\im(\psi')$.
Thus, $K$ satisfies AP but violates DAP.
By Theorem \ref{thm:Fraisse}, there is a unique (up to isomorphism) $\mathcal{L}$-structure $\mathfrak{F}$ into which every $\mathfrak{S}\in K$ embeds.
Any such $\mathfrak{F}$ is isomorphic to $(\Nb,\{1\})$.

The collection $\XN$ containing only $(\Nb,\emptyset)$ and structures of the form $(\Nb,\{i\})$, $i\in\Nb$, is a pathological state space for exchangeable, projective Markov processes.
By exchangeability of the transition probabilities \eqref{eq:exch-tps} and countable additivity of probabilities, the one step transition probability of any such Markov process must satisfy
\[P(\mathfrak{M},A)=\left\{\begin{array}{cc} p,& \mathfrak{M}\in A,\ (\Nb,\emptyset)\not\in A,\\ 1-p, & \mathfrak{M}\not\in A,\ (\Nb,\emptyset)\in A, \\ 1, & \mathfrak{M}\in A,\ (\Nb,\emptyset)\in A,\\ 0,& \text{otherwise},\end{array}\right.\quad \mathfrak{M}\in\XN,\quad A\Borel\XN,\]
for some $0\leq p\leq 1$,
meaning that every $\mathbf{X}_{\mathfrak{M}}=(\mathfrak{X}_m)_{m\in\mathbb{Z}_+}$ remains in its initial state $\mathfrak{X}_0=\mathfrak{M}$ for a Geometrically distributed number of steps until it is absorbed in the empty structure $(\Nb,\emptyset)$.
Moreover, there exists no exchangeable measure $\nu$ with full support on $\XN$ such that we can generate a fully exchangeable process $\mathbf{X}_{\nu}=(\mathfrak{X}_m)_{m\in\mathbb{Z}_+}$ by first drawing $\mathfrak{X}_0\sim\nu$ and then putting $\mathbf{X}_{\nu}=\mathbf{X}_{\mathfrak{M}}$ on the event $\mathfrak{X}_0=\mathfrak{M}$.
\end{example}

Example \ref{ex:DAP-fail} reveals a pathology in spaces that satisfy HP, JEP, and AP but not DAP.
Processes evolving on such spaces may exhibit trivial behaviors and present technical nuisances.
We rule out both possibilities by restricting our attention to Markov processes on spaces of countable structures that satisfy DAP in addition to HP and JEP.
We call any such space a {\em Fra\"{i}ss\'e space} to distinguish from the notion of a Fra\"{i}ss\'e class, which need only satisfy AP.
We sometimes refer to a Fra\"{i}ss\'e space as a {\em Fra\"iss\'e class with DAP} to avoid confusion for readers familiar with the customary definition.

\begin{definition}[Fra\"iss\'e space]\label{defn:Fraisse space}
We call $\XN\subseteq\lN$ a {\em Fra\"iss\'e space} if $\age(\XN)$ is a Fra\"{i}ss\'e class with DAP, that is, $\age(\XN)$ satisfies HP, JEP, and DAP.
\end{definition}

Definition \ref{defn:Fraisse space} captures the key properties satisfied by combinatorial state spaces of practical interest while ruling out pathological cases, as in Example \ref{ex:DAP-fail}.
As an important corollary, we observe that Fra\"{i}ss\'e spaces are exactly those spaces whose Fra\"{i}ss\'e limit can be randomly generated by an exchangeable measure.

\begin{theorem}[Ackerman, Freer \& Patel \cite{AFP}, Corollary 1.3]\label{thm:AFP}
Let $\mathfrak{F}$ be a Fra\"iss\'e limit of some Fra\"iss\'e class with DAP.
Then there exists an exchangeable probability measure $\mu$ on $\lN$ such that $\mathfrak{N}\cong\mathfrak{F}$ for $\mu$-almost every $\mathfrak{N}\in\lN$.
\end{theorem}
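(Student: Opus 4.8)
The plan is to realize $\mu$ through the Aldous--Hoover--Kallenberg representation of exchangeable random structures and to use DAP to force the sampled structure to be a copy of $\mathfrak{F}$. Recall first that every exchangeable random element of $\lN$ can be produced by attaching i.i.d.\ $\mathrm{Uniform}[0,1]$ variables $(U_A)$ to the finite subsets $A\finite\Nb$ and determining, for each relation symbol $R_j$ and tuple $\vec x$, whether $R_j(\vec x)$ holds via a fixed symmetric Borel function of the family $(U_A:A\subseteq\rng\vec x)$ together with the arrangement of $\vec x$; conversely every such recipe is exchangeable. It therefore suffices to design a single symmetric Borel recipe whose output $\mathfrak{N}$ is almost surely isomorphic to $\mathfrak{F}$.

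I would next reduce ``$\mathfrak{N}\cong\mathfrak{F}$'' to two verifiable events using the extension-axiom characterization that follows from Fra\"{i}ss\'e's theorem (Theorem~\ref{thm:Fraisse}): a countable $\mathfrak{N}$ with $\age(\mathfrak{N})\subseteq\age(\mathfrak{F})$ is isomorphic to $\mathfrak{F}$ exactly when it has the one-point extension property, namely that for every $\mathfrak{S}\subseteq\mathfrak{T}$ in $\age(\mathfrak{F})$ with $|\dom\mathfrak{T}|=|\dom\mathfrak{S}|+1$ and every embedding $f:\mathfrak{S}\to\mathfrak{N}$, some element of $\mathfrak{N}$ extends $f$ to an embedding of $\mathfrak{T}$. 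The goal is thus to arrange that (i) every finite substructure of $\mathfrak{N}$ lies in $\age(\mathfrak{F})$ almost surely, and (ii) each fixed one-point extension demand is met with probability one; a Borel--Cantelli argument over the countably many data $(\mathfrak{S},\mathfrak{T},f)$ then upgrades (ii) to the full extension property almost surely.

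This is where DAP enters, and I expect it to be the crux. I would build the recipe by an induction on the size of tuples, assigning to each tuple a quantifier-free type consistently with the types already fixed on its sub-tuples; DAP is precisely the combinatorial guarantee that the overlapping local choices amalgamate into a single global type. Its disjointness clause $\im(\psi\circ\phi)=\im(\psi)\cap\im(\psi')$ is what ensures that no element is pinned down by the others: two copies of any one-point extension $\mathfrak{T}$ of $\mathfrak{S}$ can be amalgamated disjointly over $\mathfrak{S}$, so no single extension type is forced, and each admissible type can be assigned \emph{positive} probability, independently across candidate fresh points. With positive probability $p>0$ available for each required extension type over any finite configuration, claim (ii) follows: among the infinitely many points outside the finite image of $f$, each independently realizes the demanded type with probability $\ge p$, so almost surely one does.

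The hard part will be producing a single symmetric, measurable recipe that simultaneously keeps all finite substructures inside $\age(\mathfrak{F})$, respects consistency among the types of all tuples so that a genuine $\mathcal{L}$-structure results, and remains random enough to force every extension axiom, all while being invariant under relabeling of $\Nb$. Disjoint amalgamation is exactly the input reconciling these demands: its failure allows an element to be definable from others (nontrivial definable closure), which is what makes Example~\ref{ex:DAP-fail} admit no full-support exchangeable measure. Carrying out the inductive construction of the type assignment, with DAP supplying the amalgamation at each stage and verifying measurability and symmetry, is the substantive work done in \cite{AFP}.
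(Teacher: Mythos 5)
The first thing to note is that the paper does not prove this statement at all: Theorem \ref{thm:AFP} is imported verbatim from Ackerman, Freer and Patel \cite{AFP} (their Corollary 1.3), and the surrounding text only illustrates it with examples (Examples \ref{ex:universal-set} and \ref{example:Rado}). So there is no internal proof to compare your attempt against; the relevant comparison is with the argument of \cite{AFP} itself.

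As a reading of that argument, your outline is broadly faithful: reducing $\mathfrak{N}\cong\mathfrak{F}$ to (i) $\age(\mathfrak{N})\subseteq\age(\mathfrak{F})$ plus (ii) the one-point extension property is the standard back-and-forth characterization, and your Borel--Cantelli scheme is sound once one observes that, for a fixed embedding $f:\mathfrak{S}\to\mathfrak{N}$, the extension events at distinct fresh points $y\neq y'$ are only \emph{conditionally} independent given $(U_A)_{A\subseteq\im f}$ (they share those variables); this still suffices provided the conditional probability of realizing the demanded type is almost surely positive. The genuine gap is that the crux --- constructing a single symmetric Borel recipe that simultaneously (a) keeps every finite induced substructure inside $\age(\mathfrak{F})$ almost surely and (b) gives every one-point extension demand positive conditional probability --- is never carried out; you explicitly defer it to \cite{AFP}, which is circular when the task is to prove the theorem of \cite{AFP}. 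That deferred step is where all the difficulty lives: requirement (a) alone forces the Aldous--Hoover functions to respect the axioms of the class pointwise (compare Example \ref{ex:partition}, where transitivity is exactly the kind of non-local constraint that obstructs this), and it is precisely DAP --- equivalently, trivial definable closure of the limit --- that Ackerman, Freer and Patel convert, via their construction of a Borel $\mathcal{L}$-structure on the continuum that ``strongly witnesses'' the extension axioms and from which $\mu$ arises by i.i.d.\ sampling, into the two properties you need. Without executing that construction, your proposal is a correct map of the proof's architecture, but not a proof.
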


\begin{example}\label{ex:universal-set}
For an easy illustration of Theorem \ref{thm:AFP}, let $\mathcal{L}=\{R\}$ have $\ar(R)=1$ and let $\XN=\lN$ correspond to the set of all subsets of $\Nb$.
An exchangeable Fra\"iss\'e limit $\mathfrak{F}=(\Nb,R^{\mathfrak{F}})$ can be constructed by putting $R^{\mathfrak{F}}(n)=\xi_n$ for each $n\in\Nb$, where $\xi_1,\xi_2,\ldots$ are i.i.d.\ Bernoulli random variables with $\mathbb{P}\{\xi_n=1\}=1/2$, $n\in\Nb$.
\end{example}

\begin{example}\label{example:Rado}
Let $\graphsN$ be the space of countable undirected graphs.
By Example \ref{example:graphs-JEP}, $\age(\graphsN)$ satisfies HP, JEP, and DAP and, by Theorem \ref{thm:Fraisse}, there exists a unique (up to isomorphism) ultrahomogeneous graph whose age coincides with $\age(\graphsN)$.
This countable universal graph is a quintessential example of a Fra\"iss\'e limit and is called the {\em Rado graph} \cite{Rado1964}, denoted $\mathfrak{F}_{\Gamma}$.
Theorem \ref{thm:AFP} guarantees us the ability to generate an exchangeable copy of the Rado graph.

In the case of undirected graphs, the family of Erd\H{o}s--R\'enyi measures with parameter $p\in(0,1)$ produces an exchangeable copy of the Rado graph with probability 1.
To see this, take $p=1/2$ so that every edge is present independently with probability $1/2$.
The Erd\H{o}s--R\'enyi measure assigns probability $2^{-\binom{n}{2}}>0$ to every $\mathfrak{S}\in\graphsn$, $n\geq1$.
The Borel--Cantelli lemma implies that every $\mathfrak{S}$ embeds into the random graph with probability 1.
Since there are countably many finite graphs, this holds for all $\mathfrak{S}\in\bigcup_{n\in\Nb}\graphsn$ and the Erd\H{o}s--R\'enyi graph is isomorphic to the Rado graph with probability 1.
\end{example}

Beyond classifying the behavior of all exchangeable, projective Markov processes defined on Fra\"{i}ss\'e spaces, our discussion focuses on fundamental differences between processes that evolve on certain spaces. 
Comparing processes on the spaces $\partitionsN$ of countable partitions and $\graphsN$ of countable graphs provides an illuminating example of this distinction.
Both $\age(\partitionsN)$ and $\age(\graphsN)$ satisfy HP, JEP, and DAP.
By Theorem \ref{thm:Fraisse}, $\partitionsN$ and $\graphsN$ can be represented by structures $\mathfrak{F}_{\Pi}$ and $\mathfrak{F}_{\Gamma}$, respectively, into which every element of the state space embeds, allowing us to study the transition behavior of Feller processes on these spaces by considering the transition behavior out of their respective Fra\"iss\'e limits.
For reasons laid bare in Section \ref{section:relative exchangeability}, the space of countable graphs $\graphsN$ satisfies a stronger amalgamation property than $\partitionsN$, permitting a more precise description of processes on these spaces.

\begin{definition}[$n$-DAP]\label{defn:n-DAP}
Let $K$ be a collection of finite structures that is closed under isomorphism.
For $n\geq1$, we say that $K$ satisfies the \emph{$n$-disjoint amaglamation property} ($n$-DAP) if for every collection $(\mathfrak{S}_i)_{1\leq i\leq n}$ of structures with $\mathfrak{S}_i\in K$, $\dom\mathfrak{S}_i=[n]\setminus\{i\}$ for each $1\leq i\leq n$, and $\mathfrak{S}_i|_{[n]\setminus\{i,j\}}=\mathfrak{S}_j|_{[n]\setminus\{i,j\}}$ for all $1\leq i,j\leq n$ there exists $\mathfrak{S}\in K$ with $\dom\mathfrak{S}=[n]$ such that $\mathfrak{S}|_{[n]\setminus\{i\}}=\mathfrak{S}_i$ for every $1\leq i\leq n$.

We say that $K$ satisfies \emph{$\odap$-DAP} if it satisfies $n$-DAP for all $n\geq1$.
\end{definition}

\begin{remark}
Note that 2-DAP and DAP are identical conditions.
\end{remark}
Under $n$-disjoint amalgamation, if we specify structures on each proper subset of $[n]$ in a way that is pairwise compatible, then we can unify these structures into a single structure on all of $[n]$. 
(Note that DAP only implies that $(\mathfrak{S}_i)_{1\leq i\leq n}$ can be unified into some large enough structure.)
 By slight abuse of terminology, if $K$ is a collection of finite structures not closed under isomorphism, then we say that $K$ has $\odap$-DAP if its closure under isomorphism has $\odap$-DAP.

We realize the probabilistic significance of $\odap$-DAP by considering the conditional distribution of a random substructure $\mathfrak{X}_{t+s}|_S$ given $\mathfrak{X}_t$ in an exchangeable combinatorial Feller process $(\mathfrak{X}_t)_{t\in T}$.
Under $\odap$-DAP and exchangeability, the conditional distribution of $\mathfrak{X}_{t+s}|_S$ given $\mathfrak{X}_t$ depends only on $\mathfrak{X}_t|_S$, that is, the dependence is localized to the corresponding substructure of $\mathfrak{X}_t$.
In the absence of $\odap$-DAP, the projective Markov property only implies that the conditional distribution of $\mathfrak{X}_{t+s}|_S$ given $\mathfrak{X}_t$ depends on $\mathfrak{X}_t|_{[\max S]}$.
This distinction is captured in Theorems \ref{thm:CTasym} and \ref{thm:CT2}.

Many of the most interesting applications of our results are to symmetric structures, including partitions and undirected graphs.
Restricting to this case simplifies some of the statements.

\begin{definition}[Symmetric structures]
An $\mathcal{L}$-structure $\mathfrak{M}=(S,R_1^{\mathfrak{M}},\ldots,R_r^{\mathfrak{M}})$ is \emph{symmetric} if each of its relations $R_j^{\mathfrak{M}}$ is symmetric, that is, 
\[ R_j^{\mathfrak{M}}(x_1,\ldots,x_{\ar(R_j)})=R_j^{\mathfrak{M}}(x_{\sigma(1)},\ldots,x_{\sigma(\ar(R_j))})\]
for all permutations $\sigma:[\ar(R_j)]\to[\ar(R_j)]$, for every $j=1,\ldots,r$.
A combinatorial state space $\mathcal{X}_S$ is \emph{symmetric} if every $\mathfrak{M}\in\mathcal{X}_S$ is symmetric.
\end{definition}


\begin{lemma}
  The Fra\"iss\'e limit of a Fra\"iss\'e class $K$ is symmetric if and only if every $\mathfrak{S}\in K$ is symmetric.
\end{lemma}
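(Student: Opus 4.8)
The claim is a biconditional, so the plan is to prove both directions, with the content residing entirely in the forward direction.

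The easy direction is ($\Leftarrow$): if every $\mathfrak{S}\in K$ is symmetric, then the Fra\"iss\'e limit $\mathfrak{F}_K$ is symmetric. Here I would use the fact from Theorem \ref{thm:Fraisse} that $\age(\mathfrak{F}_K)=K$. Symmetry of a relation $R_j^{\mathfrak{F}_K}$ is a property witnessed by finitely many points: given a tuple $\vec x=(x_1,\ldots,x_{\ar(R_j)})$ and a permutation $\tau$ of the coordinate positions, both $\vec x$ and its $\tau$-reordering live inside the finite substructure $\mathfrak{F}_K|_{\rng\vec x}$. This finite substructure is (isomorphic to, hence after relabeling equal to) some element of $K=\age(\mathfrak{F}_K)$, which is symmetric by hypothesis, so $R_j^{\mathfrak{F}_K}(\vec x)=R_j^{\mathfrak{F}_K}(x_{\tau(1)},\ldots,x_{\tau(\ar(R_j))})$. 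Since $\vec x$, $j$, and $\tau$ were arbitrary, $\mathfrak{F}_K$ is symmetric.

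The converse direction ($\Rightarrow$) is where I expect the real work, and it is really the assertion that symmetry of $K$ is detected by $\mathfrak{F}_K$. Suppose $\mathfrak{F}_K$ is symmetric; I want every $\mathfrak{S}\in K$ to be symmetric. The key is the universality clause of Fra\"iss\'e's theorem: every $\mathfrak{S}\in K=\age(\mathfrak{F}_K)$ embeds into $\mathfrak{F}_K$ via some embedding $\phi:\mathfrak{S}\to\mathfrak{F}_K$. Since embeddings preserve and reflect the interpretation of each relation (by definition $\mathfrak{F}_K^{\phi}=\mathfrak{S}$, i.e.\ $R_j^{\mathfrak{S}}(\vec s)=R_j^{\mathfrak{F}_K}(\phi(s_1),\ldots,\phi(s_{\ar(R_j)}))$ via \eqref{eq:phi-image}), symmetry transfers back along $\phi$: for any coordinate permutation $\tau$,
\[
R_j^{\mathfrak{S}}(s_1,\ldots,s_{\ar(R_j)})=R_j^{\mathfrak{F}_K}(\phi(s_1),\ldots)=R_j^{\mathfrak{F}_K}(\phi(s_{\tau(1)}),\ldots)=R_j^{\mathfrak{S}}(s_{\tau(1)},\ldots,s_{\tau(\ar(R_j))}),
\]
where the middle equality is symmetry of $\mathfrak{F}_K$ applied to the tuple $(\phi(s_1),\ldots)$. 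Hence each relation of $\mathfrak{S}$ is symmetric, so $\mathfrak{S}$ is symmetric.

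The main (and only) obstacle to watch is bookkeeping about the difference between isomorphism and equality of structures flagged in the Remark after Definition \ref{defn:age}: elements of $K$ are normalized to have domain $[n]$, whereas a finite substructure of $\mathfrak{F}_K$ sits on an arbitrary finite subset of $\Nb$. In the forward direction this is a non-issue because symmetry is a property of the abstract relation and is preserved under the relabeling that identifies $\mathfrak{F}_K|_{\rng\vec x}$ with its isomorphic copy in $K$; I should simply note that symmetry is an isomorphism-invariant property, so it does not matter whether we work with the substructure on its original domain or its normalized copy. With that remark in place, both implications are short, and no amalgamation or homogeneity beyond the two cited consequences of Theorem \ref{thm:Fraisse} (namely $\age(\mathfrak{F}_K)=K$ and universality) is needed.
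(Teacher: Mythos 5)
Your proof is correct. The paper states this lemma without any proof, treating it as routine, and your argument supplies exactly the verification being left implicit: both directions follow from the single fact that symmetry of a relation is a finitary, isomorphism-invariant property that is transferred along embeddings, combined with the clause $\age(\mathfrak{F}_K)=K$ of Theorem \ref{thm:Fraisse} (one inclusion of that equality per direction), and your handling of the domain-normalization bookkeeping (elements of $K$ live on $[n]$ while $\mathfrak{F}_K|_{\rng\vec x}$ lives on an arbitrary finite subset of $\Nb$) is the only point of care and is dealt with correctly.
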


\subsection{Examples}

Following are several examples describing familiar Fra\"iss\'e spaces and their Fra\"{i}ss\'e limits.

\begin{example}[Colorings]\label{ex:coloring}
Let $\mathcal{L}=\{R, B\}$ have $\ar(R)=\ar(B)=1$.  
Here we may regard an $\mathcal{L}$-structure as a $4$-coloring, where the four colors are represented by the four possible combinations of $R$ and $B$: for each $i\in\Nb$ and $\mathfrak{M}\in\mathcal{L}_{\Nb}$, we have $(R^{\mathfrak{M}}(i),B^{\mathfrak{M}}(i))\in\{(0,0),(0,1),(1,0),(1,1)\}$.

If $K$ is the collection of all finite $\mathcal{L}$-structures, its Fra\"iss\'e limit is simply the countably infinite structure in which there are infinitely many elements of each color.  This example is symmetric and has $\odap$-DAP.
Such a Fra\"{i}ss\'e limit can be generated exchangeably by choosing among each of the four combinations with equal probability, independently for each $i\in\Nb$.

More generally, one can represent  a $k$-coloring for any fixed $k\geq1$ by taking $\mathcal{L}=\{R_1,\ldots,R_k\}$ with $\ar(R_i)=1$, $1\leq i\leq k$, and requiring of each $\mathfrak{M}$ that $x\in R_i^{\mathfrak{M}}$ holds of exactly one $i=1,\ldots,k$, for every $x\in\dom\mathfrak{M}$.
In this case, generating an exchangeable version of the Fra\"{i}ss\'e limit requires coordination across relations.
\end{example}

\begin{example}[Graphs]\label{ex:graph}
Let $\mathcal{L}=\{R\}$ have $\ar(R)=2$ and let $K_d$ be the collection of all finite irreflexive $\mathcal{L}$-structures, which may be regarded as the collection of finite directed graphs.
For $\mathfrak{S}\in K_d$ and $a,b\in \dom\mathfrak{S}$,
\[(a,b)\in R^{\mathfrak{S}}\quad\text{if and only if}\quad \mathfrak{S}\text{ has a directed edge from }a\text{ to }b.\]
The Fra\"iss\'e limit of $K_d$ is the universal directed graph, which has a natural random representation by taking a countable set and letting each directed edge be present independently with probability $1/2$ (or any probability $p\in(0,1)$).

A natural subset of $K_d$ is $K_g$, the collection of finite irreflexive symmetric $\mathcal{L}$-structures corresponding to the set of finite \emph{undirected} graphs.  The Fra\"iss\'e limit is the Rado graph discussed in Example \ref{example:Rado} above.

Another subset of $K_d$ is the collection $K_t$ of finite {\em tournaments}, that is, finite $\mathcal{L}$-structures $\mathfrak{S}$ in which exactly one of the pairs $(i,j)$ and $(j,i)$ is present in $R^{\mathfrak{S}}$ for each pair $i,j$ with $i\neq j$.  The Fra\"iss\'e limit of $K_t$ is the universal tournament, which can be randomly generated by independently selecting an orientation $i\rightarrow j$ or $j\rightarrow i$ for each pair $i,j$.

Clearly $K_g$ is symmetric but $K_d$ and $K_t$ are not.  All three have $\odap$-DAP.
\end{example}

\begin{example}[Partitions]\label{ex:partition}
Let $\mathcal{L}=\{R\}$ have $\ar(R)=2$.  Let $K_p$ be the collection of all finite structures $\mathfrak{S}$ in which $R^{\mathfrak{S}}$ is an equivalence relation, that is, a reflexive, symmetric, and transitive relation.  
The Fra\"iss\'e limit is a structure with infinitely many equivalence classes of infinite size.

A subset of $K_p$ is $K_{p,2}$, the collection of all structures $\mathfrak{S}$ in which $R^{\mathfrak{S}}$ is an equivalence relation with at most 2 equivalence classes.  The Fra\"iss\'e limit is an infinite structure with two infinite equivalence classes.

Both $K_p$ and $K_{p,2}$ are symmetric but do not have $3$-DAP.
For example, take $\mathfrak{S}_1$ to be the structure with equivalence classes $\{2\}$ and $\{3\}$, $\mathfrak{S}_2$ to be the structure with equivalence class $\{1,3\}$, and $\mathfrak{S}_3$ to be the structure with equivalence class $\{1,2\}$.
Then the result of taking the union of these three structures fails to be an equivalence relation since $\mathfrak{S}_2$ and $\mathfrak{S}_3$ require equivalence classes $\{1,3\}$ and $\{1,2\}$, respectively, and, thus, the transitivity axiom requires the class $\{2,3\}$ while $\mathfrak{S}_1$ requires classes $\{2\}$ and $\{3\}$, a contradiction.
\end{example}

\begin{example}[Hypergraphs]\label{ex:hypergraph}
  Let $\mathcal{L}=\{S\}$ have $\ar(S)=n$.  Let $K$ be the collection of all finite irreflexive symmetric $\mathcal{L}$-structures, that is, structures $\mathfrak{S}$ for which $(a_1,\ldots,a_n)\in S^{\mathfrak{S}}$ implies that $a_1,\ldots,a_n$ are pairwise distinct and $(a_{\sigma(1)},\ldots,a_{\sigma(n)})\in S^{\mathfrak{S}}$ for every permutation $\sigma:[n]\rightarrow[n]$.  Then $K$ is the collection of finite $n$-ary undirected hypergraphs, which is symmetric and has $\odap$-DAP.  (Undirectedness is ensured by the closure under permutations and $n$-arity by the fact that $a_1,\ldots,a_n$ must be pairwise distinct.)

The Fra\"iss\'e limit is the natural generalization of the Rado graph to a hypergraph, a countable $n$-ary hypergraph in which every finite hypergraph appears, which can be produced randomly by choosing a hypergraph on $\mathbb{N}$ for which every hyperedge is present independently with probabiltiy $1/2$.

\end{example}

\begin{example}[Partitions on pairs]\label{ex:pairpartition}
Let $\mathcal{L}=\{S\}$ have $\ar(S)=4$.  Let $K$ be the collection of all finite structures $\mathfrak{S}$ such that $S^{\mathfrak{S}}$ is an equivalence relation on pairs from $\dom\mathfrak{S}$, that is,
\begin{itemize}
\item for all $i,j\in\dom\mathfrak{S}$, $(i,j,i,j)\in S^{\mathfrak{S}}$,
\item for all $i,j,i',j'\in\dom\mathfrak{S}$, if $(i,j,i',j')\in S^{\mathfrak{S}}$ then $(i',j',i,j)\in S^{\mathfrak{S}}$, and
\item for all $i,j,i',j',i'',j''\in\dom\mathfrak{S}$, if $(i,j,i',j')\in S^{\mathfrak{S}}$ and $(i',j',i'',j'')\in S^{\mathfrak{S}}$ then $(i,j,i'',j'')\in S^{\mathfrak{S}}$.
\end{itemize}

The Fra\"iss\'e limit is an infinite structure with an equivalence relation on pairs with infinitely many infinite equivalence relations and some additional universality properties.
For instance, for any finite list of equivalence classes $C_1,\ldots,C_d$, there is an $i$ and values $j_1,\ldots,j_d$ such that $(i,j_k)\in C_k$ for each $k\leq d$.

This class fails to have $n$-DAP for $n=3,4,5$.
\end{example}

\begin{example}[Colored graphs]\label{ex:colorgraph}
Let $\mathcal{L}=\{P,R\}$ have $\ar(P)=1$ and $\ar(R)=2$.  Let $K$ be the collection of all finite structures $\mathfrak{S}$ such that $R^{\mathfrak{S}}$ is irreflexive and symmetric, that is, $K$ consists of graphs with a 2-coloring on the vertices.  The Fra\"iss\'e limit is the Rado graph together with a 2-coloring of the vertices such that every possible colored finite graph appears as a subgraph.  This can be generated by labeling each of the vertices in an Erd\H{o}s--R\'enyi graph independently and uniformly in $\{0,1\}$.
  This example is symmetric and has $\odap$-DAP.
\end{example}

\begin{example}[Orderings]
Let $\mathcal{L}=\{R\}$ have $\ar(R)=2$.
Let $K_o$ be the collection of all finite structures $\mathfrak{S}$ such that $R^{\mathfrak{S}}$ is a total ordering of $\dom\mathfrak{S}$, that is, every $\mathfrak{S}=([n],R^{\mathfrak{S}})\in K$ satisfies, for all $i,j\in\Nb$, $i\neq j$, either $(i,j)\in R^{\mathfrak{S}}$ or $(j,i)\in R^{\mathfrak{S}}$ but not both.
The Fra\"{i}ss\'e limit $\mathfrak{F}$ is the dense total ordering of $\Nb$, which can be generated exchangeably by taking $\xi_1,\xi_2,\ldots$ i.i.d.\ Uniform$[0,1]$ and putting $(i,j)\in R^{\mathfrak{F}}$ if and only if $\xi_i<\xi_j$, for all $i\neq j$.
Here, $K_o$ is asymmetric and fails to have $3$-DAP.
\end{example}

Further elaborations include all manner of combinations of the above examples, including edge colored graphs and hypergraphs, graphs with vertex partitions, hypergraphs with edge partitions, and so on.

\subsection{Equivalence of Feller and projective Markov properties}

\begin{theorem}\label{prop:Feller}
Let $\mathbf{X}=\{\mathbf{X}_{\mathfrak{M}}:\,\mathfrak{M}\in\XN\}$ be an exchangeable Markov process on a Fra\"iss\'e space $\XN$.
Then the following are equivalent.
\begin{itemize}
	\item[(i)] $\mathbf{X}$ has the projective Markov property.
	\item[(ii)] $\mathbf{X}$ has the Feller property.
\end{itemize}
\end{theorem}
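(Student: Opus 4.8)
The plan is to prove the equivalence by establishing both implications directly, using the finite approximation structure that the Fra\"iss\'e space provides. The key observation is that for any bounded measurable $g:\XN\to\mathbb{R}$, the topology of $\XN$ is generated by the restriction maps, so $g$ can be approximated by functions depending only on $\mathfrak{M}|_{[n]}$. More precisely, define the \emph{cylinder functions} $g_n(\mathfrak{M}):=\mathbb{E}(g(\mathfrak{X})\mid \mathfrak{X}|_{[n]}=\mathfrak{M}|_{[n]})$ under a suitable reference measure, or simply work with $g$ that are themselves $\sigma\langle\cdot|_{[n]}\rangle$-measurable, since such functions are dense in the sup-norm among continuous functions by compactness of $\XN$ and the ultrametric \eqref{eq:ultrametric}. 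The whole argument then reduces to understanding how $\mathbf{P}_t$ interacts with restriction to $[n]$.

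\textbf{(i) $\Rightarrow$ (ii).} First I would assume the projective Markov property and verify the two clauses of Definition \ref{defn:Feller}. For continuity of $\mathfrak{M}\mapsto\mathbf{P}_tg(\mathfrak{M})$, it suffices to treat $g$ that factors through $\cdot|_{[n]}$, i.e.\ $g(\mathfrak{M})=h(\mathfrak{M}|_{[n]})$ for some $h:\Xn\to\mathbb{R}$. By projectivity, the process $\mathbf{X}_{\mathfrak{M}}^{[n]}=(\mathfrak{X}_t|_{[n]})_{t\in T}$ is itself Markov on the \emph{finite} space $\Xn=\age_n(\XN)$, with a transition law depending on $\mathfrak{M}$ only through $\mathfrak{M}|_{[n]}$. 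Hence $\mathbf{P}_tg(\mathfrak{M})=\mathbb{E}(h(\mathfrak{X}_t^{[n]})\mid\mathfrak{X}_0^{[n]}=\mathfrak{M}|_{[n]})$ depends only on $\mathfrak{M}|_{[n]}$, and a function of $\mathfrak{M}|_{[n]}$ is automatically continuous in the ultrametric topology (it is locally constant on the ball of radius $1/(n+1)$). Approximating a general continuous $g$ uniformly by such cylinder functions and using $\|\mathbf{P}_t\|\le 1$ transfers continuity to $\mathbf{P}_tg$. For the second clause, $\lim_{t\downarrow0}\mathbf{P}_tg(\mathfrak{M})=g(\mathfrak{M})$, I would again reduce to cylinder $g$ and invoke the c\`adl\`ag/positive-holdtime property: since $\Xn$ is finite and projectivity guarantees $\mathbf{X}_{\mathfrak{M}}^{[n]}$ spends a strictly positive hold time in its initial state $\mathfrak{M}|_{[n]}$ with probability one, right-continuity at $0$ gives $\mathbb{P}\{\mathfrak{X}_t|_{[n]}=\mathfrak{M}|_{[n]}\}\to1$ as $t\downarrow0$, whence $\mathbf{P}_tg(\mathfrak{M})\to g(\mathfrak{M})$.

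\textbf{(ii) $\Rightarrow$ (i).} Conversely, assuming the Feller property, I would show that each restricted process $\mathbf{X}_{\mathfrak{M}}^{[n]}$ is Markov. The obstacle here is that the Feller/continuity hypothesis must be leveraged to show that the transition kernel $P_t(x,\cdot)$, when pushed forward under $\cdot|_{[n]}$, depends on $x$ only through $x|_{[n]}$. I expect this to be the main difficulty, and the resolution should use \emph{exchangeability} together with the Fra\"iss\'e structure: given two states $x,x'$ with $x|_{[n]}=x'|_{[n]}$, the joint embedding and disjoint amalgamation properties of $\age(\XN)$ (Definition \ref{defn:Fraisse space}) allow me to find permutations $\sigma$ of $\Nb$ fixing $[n]$ that carry configurations on $\Nb\setminus[n]$ into one another, and a limiting/averaging argument over such $\sigma$ combined with \eqref{eq:exch-tps} forces $\int \mathbf{1}\{y|_{[n]}\in B\}\,P_t(x,dy)=\int\mathbf{1}\{y|_{[n]}\in B\}\,P_t(x',dy)$ for every $B\Borel\Xn$. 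Continuity from the Feller property is what legitimizes passing to the ultrahomogeneous limit $\mathfrak{F}$ and justifies exchanging limits with $\mathbf{P}_t$. Once the pushforward kernel is shown to factor through $\cdot|_{[n]}$, the Chapman--Kolmogorov relations for $P$ restrict cleanly to a consistent family of transition kernels on each $\Xn$, and these assemble (by the projective limit construction already used for $\mathbf{X}^*_\Lambda$ in Section \ref{section:continuous-summary}) into the projective Markov property of Definition \ref{defn:projective}. I would close by noting that the finiteness of each $\Xn$ makes the verification of the Markov property on $\Xn$ purely a matter of checking conditional independence of the generated $\sigma$-fields, which follows from the factorization of the kernel.
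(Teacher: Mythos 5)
Your overall strategy coincides with the paper's: for (i) $\Rightarrow$ (ii) you reduce to cylinder functions (dense by Stone--Weierstrass on the compact ultrametric space), observe that projectivity makes $\mathbf{P}_t g$ factor through $\mathfrak{M}|_{[n]}$ and hence be locally constant, and use the strictly positive hold time of the finite-state restriction for the $t\downarrow 0$ limit; this is exactly the paper's argument and is fine. The problem is in (ii) $\Rightarrow$ (i), which you rightly flag as the main difficulty.

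The step that fails as written is your claim that for two states $x,x'$ with $x|_{[n]}=x'|_{[n]}$ one can ``find permutations $\sigma$ of $\Nb$ fixing $[n]$ that carry configurations on $\Nb\setminus[n]$ into one another'' and then average over such $\sigma$. No such permutations exist in general: $x$ and $x'$ need not even be isomorphic over $[n]$ (in the space of graphs, take $x$ edgeless outside $[n]$ and $x'$ complete outside $[n]$), so exchangeability alone yields no relation between $P_t(x,\cdot)$ and $P_t(x',\cdot)$, and there is nothing to average. The repair --- which is what the paper does, and which your phrase ``passing to the ultrahomogeneous limit $\mathfrak{F}$'' gestures at without carrying out --- is to route both states through a single auxiliary structure rather than trying to connect them directly: fix a Fra\"iss\'e limit $\mathfrak{F}$ of $\XN$ with $\mathfrak{F}|_{[n]}=x|_{[n]}$ (Theorem \ref{thm:Fraisse} and Definition \ref{defn:Fraisse space}); by exchangeability, $\mathbf{P}_t\psi_{\mathfrak{S}}$ is constant on the orbit $\{\mathfrak{F}^{\sigma}:\sigma \text{ fixes } [n] \text{ pointwise}\}$; by ultrahomogeneity and universality of $\mathfrak{F}$, this orbit is dense in the fiber $\{\mathfrak{M}'\in\XN:\mathfrak{M}'|_{[n]}=x|_{[n]}\}$; and by Feller continuity, $\mathbf{P}_t\psi_{\mathfrak{S}}$ is then constant on the entire fiber, which contains both $x$ and $x'$. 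Note that JEP and DAP enter only through guaranteeing that $\mathfrak{F}$ exists; the density claim itself comes from ultrahomogeneity plus universality, not from amalgamation applied to $x$ and $x'$. With that lemma in place, your closing step (the pushforward kernel factors through $\cdot|_{[n]}$, hence the restricted process is Markov by the tower property over the restricted filtration, using finiteness of $\Xn$) goes through as you describe.
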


\begin{proof}Below we write $\XN\subseteq\lN$ to denote the state space of an exchangeable Markov process $\Xbf$ and $\Xn:=\{\mathfrak{M}|_{[n]}:\,\mathfrak{M}\in\XN\}\subseteq\ln$ to denote the state space of $\mathbf{X}^{[n]}$, the restriction of $\mathbf{X}$ to a process on $[n]$-labeled structures, for each $n\in\Nb$.
\begin{proof}[(i) $\Rightarrow$ (ii)]
Consider the set
\[\mathcal{C}(\XN):=\{g:\XN\rightarrow\mathbb{R}:\, \exists n\in\mathbb{N}\text{ such that }\mathfrak{M}|_{[n]}=\mathfrak{M}'|_{[n]}\Longrightarrow g(\mathfrak{M})=g(\mathfrak{M}')\}.\]
By the Stone--Weierstrass theorem for compact Hausdorff spaces, $\mathcal{C}(\XN)$ is dense in the space of bounded, continuous functions, and it suffices to prove the Feller property for $g\in\mathcal{C}(\XN)$.

Take any $g\in\mathcal{C}(\XN)$ and let $n\in\mathbb{N}$ be such that $g(\mathfrak{M})$ only depends on $\mathfrak{M}|_{[n]}$.
We can, therefore, specify $g':\Xn\to\mathbb{R}$ so that $g(\mathfrak{M})=g'(\mathfrak{M}|_{[n]})$ for every $\mathfrak{M}\in\XN$.
Under the topology induced by \eqref{eq:ultrametric}, all elements of $\Xn$ are isolated, making $g'$ continuous by default.

Suppose $\mathfrak{X}_0=\mathfrak{M}$.
By projectivity of $\mathbf{X}$, $\mathbf{X}^{[n]}_{\mathfrak{M}}$ has the Markov property with initial state $\mathfrak{M}|_{[n]}$.
As $\Xn$ is a finite state space, $\Xbf^{[n]}_{\mathfrak{M}}$ must have a strictly positive hold time in its initial state with probability 1.
In particular, $\mathfrak{X}_t|_{[n]}\rightarrow_P \mathfrak{X}_0|_{[n]}=\mathfrak{M}|_{[n]}$ as $t\downarrow0$, where $\rightarrow_P$ denotes {\em convergence in probability}.
By the projective Markov property, continuity of $g$, and the bounded convergence theorem, we observe
\begin{eqnarray*}
\lim_{t\downarrow0}\mathbf{P}_tg(\mathfrak{M})&=&\lim_{t\downarrow0}E(g(\mathfrak{X}_t)\mid \mathfrak{X}_0=\mathfrak{M})\\
&=&E(\lim_{t\downarrow0}g'(\mathfrak{X}_t|_{[n]})\mid \mathfrak{X}_0|_{[n]}=\mathfrak{M}|_{[n]})\\
&=&E(g'(\lim_{t\downarrow0} \mathfrak{X}_t|_{[n]})\mid \mathfrak{X}_0|_{[n]}=\mathfrak{M}|_{[n]})\\
&=&g'(\mathfrak{M}|_{[n]})\\
&=&g(\mathfrak{M}),\end{eqnarray*}
establishing the first part of the Feller property.

For the second part, we must show that $\mathfrak{M}\mapsto\mathbf{P}_tg(\mathfrak{M})$ is continuous for all $t>0$.
Let $g\in\mathcal{C}(\XN)$ and $(\mathfrak{M}_n)_{n\in\Nb}$ be a sequence in $\XN$ that converges to $\mathfrak{M}\in\XN$ under the product discrete topology.
Then, for every $k\in\mathbb{N}$, there is an $N_k\in\mathbb{N}$ such that $\mathfrak{M}_n|_{[k]}=\mathfrak{M}|_{[k]}$ for all $n\geq N_k$.
In particular, we can choose $k$ so that $g(\mathfrak{M}_n)=g(\mathfrak{M})$ for all $n\geq N_k$.
Continuity follows by continuity of $g$ and the bounded convergence theorem.
\end{proof}

\begin{proof}[(ii) $\Rightarrow$ (i)]

 For finite $S\subset\Nb$ and $\mathfrak{S}\in\XS$, we define $\psi_{\mathfrak{S}}:\XN\rightarrow\{0,1\}$ by
\[\psi_{\mathfrak{S}}(\mathfrak{M}):=\mathbf{1}\{\mathfrak{M}|_{S}:=\mathfrak{S}\}=\left\{\begin{array}{cc} 1,& \mathfrak{M}|_{S}=\mathfrak{S},\\ 0,& \text{otherwise},\end{array}\right.\quad \mathfrak{M}\in\XN,\]
which is bounded and continuous.
To establish the projective property, we must prove that
\[\mathbb{P}\{\mathfrak{X}_t|_{S}=\mathfrak{S}\mid \mathfrak{X}_0=\mathfrak{M}\}=\mathbb{P}\{\mathfrak{X}_t|_{S}=\mathfrak{S}\mid \mathfrak{X}_0|_{S}=\mathfrak{M}|_{S}\}\]
for every $t\geq0$, $\mathfrak{S}\in\XS$, $\mathfrak{M}\in\XN$, and $S\subset\Nb$, which amounts to showing that 
\begin{equation}\label{eq:cond}\mathbf{P}_t\psi_{\mathfrak{S}}(\mathfrak{M})=\mathbf{P}_t\psi_{\mathfrak{S}}(\mathfrak{M}')\end{equation}
 for all $\mathfrak{M},\mathfrak{M}'\in\XN$ for which $\mathfrak{M}|_{S}=\mathfrak{M}'|_{S}$.

By the Feller property, it is enough to establish \eqref{eq:cond} on a dense subset of $\{\mathfrak{M}'\in\XN:\ \mathfrak{M}'|_{S}=\mathfrak{M}|_{S}\}$.
Exchangeability of $\mathbf{X}$ implies $\mathbf{P}_t\psi_{\mathfrak{S}}(\mathfrak{M})=\mathbf{P}_t\psi_{\mathfrak{S}}(\mathfrak{M}^{\sigma})$ for every $\sigma:\Nb\rightarrow\Nb$ that coincides with the identity $S\rightarrow S$.
To obtain \eqref{eq:cond}, we let $\mathfrak{F}$ be any Fra\"iss\'e limit of $\XN$ with $\mathfrak{F}|_{S}=\mathfrak{M}|_{S}$, as given by Theorem \ref{thm:Fraisse} and our assumption that $\XN$ is a Fra\"{i}ss\'e space (Definition \ref{defn:Fraisse space}).
By ultrahomogeneity and universality of $\mathfrak{F}$, 
\[\{\mathfrak{F}^{\sigma}: \sigma:\Nb\to\Nb\text{ coincides with the identity }\dom\mathfrak{S}\to\dom\mathfrak{S}\}\]
is dense in $\{\mathfrak{M}'\in\XN: \mathfrak{M}'|_{S}=\mathfrak{M}|_{S}\}$.
Since $\mathbf{P}_t\psi_{\mathfrak{S}}(\mathfrak{M})$ is invariant with respect to permutations that fix $S$, we have \eqref{eq:cond} on a dense subset of $\{\mathfrak{M}'\in\XN: \mathfrak{M}'|_{S}=\mathfrak{M}|_{S}\}$.
Continuity of $\mathbf{P}_t$ implies that \eqref{eq:cond} must hold on the closure of this set and, therefore, $\mathbf{P}_t\psi_{\mathfrak{S}}(\mathfrak{M})$ depends only on $\mathfrak{M}|_{S}$.

By the second part of the Feller property, we must have $\mathbf{P}_t \psi_{\mathfrak{S}}(\mathfrak{M})\to\psi_{\mathfrak{S}}(\mathfrak{M})$ as $t\downarrow0$, proving that $\mathbf{X}_{\mathfrak{M}}$ has c\`adl\`ag sample paths and the finite restrictions $\mathbf{X}^{S}_{\mathfrak{M}}$ are consistent for every finite subset $S\subset\Nb$.
\end{proof}

\end{proof}

\section{Relative exchangeability}\label{section:relative exchangeability}

In \cite{CraneTowsner2015}, we refined the notion of exchangeability to structures whose distributions are invariant with respect to the symmetries of another structure; see also \cite{Ackerman2015} for related work in a slightly different setting.
Vis-\`a-vis Definition \ref{defn:exchangeable CMP}, the following notion of relative exchangeability occurs naturally in our study of exchangeable combinatorial Feller processes.

\begin{definition}[Relative exchangeability]\label{defn:rel-exch}
Let $\mathcal{L},\mathcal{L}'$ be signatures with $\mathfrak{M}\in\lN$.
A countable random $\mathcal{L}'$-structure $\mathfrak{X}$ is {\em relatively exchangeable with respect to $\mathfrak{M}$}, alternatively {\em $\mathfrak{M}$-exchangeable} or {\em exchangeable relative to $\mathfrak{M}$}, if $\mathfrak{X}|_T^{\phi}\equalinlaw\mathfrak{X}|_S$ for every injection $\phi:S\to T$ such that $\mathfrak{M}|_T^{\phi}=\mathfrak{M}|_S$.
\end{definition}

The projective Markov and exchangeability properties of any exchangeable combinatorial Feller process $\mathbf{X}=\{\mathbf{X}_{\mathfrak{M}}: \mathfrak{M}\in\XN\}$ imply that $\mathfrak{X}_{s+u}$ is relatively exchangeable with respect to $\mathfrak{X}_s$ for all $s,u\geq0$, for every $\mathbf{X}_{\mathfrak{M}}=(\mathfrak{X}_t)_{t\geq0}$.

\begin{prop}
Let $\mathbf{X}$ be an exchangeable Feller process on $\XN$.
For every $\mathfrak{M}\in\XN$, the conditional distribution of $\mathfrak{X}_{s+u}$ in $\mathbf{X}_{\mathfrak{M}}=(\mathfrak{X}_t)_{t\geq0}$, given $\mathfrak{X}_s=\mathfrak{N}$, is relatively exchangeable with respect to $\mathfrak{N}$ for all $s,u\geq0$.
\end{prop}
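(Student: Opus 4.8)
The plan is to unwind the definition of relative exchangeability (Definition~\ref{defn:rel-exch}) and verify it directly from the exchangeability and projective Markov properties of $\mathbf{X}$. Fix $\mathfrak{M}\in\XN$, times $s,u\geq0$, and a state $\mathfrak{N}$ with positive conditional probability. I must show that, conditionally on $\mathfrak{X}_s=\mathfrak{N}$, the random structure $\mathfrak{X}_{s+u}$ satisfies $\mathfrak{X}_{s+u}|_T^{\phi}\equalinlaw \mathfrak{X}_{s+u}|_S$ for every injection $\phi:S\to T$ with $\mathfrak{N}|_T^{\phi}=\mathfrak{N}|_S$, where $S,T\subset\Nb$ are finite. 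The natural route is to first handle injections that arise from restricting permutations of $\Nb$ fixing the relevant structure, and then to pass from permutations to arbitrary injections using the Fra\"iss\'e machinery.

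First I would reduce to the following statement about the transition law: for finite $S,T$ and an injection $\phi:S\to T$ with $\mathfrak{N}|_T^{\phi}=\mathfrak{N}|_S$, the conditional distributions of $\mathfrak{X}_{s+u}|_T^{\phi}$ and $\mathfrak{X}_{s+u}|_S$ given $\mathfrak{X}_s=\mathfrak{N}$ agree. By the projective Markov property, the conditional law of $\mathfrak{X}_{s+u}$ given $\mathfrak{X}_s=\mathfrak{N}$ is governed by the transition kernel $P_u(\mathfrak{N},\cdot)$, and by exchangeability of the transition law \eqref{eq:exch-tps} we have $P_u(\mathfrak{N},A)=P_u(\mathfrak{N}^{\sigma},A^{\sigma})$ for every permutation $\sigma$ of $\Nb$. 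The key observation is that if $\sigma$ is a permutation with $\mathfrak{N}^{\sigma}=\mathfrak{N}$ (an automorphism of $\mathfrak{N}$), then $P_u(\mathfrak{N},A)=P_u(\mathfrak{N},A^{\sigma})$, so the conditional law of $\mathfrak{X}_{s+u}$ given $\mathfrak{X}_s=\mathfrak{N}$ is invariant under the automorphism group of $\mathfrak{N}$. Restricting this invariance to the finite coordinates picked out by $S$ and $T$ yields exactly the matching of finite-dimensional distributions required by relative exchangeability, \emph{provided} the injection $\phi$ can be realized as the restriction of some automorphism of $\mathfrak{N}$.

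The hard part, and the main obstacle, is precisely this last proviso: an arbitrary injection $\phi:S\to T$ satisfying $\mathfrak{N}|_T^{\phi}=\mathfrak{N}|_S$ need not extend to a global automorphism of $\mathfrak{N}$, since $\mathfrak{N}$ is an arbitrary member of $\XN$ rather than the ultrahomogeneous Fra\"iss\'e limit $\mathfrak{F}$. To bridge this gap I would invoke the Fra\"iss\'e-space structure as in the proof of Theorem~\ref{prop:Feller}: embed $\mathfrak{N}$ into the Fra\"iss\'e limit $\mathfrak{F}$, use ultrahomogeneity of $\mathfrak{F}$ to extend the partial isomorphism induced by $\phi$ to an automorphism of $\mathfrak{F}$, and transfer the invariance statement back to $\mathfrak{N}$ via the projective Markov property and a density/continuity argument exploiting the Feller property. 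Concretely, the set of $\mathfrak{M}'$ obtained from $\mathfrak{F}$ by permutations fixing the relevant finite piece is dense among the structures agreeing with $\mathfrak{N}$ on that piece, and continuity of $\mathbf{P}_u$ then extends the identity of conditional laws from the dense set to $\mathfrak{N}$ itself.

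Once this transfer is in place, the verification is routine: I would write out that for any finite $S,T$ and any $\phi$ as above, the two conditional distributions agree on cylinder events determined by finitely many coordinates, and since such cylinder events generate the Borel $\sigma$-field on $\XN$ this yields $\mathfrak{X}_{s+u}|_T^{\phi}\equalinlaw \mathfrak{X}_{s+u}|_S$ conditionally on $\mathfrak{X}_s=\mathfrak{N}$, which is the definition of $\mathfrak{M}$-exchangeability relative to $\mathfrak{N}$. I expect the only genuinely delicate point to be confirming that the density argument of Theorem~\ref{prop:Feller} adapts cleanly to the two-parameter setting here, so I would lean on that theorem's proof as a template rather than reproving the density claim from scratch.
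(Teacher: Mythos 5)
Your proposal is correct, and it reaches the conclusion by a genuinely more self-contained route than the paper. The paper's own proof is essentially two observations plus a citation: (i) exchangeability \eqref{eq:exch-tps} of the kernel makes $P_u(\mathfrak{N},\cdot)$ invariant under automorphisms of $\mathfrak{N}$; (ii) projectivity makes the law of $\mathfrak{X}_{s+u}|_{[n]}$ depend only on $\mathfrak{N}|_{[n]}$; and the implication that (i) and (ii) together yield relative exchangeability is simply quoted from \cite[Remark 1.4]{CraneTowsner2015}. You correctly isolate exactly what that citation is covering --- partial isomorphisms $\phi:S\to T$ of an arbitrary $\mathfrak{N}\in\XN$ need not extend to automorphisms of $\mathfrak{N}$ --- and you close that gap inside the paper's own machinery: pass to copies of the Fra\"iss\'e limit agreeing with $\mathfrak{N}$ on the finite window $S\cup T$, extend $\phi$ to an automorphism there by ultrahomogeneity, and transfer back to $\mathfrak{N}$ by density plus Feller continuity, exactly as in the proof of Theorem \ref{prop:Feller}. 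This works; its advantage is that it exposes where the Fra\"iss\'e-space hypothesis genuinely enters, whereas the paper buys brevity at the cost of an external black box. One simplification you may want: you need not re-run the density argument at all, because Theorem \ref{prop:Feller} (ii)$\Rightarrow$(i) already establishes the locality statement for an arbitrary finite set $S$, namely that $\mathbb{P}\{\mathfrak{X}_{s+u}|_S=\mathfrak{S}\mid\mathfrak{X}_s=\mathfrak{M}\}$ depends only on $\mathfrak{M}|_S$. Granting that, extend $\phi$ to any permutation $\sigma$ of $\Nb$; then $\mathfrak{X}_{s+u}|_T^{\phi}=(\mathfrak{X}_{s+u}^{\sigma})|_S$ holds deterministically, \eqref{eq:exch-tps} shows that $\mathfrak{X}_{s+u}^{\sigma}$ has law $P_u(\mathfrak{N}^{\sigma},\cdot)$, and since $(\mathfrak{N}^{\sigma})|_S=\mathfrak{N}|_T^{\phi}=\mathfrak{N}|_S$, locality gives $\mathfrak{X}_{s+u}|_T^{\phi}\equalinlaw\mathfrak{X}_{s+u}|_S$ directly --- no fresh density argument, and no need to restrict attention to automorphisms at any stage.
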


\begin{proof}
For $s,u\geq0$, let $P_u(\mathfrak{N},\cdot)=\mathbb{P}\{\mathfrak{X}_{s+u}\in\cdot\mid \mathfrak{X}_s=\mathfrak{N}\}$ be the $u$-step transition probability measure for $\mathbf{X}$.

By Definitions \ref{defn:exchangeable CMP} and \ref{defn:projective}, it is clear that $\mathfrak{X}\sim P_s(\mathfrak{N},\cdot)$ satisfies $\mathfrak{X}\equalinlaw\mathfrak{X}^{\sigma}$ for all automorphisms $\sigma:\mathfrak{N}\to\mathfrak{N}$.
Furthermore, the projectivity assumption implies that $\mathfrak{X}|_{[n]}$ depends only on $\mathfrak{N}|_{[n]}$.
The combination of these properties implies relative exchangeability of $\mathfrak{X}$ with respect to $\mathfrak{N}$ \cite[Remark 1.4]{CraneTowsner2015}.
In $\mathbf{X}_{\mathfrak{M}}$, the Markov property and \eqref{eq:tps} imply that $\mathfrak{X}_{s+u}$ is relatively exchangeable with respect to $\mathfrak{N}$ on the event $\mathfrak{X}_s=\mathfrak{N}$, for all $s,u\geq0$.
\end{proof}

Our main theorems below require the general representation theorem for relatively exchangeable structures from \cite{CraneTowsner2015}.  We first state a simpler version which holds of symmetric structures and conveys the main idea.

\begin{theorem}[Crane \& Towsner \cite{CraneTowsner2015}]\label{thm:CT}
Let $\mathcal{L}=\{R_1,\ldots,R_r\}$ be a signature and $\XN\subseteq\lN$ be a symmetric Fra\"iss\'e space with Fra\"iss\'e limit $\mathfrak{F}$.
Suppose $\mathfrak{X}$ is relatively exchangeable with respect to $\mathfrak{F}$.
Then there exist Borel measurable functions $f_1,\ldots,f_r$ with range $\{0,1\}$ such that $\mathfrak{X}\equalinlaw\mathfrak{X}^*=(\Nb,\mathcal{X}_1^*,\ldots,\mathcal{X}_r^*)$ with
\begin{equation}\label{eq:X**}
\mathcal{X}^*_j(\vec x)=f_j(\mathfrak{F}|_{[\max\vec x]},(\xi_s)_{s\subseteq\rng\vec x}),\quad\vec x\in\Nb^{\ar(R_j)},
\end{equation}
for $(\xi_s)_{s\subseteq\Nb: |s|\leq\max_j\ar(R_j)}$ i.i.d.\ Uniform$[0,1]$ random variables.
\end{theorem}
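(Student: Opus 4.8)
The plan is to obtain this as the symmetric specialization of the general representation theorem for relatively exchangeable structures from \cite{CraneTowsner2015}, whose proof reduces relative exchangeability to ordinary joint exchangeability and then invokes the Aldous--Hoover--Kallenberg theorem. Two structural facts drive the reduction. First, since $\mathfrak{F}$ is the Fra\"iss\'e limit of a symmetric Fra\"iss\'e space, it is ultrahomogeneous (Theorem~\ref{thm:Fraisse}), so every partial isomorphism between finite substructures extends to an automorphism; in particular, taking $S=T=\Nb$ and $\phi\in\mathrm{Aut}(\mathfrak{F})$ in Definition~\ref{defn:rel-exch} shows that the law of $\mathfrak{X}$ is $\mathrm{Aut}(\mathfrak{F})$-invariant. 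Second, by the Ackerman--Freer--Patel theorem (Theorem~\ref{thm:AFP}) there is an exchangeable random structure $\mathfrak{G}$ with $\mathfrak{G}\cong\mathfrak{F}$ almost surely, which will serve as the bridge to the classical theory.

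First I would build a jointly exchangeable object. Realizing $\mathfrak{G}$ exchangeably and, conditionally on $\mathfrak{G}$, transporting $\mathfrak{X}$ along any isomorphism $\iota\colon\mathfrak{G}\to\mathfrak{F}$ to a structure $\tilde{\mathfrak{X}}:=\mathfrak{X}^{\iota}$ on the domain of $\mathfrak{G}$, I would check that the combined array $(\tilde{\mathfrak{X}},\mathfrak{G})$ is jointly exchangeable, i.e.\ invariant in law under the diagonal action of every permutation of $\Nb$. This is where the two hypotheses combine: the conditional law of $\tilde{\mathfrak{X}}$ given $\mathfrak{G}$ is well defined independently of the choice of $\iota$ because any two such isomorphisms differ by an element of $\mathrm{Aut}(\mathfrak{F})$ and $\mathfrak{X}$ is $\mathrm{Aut}(\mathfrak{F})$-invariant; the same computation shows this conditional law is equivariant, and combined with exchangeability of $\mathfrak{G}$ it yields joint exchangeability of the pair.

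Next I would apply the Aldous--Hoover--Kallenberg representation to the jointly exchangeable array $(\tilde{\mathfrak{X}},\mathfrak{G})$, which has bounded arity $d:=\max_j\ar(R_j)$. This produces a single family of i.i.d.\ $\mathrm{Uniform}[0,1]$ seeds $(\xi_s)_{s\subseteq\Nb,\,|s|\leq d}$ together with Borel functions representing each relation of both sorts through the seeds indexed by subsets of $\rng\vec x$; symmetry of all relations is exactly what lets the representation be indexed by unordered sets rather than ordered tuples, matching the form in \eqref{eq:X**}. Conditioning on $\mathfrak{G}$ (equivalently on $\mathfrak{F}$) isolates the $\tilde{\mathfrak{X}}$-sort, so that the directing function for $\mathcal{X}^*_j(\vec x)$ becomes a function of local data of $\mathfrak{F}$ at $\vec x$ and of seeds independent of $\mathfrak{F}$. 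The last point is to show this local data may be taken to be $\mathfrak{F}|_{[\max\vec x]}$: ultrahomogeneity guarantees that tuples of the same finite type lie in a common automorphism orbit, so relative exchangeability forces the conditional law to depend on $\vec x$ only through that type, which is read off from $\mathfrak{F}|_{[\max\vec x]}$. Composing the representing functions with a $\{0,1\}$-valued threshold delivers $f_1,\dots,f_r$, and transporting back from $\mathfrak{G}$ to $\mathfrak{F}$ gives $\mathfrak{X}\equalinlaw\mathfrak{X}^*$.

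I expect the main obstacle to be the joint-exchangeability upgrade together with the subsequent factorization. Relative exchangeability is invariance only under the partial isomorphisms of $\mathfrak{F}$, not under all permutations of $\Nb$, so producing a genuinely jointly exchangeable array requires the exchangeable presentation $\mathfrak{G}$ of Theorem~\ref{thm:AFP} and careful bookkeeping of the equivariant conditional laws; and showing that the directing function factors through the finite restriction $\mathfrak{F}|_{[\max\vec x]}$, rather than depending on $\mathfrak{F}$ globally, is precisely where ultrahomogeneity (Definition~\ref{defn:ultrahomogeneity}) enters essentially. The symmetric hypothesis is what keeps this local data as simple as $\mathfrak{F}|_{[\max\vec x]}$ with seeds over subsets of $\rng\vec x$; in the asymmetric case one must retain finer information about the ordered type, which is why this symmetric statement is the cleanest instance of the general theorem of \cite{CraneTowsner2015}.
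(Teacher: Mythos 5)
A preliminary caveat: the paper never proves Theorem \ref{thm:CT}; it is imported verbatim from \cite{CraneTowsner2015} (the authors explicitly ``leave the details to \cite{CraneTowsner2015}''), so there is no in-paper proof to compare yours against, and I can only assess your sketch on its own terms. Its outer architecture is reasonable: invoking Theorem \ref{thm:AFP} to get an exchangeable copy $\mathfrak{G}$ of the Fra\"iss\'e limit, checking that the conditional law of $\tilde{\mathfrak{X}}=\mathfrak{X}^{\iota}$ given $\mathfrak{G}$ is independent of the choice of isomorphism $\iota$ (by $\mathrm{Aut}(\mathfrak{F})$-invariance), and concluding joint exchangeability of the pair $(\tilde{\mathfrak{X}},\mathfrak{G})$ are all sound steps. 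The first genuine gap is the step ``conditioning on $\mathfrak{G}$ isolates the $\tilde{\mathfrak{X}}$-sort.'' The Aldous--Hoover--Kallenberg theorem represents \emph{both} sorts as functions of a single shared family of seeds; conditionally on $\mathfrak{G}$ those seeds are neither uniform nor independent, and their conditional law is not a function of $\mathfrak{G}|_{[\max\vec x]}$ (the seed $\xi_{\{i\}}$ is correlated with every relation of $\mathfrak{G}$ involving $i$, at arbitrarily large indices). Producing \emph{fresh} i.i.d.\ uniforms independent of the base, together with the stated localization, is the entire content of the theorem; it requires a transfer/coding argument and cannot be obtained by bare conditioning.

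The second gap is worse: your justification of the localization proves too much. In an ultrahomogeneous structure, the $\mathrm{Aut}(\mathfrak{F})$-orbit of $\vec x$ is determined by its quantifier-free type, i.e.\ by $\mathfrak{F}|_{\rng\vec x}$; so if ``the conditional law depends on $\vec x$ only through its type'' really yielded the joint functional representation, you would obtain the stronger form \eqref{eq:X*-nice} of Theorem \ref{thm:CT2} for \emph{every} symmetric Fra\"iss\'e space. That is false. The space $\partitionsN$ is a symmetric Fra\"iss\'e space lacking $3$-DAP (Example \ref{ex:partition}), and coagulation gives a concrete counterexample: for $\mathfrak{X}=\coag(\mathfrak{F},\Pi)$ with $\Pi$ a nontrivial exchangeable partition, elements in the same $\mathfrak{F}$-block must make identical merge decisions with every other element; if the value at $(i,j)$ were $f(\mathfrak{F}|_{\{i,j\}},\xi_{\{i\}},\xi_{\{j\}},\xi_{\{i,j\}},\xi_{\emptyset})$, then comparing $(i,j)$ with $(i',j)$ for $i\sim_{\mathfrak{F}}i'$ forces $f$ to be almost surely constant in all seeds except $\xi_{\emptyset}$, so that given $\xi_{\emptyset}$ either all distinct blocks merge or none do --- contradicting equality in law with a genuine coagulation. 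This is exactly why \eqref{eq:X**} carries the initial segment $\mathfrak{F}|_{[\max\vec x]}$ (from which the block-rank of $i$ can be read) rather than $\mathfrak{F}|_{\rng\vec x}$. Your orbit argument controls one-dimensional marginals, which indeed depend only on the type, but the theorem is a statement about the joint law. Relatedly, even granting a representation of $\tilde{\mathfrak{X}}$ in terms of $\mathfrak{G}|_{[\max\vec x]}$, transporting it back along an isomorphism $\mathfrak{F}\to\mathfrak{G}$ destroys the form, since isomorphisms do not carry initial segments to initial segments. The appearance of the non-$\mathrm{Aut}(\mathfrak{F})$-invariant data $\mathfrak{F}|_{[\max\vec x]}$ is a signal that any correct proof must exploit the enumeration of $\Nb$ itself --- e.g.\ by sequentially representing the conditional law of $\mathfrak{X}|_{[n]}$ given $\mathfrak{F}|_{[n]}$ and $\mathfrak{X}|_{[n-1]}$ with randomization lemmas --- rather than orbit-equivariance over an exchangeable copy of the base.
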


\begin{remark}
Note that the first argument of $f_j$ in \eqref{eq:X**} depends on $\mathfrak{F}|_{[\max\vec x]}$, that is, the entire initial segment of $\mathfrak{F}$ that contains all elements of $\rng\vec x$.
Under stronger conditions on $\XN$ we achieve dependence on only $\mathfrak{F}|_{\rng\vec x}$, that is, the smallest substructure containing all elements of $\rng\vec x$.
The latter representation enables a more precise description of combinatorial Markov processes, as we discuss in Section \ref{section:Levy-Ito}.
\end{remark}

The corresponding representation is somewhat more complicated when $\XN$ is asymmetric.
For example, for $\mathcal{L}=\{R\}$ with $\ar(R)=2$ and a random structure $\mathfrak{X}^*=(\Nb,\mathcal{X}^*_1)$, $\mathcal{X}^*_1(1,2)$ and $\mathcal{X}^*_1(2,1)$ may be negatively correlated, so $f_1$ must be able to distinguish $(1,2)$ from $(2,1)$ and also coordinate the outcomes between the two.  
The presence of the random orderings $(\prec_{\vec y})_{\vec y\sqsubseteq\vec x}$ in Theorem \ref{thm:CT2} is needed to coordinate between possibly asymmetric outcomes.  (Recall from Section \ref{section:notation} that $\vec y\sqsubseteq\vec x$ means $\vec y$ is a subsequence of $\vec x$.)

\begin{definition}
    When $s\subseteq\Nb$ is a finite set, a \emph{uniform random ordering of $s$} is an ordering $\prec_{s}$ of $s$ chosen uniformly at random.  Given $\prec_{\rng\vec x}$, we write $\prec_{\vec x}$ for the ordering of $[|\rng\vec x|]$ induced by putting $i\prec_{\vec x}j$ if and only if $x_i\prec_{\rng\vec x}x_j$.  If $x_i=x_j$, then $i\not\prec_{\vec x}j$ and $j\not\prec_{\vec x}i$.
\end{definition}

The details of the upcoming representation theorem are not crucial to anything that follows.
We only need the existence of a suitable representation in order to ensure a construction of the random transition operators in our main theorems.
We state the theorem below and leave the details to \cite{CraneTowsner2015}.
\begin{theorem}[Crane \& Towsner \cite{CraneTowsner2015}]\label{thm:CTasym}
Let $\mathcal{L}=\{R_1,\ldots,R_r\}$ be a signature and $\XN\subseteq\lN$ be a Fra\"iss\'e space with Fra\"iss\'e limit $\mathfrak{F}$.
Suppose $\mathfrak{X}$ is relatively exchangeable with respect to $\mathfrak{F}$.
Then there exist Borel measurable functions $f_1,\ldots,f_r$ with range $\{0,1\}$ such that $\mathfrak{X}\equalinlaw\mathfrak{X}^*=(\Nb,\mathcal{X}_1^*,\ldots,\mathcal{X}_r^*)$ with
\begin{equation}\label{eq:X*}
\mathcal{X}^*_j(\vec x)=f_j(\mathfrak{F}|_{[\max\vec x]},(\xi_s)_{s\subseteq\rng\vec x},(\prec_{\vec y})_{\vec y\sqsubseteq\vec x}),\quad\vec x\in\Nb^{\ar(R_j)},
\end{equation}
for i.i.d.\ Uniform$[0,1]$ random variables $(\xi_s)_{s\subseteq\Nb:|s|\leq\max_j\ar(R_j)}$ and independent uniform random orderings $(\prec_{s})_{s\subseteq\Nb:|s|\leq \max_j\ar(R_j)}$.
\end{theorem}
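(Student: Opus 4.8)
The plan is to derive the representation from the Aldous--Hoover--Kallenberg theorem for jointly exchangeable arrays, after coupling $\mathfrak{X}$ to an exchangeably generated copy of $\mathfrak{F}$. The first step is to translate relative exchangeability into ordinary symmetry: since $\mathfrak{F}$ is ultrahomogeneous, every partial isomorphism between finite substructures of $\mathfrak{F}$ extends to an automorphism, so an injection $\phi:S\to T$ satisfies $\mathfrak{F}|_T^{\phi}=\mathfrak{F}|_S$ exactly when it is the restriction of some $\alpha\in\mathrm{Aut}(\mathfrak{F})$. Hence $\mathfrak{F}$-exchangeability of $\mathfrak{X}$ (Definition~\ref{defn:rel-exch}) is equivalent to invariance of the law of $\mathfrak{X}$ under the action of $\mathrm{Aut}(\mathfrak{F})$ on $\Nb$.

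Next I would use Theorem~\ref{thm:AFP} to fix a $\mathrm{Sym}(\Nb)$-invariant random $\mathcal{L}$-structure $\mathfrak{G}$ with $\mathfrak{G}\cong\mathfrak{F}$ almost surely, and build a coupled random $\mathcal{L}'$-structure $\mathfrak{W}$ by transporting $\mathfrak{X}$ along a (random) isomorphism $\mathfrak{F}\to\mathfrak{G}$. The previous step guarantees that the choice of isomorphism is immaterial, so the conditional law of $\mathfrak{W}$ given $\mathfrak{G}$ is well defined, and one checks that the combined $(\mathcal{L}\cup\mathcal{L}')$-structure $(\mathfrak{G},\mathfrak{W})$ is then \emph{jointly} exchangeable in the ordinary sense: a relabeling of $\mathfrak{G}$ by $\sigma$ pulls back through the isomorphism to an automorphism of $\mathfrak{F}$, under which $\mathfrak{X}$ is distributionally invariant. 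Verifying that this coupling is genuinely exchangeable on the nose—rather than merely invariant under $\mathrm{Aut}(\mathfrak{F})$—is the first point demanding care, as one must track how $\sigma$ and the random isomorphism compose.

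With $(\mathfrak{G},\mathfrak{W})$ jointly exchangeable I would apply Aldous--Hoover--Kallenberg: every relation of the combined structure, of arity at most $k:=\max_j\ar(R_j)$, is almost surely a measurable function of i.i.d.\ Uniform$[0,1]$ seeds $(\xi_s)_{s\subseteq\Nb,\,|s|\leq k}$ together, in the asymmetric case, with independent uniform random orderings $(\prec_s)$. The orderings are precisely what permit the representing functions to distinguish and coordinate asymmetric outcomes, such as negatively correlated values of $\mathcal{X}(1,2)$ and $\mathcal{X}(2,1)$, while keeping the whole construction exchangeable, since a uniform ordering is itself an exchangeable object. Splitting each seed into a coordinate that codes the $\mathcal{L}$-part $\mathfrak{G}$ and an independent fresh coordinate for $\mathfrak{W}$, I would condition on $\mathfrak{G}$ and transport back along the isomorphism to obtain functions $f_j$ expressing $\mathcal{X}^*_j(\vec x)$ in terms of $\mathfrak{F}$ and independent randomness of the stated form.

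The hard part will be the localization: showing that the dependence on the reference structure collapses from all of $\mathfrak{F}$ (equivalently, all of its generating seeds) onto the finite initial segment $\mathfrak{F}|_{[\max\vec x]}$. The Aldous--Hoover seeds $(\xi_s)_{s\subseteq\rng\vec x}$ feeding into $\mathcal{X}^*_j(\vec x)$ already encode the $\mathcal{L}$-relations of $\mathfrak{G}$ among $\rng\vec x$; the work is to control the residual dependence on lower-indexed elements that transporting back along the global isomorphism can reintroduce. I expect this to be governed by ultrahomogeneity—automorphisms of $\mathfrak{F}$ fixing $[\max\vec x]$ pointwise leave the relevant conditional laws invariant—and this is exactly the step sensitive to the amalgamation strength of $\XN$: DAP yields dependence on the initial segment $\mathfrak{F}|_{[\max\vec x]}$, whereas $\odap$-DAP would sharpen it to the substructure $\mathfrak{F}|_{\rng\vec x}$, as anticipated in the remark following Theorem~\ref{thm:CT}.
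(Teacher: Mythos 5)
A point of context first: the paper you are working against contains no proof of this statement --- Theorem~\ref{thm:CTasym} is imported from \cite{CraneTowsner2015}, and the surrounding text says explicitly that the details are left to that reference, so your proposal can only be judged against the theorem itself. Your opening moves are sound: ultrahomogeneity does make relative exchangeability with respect to $\mathfrak{F}$ equivalent to invariance of the law of $\mathfrak{X}$ under $\mathrm{Aut}(\mathfrak{F})$ (finite partial isomorphisms extend to automorphisms, and laws are determined by finite restrictions), and the coupled pair $(\mathfrak{G},\mathfrak{W})$ built from Theorem~\ref{thm:AFP} is genuinely jointly exchangeable, because replacing the chosen isomorphism $\iota:\mathfrak{F}\to\mathfrak{G}$ by $\iota\circ\alpha$ with $\alpha\in\mathrm{Aut}(\mathfrak{F})$ does not change the conditional law of $\mathfrak{W}$ given $\mathfrak{G}$.

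The gap is the step you yourself label ``the hard part'' and then only gesture at: it is not a technical residue but the entire content of the theorem, and the mechanism you propose for it would fail. After Aldous--Hoover, $R_j^{\mathfrak{W}}(\vec x)$ is a function of seeds $(\xi_s)_{s\subseteq\rng\vec x}$ which also generate $\mathfrak{G}$; to reach the stated representation you must condition on the event $\{\mathfrak{G}=\mathfrak{F}\}$, equivalently compose with the random isomorphism $\iota$. That event is a global, measure-zero conditioning on the entire seed family: given $\{\mathfrak{G}=\mathfrak{F}\}$, the seeds attached to different finite sets are in general mutually dependent and their conditional law depends on all of $\mathfrak{F}$, not on $\mathfrak{F}|_{[\max\vec x]}$; equivalently, $R_j^{\mathfrak{X}}(\vec x)=R_j^{\mathfrak{W}}(\iota(\vec x))$ involves seeds indexed by the \emph{random} set $\iota(\rng\vec x)$, and the selection of $\iota$ depends on all of the randomness, so transporting back destroys exactly the locality you need. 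The conditioning does localize in the special cases where $\mathfrak{G}$ can be generated with independent per-relation seeds (graphs and hypergraphs, as in Example~\ref{example:Rado}), but for spaces with DAP only, such as partitions (Example~\ref{ex:partition}) or orderings, every exchangeable generation of the Fra\"iss\'e limit is intrinsically non-local (paintboxes, i.i.d.\ reals inducing an order), and nothing in your sketch bridges this; your argument treats all Fra\"iss\'e spaces identically even though the theorem's conclusion is sensitive to amalgamation strength. Your fallback --- invariance under automorphisms of $\mathfrak{F}$ fixing $[\max\vec x]$ pointwise --- is only a distributional symmetry statement; converting such group invariance into a representation by \emph{independent, localized} uniform seeds is precisely what Aldous--Hoover-type theorems must prove and cannot be had for free. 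As written, the proposal establishes joint exchangeability of a coupling and stops short of the theorem's actual assertion.
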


\subsection{Lipschitz continuous functions}

Theorems \ref{thm:CT} and \ref{thm:CTasym} are general Aldous--Hoover-type theorems for relatively exchangeable structures.
Exchangeability arises as a special case by taking $\mathfrak{F}$ to be either the complete or empty $\mathcal{L}$-structure so that the dependence of each $f_j$ on the first argument is moot.
The representation in \eqref{eq:X*} factors into our proofs below by allowing us to construct random Lipschitz continuous functions from which we build standard versions of $\mathbf{X}$ by either an i.i.d.\ sequence or a Poisson point process, depending on whether time is discrete or continuous, respectively.

In our main theorems for discrete time chains, $\mathbf{X}$ is an exchangeable, time homogeneous combinatorial Feller process on a Fra\"iss\'e space and, thus, all of its transitions can be represented by a single exchangeable transition probability measure
\[P(\mathfrak{N},\cdot)=\mathbb{P}\{\mathfrak{X}_1\in\cdot\mid\mathfrak{X}_0=\mathfrak{N}\},\quad\mathfrak{N}\in\XN,\]
which satisfies \eqref{eq:exch-tps} and for which
\begin{equation}\label{eq:consistent tps}
\mathbb{P}\{\mathfrak{X}_1|_{[n]}=\mathfrak{S}\mid\mathfrak{X}_0=\mathfrak{N}\}=\mathbb{P}\{\mathfrak{X}_1|_{[n]}=\mathfrak{S}\mid\mathfrak{X}_0=\mathfrak{N}'\}
\end{equation}
for all $\mathfrak{N},\mathfrak{N}'\in\XN$ with $\mathfrak{N}|_{[n]}=\mathfrak{N}'|_{[n]}$, for all $\mathfrak{S}\in\Xn$, for all $n\in\Nb$.
By \eqref{eq:consistent tps}, we define the finite state space transition probabilities $P^{[n]}$ on $\Xn$ by
\begin{equation}\label{eq:fidi tps}
P^{[n]}(\mathfrak{S},\mathfrak{S}'):=P(\mathfrak{N},\{\mathfrak{N}'\in\XN: \mathfrak{N}'|_{[n]}=\mathfrak{S}'\}),\quad\mathfrak{S},\mathfrak{S}'\in\Xn,\end{equation}
for any choice of $\mathfrak{N}\in\XN$ with $\mathfrak{N}|_{[n]}=\mathfrak{S}$.

Among our main objectives is to show that any random structure $\mathfrak{X}\sim P(\mathfrak{N},\cdot)$ satisfies $\mathfrak{X}\equalinlaw\Phi(\mathfrak{N})$ for some exchangeable random Lipschitz continuous function $\Phi\in\Lip(\XN)$ whose distribution does not depend on $\mathfrak{N}$.
To this end, we define a canonical embedding of each $\mathfrak{N}\in\XN$ into the Fra\"iss\'e limit of $\XN$.

Let $\XN$ be a Fra\"iss\'e space with Fra\"iss\'e limit $\mathfrak{F}$, and for any injection $\phi:[n]\to\Nb$ recall the notation $\mathfrak{F}^{\phi}$ defined in \eqref{eq:phi-image}.
Given $\mathfrak{F}$ and $\mathfrak{S}\in\age(\XN)$, we define $\rho_{\mathfrak{S}}=\rho_{\mathfrak{S},\mathfrak{F}}:[|\dom\mathfrak{S}|]\to\Nb$ (suppressing the dependence on $\mathfrak{F}$ for convenience) as follows.
We begin by choosing $\rho_{\mathfrak{S}}(1)$ to be the smallest positive integer such that $\mathfrak{F}^{\rho_{\mathfrak{S}}\upharpoonright\{1\}}=\mathfrak{S}|_{\{1\}}$, where in general $\rho_{\mathfrak{S}}\upharpoonright S$ denotes the domain restriction of $\rho_{\mathfrak{S}}$ to a function $S\to\Nb$.
Given $\rho_{\mathfrak{S}}(1),\ldots,\rho_{\mathfrak{S}}(m)$, with $m<|\dom\mathfrak{S}|$, we then choose $\rho_{\mathfrak{S}}(m+1)$ to be the smallest integer greater than $\rho_{\mathfrak{S}}(m)$ such that $\rho_{\mathfrak{S}}\upharpoonright[m+1]$ is an embedding $\mathfrak{S}|_{[m+1]}\to\mathfrak{F}$.
We continue until $\rho_{\mathfrak{S}}$ defines an embedding $\mathfrak{S}\to\mathfrak{F}$.
The existence of such an embedding for every $\mathfrak{S}$ is guaranteed by the status of $\mathfrak{F}$ as a Fra\"iss\'e limit.

We have defined $\rho_{\mathfrak{S}}$ for finite structures, but our construction extends to countably infinite structures $\mathfrak{N}\in\XN$ by noting that $\rho_{\mathfrak{S'}}\upharpoonright\dom\mathfrak{S}=\rho_{\mathfrak{S}}$ for $\mathfrak{S},\mathfrak{S}'\in\age(\XN)$ with $\mathfrak{S}'|_{[|\dom\mathfrak{S}|]}=\mathfrak{S}$.  (Here our convention that the domain of $\mathfrak{S}$ is not merely a subset of $\mathfrak{S}'$, but actually an initial segment, plays a crucial role.)

\begin{example}
For a concrete description of the canonical embedding, let $\mathcal{L}=(1)$ and let $\XN$ be the set of all subsets of $\Nb$, as in Example \ref{ex:universal-set}.
Suppose $\mathfrak{F}=(\Nb,R^{\mathfrak{F}})$ is a Fra\"{i}ss\'e limit with $R^{\mathfrak{F}}=\{1,2,4,8,\ldots\}$.
For $\mathfrak{S}=([1],\{1\})$, we define $\rho_{\mathfrak{S},\mathfrak{F}}(1)=1$; for $\mathfrak{S}=([1],\emptyset)$, we define $\rho_{\mathfrak{S},\mathfrak{F}}(1)=3$; for $\mathfrak{S}=([2],\{1\})$, we define $\rho_{\mathfrak{S},\mathfrak{F}}(1)=1$ and $\rho_{\mathfrak{S},\mathfrak{F}}(2)=3$; for $\mathfrak{S}=([2],\{2\})$, we define $\rho_{\mathfrak{S},\mathfrak{F}}(1)=3$ and $\rho_{\mathfrak{S},\mathfrak{F}}(2)=4$; and so on.
In this way, $\mathfrak{F}^{\rho_{\mathfrak{S},\mathfrak{F}}}=\mathfrak{S}$ for every $\mathfrak{S}$ and $\rho_{\mathfrak{S},\mathfrak{F}}$ extends $\rho_{\mathfrak{S}',\mathfrak{F}}$ whenever $\mathfrak{S}$ extends $\mathfrak{S}'$.
\end{example}


In what follows, we always assume $\mathfrak{F}$ is an exchangeable Fra\"iss\'e limit for the Fra\"{i}ss\'e space $\XN$, as guaranteed by Theorem \ref{thm:AFP}.
Given an exchangeable, consistent transition probability measure $P$ and an exchangeable Fra\"iss\'e limit $\mathfrak{F}$, we construct an exchangeable Lipschitz continuous function $\Phi\in\Lip(\XN)$ by first taking $\mathfrak{Y}\sim P(\mathfrak{M},\cdot)$ conditional on the event $\mathfrak{F}=\mathfrak{M}$.
Given $\mathfrak{Y}$, we define the random function $\Phi=\Phi_{\mathfrak{Y}}:\XN\to\XN$ by putting $\Phi(\mathfrak{N})=\mathfrak{Y}^{\rho_{\mathfrak{N}}}$ for each $\mathfrak{N}\in\XN$, where $\rho_{\mathfrak{N}}:=\rho_{\mathfrak{N},\mathfrak{M}}$ is the random canonical embedding $\mathfrak{N}\to\mathfrak{M}$ as defined above, on the event $\mathfrak{F}=\mathfrak{M}$.
(Note that the randomness in $\rho_{\mathfrak{N},\mathfrak{F}}$ is derived from the randomness in the construction of $\mathfrak{F}$.)

\begin{theorem}\label{thm:Lip-construct}
Let $\mathcal{L}$ be a signature, $P$ be an exchangeable, consistent transition probability on a Fra\"{i}ss\'e space $\XN\subseteq\lN$, $\mathfrak{F}$ be an exchangeable Fra\"iss\'e limit of $\XN$, and, given $\mathfrak{F}=\mathfrak{M}$, $\mathfrak{Y}\sim P(\mathfrak{M},\cdot)$.
The above function $\Phi=\Phi_{\mathfrak{Y}}$ is well defined and Lipschitz continuous with probability 1 and exchangeable in the sense of Definition \ref{defn:exch-measure}.
\end{theorem}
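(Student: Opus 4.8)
The plan is to establish the three assertions in turn—that $\Phi$ is a genuine map $\XN\to\XN$, that it is $1$-Lipschitz for the ultrametric, and that its law meets Definition \ref{defn:exch-measure}—working throughout on the event $\{\mathfrak{F}=\mathfrak{M}\}$ and integrating against the law of $\mathfrak{F}$ at the end. For well-definedness I would first check that the greedy recipe for $\rho_{\mathfrak{N}}=\rho_{\mathfrak{N},\mathfrak{M}}$ never stalls: at stage $m+1$ one needs the least integer exceeding $\rho_{\mathfrak{N}}(m)$ that extends $\rho_{\mathfrak{N}}\upharpoonright[m]$ to an embedding of $\mathfrak{N}|_{[m+1]}$, and the existence of infinitely many such integers is exactly the extension property of the Fra\"iss\'e limit $\mathfrak{M}$ (ultrahomogeneity together with $\age(\mathfrak{M})=\age(\XN)$, Theorem \ref{thm:Fraisse}). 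Minimality forces each choice, so $\rho_{\mathfrak{N}}$ is uniquely determined, and the coherence relation $\rho_{\mathfrak{S}'}\upharpoonright\dom\mathfrak{S}=\rho_{\mathfrak{S}}$ recorded in the construction shows the finite pieces assemble into a single embedding $\mathfrak{N}\to\mathfrak{M}$. Finally $\Phi(\mathfrak{N})=\mathfrak{Y}^{\rho_{\mathfrak{N}}}$ is isomorphic, via $\rho_{\mathfrak{N}}$, to the substructure of $\mathfrak{Y}$ on $\im\rho_{\mathfrak{N}}$; since every finite substructure of $\Phi(\mathfrak{N})$ embeds into $\mathfrak{Y}\in\XN$ and hence lies in $\age(\XN)$, and $\XN$ is closed under isomorphism, $\Phi(\mathfrak{N})\in\XN$.

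For the Lipschitz property the crucial observation is that $\rho_{\mathfrak{N}}\upharpoonright[n]$ is a function of $\mathfrak{N}|_{[n]}$ alone: the greedy values $\rho_{\mathfrak{N}}(1),\dots,\rho_{\mathfrak{N}}(n)$ are fixed inductively by the requirement that $\rho_{\mathfrak{N}}\upharpoonright[k]$ embed $\mathfrak{N}|_{[k]}$ into $\mathfrak{M}$, a condition inspecting only $\mathfrak{N}|_{[n]}$. Since $(\mathfrak{Y}^{\rho_{\mathfrak{N}}})|_{[n]}=\mathfrak{Y}^{\rho_{\mathfrak{N}}\upharpoonright[n]}$, it follows that $\Phi(\mathfrak{N})|_{[n]}$ depends only on $\mathfrak{N}|_{[n]}$. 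Hence $\mathfrak{N}|_{[n]}=\mathfrak{N}'|_{[n]}$ forces $\Phi(\mathfrak{N})|_{[n]}=\Phi(\mathfrak{N}')|_{[n]}$, which is exactly the contraction \eqref{eq:Lipschitz} for the ultrametric \eqref{eq:ultrametric}.

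The substantive step—and the one I expect to be the main obstacle—is exchangeability, for which I would verify the distributional form of Definition \ref{defn:exch-measure}: $\Phi(\mathfrak{N}^{\sigma})\equalinlaw\Phi(\mathfrak{N})^{\sigma}$ for every permutation $\sigma$ of $\Nb$ and every $\mathfrak{N}\in\XN$. Using $(\mathfrak{Y}^{\phi})^{\psi}=\mathfrak{Y}^{\phi\circ\psi}$, I would rewrite both sides as pullbacks of the \emph{same} random structure $\mathfrak{Y}$ along two embeddings of $\mathfrak{N}^{\sigma}$ into $\mathfrak{M}$, namely $\Phi(\mathfrak{N})^{\sigma}=\mathfrak{Y}^{\rho_{\mathfrak{N}}\circ\sigma}$ and $\Phi(\mathfrak{N}^{\sigma})=\mathfrak{Y}^{\rho_{\mathfrak{N}^{\sigma}}}$; both $\rho_{\mathfrak{N}}\circ\sigma$ and $\rho_{\mathfrak{N}^{\sigma}}$ are embeddings $\mathfrak{N}^{\sigma}\to\mathfrak{M}$, since $\mathfrak{M}^{\rho_{\mathfrak{N}}\circ\sigma}=(\mathfrak{M}^{\rho_{\mathfrak{N}}})^{\sigma}=\mathfrak{N}^{\sigma}=\mathfrak{M}^{\rho_{\mathfrak{N}^{\sigma}}}$. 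The difficulty is that the greedy embedding is \emph{not} equivariant: relabelling $\mathfrak{N}$ by $\sigma$ changes which labels are ``smallest,'' so these two embeddings genuinely differ and no direct substitution identifies the two sides.

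I would resolve this through relative exchangeability of $\mathfrak{Y}$ with respect to $\mathfrak{M}$, which holds by the relative-exchangeability proposition of Section \ref{section:relative exchangeability} applied with $\mathfrak{N}=\mathfrak{M}$. Writing $\alpha=\rho_{\mathfrak{N}^{\sigma}}$, $\beta=\rho_{\mathfrak{N}}\circ\sigma$, $S=\im\alpha$, and $T=\im\beta$, the bijection $\phi:=\beta\circ\alpha^{-1}\colon S\to T$ satisfies $\mathfrak{M}|_{T}^{\phi}=\mathfrak{M}|_{S}$, precisely because $\alpha$ and $\beta$ realize the same structure $\mathfrak{N}^{\sigma}$ inside $\mathfrak{M}$ (so $\phi$ is a partial isomorphism of $\mathfrak{M}$); relative exchangeability then gives $\mathfrak{Y}|_{T}^{\phi}\equalinlaw\mathfrak{Y}|_{S}$, and applying the deterministic pullback $(\cdot)^{\alpha}$ to both sides yields $\mathfrak{Y}^{\beta}\equalinlaw\mathfrak{Y}^{\alpha}$, i.e.\ $\Phi(\mathfrak{N})^{\sigma}\equalinlaw\Phi(\mathfrak{N}^{\sigma})$. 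This argument is conditional on $\{\mathfrak{F}=\mathfrak{M}\}$; since it holds for every $\mathfrak{M}$, integrating against the law of $\mathfrak{F}$ gives the unconditional statement, and the displayed finite-$n$ form of Definition \ref{defn:exch-measure} follows by restriction. One minor point to dispatch is that $S$ and $T$ are infinite, so I would either invoke relative exchangeability directly for infinite index sets or reduce to finite restrictions, equality of all finite-dimensional distributions sufficing for structures on $\Nb$.
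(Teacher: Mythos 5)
Your proposal is correct, and its well-definedness and Lipschitz steps are essentially the paper's own: the greedy embeddings cohere ($\rho_{\mathfrak{S}'}\upharpoonright\dom\mathfrak{S}=\rho_{\mathfrak{S}}$), so $\Phi(\mathfrak{N})|_{[n]}$ depends only on $\mathfrak{N}|_{[n]}$. Where you genuinely diverge is the exchangeability step. The paper conditions on $\mathfrak{F}$ and runs a tower-property computation: since $\{\rho_{\mathfrak{N},\mathfrak{F}}=\phi\}$ is $\mathfrak{F}$-measurable and $\phi$ embeds $\mathfrak{N}$ into $\mathfrak{F}$, consistency and exchangeability of $P$ give $\mathbb{P}\{\mathfrak{Y}^{\phi}\in A\mid \mathfrak{F},\rho_{\mathfrak{N},\mathfrak{F}}=\phi\}=P(\mathfrak{N},A)$, hence $\mathbb{P}\{\Phi(\mathfrak{N})\in A\}=P(\mathfrak{N},A)$, and exchangeability of $\Phi$ is inherited from $P(\mathfrak{N},A)=P(\mathfrak{N}^{\sigma},A^{\sigma})$. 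You instead prove embedding-independence of the pullback law: $\rho_{\mathfrak{N}}\circ\sigma$ and $\rho_{\mathfrak{N}^{\sigma}}$ are two embeddings of $\mathfrak{N}^{\sigma}$ into $\mathfrak{M}$ differing by a partial isomorphism of $\mathfrak{M}$, and relative exchangeability of $\mathfrak{Y}$ with respect to $\mathfrak{M}$ (the proposition following Definition \ref{defn:rel-exch}) forces $\mathfrak{Y}^{\rho_{\mathfrak{N}}\circ\sigma}\equalinlaw\mathfrak{Y}^{\rho_{\mathfrak{N}^{\sigma}}}$. Both routes rest on the same two hypotheses (consistency plus exchangeability of $P$), but the paper's computation buys the stronger intermediate identity $\Phi(\mathfrak{N})\sim P(\mathfrak{N},\cdot)$, which is what the application in Theorem \ref{thm:discrete} actually uses; your version proves the theorem as stated, and, notably, your mechanism is the one the paper itself invokes inside the proof of Theorem \ref{thm:discrete} (``by relative exchangeability and Theorem \ref{thm:CTasym}\dots''). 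Your closing reduction of the infinite sets $S,T$ to finite restrictions is the right fix: equality in law of structures on a countable domain is determined by the finite restrictions, which form a generating $\pi$-system.

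One factual correction: the existence of infinitely many admissible integers at each greedy stage is \emph{not} ``exactly'' ultrahomogeneity together with $\age(\mathfrak{M})=\age(\XN)$. In Example \ref{ex:DAP-fail}, the structure $(\Nb,\{1\})$ is ultrahomogeneous and its age is the whole class, yet the one-point extension placing a point into $R$ is realized exactly once, so a greedy increasing embedding stalls. What rescues the construction is DAP (Definition \ref{defn:DAP}): disjointly amalgamating many copies of $\mathfrak{N}|_{[m+1]}$ over $\mathfrak{N}|_{[m]}$ and embedding the amalgam into $\mathfrak{M}$ produces arbitrarily many realizations of the extension, hence one exceeding $\rho_{\mathfrak{N}}(m)$. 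Since $\XN$ is assumed to be a Fra\"iss\'e space this is available to you (the paper leans on ultrahomogeneity, induction, and countable additivity of the law of $\mathfrak{F}$ at the same spot), but the attribution as you wrote it is precisely what fails in the AP-only setting.
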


\begin{proof}
In general, $\XN$ is uncountable, so establishing these properties with probability 1 for each $\mathfrak{N}\in\XN$ is not enough.
However, since the signature $\mathcal{L}=\{R_1,\ldots,R_r\}$ contains only finitely many relations, $\ln$ is finite for every $n\geq1$ and, therefore, so is $\Xn\subseteq\ln$.
It follows that $\age(\XN)$ is at most countable, and so it is enough to establish that the relevant properties of $\Phi$ hold with probability 1 for each $\mathfrak{S}\in\age(\XN)$.
In this way, we show that the sequence of finite restrictions $(\Phi^{[n]})_{n\geq1}$ determines a projective limit $\Phi$ that is well defined and Lipschitz continuous with probability 1.

For $\Phi$ to be well defined, there must exist an embedding $\rho_{\mathfrak{S}}:\mathfrak{S}\to\mathfrak{F}$ of the type we describe for every $\mathfrak{S}\in\age(\XN)$.
Fix $\mathfrak{S}\in\age(\XN)$.
By Theorem \ref{thm:AFP}, $\mathfrak{F}$ is isomorphic to the Fra\"iss\'e limit of $\XN$ with probability 1 and, therefore, $\mathfrak{S}$ embeds into $\mathfrak{F}$ with probability 1.
The fact that $\rho_{\mathfrak{S}}:\mathfrak{S}\to\mathfrak{F}$ can be chosen so that $\rho_{\mathfrak{S}}(m)<\rho_{\mathfrak{S}}(m+1)$ is a consequence of ultrahomogeneity, induction, and countable additivity of probability measures.
Since this holds for every $\mathfrak{S}\in\age(\XN)$ with probability 1, the embedding $\rho_{\mathfrak{S}}$ can be defined for all $\mathfrak{S}\in\age(\XN)$ with probability 1.
For $\mathfrak{N}\in\XN$, we define $\rho_{\mathfrak{N}}$ by $\rho_{\mathfrak{N}}(n)=\rho_{\mathfrak{N}|_{[n]}}(n)$ for every $n\geq1$.
Since $\rho_{\mathfrak{N}|_{[n]}}$ is well defined with probability 1 for every $n\geq1$, $\rho_{\mathfrak{N}}$ is also well defined for every $\mathfrak{N}\in\XN$ with probability 1.
Lipschitz continuity of $\Phi$ is plain by noting that $\rho_{\mathfrak{S}'}\upharpoonright\dom\mathfrak{S}=\rho_{\mathfrak{S}}$ for all $\mathfrak{S},\mathfrak{S}'\in\age(\XN)$ for which $\mathfrak{S}'|_{\dom\mathfrak{S}}=\mathfrak{S}$.

For exchangeability, we need $\mathbb{P}\{\Phi(\mathfrak{N})\in A\}=\mathbb{P}\{\Phi(\mathfrak{N}^{\sigma})\in A^{\sigma}\}$ for all $\mathfrak{N}\in\XN$, $A\Borel\XN$, and permutations $\sigma:\Nb\to\Nb$, where $A^{\sigma}=\{x^{\sigma}: x\in A\}$ is the set of relabeled structures in $A$.
We have defined $\Phi(\mathfrak{N})=\mathfrak{Y}^{\rho_{\mathfrak{N}}}$, where $\rho_{\mathfrak{N}}=\rho_{\mathfrak{N},\mathfrak{F}}$ for an exchangeable Fra\"iss\'e limit $\mathfrak{F}$  and $\mathfrak{Y}\sim P(\mathfrak{M},\cdot)$ conditional on $\mathfrak{F}=\mathfrak{M}$.
For any $\mathfrak{N}\in\XN$ and $A\Borel\XN$,
\begin{eqnarray*}
\mathbb{P}\{\Phi(\mathfrak{N})\in A\}&=&\mathbb{P}\{\mathfrak{Y}^{\rho_{\mathfrak{N}}}\in A\}\\
&=&\mathbb{E}\mathbb{P}\{\mathfrak{Y}^{\rho_{\mathfrak{N},\mathfrak{F}}}\in A\mid\mathfrak{F}\}\\
&=&\mathbb{E}\mathbb{P}\{\mathfrak{Y}^{\phi}\in A\mid\mathfrak{F}, \rho_{\mathfrak{N},\mathfrak{F}}=\phi\}\\
&=&\mathbb{E}P(\mathfrak{N},A)\\
&=&\mathbb{E} P(\mathfrak{N}^{\sigma},A^{\sigma})\\
&=&\mathbb{E} \mathbb{P}\{\mathfrak{Y}^{\phi}\in A^{\sigma}\mid\mathfrak{F},\rho_{\mathfrak{N}^{\sigma},\mathfrak{F}}=\phi\}\\
&=&\mathbb{E} \mathbb{P}\{\mathfrak{Y}^{\rho_{\mathfrak{N}^{\sigma},\mathfrak{F}}}\in A^{\sigma}\mid\mathfrak{F}\}\\
&=&\mathbb{P}\{\mathfrak{Y}^{\rho_{\mathfrak{N}^{\sigma}}}\in A^{\sigma}\}\\
&=&\mathbb{P}\{\Phi(\mathfrak{N}^{\sigma})\in A^{\sigma}\}
\end{eqnarray*}
for all permutations $\sigma:\Nb\to\Nb$; thus, $\Phi$ is exchangeable.
(In the above string of equalities, the first line follows by the definition of $\Phi$; the second line is a consequence of the tower property for conditional expectation; the third line holds because the event $\{\rho_{\mathfrak{N},\mathfrak{F}}=\phi\}$ is measurable with respect to $\mathfrak{F}$; the fourth line is the projective Markov property; the fifth line follows by exchangeability of the transition measure; and the rest follows by applying each of the above operations in reverse.)  This completes the proof.
\end{proof}

\subsection{Conjugation invariant functions}

Recall that any permutation $\sigma:\Nb\rightarrow\Nb$ acts on $F\in\Lip(\XN)$ by {\em conjugation}, that is, $(\sigma^{-1}F\sigma)(\mathfrak{M})=F(\mathfrak{M}^{\sigma})^{\sigma^{-1}}$, and recall Definition \ref{defn:conjugation invariant} of a conjugation invariant function.
Conjugation invariant functions exist, for example, the identity $\XN\rightarrow\XN$, but not every $F\in\Lip(\XN)$ is conjugation invariant, for example, the coagulation operator in Example \ref{example:coalescent} below.

\begin{prop}\label{prop:conj-inv}
 Let $F\in\Lip(\XN)$.  Then $F$ is conjugation invariant if and only if for all finite $S\subset\Nb$ and all $\mathfrak{M},\mathfrak{M}'\in\XN$
\begin{equation}\label{eq:conj-inv-cond}
\mathfrak{M}|_S=\mathfrak{M}'|_S\quad\Longrightarrow\quad F(\mathfrak{M})|_S=F(\mathfrak{M}')|_S.\end{equation}
\end{prop}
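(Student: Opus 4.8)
The plan is to prove both implications by exploiting the fact that, by the ultrametric \eqref{eq:ultrametric}, Lipschitz continuity is exactly the assertion that $F(\mathfrak{M})|_{[n]}$ is determined by $\mathfrak{M}|_{[n]}$ for every initial segment $[n]$, whereas \eqref{eq:conj-inv-cond} is the same assertion for an \emph{arbitrary} finite set $S$. The bridge between an arbitrary finite $S$ and an initial segment is a permutation $\tau:\Nb\to\Nb$ with $\tau(S)=[n]$, where $n:=|S|$, and conjugation by $\tau$ is precisely the operation that implements this relabeling. The only computational input I need is the interaction of relabeling with restriction: for a permutation $\sigma$ and finite $S$ one has the composition rule $(\mathfrak{M}^\phi)^\psi=\mathfrak{M}^{\phi\circ\psi}$ and the commutation identity $(\mathfrak{M}^\sigma)|_S=(\mathfrak{M}|_{\sigma(S)})^{\sigma\upharpoonright S}$, both following directly from \eqref{eq:phi-image} and the definition of restriction; here $\sigma\upharpoonright S$ denotes the bijection $S\to\sigma(S)$.

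For the direction $(\Leftarrow)$, I assume \eqref{eq:conj-inv-cond} and fix a permutation $\sigma$; I must show $\sigma F\sigma^{-1}\in\Lip(\XN)$, i.e.\ that $(\sigma F\sigma^{-1})(\mathfrak{M})|_{[n]}$ depends only on $\mathfrak{M}|_{[n]}$. Writing $S:=\sigma^{-1}([n])$, a finite set of size $n$, and unwinding the definition $(\sigma F\sigma^{-1})(\mathfrak{M})=F(\mathfrak{M}^\sigma)^{\sigma^{-1}}$ with the two identities above, one finds $(\sigma F\sigma^{-1})(\mathfrak{M})|_{[n]}=\bigl(F(\mathfrak{M}^\sigma)|_S\bigr)^{\sigma^{-1}\upharpoonright[n]}$, while $(\mathfrak{M}^\sigma)|_S=(\mathfrak{M}|_{[n]})^{\sigma\upharpoonright S}$ is determined by $\mathfrak{M}|_{[n]}$. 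Applying \eqref{eq:conj-inv-cond} at the finite set $S$ then shows that $F(\mathfrak{M}^\sigma)|_S$, and hence $(\sigma F\sigma^{-1})(\mathfrak{M})|_{[n]}$, is determined by $\mathfrak{M}|_{[n]}$, giving the Lipschitz property.

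For $(\Rightarrow)$, I assume $F$ is conjugation invariant and fix a finite $S$ with $n:=|S|$ together with $\mathfrak{N},\mathfrak{N}'\in\XN$ satisfying $\mathfrak{N}|_S=\mathfrak{N}'|_S$. I choose $\tau$ with $\tau(S)=[n]$ and apply the Lipschitz property of the conjugate $\tau F\tau^{-1}$. Using $(\mathfrak{N}^{\tau^{-1}})^\tau=\mathfrak{N}$ one computes $(\tau F\tau^{-1})(\mathfrak{N}^{\tau^{-1}})=F(\mathfrak{N})^{\tau^{-1}}$, so that $F(\mathfrak{N})|_S$ is recovered by relabeling from $(\tau F\tau^{-1})(\mathfrak{N}^{\tau^{-1}})|_{[n]}$; the latter, being a value of a Lipschitz function, depends only on $(\mathfrak{N}^{\tau^{-1}})|_{[n]}=(\mathfrak{N}|_S)^{\tau^{-1}\upharpoonright[n]}$, hence only on $\mathfrak{N}|_S$. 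Running the identical computation for $\mathfrak{N}'$ and using $\mathfrak{N}|_S=\mathfrak{N}'|_S$ yields $F(\mathfrak{N})|_S=F(\mathfrak{N}')|_S$, which is \eqref{eq:conj-inv-cond}.

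I expect no conceptual difficulty; the main obstacle is purely notational bookkeeping, namely keeping straight the direction of each relabeling $\cdot^\sigma$ against the restriction $\cdot|_S$ and checking that the bijection $\tau\upharpoonright S$ (respectively $\sigma^{-1}\upharpoonright[n]$) is exactly what strips the relabeling off at the end. I also need to note that every auxiliary structure produced, such as $\mathfrak{M}^\sigma$ and $\mathfrak{N}^{\tau^{-1}}$, lies in $\XN$, which is immediate since $\XN$ is closed under isomorphism.
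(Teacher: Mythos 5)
Your proof is correct and follows essentially the same route as the paper's: both directions transport an arbitrary finite set $S$ to the initial segment $[n]$ by a permutation and use the identity $(\sigma F\sigma^{-1})(\mathfrak{M})=F(\mathfrak{M}^{\sigma})^{\sigma^{-1}}$ to intertwine restriction to $S$ with restriction to $[n]$; the only cosmetic difference is that the paper realizes $\tau$ as an explicit involution built from transpositions $(i\,s_i)$, while you take any permutation with $\tau(S)=[n]$.
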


\begin{proof}
Suppose $F\in\Lip(\XN)$ is conjugation invariant and, for finite $S\subset\Nb$ with $|S|=n$, assume $\mathfrak{M},\mathfrak{M}'\in\XN$ satisfy $\mathfrak{M}|_S=\mathfrak{M}'|_S$.
With the elements of $S$ ordered $s_1<\cdots<s_n$, we define the transpositions $\tau_i=(is_i)$ and put $\tau=\tau_n\cdots\tau_1$, so that $\tau^2=\text{id}$ and $\mathfrak{M}^{\tau}|_{[n]}=\mathfrak{M}'^{\tau}|_{[n]}$.

For any map $\sigma:S\to S'$, we define $\sigma[S]=\{\sigma(s): s\in S\}\subseteq S'$ as the image of $S$ under $\sigma$.
By conjugation invariance, $\sigma^{-1}F\sigma\in\Lip(\XN)$ for all permutations $\sigma:\Nb\rightarrow\Nb$.
Taking $\sigma=\tau^{-1}$, we have
\[\tau F\tau^{-1}(\mathfrak{M}^{\tau})|_{[n]}=\tau F\tau^{-1}(\mathfrak{M}'^{\tau})|_{[n]}.\]
Furthermore,
\begin{eqnarray*}
\tau F\tau^{-1}(\mathfrak{M}^{\tau})|_{[n]}&=&(F(\mathfrak{M}))^{\tau}|_{[n]}\\
&=&F(\mathfrak{M})|_{\tau^{-1}[n]}\\
&=&F(\mathfrak{M})|_{S},
\end{eqnarray*}
and likewise for $\tau F\tau^{-1}(\mathfrak{M}'^{\tau})|_{[n]}$, establishing the conclusion.

For the converse, suppose $F$ satisfies \eqref{eq:conj-inv-cond}, $\mathfrak{M}|_{[n]}=\mathfrak{M}'|_{[n]}$, and $\sigma:\Nb\to\Nb$ is a permutation.
Then $\mathfrak{M}^{\sigma}$ and $\mathfrak{M}'^{\sigma}$ agree on $\sigma[n]$.
By \eqref{eq:conj-inv-cond}, $F(\mathfrak{M}^{\sigma})$ and $F(\mathfrak{M}'^{\sigma})$ agree on $\sigma[n]$ and, thus, $F(\mathfrak{M}^{\sigma})^{\sigma^{-1}}|_{[n]}=F(\mathfrak{M}'^{\sigma})^{\sigma^{-1}}|_{[n]}$, so that $\sigma F\sigma^{-1}$ is Lipschitz continuous.
Conjugation invariance readily follows.

\end{proof}

For an injection $\phi:S\rightarrow\Nb$, we write $F^{\phi}$ to denote the image of $F$ under $\phi$, that is, $F^{\phi}(\mathfrak{M})=[F(\mathfrak{M})]^{\phi}$ for $\mathfrak{M}\in\XN$.
Note that $F^{\phi}$ is not Lipschitz continuous in general.

\begin{cor}\label{cor:Lip}
Let $\phi:S\to\Nb$ be an injection and let $F\in\Lip(\XN)$ be conjugation invariant.
Then $F^{\phi}\in\Lip(\mathcal{X}_S)$.
\end{cor}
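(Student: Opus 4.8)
The plan is to first show that $F^\phi$ descends to a genuine self-map of $\XS$, and then to verify the Lipschitz estimate by reducing it to a statement about finite restrictions, at which point Proposition~\ref{prop:conj-inv} does all the work. Recall that $F^\phi$ is given on $\XN$ by $F^\phi(\mathfrak{M})=[F(\mathfrak{M})]^\phi$, a structure with domain $S$. First I would check that $F^\phi(\mathfrak{M})$ depends only on $\mathfrak{M}^\phi\in\XS$, so that $F^\phi$ factors through the map $\mathfrak{M}\mapsto\mathfrak{M}^\phi$ and defines a function $\XS\to\XS$ (still written $F^\phi$). Indeed, $\mathfrak{M}^\phi=\mathfrak{M}'^\phi$ holds precisely when $\mathfrak{M}$ and $\mathfrak{M}'$ agree on $\im\phi$; applying Proposition~\ref{prop:conj-inv} to every finite subset of $\im\phi$ (and taking the union, to cover the case that $\im\phi$ is infinite) yields $F(\mathfrak{M})|_{\im\phi}=F(\mathfrak{M}')|_{\im\phi}$, and since $[F(\mathfrak{M})]^\phi$ is determined by $F(\mathfrak{M})|_{\im\phi}$ we obtain $F^\phi(\mathfrak{M})=F^\phi(\mathfrak{M}')$. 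The existence of at least one $\mathfrak{M}\in\XN$ with $\mathfrak{M}^\phi=\mathfrak{S}$ for each $\mathfrak{S}\in\XS$ follows since every such $\mathfrak{S}$ is a $\phi$-image of an element of $\XN$ (equivalently, by universality of the Fra\"iss\'e limit, Theorem~\ref{thm:Fraisse}), so $F^\phi$ is defined on all of $\XS$.

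For the Lipschitz estimate I would use that the ultrametric on $\XS$ is determined by agreement on an increasing sequence of finite subsets exhausting $S$; consequently it suffices to prove the stronger locality statement that, for every finite $T\subseteq S$, the restriction $F^\phi(\mathfrak{S})|_T$ depends only on $\mathfrak{S}|_T$. Fixing such a $T$ and choosing $\mathfrak{M}\in\XN$ with $\mathfrak{M}^\phi=\mathfrak{S}$, the restriction $F^\phi(\mathfrak{S})|_T=([F(\mathfrak{M})]^\phi)|_T$ is computed from the values of the relations of $F(\mathfrak{M})$ on tuples drawn from $\phi[T]$, so it is determined by $F(\mathfrak{M})|_{\phi[T]}$. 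Since $\phi[T]$ is a \emph{finite} subset of $\Nb$, Proposition~\ref{prop:conj-inv} applies and shows that $F(\mathfrak{M})|_{\phi[T]}$ depends only on $\mathfrak{M}|_{\phi[T]}$; but $\mathfrak{M}|_{\phi[T]}$ is exactly the $\phi$-transport of $\mathfrak{S}|_T$ and hence is determined by $\mathfrak{S}|_T$. Chaining these dependencies shows that $F^\phi(\mathfrak{S})|_T$ depends only on $\mathfrak{S}|_T$, which gives the inequality $d_{\XS}(F^\phi(\mathfrak{S}),F^\phi(\mathfrak{S}'))\leq d_{\XS}(\mathfrak{S},\mathfrak{S}')$.

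The conceptual heart of the argument --- and the only place where conjugation invariance, rather than mere Lipschitz continuity, is used --- is the step in which Proposition~\ref{prop:conj-inv} controls $F$ on the finite set $\phi[T]$, which need not be an initial segment $[n]$ of $\Nb$. Plain Lipschitz continuity only guarantees that $F(\mathfrak{M})|_{[n]}$ is determined by $\mathfrak{M}|_{[n]}$, which would be useless here because an arbitrary injection $\phi$ can send $T$ to a scattered finite set; conjugation invariance upgrades this to locality over every finite subset, exactly matching the flexibility of $\phi$. I expect the main (and still modest) obstacle to be purely bookkeeping: justifying the two restriction identities, namely that $([F(\mathfrak{M})]^\phi)|_T$ is a function of $F(\mathfrak{M})|_{\phi[T]}$ and that $\mathfrak{M}|_{\phi[T]}$ is the $\phi$-transport of $\mathfrak{S}|_T$, together with the passage to a union of finite subsets needed to handle an infinite image $\im\phi$ in the well-definedness step.
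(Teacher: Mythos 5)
Your proof is correct and follows essentially the same route as the paper: the paper likewise defines the induced map on $\XS$ by lifting $\mathfrak{S}$ to any $\mathfrak{M}\in\XN$ and invoking Proposition \ref{prop:conj-inv} for both well-definedness and the Lipschitz bound. Your version merely spells out the finite-set bookkeeping (locality over $\phi[T]$ for finite $T\subseteq S$, and the passage to unions of finite sets when $\im\phi$ is infinite) that the paper's three-line proof leaves implicit.
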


\begin{proof}
Given conjugation invariant $F\in\Lip(\XN)$, we define $F':\mathcal{X}_S\rightarrow\mathcal{X}_S$ by $F'(\mathfrak{S})=F(\mathfrak{M})^{\phi}$ for any $\mathfrak{M}\in\XN$ such that $\mathfrak{M}|_S=\mathfrak{S}$.
This definition is well defined and Lipschitz continuous by Proposition \ref{prop:conj-inv}, and $F'$ coincides with our definition of $F^{\phi}$ above.
\end{proof}

In the following theorem, recall the definition of $\odap$-DAP from Definition \ref{defn:n-DAP}.

\begin{theorem}[Crane \& Towsner \cite{CraneTowsner2015}]\label{thm:CT2}
Let $\mathcal{L}=\{R_1,\ldots,R_r\}$ be a signature and $\XN\subseteq\lN$ be a Fra\"iss\'e space that has Fra\"iss\'e limit $\mathfrak{F}$ and which satisfies $\odap$-DAP.
Suppose $\mathfrak{X}$ is relatively exchangeable with respect to $\mathfrak{F}$.
Then there exist Borel measurable functions $f_1,\ldots,f_r$ with range $\{0,1\}$ such that $\mathfrak{X}\equalinlaw\mathfrak{X}^*=(\Nb,\mathcal{X}_1^*,\ldots,\mathcal{X}_r^*)$ with
\begin{equation}\label{eq:X*-nice}
\mathcal{X}^*_j(\vec x)=f_j(\mathfrak{F}|_{\rng\vec x},(\xi_s)_{s\subseteq\rng\vec x},(\prec_{\vec y})_{\vec y\sqsubseteq\vec x}),\quad\vec x\in\Nb^{\ar(R_j)},
\end{equation}
for i.i.d.\ Uniform$[0,1]$ random variables $(\xi_s)_{s\subseteq\Nb: |s|\leq\max_j\ar(R_j)}$ and independent uniform random orderings $(\prec_s)_{s\subseteq\Nb: |s|\leq\max_j\ar(R_j)}$.
\end{theorem}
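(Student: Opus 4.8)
The plan is to bootstrap from Theorem~\ref{thm:CTasym}, whose representation already agrees with \eqref{eq:X*-nice} in every slot except one: its first argument carries the full initial segment $\mathfrak{F}|_{[\max\vec x]}$ rather than the local substructure $\mathfrak{F}|_{\rng\vec x}$. The seed variables $(\xi_s)_{s\subseteq\rng\vec x}$ and the orderings $(\prec_{\vec y})_{\vec y\sqsubseteq\vec x}$ are already indexed by local data, so the entire task reduces to showing that, under $\odap$-DAP, the functions $f_j$ may be rechosen to factor through $\mathfrak{F}|_{\rng\vec x}$, discarding any dependence on the ambient coordinates $[\max\vec x]\setminus\rng\vec x$. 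First I would fix the representation $\mathfrak{X}\equalinlaw\mathfrak{X}^*$ granted by Theorem~\ref{thm:CTasym} and isolate the single nonlocal dependence to be eliminated.

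The key step is a \emph{localization lemma}: under $\odap$-DAP, the conditional distribution of the block $(\mathcal{X}^*_j(\vec x))_{\vec x\in(\rng\vec x)^{\ar(R_j)},\,j}$, given the local seeds on $\rng\vec x$, is insensitive to the context in which $\rng\vec x$ is embedded inside $\mathfrak{F}$. I would prove this by a comparison argument driven by relative exchangeability (Definition~\ref{defn:rel-exch}). Given two finite extensions $\mathfrak{T},\mathfrak{T}'\in\age(\XN)$ of $\mathfrak{F}|_{\rng\vec x}$ representing two possible surrounding contexts, disjoint amalgamation lets us build $\mathfrak{U}\in\age(\XN)$ containing embedded copies of both $\mathfrak{T}$ and $\mathfrak{T}'$ meeting exactly in $\rng\vec x$, with $\im(\psi\circ\phi)=\im(\psi)\cap\im(\psi')$ guaranteeing that no spurious identifications are introduced. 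Embedding $\mathfrak{U}$ into $\mathfrak{F}$ by universality produces, inside a single fixed $\mathfrak{F}$, two embedded copies of $\rng\vec x$ carrying identical local structure but distinct contexts; relative exchangeability then forces the laws of $\mathfrak{X}$ restricted to the two copies to coincide. Since $\mathfrak{T},\mathfrak{T}'$ were arbitrary, the restriction law depends only on the isomorphism type of $\mathfrak{F}|_{\rng\vec x}$, which by the canonical embedding $\rho$ of Section~\ref{section:relative exchangeability} we may record as $\mathfrak{F}|_{\rng\vec x}$ itself.

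What disjoint amalgamation at a \emph{single} level delivers only the marginal statement; the force of $\odap$-DAP is to coordinate the \emph{joint} law across the subset lattice of $\rng\vec x$. I would therefore run an induction on $|\rng\vec x|=n$: assuming the relations on all proper subtuples have been generated from local seeds $(\xi_s)_{s\subsetneq\rng\vec x}$, the structures specified on the sets $[n]\setminus\{i\}$ are automatically pairwise compatible, and $n$-DAP (Definition~\ref{defn:n-DAP}) guarantees a realization on all of $[n]$ extending them. This realizability is exactly the conditional independence that lets the top-level decision be encoded by a single fresh seed $\xi_{\rng\vec x}$, conditionally independent of the context given the lower-level data; iterating down the lattice reassembles the representation \eqref{eq:X*-nice}. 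Asymmetry is handled verbatim as in Theorem~\ref{thm:CTasym}: the uniform orderings $(\prec_{\vec y})_{\vec y\sqsubseteq\vec x}$ are retained to distinguish a tuple from its reorderings and to coordinate possibly negatively correlated outcomes such as $\mathcal{X}^*_j(1,2)$ and $\mathcal{X}^*_j(2,1)$.

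The main obstacle is precisely this joint coordination across the lattice, where $n$-DAP for \emph{all} $n$---not merely $2$-DAP---is indispensable. The partition classes $K_p,K_{p,2}$ of Example~\ref{ex:partition}, which fail $3$-DAP, show that localization can genuinely break down: there the three compatible pieces on pairs cannot be amalgamated on a triple, so the top-level seed cannot be introduced independently of the context, and one is forced back to the coarser $\mathfrak{F}|_{[\max\vec x]}$ of Theorem~\ref{thm:CTasym}. Verifying that $n$-DAP supplies exactly the conditional independence required at each level of the induction is the crux of the argument, and the remaining bookkeeping follows the template already established for Theorems~\ref{thm:CT} and~\ref{thm:CTasym}.
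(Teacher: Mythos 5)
First, a point of comparison you could not have known: the paper does not prove this statement at all. Theorem \ref{thm:CT2} is imported verbatim from \cite{CraneTowsner2015}, and the surrounding text says explicitly that the details of the representation are not needed, only its existence. So there is no internal proof to match your proposal against; it must stand on its own, and as written it has a genuine gap at exactly the point you flag as the crux.

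The gap is the passage from $n$-DAP to conditional independence. Your ``localization lemma'' establishes only a statement about marginal laws, and that much is immediate from Definition \ref{defn:rel-exch} with no amalgamation argument at all: if $\phi$ is an isomorphism $\mathfrak{F}|_S\to\mathfrak{F}|_T$, relative exchangeability gives $\mathfrak{X}|_T^{\phi}\equalinlaw\mathfrak{X}|_S$ directly. The entire content of the theorem is the joint statement---that a single family of Borel functions $f_j$, fed i.i.d.\ seeds indexed by the \emph{local} subsets, reproduces the law of all of $\mathfrak{X}$ simultaneously---and here your induction rests on the assertion that realizability under $n$-DAP ``is exactly the conditional independence'' allowing the top level to be encoded by a fresh seed $\xi_{\rng\vec x}$ independent of the context. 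But $n$-DAP is a purely combinatorial existence statement about the state space $\age(\XN)$: pairwise compatible structures on the sets $[n]\setminus\{i\}$ admit \emph{some} common extension in the class. It says nothing about the law of $\mathfrak{X}$; in particular it does not imply that the conditional distribution of $\mathfrak{X}|_{[n]}$ given the lower levels and the ambient data factors through local information, nor that this conditional law can be realized measurably by a uniform variable, consistently across all $n$-sets of the same isomorphism type. Compare the classical case where $\mathfrak{F}$ is trivial and every amalgamation property holds: an exchangeable graph (e.g.\ a mixture of Erd\H{o}s--R\'enyi graphs) has no naive conditional independence between distinct edges, and the Aldous--Hoover theorem cannot be proved by such a lattice induction; one needs the full coding machinery, with the empty-set seed $\xi_{\emptyset}$ carrying the mixing, randomization lemmas to realize conditional laws by uniform variables, and a verification that these realizations cohere into common functions. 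Bridging ``the amalgam exists in the class'' to ``the conditional law is codable by an independent uniform seed'' is precisely the work done in \cite{CraneTowsner2015}, and your proposal leaves it as an assertion; the failure of the conclusion for partitions (which lack $3$-DAP) shows the hypothesis is needed, but not that it suffices.
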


\begin{remark}
The difference between the representations in Theorems \ref{thm:CTasym} and \ref{thm:CT2} is that the assignment of $\vec x$ in $\mathcal{X}^*_j$ depends on the entire initial substructure $\mathfrak{F}|_{[\max\vec x]}$ in \eqref{eq:X*} but only on $\mathfrak{F}|_{\rng\vec x}$ in \eqref{eq:X*-nice}.
The intuition behind the weaker representation in \eqref{eq:X*} is that in general the structure of $\mathfrak{F}$ is such that statements about $\vec x$ have implications beyond the substructure $\mathfrak{F}|_{\rng\vec x}$. 
A prime example is the case of an equivalence relation $\sim_\pi$, where the transitivity axiom implies that $i\sim_{\pi} k$ whenever $i\sim_{\pi} j$ and $j\sim_{\pi} k$.
Therefore, when building a relatively exchangeable random partition from another, it is important to account for the possible non-local restrictions imposed by the transitivity axiom.
The condition of $\odap$-DAP implies that such non-local restrictions are absent, leading to the difference between \eqref{eq:X*} and \eqref{eq:X*-nice}.
\end{remark}

Let $\mathfrak{F}$ be an exchangeable Fra\"iss\'e limit for a space $\XN$ with $\odap$-DAP, $\mu$ be an $\mathfrak{F}$-exchangeable probability measure on $\XN$, and $f=(f_1,\ldots,f_r)$ be any Borel measurable functions as in \eqref{eq:X*-nice}.  Let $(\xi_{s})_{s\subseteq\Nb: |s|\leq\max_j\ar(R_j)}$ be i.i.d.\ Uniform$[0,1]$ random variables and $(\prec_{s})_{s\subseteq\Nb: |s|\leq\max_j\ar(R_j)}$ be independent uniform random orderings.
 We construct a random function $\Psi:\XN\to\XN$ by $\mathfrak{M}\mapsto\Psi(\mathfrak{M})=(\Nb,\mathcal{R}_1',\ldots,\mathcal{R}'_r)$, where
\begin{equation}\label{eq:Psi}
\mathcal{R}_j'(\vec x)=f_j(\mathfrak{M}|_{\rng\vec x},(\xi_{s})_{s\subseteq\rng\vec x},(\prec_{\vec y})_{\vec y\sqsubseteq\vec x}),\quad\vec x\in\Nb^{\ar(R'_j)}.\end{equation}
This map is clearly Lipschitz continuous and has the further property that $\mathfrak{N}|_{\rng\vec x}=\mathfrak{N}'|_{\rng\vec x}$ implies $\Psi(\mathfrak{N})|_{\rng\vec x}=\Psi(\mathfrak{N}')|_{\rng\vec x}$.
Note further that $\mathfrak{M}^{\sigma}|_{\rng\vec x}=\mathfrak{M}|_{\rng\sigma^{-1}(\vec x)}$ for every $\vec x$ and every permutation $\sigma:\Nb\to\Nb$.

The following is an immediate consequence of Theorem \ref{thm:Lip-construct} and our construction of $\Psi$.

\begin{prop}\label{prop:conj-construct}
Let $\mathcal{L}$ be a signature, $P$ be an exchangeable, consistent transition probability on a Fra\"iss\'e space $\XN$ that satisfies $\odap$-DAP, and $\mathfrak{F}$ be an exchangeable Fra\"iss\'e limit of $\XN$.
Given $\mathfrak{F}=\mathfrak{M}$, let $f=(f_1,\ldots,f_r)$ be a collection of Borel measurable functions that determine the law of the $\mathfrak{M}$-exchangeable probability distribution $P(\mathfrak{M},\cdot)$ as in \eqref{eq:X*-nice}.
The function $\Psi$ defined from $f$ as in \eqref{eq:Psi} is well defined and conjugation invariant with probability 1 and exchangeable in the sense of Definition \ref{defn:exch-measure}.
\end{prop}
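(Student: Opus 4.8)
The plan is to recognize $\Psi$ as the version of the random Lipschitz function $\Phi$ of Theorem~\ref{thm:Lip-construct} obtained by realizing the draw $\mathfrak{Y}\sim P(\mathfrak{F},\cdot)$ through the explicit local representation \eqref{eq:X*-nice}, which is available precisely because $\XN$ has $\odap$-DAP. With this identification, well-definedness, Lipschitz continuity, and exchangeability are the very assertions already established in Theorem~\ref{thm:Lip-construct}, so the only genuinely new content is conjugation invariance, and it is the local form of \eqref{eq:X*-nice} that supplies it.

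First I would settle well-definedness, that is, $\Psi(\mathfrak{N})\in\XN$ almost surely for every $\mathfrak{N}\in\XN$. Filling the input slot of \eqref{eq:Psi} with the realized $\mathfrak{F}$ reproduces \eqref{eq:X*-nice}, so $\Psi(\mathfrak{F})\equalinlaw\mathfrak{Y}\sim P(\mathfrak{F},\cdot)$ and hence $\Psi(\mathfrak{F})\in\XN$ almost surely by Theorem~\ref{thm:CT2}; equivalently $\Psi(\mathfrak{F})|_{[n]}\in\Xn$ for every $n$. Now fix $\mathfrak{N}\in\XN$ and $n\in\Nb$. Since $\mathfrak{F}$ is an exchangeable Fra\"iss\'e limit, it is almost surely universal and its law is permutation invariant, so $\mathbb{P}\{\mathfrak{F}|_{[n]}=\mathfrak{N}|_{[n]}\}>0$. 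On that event every $\mathfrak{F}|_{\rng\vec x}$ with $\rng\vec x\subseteq[n]$ agrees with $\mathfrak{N}|_{\rng\vec x}$, and because $\Psi(\mathfrak{F})$ and $\Psi(\mathfrak{N})$ are built from the same seed families $(\xi_s)$ and $(\prec_s)$, formula \eqref{eq:Psi} yields $\Psi(\mathfrak{N})|_{[n]}=\Psi(\mathfrak{F})|_{[n]}\in\Xn$. But the value $\Psi(\mathfrak{N})|_{[n]}$ depends only on $\mathfrak{N}|_{[n]}$ and on the seeds, which are independent of $\mathfrak{F}$; hence this conclusion on a positive-probability $\mathfrak{F}$-event is in fact unconditional, giving $\Psi(\mathfrak{N})|_{[n]}\in\Xn$ almost surely. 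Letting $n$ vary shows $\Psi(\mathfrak{N})\in\XN$ almost surely, and combined with the Lipschitz continuity recorded after \eqref{eq:Psi} this gives $\Psi\in\Lip(\XN)$ with probability $1$.

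Conjugation invariance is then immediate from Proposition~\ref{prop:conj-inv}. The local property noted after \eqref{eq:Psi}, namely that $\mathfrak{N}|_{\rng\vec x}=\mathfrak{N}'|_{\rng\vec x}$ forces $\Psi(\mathfrak{N})|_{\rng\vec x}=\Psi(\mathfrak{N}')|_{\rng\vec x}$, upgrades to $\mathfrak{N}|_S=\mathfrak{N}'|_S\Rightarrow\Psi(\mathfrak{N})|_S=\Psi(\mathfrak{N}')|_S$ for every finite $S$, since a relation restricted to $S$ is determined by its tuples with range contained in $S$. This is exactly criterion \eqref{eq:conj-inv-cond}, so $\Psi$ is conjugation invariant. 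I would stress that this is the single step at which $\odap$-DAP is used: it is precisely the local first argument $\mathfrak{F}|_{\rng\vec x}$ of \eqref{eq:X*-nice}, rather than the initial segment $\mathfrak{F}|_{[\max\vec x]}$ of \eqref{eq:X*}, that makes $\Psi(\mathfrak{N})|_{\rng\vec x}$ depend on $\mathfrak{N}$ only through $\mathfrak{N}|_{\rng\vec x}$; without it one recovers only Lipschitz continuity.

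Finally, exchangeability in the sense of Definition~\ref{defn:exch-measure} I would verify directly from \eqref{eq:Psi}, mirroring the corresponding computation in Theorem~\ref{thm:Lip-construct}. Fixing $\mathfrak{M}\in\XN$ and a permutation $\sigma:\Nb\to\Nb$, the identity $\mathfrak{M}^{\sigma}|_{\rng\vec x}=\mathfrak{M}|_{\rng\sigma^{-1}(\vec x)}$ recorded after \eqref{eq:Psi} shows that the relations of $\Psi(\mathfrak{M}^{\sigma})$ and those of $\Psi(\mathfrak{M})^{\sigma}$ are given by the same functions $f_j$ applied to the same structure data, up to relabeling the index sets of $(\xi_s)$ and $(\prec_s)$ by $\sigma$. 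As these families are i.i.d.\ Uniform$[0,1]$ and independent uniform random orderings, they are invariant in law under that relabeling, whence $\Psi(\mathfrak{M}^{\sigma})\equalinlaw\Psi(\mathfrak{M})^{\sigma}$. I expect the main obstacle to lie in the well-definedness step, specifically in transferring the almost-sure membership $\Psi(\mathfrak{F})\in\XN$ from the random limit $\mathfrak{F}$ to an arbitrary fixed $\mathfrak{N}$; the independence of the seeds from $\mathfrak{F}$ and the positivity $\mathbb{P}\{\mathfrak{F}|_{[n]}=\mathfrak{N}|_{[n]}\}>0$ are exactly what close that gap, after which conjugation invariance and exchangeability follow cleanly from the two displayed identities following \eqref{eq:Psi} and Proposition~\ref{prop:conj-inv}.
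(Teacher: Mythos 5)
Your proposal is correct and follows essentially the route the paper intends: the paper declares Proposition \ref{prop:conj-construct} an immediate consequence of Theorem \ref{thm:Lip-construct} and the construction of $\Psi$, and your argument simply fills in those details — locality of \eqref{eq:Psi} plus Proposition \ref{prop:conj-inv} for conjugation invariance, relabeling invariance of the seed families $(\xi_s)$ and $(\prec_s)$ for exchangeability in the sense of Definition \ref{defn:exch-measure}, and Theorem \ref{thm:CT2} for identifying $\Psi(\mathfrak{F})$ with a draw from $P(\mathfrak{F},\cdot)$. Your transfer argument for well-definedness at a fixed $\mathfrak{N}$ (positivity of $\mathbb{P}\{\mathfrak{F}|_{[n]}=\mathfrak{N}|_{[n]}\}$ together with independence of the seeds from $\mathfrak{F}$) is a useful elaboration the paper leaves implicit, and it is sound at the same level of rigor as the paper's own treatment of $\Phi$ via finite restrictions.
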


\section{Discrete time chains}\label{section:discrete}

Throughout this section, $T=\mathbb{Z}_+$ and $\mathbf{X}$ is a discrete time Markov chain on a Fra\"iss\'e space $\XN$.
For any probability measure $\mu$ on $\Lip(\XN)$, recall the definition of the standard $\mu$-process $\mathbf{X}_{\mu}^*$ from Section \ref{section:discrete time-summary}.

\begin{prop}\label{prop:easy way}
For any exchangeable probability measure $\mu$ on $\Lip(\XN)$, the standard $\mu$-process is an exchangeable Feller chain on $\XN$.
\end{prop}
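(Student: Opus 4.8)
The plan is to verify the three defining features in turn: that $\mathbf{X}^*_\mu$ is a time-homogeneous combinatorial Markov chain, that it is exchangeable, and that it has the Feller property. The first is essentially built into the construction \eqref{eq:mu-chain}. Since $F_1,F_2,\ldots$ are i.i.d.\ from $\mu$, the future maps $(F_{m+k})_{k\geq1}$ are independent of $\sigma\langle F_1,\ldots,F_m\rangle$, which contains $\sigma\langle\mathfrak{X}^*_0,\ldots,\mathfrak{X}^*_m\rangle$; hence $(\mathfrak{X}^*_m)_m$ is Markov with common one-step kernel $P(x,A)=\mu(\{F\in\Lip(\XN):F(x)\in A\})$, and time-homogeneity is immediate from the identical distribution of the $F_m$. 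I would first check that $F\mapsto F(x)$ is measurable, which holds because $F(x)|_{[n]}=F^{[n]}(x|_{[n]})$ depends measurably on the restriction $F^{[n]}$ and $\Xn$ is finite, so that $\{F:F(x)\in A\}$ is Borel for every $A\Borel\XN$.

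For exchangeability I would prove $P_m(x^\sigma,A^\sigma)=P_m(x,A)$ for all $m$ by induction on the number of steps, where $P_m=P^m$ is the $m$-step kernel obtained from Chapman--Kolmogorov. The base case $m=1$ is the heart of the argument and uses exchangeability of $\mu$: since $F\sim\mu$ gives $F(x^\sigma)\equalinlaw F(x)^\sigma$ (Definition \ref{defn:exch-measure}) and $y\mapsto y^\sigma$ is a bijection, $P(x^\sigma,A^\sigma)=\mathbb{P}\{F(x^\sigma)\in A^\sigma\}=\mathbb{P}\{F(x)^\sigma\in A^\sigma\}=\mathbb{P}\{F(x)\in A\}=P(x,A)$. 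The base case also shows that $P(x^\sigma,\cdot)$ is the pushforward of $P(x,\cdot)$ along $w\mapsto w^\sigma$, so for the inductive step I would write $P_{m+1}(x^\sigma,A^\sigma)=\int P(x^\sigma,dz)\,P_m(z,A^\sigma)=\int P(x,dw)\,P_m(w^\sigma,A^\sigma)=\int P(x,dw)\,P_m(w,A)=P_{m+1}(x,A)$, the third equality being the inductive hypothesis. This yields \eqref{eq:exch-tps} for every $s\in\mathbb{Z}_+$.

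For the Feller property it suffices, by Stone--Weierstrass exactly as in the proof of Theorem \ref{prop:Feller}, to treat $g\in\mathcal{C}(\XN)$, say $g(\mathfrak{M})=g'(\mathfrak{M}|_{[n]})$. The key observation is that the ultrametric (Lipschitz) structure forces $\mathfrak{X}^*_m|_{[n]}$ to be a function of $\mathfrak{M}|_{[n]}$ alone: iterating $F(\cdot)|_{[n]}=F^{[n]}(\cdot|_{[n]})$ gives $\mathfrak{X}^*_m|_{[n]}=(F_m^{[n]}\circ\cdots\circ F_1^{[n]})(\mathfrak{M}|_{[n]})$. Consequently $\mathbf{P}_m g(\mathfrak{M})=\mathbb{E}\,g'\big((F_m^{[n]}\circ\cdots\circ F_1^{[n]})(\mathfrak{M}|_{[n]})\big)$ depends only on $\mathfrak{M}|_{[n]}$, hence lies in $\mathcal{C}(\XN)$ and is continuous. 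For general continuous $g$ I would approximate uniformly by elements of $\mathcal{C}(\XN)$ and use that each $\mathbf{P}_m$ is a contraction to transfer continuity; the second clause of Definition \ref{defn:Feller} is automatic in discrete time since $\mathbf{P}_0=\mathrm{id}$. I expect the only real care to lie in the exchangeability propagation, namely getting the change of variables in the inductive step right, since the one-step identity and the Feller argument are direct consequences of Definition \ref{defn:exch-measure} and the ultrametric, respectively. An alternative to the direct Feller argument would be to extract the projective Markov property from the same restriction identity and then invoke Theorem \ref{prop:Feller}.
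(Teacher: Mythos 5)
Your proposal is correct, and it reaches the conclusion by a partly different route than the paper. The paper's proof handles exchangeability at the level of whole processes, noting that $\mathbf{X}_{\mathfrak{M},\mu}^{*\sigma}=(F^{(m)}(\mathfrak{M})^{\sigma})_{m}\equalinlaw(F^{(m)}(\mathfrak{M}^{\sigma}))_{m}=\mathbf{X}_{\mathfrak{M}^{\sigma},\mu}^{*}$ and reading off the transition identity, whereas you prove $P_m(x^{\sigma},A^{\sigma})=P_m(x,A)$ by induction through Chapman--Kolmogorov; these are equivalent, and your inductive step (pushforward of $P(x,\cdot)$ under $w\mapsto w^{\sigma}$, then the inductive hypothesis) is sound. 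The more substantive divergence is the Feller step: the paper simply observes that construction from a common i.i.d.\ sequence of Lipschitz functions gives consistency (Definition \ref{defn:projective}) and then invokes the equivalence in Theorem \ref{prop:Feller}, while you verify Definition \ref{defn:Feller} directly, using the ultrametric identity $\mathfrak{X}^*_m|_{[n]}=(F_m^{[n]}\circ\cdots\circ F_1^{[n]})(\mathfrak{M}|_{[n]})$ to show $\mathbf{P}_m g\in\mathcal{C}(\XN)$ for $g\in\mathcal{C}(\XN)$, extending to general continuous $g$ by Stone--Weierstrass and the contraction property of $\mathbf{P}_m$, and noting that the $t\downarrow 0$ clause is vacuous in discrete time since $\mathbf{P}_0=\mathrm{id}$. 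What each buys: the paper's route is shorter because Theorem \ref{prop:Feller} is already available, and it simultaneously records projectivity; your direct route is self-contained, adds the useful measurability check for $F\mapsto F(x)$, and makes visible that only the Lipschitz/ultrametric structure of the maps is used at this point, not the Fra\"iss\'e hypothesis on $\XN$ (which Theorem \ref{prop:Feller} nominally assumes). Your closing remark correctly identifies the paper's actual argument as the alternative.
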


\begin{proof}
Let $\mu$ be any exchangeable probability measure on $\Lip(\XN)$ so that $F\sim\mu$ implies $F(\mathfrak{M}^{\sigma})\equalinlaw F(\mathfrak{M})^{\sigma}$ for every $\mathfrak{M}\in\XN$ and every permutation $\sigma:\Nb\to\Nb$.
The standard $\mu$-process $\mathbf{X}_{\mu}^*$ is constructed as in \eqref{eq:mu-chain} from an i.i.d.\ sequence $F_1,F_2,\ldots$ from $\mu$.
Each $\mathbf{X}_{\mathfrak{M},\mu}^*$ clearly satisfies the Markov property on $\XN$ by its construction from the i.i.d.\ sequence $F_1,F_2,\ldots$.
Exchangeability of $\mu$ implies 
\[\mathbf{X}_{\mathfrak{M},\mu}^{*\sigma}=(F^{(m)}(\mathfrak{M})^{\sigma})_{m\in\mathbb{Z}_+}\equalinlaw(F^{(m)}(\mathfrak{M}^{\sigma}))_{m\in\mathbb{Z}_+}=\mathbf{X}_{\mathfrak{M}^{\sigma},\mu}^{*}\]
 for every permutation $\sigma:\Nb\to\Nb$ and every $\mathfrak{M}\in\XN$, where $F^{(m)}(\cdot)=(F_m\circ\cdots\circ F_1)(\cdot)$ and $F^{(0)}(\cdot)$ is defined as the identity.
The transition probabilities of $\mathbf{X}^*_{\mu}$ therefore satisfy
\begin{eqnarray*}
\mathbb{P}\{\mathfrak{X}_{m+1}^{*}|_{[n]}=\mathfrak{S}'\mid\mathfrak{X}_{m}^{*}|_{[n]}=\mathfrak{S}\}&=&\mathbb{P}\{F_{m+1}^{[n]}(\mathfrak{S})=\mathfrak{S}'\}\\
&=&\mathbb{P}\{F_{m+1}^{[n]}(\mathfrak{S}^{\sigma})=\mathfrak{S}'^{\sigma}\}\\
&=&\mathbb{P}\{\mathfrak{X}_{m+1}^{*}|_{[n]}=\mathfrak{S}'^{\sigma}\mid\mathfrak{X}_m^{*}|_{[n]}=\mathfrak{S}^{\sigma}\},
\end{eqnarray*}
and $\mathbf{X}^*_{\mu}$ is exchangeable in the sense of Definition \ref{defn:exchangeable CMP}.
By construction from a common sequence of Lipschitz continuous functions, $\mathbf{X}^*_{\mu}$ is consistent (Definition \ref{defn:projective}).
The Feller property follows from Theorem \ref{prop:Feller}.
\end{proof}

Our main theorems in discrete time establish the converse based on the concept of relative exchangeability from Section \ref{section:relative exchangeability}.

\begin{proof}[{Proof of Theorem \ref{thm:discrete}}]

The `if' direction follows from Proposition \ref{prop:easy way}.
The `only if' direction goes as follows.

By assumption, $\XN$ is a Fra\"iss\'e space (Definition \ref{defn:Fraisse space}), which by Theorem \ref{thm:Fraisse} possesses a unique (up to isomorphism) ultrahomogeneous structure $\mathfrak{F}\in\XN$ into which every $\mathfrak{M}\in\XN$ embeds.
By Theorem \ref{thm:AFP}, we can assume $\mathfrak{F}$ results from an exchangeable construction.
Let $P(\cdot,\cdot)$ be the transition probability for $\mathbf{X}$ as in \eqref{eq:tps} and let $P^{[n]}(\cdot,\cdot)$ be the induced transition probabilities of $\mathbf{X}^{[n]}$.
By Definition \ref{defn:rel-exch}, $\mathfrak{Y}\sim P(\mathfrak{M},\cdot)$ is relatively exchangeable with respect to $\mathfrak{M}$, for every $\mathfrak{M}\in\XN$.
By Theorem \ref{thm:Fraisse}, $\mathfrak{F}$ satisfies the conditions of Theorem \ref{thm:CTasym} with probability 1 and Theorem \ref{thm:Lip-construct} implies that $\Phi=\Phi_{\mathfrak{Y}}$ defined from $\mathfrak{Y}\sim P(\mathfrak{M}, \cdot)$, given $\mathfrak{F}=\mathfrak{M}$, is an exchangeable Lipschitz continuous function $\XN\to\XN$.

Let $\mu$ be the probability measure governing $\Phi$.
By relative exchangeability and Theorem \ref{thm:CTasym}, we have
\[\Phi(\mathfrak{N})|_{[n]}=\mathfrak{Y}^{\rho_{\mathfrak{N}}}|_{[n]}\sim P^{[n]}(\mathfrak{N}|_{[n]},\cdot).\]
Thus, we can construct $\mathbf{X}^*_{\mathfrak{M}}\equalinlaw\mathbf{X}_{\mathfrak{M}}$ by taking $\mathfrak{X}_0^*=\mathfrak{M}$, generating $\Phi_1,\Phi_2,\ldots$ i.i.d.\ from $\mu$, and putting $\mathfrak{X}_m^{*}=\Phi_{m}(\mathfrak{X}_{m-1}^*)$ for each $m\geq1$.
Relative exchangeability guarantees that $\mathfrak{X}^*_m$ obeys the appropriate transition probabilities for each $m\geq1$.
We conclude $\mathbf{X}^*_{\mathfrak{M}}\equalinlaw\mathbf{X}_{\mathfrak{M}}$ for every $\mathfrak{M}\in\XN$ by induction on $m\geq1$; thus, $\mathbf{X}^*_{\mu}\equalinlaw\mathbf{X}$.
This completes the proof.
\end{proof}

There are a few examples of Theorem \ref{thm:discrete} scattered throughout the literature.
Perhaps the most well known is the discrete time coalescent chain.
\begin{example}[Coalescent chain]\label{example:coalescent}
Let $\mathcal{P}_{\Nb}$ be the set of partitions of $\Nb$.
For each $\pi\in\partitionsN$, the {\em coagulation operator} is a Lipschitz continuous map $\text{Coag}(\cdot,\pi):\mathcal{P}_{\Nb}\rightarrow\mathcal{P}_{\Nb}$ that acts by putting  $\text{Coag}(\pi'',\pi)=\{B_1',B_2',\ldots\}$, where
\[B'_j=\bigcup_{i\in B_j}B''_i,\quad j\geq1,\]
for $\pi=\{B_1,B_2,\ldots\}$ and $\pi''=\{B''_1,B''_2,\ldots\}$.
The operation $\text{Coag}(\pi'',\pi)$ is called the {\em coagulation of $\pi''$ by $\pi$}.

An {\em exchangeable coalescent chain} $\mathbf{X}=(\mathfrak{X}_m)_{m\in\mathbb{Z}_+}$ evolves on $\mathcal{P}_{\Nb}$ as follows.
Let $\mu$ be an exchangeable probability distribution on $\mathcal{P}_{\Nb}$, put $\mathfrak{X}_0=\mathbf{0}_{\Nb}=\{\{1\},\{2\},\ldots\}$ (the partition of $\Nb$ into singletons), and take $\Pi_1,\Pi_2,\ldots$ i.i.d.\ from $\mu$.
For each $m\geq1$, we define $\mathfrak{X}_m=\text{Coag}(\mathfrak{X}_{m-1},\Pi_m)$.
Exchangeability of $\mathbf{X}$ is endowed by exchangeability of the partitions $\Pi_1,\Pi_2,\ldots$ and the action of the coagulation operator.

In typical treatments, for example, \cite{Bertoin2006}, the coalescent chain is defined by its transitions in terms of the coagulation operator, which is later shown to imply the projectivity property.
Theorem \ref{thm:discrete} establishes a general converse, which guarantees the existence of some random Lipschitz continuous function such that the above iterative description is possible.
Theorem \ref{thm:discrete}, however, does not establish any canonical form for the associated class of Lipschitz continuous operators.

While $\text{Coag}(\cdot,\pi)$ is Lipschitz continuous, it is not conjugation invariant.
For example, take $\pi=\{\{1,2\},\{3\}\}$, $\pi'^{(1)}=\{\{1,4\},\{2,3,5\}\}$, and $\pi'^{(2)}=\{\{1,4\},\{2\},\{3,5\}\}$, so that $\pi'^{(1)}|_{\{3,4,5\}}=\pi'^{(2)}|_{\{3,4,5\}}=\{\{3,5\},\{4\}\}$.
In this case, $\text{Coag}(\pi'^{(1)},\pi)=\{\{1,2,3,4,5\}\}$ and $\text{Coag}(\pi'^{(2)},\pi)=\{\{1,2,4\},\{3,5\}\}$, so that $\text{Coag}(\pi'^{(1)},\pi)|_{\{3,4,5\}}\neq\text{Coag}(\pi'^{(2)},\pi)|_{\{3,4,5\}}$.
By Proposition \ref{prop:conj-inv}, $\text{Coag}(\cdot,\pi)$ is not conjugation invariant.
This observation goes hand-in-hand with Example \ref{ex:partition}, in which we show that partitions do not have the stronger $n$-disjoint amalgamation property for $n=3$, and Theorem \ref{thm:conjugation-discrete}, which associates conjugation invariant functions with processes on a Fra\"iss\'e space with $\odap$-DAP.

\end{example}

\subsection{Refining Theorem \ref{thm:discrete}}\label{section:refining}

When $\XN$ also has $\odap$-DAP, we can refine Theorem \ref{thm:discrete} so that $\mathbf{X}^{*}_{\mu}$ is constructed from i.i.d.\ conjugation invariant functions.

\begin{theorem}\label{thm:conjugation-discrete}
In addition to the hypotheses of Theorem \ref{thm:discrete}, suppose that $\XN$ is a Fra\"iss\'e space with $\odap$-DAP.
Then the characteristic measure $\mu$  from Theorem \ref{thm:discrete} can be defined so that $\mu$-almost every $F\in\Lip(\XN)$ is conjugation invariant.
\end{theorem}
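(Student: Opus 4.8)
The plan is to rerun the proof of Theorem \ref{thm:discrete}, substituting the sharper localized representation available under $\odap$-DAP for the coarser one used there. Concretely, where that proof invoked Theorem \ref{thm:CTasym} and built the random transition operator $\Phi$ via Theorem \ref{thm:Lip-construct}, I would instead invoke Theorem \ref{thm:CT2} and build the operator $\Psi$ of \eqref{eq:Psi} via Proposition \ref{prop:conj-construct}. The payoff of $\odap$-DAP is precisely that the representing functions $f_j$ depend only on the local structure $\mathfrak{F}|_{\rng\vec x}$ rather than on the full initial segment $\mathfrak{F}|_{[\max\vec x]}$, and this locality is exactly what forces the resulting operator to be conjugation invariant by Proposition \ref{prop:conj-inv}.

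First I would fix an exchangeable Fra\"iss\'e limit $\mathfrak{F}$ of $\XN$ (Theorem \ref{thm:AFP}) and let $P$ denote the one-step transition probability of $\mathbf{X}$. Conditionally on $\mathfrak{F}=\mathfrak{M}$, the law $P(\mathfrak{M},\cdot)$ is $\mathfrak{M}$-exchangeable, so Theorem \ref{thm:CT2} supplies Borel functions $f=(f_1,\ldots,f_r)$ realizing $P(\mathfrak{M},\cdot)$ in the localized form \eqref{eq:X*-nice}. Feeding these $f_j$ into \eqref{eq:Psi} produces the operator $\Psi$, and Proposition \ref{prop:conj-construct} immediately gives that $\Psi$ is well defined, conjugation invariant with probability $1$, and exchangeable in the sense of Definition \ref{defn:exch-measure}. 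Taking $\mu$ to be the law of $\Psi$ then yields a measure concentrated on conjugation invariant functions, and I would close the argument by iterating i.i.d.\ copies $\Psi_1,\Psi_2,\ldots$ of $\Psi$ exactly as in the proof of Theorem \ref{thm:discrete}.

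The one point that Proposition \ref{prop:conj-construct} does not already hand us---and the step I expect to be the crux---is that iterating $\Psi$ reproduces $\mathbf{X}$, i.e.\ that $\Psi(\mathfrak{N})|_{[n]}\equalinlaw P^{[n]}(\mathfrak{N}|_{[n]},\cdot)$ for every $\mathfrak{N}\in\XN$ and $n\in\Nb$, not merely for $\mathfrak{N}=\mathfrak{F}$. Both sides depend on $\mathfrak{N}$ only through $\mathfrak{N}|_{[n]}$: the right-hand side by consistency, and the left-hand side by the locality property recorded just after \eqref{eq:Psi}, that $\mathfrak{N}|_{\rng\vec x}=\mathfrak{N}'|_{\rng\vec x}$ implies $\Psi(\mathfrak{N})|_{\rng\vec x}=\Psi(\mathfrak{N}')|_{\rng\vec x}$. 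Given $\mathfrak{S}=\mathfrak{N}|_{[n]}\in\Xn$, universality of $\mathfrak{F}$ furnishes an embedding $\iota\colon\mathfrak{S}\to\mathfrak{F}$, so that $\mathfrak{F}^{\iota}=\mathfrak{S}$; exchangeability and consistency of $P$ then identify $P^{[n]}(\mathfrak{S},\cdot)$ with the law of $\mathfrak{Y}^{\iota}$ for $\mathfrak{Y}\sim P(\mathfrak{F},\cdot)$. On the other side, the identity $\mathfrak{M}^{\sigma}|_{\rng\vec x}=\mathfrak{M}|_{\rng\sigma^{-1}(\vec x)}$ recorded after \eqref{eq:Psi}, together with exchangeability of the auxiliary family $(\xi_s),(\prec_s)$, gives $\Psi(\mathfrak{N})|_{[n]}\equalinlaw\Psi(\mathfrak{F})^{\iota}$, while the choice of $f$ via Theorem \ref{thm:CT2} gives $\Psi(\mathfrak{F})\equalinlaw\mathfrak{Y}$. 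Chaining these equalities in law matches the two sides.

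The main obstacle, then, is bookkeeping the relabeling $\iota$ against the localized arguments of the $f_j$: one must verify that $\Psi(\mathfrak{F})^{\iota}$ evaluates $f_j$ at $\mathfrak{F}|_{\iota(\rng\vec x)}$, whose pullback along $\iota$ is exactly $\mathfrak{S}|_{\rng\vec x}=\mathfrak{N}|_{\rng\vec x}$, while the permuted families $(\xi_{\iota(s)})$ and $(\prec_{\iota(s)})$ remain i.i.d.\ uniform and independent uniform orderings. This is where $\odap$-DAP is indispensable: under mere DAP the first argument of $f_j$ would be $\mathfrak{F}|_{[\max\vec x]}$, and restricting along $\iota$ would no longer localize to $\mathfrak{N}|_{\rng\vec x}$, breaking both the conjugation invariance of $\Psi$ and the transition-matching identity above.
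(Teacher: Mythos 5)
Your proposal is correct and follows essentially the same route as the paper's own proof: replace Theorem \ref{thm:CTasym} by the localized representation of Theorem \ref{thm:CT2}, build the conjugation invariant operator $\Psi$ of \eqref{eq:Psi} via Proposition \ref{prop:conj-construct}, and iterate i.i.d.\ copies as in Theorem \ref{thm:discrete}, closing with Proposition \ref{prop:conj-inv} and Corollary \ref{cor:Lip}. The transition-matching step $\Psi(\mathfrak{N})\sim P(\mathfrak{N},\cdot)$ that you flag as the crux is the same point the paper disposes of with the phrase ``by definition,'' and your embedding-and-relabeling verification of it is sound.
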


\begin{proof}

The proof follows the same recipe as that of Theorem \ref{thm:discrete}, except now we use the stronger representation in \eqref{eq:X*-nice}.
Let $\Psi$ be the conjugation invariant function constructed in Proposition \ref{prop:conj-construct}.
For any $\mathfrak{N}\in\XN$, $\Psi(\mathfrak{N})\sim P(\mathfrak{N},\cdot)$ by definition.
The rest follows by combining Theorem \ref{thm:discrete} with Proposition \ref{prop:conj-inv}, Corollary \ref{cor:Lip}, and Theorem \ref{thm:CT2}. \end{proof}

Unlike Theorem \ref{thm:discrete}, Theorem \ref{thm:conjugation-discrete} also provides the form of those Lipschitz continuous functions that determine the evolution of exchangeable Feller chains.
The space of partitions does not have $3$-DAP, see Example \ref{ex:partition}, and therefore Theorem \ref{thm:conjugation-discrete} does not apply to the coalescent chain above or in general to Markov chains on $\partitionsN$.
Exchangeable Feller chains on $\{0,1\}^{\Nb}$ give an easy illustration of Theorem \ref{thm:conjugation-discrete}, as we now show.

\begin{example}\label{ex:cut-paste-discrete}
Here we let $\mathcal{L}=\{R\}$ be a signature with $\ar(R)=1$, so that any $\{0,1\}$-valued sequence $x=x^1x^2\cdots$ corresponds to an $\mathcal{L}$-structure $\mathfrak{N}=(\Nb,\mathcal{R})$ with $\mathcal{R}=\{n\in\Nb: x^n=1\}$.
The space $\{0,1\}^{\Nb}$ corresponds to the set of all subsets of $\Nb$, which is plainly a Fra\"iss\'e class and has $\odap$-DAP; see Example \ref{ex:universal-set}.
 Theorem \ref{thm:conjugation-discrete} applies to Feller chains on $\lN=\{0,1\}^{\Nb}$.

We construct $\mathbf{X}$ by specifying $\mu$ on $\Lip(\{0,1\}^{\Nb})$ as follows.
Let $\Theta$ be a probability measure on $[0,1]\times[0,1]$, take $(\theta_0,\theta_1)\sim\Theta$, and, given $(\theta_0,\theta_1)$, define $Y_0=Y_0^1Y_0^2\cdots$ and $Y_1=Y_1^1Y_1^2\cdots$ to be conditionally independent with $Y_0$ a sequence of i.i.d.\ Bernoulli$(\theta_0)$ random variables and $Y_1$ a sequence of i.i.d.\ Bernoulli$(\theta_1)$ random variables, that is, for every $j=1,2,\ldots$,
\begin{align*}
\mathbb{P}\{Y_0^j=1\mid\theta_0\}&=1-\mathbb{P}\{Y_0^j=0\mid\theta_0\}=\theta_0\quad\text{and}\\
\mathbb{P}\{Y_1^j=1\mid\theta_1\}&=1-\mathbb{P}\{Y_1^j=0\mid\theta_1\}=\theta_1.
\end{align*}
Based on $Y=(Y_0,Y_1)$, we define $F_Y:\{0,1\}^{\Nb}\to\{0,1\}^{\Nb}$ by $x\mapsto x'=F_ Y(x)$ with
\[x'^n:=\left\{\begin{array}{cc} Y_0^n,& x^n=0,\\ Y_1^n,& x^n=1.\end{array}\right.\]
Defined in this way, $F_Y$ is exchangeable and conjugation invariant.
The measure $\mu$ governing $F_Y$ corresponds to the measure governing the i.i.d.\ sequence $F_1,F_2,\ldots$ of Lipschitz continuous functions in Theorem \ref{thm:conjugation-discrete}.

\end{example}

\section{Continuous time processes}\label{section:continuous time}

We now let $\mathbf{X}=\{\mathbf{X}_{\mathfrak{M}}: \mathfrak{M}\in\XN\}$ be a continuous time exchangeable Feller process on a Fra\"{i}ss\'e space $\XN$.
By the Feller property, the behavior of $\mathbf{X}$ is determined by its infinitesimal jump rates
\begin{equation}\label{eq:rates}
Q(\mathfrak{M},A):=\lim_{t\downarrow0}\frac{1}{t}\mathbb{P}\{\mathfrak{X}_t\in A\mid \mathfrak{X}_0=\mathfrak{M}\},\quad \mathfrak{M}\in\XN, A\Borel\XN\setminus\{\mathfrak{M}\}.\end{equation}
Equation \eqref{eq:rates} determines the transition rates of the finite state space processes $\mathbf{X}^{[n]}$ through
\begin{equation}\label{eq:Qn}
Q^{[n]}(\mathfrak{S},\mathfrak{S}'):=Q(\mathfrak{M},\,\{\mathfrak{M}'\in\XN:\,\mathfrak{M}'|_{[n]}=\mathfrak{S}'\}),\quad \mathfrak{S},\mathfrak{S}'\in\Xn,\,\mathfrak{S}\neq\mathfrak{S}',\end{equation}
where $\mathfrak{M}$ is any element of $\{\mathfrak{M}^*\in\XN:\,\mathfrak{M}^*|_{[n]}=\mathfrak{S}\}$.
The projective Markov property implies $Q^{[n]}(\mathfrak{S},\Xn\setminus\{\mathfrak{S}\})<\infty$ for all $\mathfrak{S}\in\Xn$ and all $n\in\mathbb{N}$, and exchangeability \eqref{eq:exch-tps} guarantees that $Q^{[n]}(\mathfrak{S}^{\sigma},\mathfrak{S}'^{\sigma})=Q^{[n]}(\mathfrak{S},\mathfrak{S}')$ for all permutations $\sigma:[n]\rightarrow[n]$.

The continuous time analog to the i.i.d.\ sequence of Lipschitz continuous functions in Theorem \ref{thm:discrete} is a construction by a time homogeneous Poisson point process on the space of Lipschitz continuous functions.

\begin{prop}\label{prop:Poisson-cts}
Let $\Lambda$ be an exchangeable measure on $\Lip(\XN)$ satisfying \eqref{eq:regularity-Lambda} and let $\mathbf{X}^*_{\Lambda}$ be the standard $\Lambda$-process constructed from Poisson point process $\mathbf{\Phi}=\{(t,F_t)\}\subseteq[0,\infty)\times\Lip(\XN)$ with intensity $dt\otimes\Lambda$ as in Section \ref{section:continuous-summary}.
Then $\mathbf{X}^*_{\Lambda}$ is an exchangeable Feller process on $\XN$.
\end{prop}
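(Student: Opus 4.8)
The plan is to follow the template of the discrete-time result, Proposition \ref{prop:easy way}: show that the standard $\Lambda$-process is Markov, exchangeable, and enjoys the projective Markov property, and then invoke Theorem \ref{prop:Feller} to promote projectivity to the Feller property. Much of the structural work is already discharged by the construction in Section \ref{section:continuous-summary}. Condition \eqref{eq:regularity-Lambda} guarantees that, for each fixed $n$, only finitely many atoms $(t,F_t)$ with $F_t^{[n]}\neq\idn$ land in any bounded interval almost surely, so each $\mathbf{X}^{*[n]}_\Lambda$ is a genuine finite-state Markov jump process on $\Xn$. Since every $F\in\Lip(\XN)$ satisfies $F(\mathfrak{M})|_{[n]}=F^{[n]}(\mathfrak{M}|_{[n]})$, restriction commutes with the dynamics, so the process on $[n+1]$ restricts to $\mathbf{X}^{*[n]}_\Lambda$ and the càdlàg projective limit $\mathbf{X}^*_\Lambda$ on $\XN$ is well defined. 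The Markov property of the full process follows from the independence of $\mathbf{\Phi}$ over disjoint time intervals: the atoms in $(t,\infty)$ are independent of those in $[0,t)$, and the post-$t$ evolution is a deterministic function of the present state and the future atoms. Reading this at each finite level $n$ yields the projective Markov property of Definition \ref{defn:projective}.

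The substantive step is exchangeability. Here I would use the conjugation identity $F(\mathfrak{M})^{\sigma}=(\sigma^{-1}F\sigma)(\mathfrak{M}^{\sigma})$, valid for every $F\in\Lip(\XN)$, every structure $\mathfrak{M}$, and every permutation $\sigma:\Nb\to\Nb$. Relabelling every state of $\mathbf{X}^*_{\mathfrak{M},\Lambda}$ by $\sigma$ therefore produces precisely the process started from $\mathfrak{M}^{\sigma}$ and driven by the relabelled configuration $\{(t,\sigma^{-1}F_t\sigma)\}$; that is, $(\mathbf{X}^*_{\mathfrak{M},\Lambda})^{\sigma}$ is built from $\mathbf{\Phi}$ pushed forward by the conjugation map $(t,F)\mapsto(t,\sigma^{-1}F\sigma)$. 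By the mapping theorem for Poisson point processes, this relabelled configuration is again a Poisson point process with intensity $dt$ tensored with the image of $\Lambda$ under conjugation, and exchangeability of $\Lambda$ in the sense of Definition \ref{defn:exch-measure} asserts exactly that this image equals $\Lambda$. Hence the relabelled configuration has the same law as $\mathbf{\Phi}$, giving $(\mathbf{X}^*_{\mathfrak{M},\Lambda})^{\sigma}\equalinlaw\mathbf{X}^*_{\mathfrak{M}^{\sigma},\Lambda}$, which is the exchangeability condition \eqref{eq:exch-tps}.

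Having established that $\mathbf{X}^*_\Lambda$ is an exchangeable Markov process with the projective Markov property on the Fra\"iss\'e space $\XN$, Theorem \ref{prop:Feller} immediately yields the Feller property, completing the argument.

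I expect the main obstacle to be making the exchangeability step fully rigorous, because Definition \ref{defn:exch-measure} records exchangeability of a measure on $\Lip(\XN)$ through its finite restrictions $F^{[n]}$ and permutations of $[n]$, whereas the relabelling above involves a permutation of all of $\Nb$. I would bridge this gap one level $n$ at a time: a general $\sigma$ need not preserve $[n]$, so I would first reduce the finite-dimensional law of $(\mathbf{X}^*_{\mathfrak{M},\Lambda})^{\sigma}|_{[n]}$ to that of $\mathbf{X}^*_{\mathfrak{M},\Lambda}|_{\sigma[n]}$ relabelled by $\sigma$, using the identity $(\mathfrak{X}_t^{\sigma})|_{[n]}=(\mathfrak{X}_t|_{\sigma[n]})^{\sigma|_{[n]}}$, and then appeal to the fact that the jump rates of $\mathbf{X}^{*[n]}_\Lambda$ are governed by the pushforward of $\Lambda$ under $F\mapsto F^{[n]}$, which \eqref{eq:exch-mu} renders invariant under conjugation by permutations of $[n]$. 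Passing from invariance at every finite level to invariance under an arbitrary $\sigma:\Nb\to\Nb$ is then an approximation argument, justified because the Borel $\sigma$-field on $\Lip(\XN)$ is generated by the restriction maps of \eqref{eq:d-Lip} and cylinder events depend only on finitely many coordinates; one must also confirm that conjugation is a measurable bijection so that the Poisson mapping theorem applies, which is routine in the ultrametric topology.
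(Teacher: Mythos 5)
Your overall architecture (finite-level Markov processes with finitely many jumps by \eqref{eq:regularity-Lambda}, compatibility from the common Poisson process, Lipschitz continuity giving projectivity, then Theorem \ref{prop:Feller} to upgrade projectivity to Feller) matches the paper's proof. But your headline exchangeability argument has a genuine flaw: the conjugation map $(t,F)\mapsto(t,\sigma^{-1}F\sigma)$ for an arbitrary permutation $\sigma:\Nb\to\Nb$ is \emph{not} a self-map of $\Lip(\XN)$, so the Poisson mapping theorem cannot be applied on $[0,\infty)\times\Lip(\XN)$ as you propose. Lipschitz continuity in the ultrametric \eqref{eq:ultrametric} means $F(\mathfrak{M})|_{[n]}$ depends only on $\mathfrak{M}|_{[n]}$ for \emph{initial segments} $[n]$; after conjugation, $(\sigma^{-1}F\sigma)(\mathfrak{M})|_{[n]}$ depends on $\mathfrak{M}$ restricted to a finite set that is typically not an initial segment, so $\sigma^{-1}F\sigma$ generally violates \eqref{eq:Lipschitz}. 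This is precisely the distinction the paper draws between Lipschitz continuity and conjugation invariance (Definition \ref{defn:conjugation invariant}, Proposition \ref{prop:conj-inv}): the coagulation operator of Example \ref{example:coalescent} is Lipschitz but not conjugation invariant, and coalescent processes are among the main examples this proposition must cover. For the same reason, Definition \ref{defn:exch-measure} is deliberately phrased through the restrictions $F^{[n]}$ and permutations of $[n]$; exchangeability of $\Lambda$ does \emph{not} assert that the global conjugation image of $\Lambda$ equals $\Lambda$, so the identity your mapping-theorem step needs is simply not available. Your closing remark that conjugation being a measurable bijection ``is routine in the ultrametric topology'' is exactly the false step.

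The good news is that your final paragraph already contains the correct repair, and it is the paper's actual argument. Work level by level: the atoms with $F_t^{[n]}\neq\idn$ form, by the thinning property, a Poisson point process on $[0,\infty)\times\Lip(\Xn)$ with finite intensity $dt\otimes\Lambda^{[n]}$, where $\Lambda^{[n]}$ is the pushforward of $\Lambda$ under $F\mapsto F^{[n]}$; the jump rates of $\mathbf{X}^{*[n]}_{\Lambda}$ are then $Q^{[n]}(\mathfrak{S},\cdot)=\Lambda^{[n]}(\{F\in\Lip(\Xn):F(\mathfrak{S})\in\cdot\})$, and \eqref{eq:exch-mu} gives their invariance under permutations of $[n]$, for every $n$. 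Invariance under an arbitrary $\sigma:\Nb\to\Nb$ then follows by projectivity: a cylinder event and initial condition relabelled by $\sigma$ involve only finitely many coordinates, and the relevant injection of $[n]$ into $\Nb$ extends to a permutation of $[N]$ for $N$ large enough, so level-$N$ exchangeability suffices. So the fix is to discard the global mapping-theorem step entirely and promote your level-by-level bridging to the main argument; once that is done, your proof coincides with the paper's.
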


\begin{proof}
We need to establish that each of the finite processes $\mathbf{X}^{*[n]}_{\Lambda}$ on $\Xn$ is an exchangeable Markov process.
This is a consequence of the thinning property for Poisson point processes, exchangeability of $\Lambda$, and almost sure Lipschitz continuity of the functions that determine the jumps of $\mathbf{X}^{*}_{\Lambda}$.

To see this explicitly, let $\mathbf{\Phi}=\{(t,F_t)\}\subseteq[0,\infty)\times\Lip(\XN)$ be a Poisson point process with intensity $dt\otimes\Lambda$.
We define $\mathbf{\Phi}^{[n]}:=\{(t,F_t^{[n]})\}\subseteq[0,\infty)\times\Lip(\Xn)\setminus\{\idn\}$ from the atoms of $\mathbf{\Phi}$ for which $F_t^{[n]}\neq\idn$.
By the thinning property of Poisson processes, see \cite{KallenbergRM}, $\mathbf{\Phi}^{[n]}$ is also a Poisson process with intensity $dt\otimes\Lambda^{[n]}$, where 
\[\Lambda^{[n]}(F'):=\Lambda(\{F\in\Lip(\XN):\ F^{[n]}=F'\}),\quad F'\in\Lip(\Xn)\setminus\{\idn\},\]
and $\Lambda^{[n]}(\idn)=0$.
By \eqref{eq:regularity-Lambda}, $\Lambda^{[n]}(\Lip(\Xn))=\Lambda(\{F: F^{[n]}\neq\idn\})<\infty$ and the rate at which $\mathbf{X}^{*[n]}_{\Lambda}$ jumps out of each $\mathfrak{S}\in\Xn$ satisfies
\[Q^{[n]}(\mathfrak{S},\cdot)=\Lambda^{[n]}(\{F\in\Lip(\Xn): F(\mathfrak{S})\in\cdot\}).\]
Exchangeability of $\mathbf{X}^{*[n]}_{\Lambda}$ follows from exchangeability of $\Lambda$, as defined in \eqref{eq:exch-mu}.
The family of processes $(\mathbf{X}^{*[n]}_{\Lambda})_{n\in\Nb}$ is {\em compatible} by their construction from the same Poisson point process $\mathbf{\Phi}$, that is, $\mathbf{X}^{*[n+1]}_{\Lambda}|_{[n]}=\mathbf{X}^{*[n]}_{\Lambda}$ for all $n\geq1$.
Compatibility implies the existence of a Markov process $\mathbf{X}^*_{\Lambda}$ on $\XN$ for which the projective Markov property holds.
The Feller property follows by Theorem \ref{prop:Feller}.
\end{proof}

\subsection{Existence of exchangeable jump measure}
Proposition \ref{prop:Poisson-cts} shows that each $\mathbf{X}^{*}_{\Lambda}$ is exchangeable and has the Feller property.
Theorem \ref{thm:continuous-Lambda} asserts the converse.
To see that such a construction is always possible, let $\mathbf{X}$ be an exchangeable Feller process on a Fra\"{i}ss\'e space $\XN$.
For every $t>0$, Theorems \ref{thm:discrete} and \ref{thm:Lip-construct} guarantee the existence of an exchangeable probability measure $\Lambda_t$ on $\Lip(\XN)$ such that, for every $\mathfrak{M},\mathfrak{N}\in\XN$,
\begin{equation}\label{eq:varphi-t}
\Lambda_t(\{F\in\Lip(\XN): F(\mathfrak{N})\in\cdot\})=\mathbb{P}\{\mathfrak{X}_{t+s}\in\cdot\mid \mathfrak{X}_s=\mathfrak{N}, \mathfrak{X}_0=\mathfrak{M}\},\end{equation}
for all $s\geq0$.
The Chapman--Kolmogorov theorem implies that $(\Lambda_t)_{t\geq0}$ satisfies
\begin{eqnarray}\label{eq:Chapman-Kolmogorov}
\lefteqn{\Lambda_{t+s}(\{F\in\Lip(\XN): F(\mathfrak{M})\in\cdot\})=}\\
\notag&=&\int_{\Lip(\XN)}\Lambda_t(\{F''\in\Lip(\XN): (F''\circ F')(\mathfrak{M})\in\cdot\})\Lambda_s(dF'),
\end{eqnarray}
for all $s,t\geq0$ and all $\mathfrak{M}\in\XN$.
The family $(\Lambda_t)_{t\geq0}$ is not determined by these conditions, but nevertheless time homogeneity and \eqref{eq:varphi-t} implies the existence of an exchangeable rate measure $\Lambda$ on $\Lip(\XN)$ that satisfies \eqref{eq:regularity-Lambda} and determines the jump rates of $\mathbf{X}$.

\begin{theorem}\label{thm:existence}
Let $\mathbf{X}=\{\mathbf{X}_{\mathfrak{M}}: \mathfrak{M}\in\XN\}$ be an exchangeable Feller process on a Fra\"{i}ss\'e space $\XN$.
Then there exists an exchangeable measure $\Lambda$ satisfying \eqref{eq:regularity-Lambda} such that $\mathbf{X}\equalinlaw\mathbf{X}^*_{\Lambda}$ as constructed in Section \ref{section:continuous-summary}.
\end{theorem}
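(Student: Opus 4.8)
The plan is to extract the jump measure $\Lambda$ by differentiating, at each finite level, the convolution semigroup $(\Lambda_t)_{t\geq0}$ of exchangeable probability measures on $\Lip(\XN)$ already produced in the discussion preceding the theorem (via Theorems \ref{thm:discrete} and \ref{thm:Lip-construct} together with the Chapman--Kolmogorov relation \eqref{eq:Chapman-Kolmogorov}). First I would fix $n$ and push everything down to level $n$: writing $\mu_t:=\Lambda_t^{[n]}$ for the image of $\Lambda_t$ under the restriction $F\mapsto F^{[n]}$, I would observe that restriction is a monoid homomorphism $\Lip(\XN)\to\Lip(\Xn)$, that is, $(F''\circ F')^{[n]}=F''^{[n]}\circ F'^{[n]}$, which is immediate from the ultrametric property that $F(\mathfrak{M})|_{[n]}$ depends only on $\mathfrak{M}|_{[n]}$. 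Consequently \eqref{eq:Chapman-Kolmogorov} descends to the convolution identity $\mu_{s+t}=\mu_s*\mu_t$ on the \emph{finite} monoid $\Lip(\Xn)$, with $\mu_0=\delta_{\idn}$.

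The crux is to obtain from $(\mu_t)_t$ a finite rate measure $\Lambda^{[n]}$ on $\Lip(\Xn)\setminus\{\idn\}$. Because $\Xn$ is finite, $\Lip(\Xn)$ is a finite set, so the convolution operators $C_t f:=\mu_t*f$ form a one-parameter semigroup on the finite-dimensional space $\mathbb{R}^{\Lip(\Xn)}$. I would verify strong continuity by showing $\mu_t\to\delta_{\idn}$ as $t\downarrow0$: by \eqref{eq:varphi-t} one has $\mu_t(\{F':F'(\mathfrak{S})=\mathfrak{S}'\})=P_t^{[n]}(\mathfrak{S},\mathfrak{S}')$, which tends to $\mathbf 1\{\mathfrak{S}=\mathfrak{S}'\}$ by the Feller property, and a union bound over the finitely many $\mathfrak{S}\in\Xn$ forces $\mu_t(\{\idn\})\to1$. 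A strongly continuous semigroup on a finite-dimensional space is norm continuous with a bounded generator, hence differentiable at $0$, so I would set $\Lambda^{[n]}(\{F'\}):=\lim_{t\downarrow0}t^{-1}\mu_t(\{F'\})$ for $F'\neq\idn$. This limit exists by the differentiability just noted, is nonnegative because each $\mu_t(\{F'\})\geq0$, and is finite since $\Lip(\Xn)$ is finite; the resulting $\Lambda^{[n]}$ is the L\'evy measure of the (necessarily compound-Poisson, since the space is discrete and admits no continuous part) convolution semigroup $(\mu_t)_t$, and it inherits conjugation invariance under permutations of $[n]$ from the exchangeability \eqref{eq:exch-mu} of $\Lambda_t$.

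Next I would establish consistency across $n$ and assemble a projective limit. Since $\mu_t^{[n]}$ is the pushforward of $\mu_t^{[n+1]}$ under the restriction $r\colon\Lip(\mathcal{X}_{[n+1]})\to\Lip(\Xn)$, differentiation at $0$ gives $\Lambda^{[n]}(\{F'\})=\Lambda^{[n+1]}(\{F:F^{[n]}=F'\})$ for every $F'\neq\idn$; the preimage of a non-identity function contains only non-identity functions, so no mass is lost off the diagonal, while the extra mass $\Lambda^{[n+1]}(\{F:F^{[n]}=\idn\})$ merely records jumps first visible at level $n+1$. This is exactly the compatibility needed to define a $\sigma$-finite measure $\Lambda$ on the projective limit $\Lip(\XN)$ of the $\Lip(\Xn)$, with $\Lambda(\{F:F^{[n]}=F'\})=\Lambda^{[n]}(\{F'\})$ for $F'\neq\idn$, by the Kolmogorov extension applied to the increasing sets $\{F:F^{[n]}\neq\idn\}$. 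Setting $\Lambda(\{\idN\})=0$, the finiteness $\Lambda(\{F:F^{[n]}\neq\idn\})=\Lambda^{[n]}(\Lip(\Xn)\setminus\{\idn\})<\infty$ is precisely \eqref{eq:regularity-Lambda}, and exchangeability of $\Lambda$ in the sense of \eqref{eq:exch-mu} follows from that of each $\Lambda^{[n]}$.

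Finally, to conclude $\mathbf{X}\equalinlaw\mathbf{X}^*_{\Lambda}$, I would compare finite restrictions. By Proposition \ref{prop:Poisson-cts}, $\mathbf{X}^*_{\Lambda}$ is an exchangeable Feller process whose restriction $\mathbf{X}^{*[n]}_{\Lambda}$ is driven by the Poisson process of intensity $dt\otimes\Lambda^{[n]}$; the law of its composed driving function is the continuous convolution semigroup on $\Lip(\Xn)$ with L\'evy measure $\Lambda^{[n]}$. Since such a semigroup on a finite monoid is determined by its L\'evy measure, and $(\mu_t^{[n]})_t$ is the continuous convolution semigroup with the \emph{same} measure $\Lambda^{[n]}$ by construction, the two coincide, giving $\mathbf{X}^{*[n]}_{\Lambda}\equalinlaw\mathbf{X}^{[n]}$ for every $n$; as both processes are the projective limits of their finite restrictions, $\mathbf{X}^*_{\Lambda}\equalinlaw\mathbf{X}$. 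The main obstacle is the existence of the finite-level derivative $\Lambda^{[n]}=\lim_{t\downarrow0}t^{-1}\mu_t^{[n]}$ off the identity; everything hinges on reducing to the finite monoid $\Lip(\Xn)$, where finite-dimensional semigroup theory supplies the bounded generator and, because the space is discrete, rules out any non-jump contribution.
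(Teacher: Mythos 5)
There is a genuine gap at the pivotal step of your argument: the claim that the Chapman--Kolmogorov relation \eqref{eq:Chapman-Kolmogorov} ``descends to the convolution identity $\mu_{s+t}=\mu_s\ast\mu_t$ on the finite monoid $\Lip(\Xn)$.'' Relation \eqref{eq:Chapman-Kolmogorov} constrains only the pushforwards of $\Lambda_t$ under the evaluation maps $F\mapsto F(\mathfrak{M})$, i.e.\ the transition kernels $P_t$; it says nothing about the joint law of $F$ as a function. The map sending a measure on $\Lip(\Xn)$ to its induced kernel on $\Xn$ is many-to-one: already for one unary relation and $n=1$, so that $\mathcal{X}_{[1]}$ has two elements, the uniform measure on all four maps $\mathcal{X}_{[1]}\to\mathcal{X}_{[1]}$ and the uniform measure on the two constant maps induce the same kernel. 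Hence the kernel-level Chapman--Kolmogorov identity, which is all that \eqref{eq:Chapman-Kolmogorov} yields after pushing to level $n$, does not imply the measure-level identity $\mu_{s+t}=\mu_s\ast\mu_t$. The paper flags exactly this point: ``the family $(\Lambda_t)_{t\geq0}$ is not determined by these conditions.'' Worse, the assertion that the driving measures can be \emph{chosen} to form a convolution semigroup on function space is essentially the content of Theorem \ref{thm:existence} itself (the Poissonian construction produces such a family), so assuming it is close to circular.

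Because $(\mu_t)_{t\geq0}$ is not known to be a convolution semigroup, the finite-dimensional generator theory you invoke does not apply, and the proper limit $\Lambda^{[n]}(\{F'\})=\lim_{t\downarrow0}t^{-1}\mu_t(\{F'\})$ --- the crux of your construction --- is unjustified; the strong continuity $\mu_t\to\delta_{\idn}$ (which you prove correctly, as is the homomorphism property $(F''\circ F')^{[n]}=F''^{[n]}\circ F'^{[n]}$) gives nothing about differentiability of a family that is not a semigroup. What survives is only the bound $t^{-1}\mu_t(\{F'\})\leq\lambda_n:=\max_{\mathfrak{S}\in\Xn}Q^{[n]}(\mathfrak{S},\Xn\setminus\{\mathfrak{S}\})<\infty$ for $F'\neq\idn$, and this is precisely where the paper's proof diverges from yours: it extracts \emph{subsequential} limits of $t^{-1}\Lambda^{(n)}_t$ by Bolzano--Weierstrass, uses nested refining subsequences across $n$ to secure the cross-level consistency $\Lambda^{(n+1)}(\{F:F^{[n]}=F'\})=\Lambda^{(n)}(F')$ that you would obtain for free by differentiation, and then applies Carath\'eodory. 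Your final identification step inherits the same flaw, since ``a continuous convolution semigroup on a finite monoid is determined by its L\'evy measure'' is applied to $(\mu_t^{[n]})_{t\geq0}$, which is not known to be such a semigroup. The identification that does go through, and is the one the paper uses, works at the level of the finite-state chains: by the thinning property, the Poisson construction gives $\mathbf{X}^{*[n]}_{\Lambda}$ jump rates $\Lambda^{(n)}(\{F\in\Lip(\Xn):F(\mathfrak{S})=\mathfrak{S}'\})=Q^{[n]}(\mathfrak{S},\mathfrak{S}')$, and a finite-state Markov process is determined by its jump rates and initial state.
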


\begin{proof}
Let $\mathbf{X}$ be a continuous time exchangeable Feller process on $\XN$.
For every $t>0$, Theorem \ref{thm:discrete} guarantees that there exists a probability measure $\Lambda_t$ on $\Lip(\XN)$ for which \eqref{eq:varphi-t} holds.
For each $n\geq1$ and $t>0$, we define $\Lambda_t^{(n)}$ on $\Lip(\Xn)$ by
\begin{equation}\label{eq:varphi-t-n}
\Lambda_t^{(n)}(F):=\left\{\begin{array}{cc}
t^{-1}\Lambda_t(\{H\in\Lip(\XN): H^{[n]}=F\}),& F\neq\idn,\\ 0,& \text{otherwise.}\end{array}\right.
\end{equation}
For every $n\geq1$, we define $\lambda_n:=\max_{\mathfrak{S}\in\Xn}Q^{[n]}(\mathfrak{S},\Xn\setminus\{\mathfrak{S}\})<\infty$ for every $n\geq1$, which satisfies $\lambda_n<\infty$ by finiteness of $\Xn$ and the projective Markov property (Theorem \ref{prop:Feller}).

For every $F\in\Lip(\Xn)\setminus\{\idn\}$, there is some $\mathfrak{S}^*\in\Xn$ such that $F(\mathfrak{S}^*)\neq\mathfrak{S}^*$.
Let $\mathfrak{S}$ be any such $\mathfrak{S}^*$.
Then
\begin{eqnarray*}
\Lambda_t^{(n)}(F)&\leq& \Lambda_t^{(n)}(\{H\in\Lip(\Xn): H(\mathfrak{S})\neq\mathfrak{S}\})\\
&=& t^{-1} \mathbb{P}\{\mathfrak{X}_t^{[n]}\neq\mathfrak{S}\mid \mathfrak{X}_0^{[n]}=\mathfrak{S}\}\\
&\leq& t^{-1}\mathbb{P}\{(\mathfrak{X}_s^{[n]})_{s\geq0}\text{ is discontinuous on }[0,t]\mid \mathfrak{X}_0^{[n]}=\mathfrak{S}\}\\
&=&t^{-1}(1-\exp\{-t Q^{[n]}(\mathfrak{S},\Xn\setminus\{\mathfrak{S}\})\})\\
&\leq& t^{-1}(1-\exp\{-t\lambda_n\})\\
&\leq& t^{-1}(\lambda_n t)\\
&=&\lambda_n\\
&<&\infty.
\end{eqnarray*}
For a signature $\mathcal{L}=\{R_1,\ldots,R_r\}$ with $\max_{1\leq j\leq r}\ar(R_j)=m^*<\infty$, there can be no more than $2^{rn^{m^*}}$ elements in $\Xn$ and, therefore, there are at most $4^{r^2n^{2m^*}}<\infty$ elements in $\Lip(\Xn)$.
Thus, $\Lambda_t^{(n)}$ is a finite measure bounded by $4^{r^2n^{2m^*}}\lambda_n$ for all $t>0$ and $n\geq1$, and $(\Lambda_t^{(n)})_{t\downarrow0}$ is a bounded sequence of exchangeable measures for each $n\geq1$.

Finiteness of $\Lip(\Xn)$ and the Bolzano--Weierstrass theorem allows us to choose a subsequence $t_{n,1}>t_{n,2}>\cdots>0$ with $t_{n,k}\downarrow0$ such that $\Lambda_{t_{n,k}}^{(n)}\to\Lambda^{(n)}$, a finite measure on $\Lip(\Xn)$ with
\begin{equation}\label{eq:varphi-n}
\Lambda^{(n)}(F)=\lim_{k\to\infty}\Lambda^{(n)}_{t_{n,k}}(F),\quad F\in\Lip(\Xn).\end{equation}
Each $\Lambda_t^{(n)}$ is exchangeable and, therefore, so is $\Lambda^{(n)}$ for any choice of subsequence.
We have also defined $\Lambda_t^{(n)}(\idn)=0$ for all $t>0$ so that $\Lambda^{(n)}(\idn)=0$ for any choice of subsequence.

We ensure that $(\Lambda^{(n)})_{n\geq1}$ determines a measure $\Lambda$ on $\Lip(\XN)$ by defining the collection $(\Lambda^{(n)})_{n\geq1}$ from refining subsequences $\{\{t_{n,k}\}_{k\geq1}\}_{n\geq1}$.
We first choose $\{t_{1,k}\}_{k\geq1}$ so that $\Lambda_{t_{1,k}}^{(1)}$ converges to $\Lambda^{(1)}$ on $\Lip(\mathcal{X}_{[1]})$.
Given $\{t_{n,k}\}_{k\geq1}$ for any $n\geq1$, we then choose $\{t_{n+1,k}\}_{k\geq1}$ as a subsequence of $\{t_{n,k}\}_{k\geq1}$ so that $\Lambda_{t_{n+1,k}}$ converges to a measure $\Lambda^{(n+1)}$ on $\Lip(\mathcal{X}_{[n+1]})$.
We can always choose such a subsequence by the Bolzano--Weierstrass theorem.

By our construction from a refining collection of subsequences, we have
\begin{eqnarray*}
\lefteqn{\Lambda^{(n+1)}(\{F'\in\Lip(\mathcal{X}_{[n+1]}): F'^{[n]}=F\})=}\\
&=&\sum_{F'\in\Lip(\mathcal{X}_{[n+1]}): F'^{[n]}=F}\Lambda^{(n+1)}(F')\\
&=&\sum_{F'\in\Lip(\mathcal{X}_{[n+1]}): F'^{[n]}=F}\lim_{k\to\infty}\Lambda_{t_{n+1,k}}^{(n+1)}(F')\\
&=&\lim_{k\to\infty}t_{n+1,k}^{-1}\sum_{F'\in\Lip(\mathcal{X}_{[n+1]}): F'^{[n]}=F}\Lambda_{t_{n+1,k}}(\{F^*\in\Lip(\XN): F^{*[n+1]}=F'\})\\
&=&\lim_{k\to\infty}t_{n+1,k}^{-1}\Lambda_{t_{n+1,k}}(\{F^*\in\Lip(\XN): F^{*[n]}=F\})\\
&=&\Lambda^{(n)}(F),
\end{eqnarray*}
where the interchange of sum and limit is justified by the bounded convergence theorem.

The Borel $\sigma$-field on $\Lip(\XN)$ is generated by the $\pi$-system of events of the form
\[\{F^*\in\Lip(\XN): F^{*[n]}=F\},\quad F\in\Lip(\Xn),\quad n\in\Nb;\]
therefore, we can define a set function $\Lambda$ on $\Lip(\XN)$ on sets of the form $\{F^*\in\Lip(\XN): F^{*[n]}=F\}$ for $F\in\Lip(\Xn)\setminus\{\idn\}$ by
\[\Lambda(\{F^*\in\Lip(\XN): F^{*[n]}=F\})=\Lambda^{(n)}(F),\quad F\in\Lip(\Xn)\setminus\{\idn\}.\]
By construction, $(\Lambda^{(n)})_{n\in\Nb}$ satisfies
\[\Lambda^{(m)}(F)=\sum_{F'\in\Lip(\Xn): F'^{[m]}=F}\Lambda^{(n)}(F')\]
for every $m\leq n$ and $F\in\Lip(\mathcal{X}_{[m]})\setminus\{\text{id}_{[m]}\}$.
Carath\'eodory's extension theorem guarantees an extension to a measure $\Lambda$ on $\Lip(\XN)\setminus\{\idN\}$.
For each $n\in\Nb$, $\Lambda^{(n)}$ determines the jump rates of an exchangeable Markov chain on $\Xn$, giving
\[\Lambda(\{F^*\in\Lip(\XN): F^{*[n]}\neq\idn\})=\Lambda^{(n)}(\Xn\setminus\{\idn\})<\infty\]
for all $n\in\Nb$.
Putting $\Lambda(\{\idN\})=0$ gives \eqref{eq:regularity-Lambda}.
Exchangeability of every $\Lambda_t$, $t>0$, makes each $\Lambda^{(n)}$, $n\in\Nb$, exchangeable and, hence, implies that $\Lambda$ is exchangeable.
Finally, we must show that such $\Lambda$ gives $\mathbf{X}^{*}_{\Lambda}\equalinlaw\mathbf{X}$.

Let $\Lambda$ be the exchangeable measure from above and let $\mathbf{\Phi}=\{(t,\Phi_t)\}\subseteq[0,\infty)\times\Lip(\XN)$ be a Poisson point process with intensity $dt\otimes\Lambda$.
Since $\Lambda$ satisfies \eqref{eq:regularity-Lambda}, Proposition \ref{prop:Poisson-cts} allows us to construct $\mathbf{X}^*_{\Lambda}$ from $\mathbf{\Phi}$ as in Section \ref{section:continuous-summary}.
The jump rates of each $\mathbf{X}_{\Lambda}^{*[n]}$ are determined by a thinned version $\mathbf{\Phi}^{[n]}$ of $\mathbf{\Phi}$ that only keeps the atoms $(t,\Phi_t)$ for which $\Phi_t^{[n]}\neq\idn$.
By the thinning property of Poisson random measures and our construction of $\Lambda$, the intensity of $\mathbf{\Phi}^{[n]}$ is $\Lambda^{(n)}$ as defined in \eqref{eq:varphi-n}, and it follows immediately that the jump rate from $\mathfrak{S}$ to $\mathfrak{S}'\neq\mathfrak{S}$ in $\mathbf{X}^{*[n]}_{\Lambda}$ is
\[\Lambda^{(n)}(\{F\in\Lip(\Xn): F(\mathfrak{S})=\mathfrak{S}'\})=Q^{[n]}(\mathfrak{S},\mathfrak{S}'),\]
for $Q^{[n]}(\cdot,\cdot)$ as in \eqref{eq:Qn}.
By construction, $(\mathbf{X}^{*[n]}_{\Lambda})_{n\in\Nb}$ is a compatible collection of c\`adl\`ag exchangeable Markov chains governed by the finite-dimensional transition law of $\mathbf{X}$.
The proof is complete.
\end{proof}

\begin{proof}[Proof of Theorem \ref{thm:continuous-Lambda}]
The `if' direction follows directly from Proposition \ref{prop:Poisson-cts}.
The `only if' direction follows from Theorem \ref{thm:existence}.
\end{proof}

\begin{example}[Coalescent process]\label{example:coag}
The coalescent chain in Example \ref{example:coalescent} has a continuous time analog  defined as follows.
Let $(\mathbf{P}_t)_{t\geq0}$ be the semigroup of a Markov process on $\partitionsN$ such that, for every $t\geq0$ and every bounded continuous $g:\partitionsN\to\mathbb{R}$,
\[\mathbf{P}_t g(\pi)=\mathbb{E}g(\text{Coag}(\pi,\Pi_t))\]
for some exchangeable random partition $\Pi_t$.
By definition, this process is defined in terms of the Lipschitz continuous coagulation operator.
This is a special case of Theorem \ref{thm:continuous-Lambda}.

By analyzing the interplay between the coagulation operator and the exchangeability condition, the transition rates enjoy a special structure with $\Lambda=\mathbf{c}\kappa+\varrho_{\nu}$ for a unique constant $\mathbf{c}\geq0$ and measures $\kappa$ and $\nu$ defined as follows.

Let $\nu$ be a measure on the ranked-simplex
\[\Delta^{\downarrow}:=\{(s_1,s_2,\ldots): s_1\geq s_2\geq \cdots\geq0,\ \sum_{i\geq1}s_i\leq 1\}\]
satisfying
\begin{equation}\label{eq:reg-paintbox} \nu(\{\mathbf{0}\})=0\quad\text{and}\quad\int_{\Delta^{\downarrow}}\left(\sum_{i=1}^{\infty}s_i^2\right)\nu(ds)<\infty,\end{equation}
where $\mathbf{0}:=(0,0,\ldots)$.
We write $\varrho_{\nu}$ to denote {\em Kingman's paintbox measure} \cite{Kingman1978b} directed by $\nu$ on the space of partitions of $\Nb$, that is,
\begin{equation*}\label{eq:regularity-paintbox}
\varrho_{\nu}(\cdot):=\int_{\Delta^{\downarrow}}\varrho_s(\cdot)\nu(ds),\end{equation*}
where $\varrho_{s}(\cdot)$ is the paintbox measure for a fixed $s\in\Delta^{\downarrow}$; see Bertoin \cite[Chapters 2 and 4]{Bertoin2006} for more details.
By \eqref{eq:reg-paintbox}, the image measure of $\varrho_{\nu}$ on $\Lip(\XN)$ defined by $\pi\mapsto\coag(\cdot,\pi)$ satisfies \eqref{eq:regularity-Lambda} and is exchangeable.

For $i<j$ let $e_{i,j}$ be the partition of $\Nb$ with all blocks singletons except one block of size two given by $\{i,j\}$.
The {\em Kingman measure} $\kappa(\cdot)=\sum_{i<j}\delta_{e_{i,j}}(\cdot)$ puts mass 1 on each of these partitions.
The image of $\kappa$ by $\pi\mapsto\coag(\cdot,\pi)$ also satisfies \eqref{eq:regularity-Lambda} and is exchangeable.

\end{example}

\begin{example}[Fragmentation process]\label{example:frag}

The class of homogeneous fragmentation processes evolves in the opposite direction of the above coalescent process.
Instead of merging blocks together by the coagulation operator, a fragmentation process breaks them apart using the {\em fragmentation operator}.
For a partition $\pi=\{B_1,B_2,\ldots\}$ with blocks listed in increasing order of their smallest element, we define $\text{Frag}:\partitionsN\times\partitionsN\times\Nb\to\partitionsN$ by $\pi\mapsto\pi'=\text{Frag}(\pi,\pi'',k)$, where $\pi'$ has blocks $B_1,\ldots,B_{k-1},B_{k+1},\ldots$ just as in $\pi$ along with the blocks $B_k\cap B''_1,B_k\cap B''_2,\ldots$, that is, the blocks obtained by restricting $\pi''$ to $B_k$.
We call $\text{Frag}(\pi,\pi'',k)$ the {\em fragmentation of the $k$th block of $\pi$ by $\pi''$}.

Heuristically, the homogeneous fragmentation process $(\mathfrak{X}_t)_{t\in[0,\infty)}$ is an exchangeable Feller process on $\partitionsN$ constructed from initial state $\mathbf{1}_{\Nb}=\{\Nb\}$ and a Poisson point process $\{(t,\Pi_t,K_t)\}\subset[0,\infty)\times\partitionsN\times\Nb$ by putting $\mathfrak{X}_t=\text{Frag}(\mathfrak{X}_{t-},\Pi_t,K_t)$ if $t\geq0$ is an atom time.
(The rigorous construction follows the same program as in Section \ref{section:continuous-summary}; see also Bertoin \cite[Chapter 3]{Bertoin2006}.)
Here the measure $\Lambda$ decomposes as $\Lambda=\mathbf{c}\epsilon+\varrho_{\nu}$ for a unique constant $\mathbf{c}\geq0$, $\varrho_{\nu}$ as defined in Example \ref{example:coag} with regularity constraint
\begin{equation}\label{eq:regularity-frag}
\nu(\{(1,0,\ldots)\})=0\quad\text{and}\quad\int_{\Delta^{\downarrow}}(1-s_1)\nu(ds)<\infty,
\end{equation}
and {\em erosion measure} $\epsilon$ defined as follows.
For each $n\in\Nb$, let $\mathbf{e}_{n}:=\{\Nb\setminus\{n\},\{n\}\}$ be the partition with element $n$ isolated from the rest and define $\epsilon(\cdot):=\sum_{n\in\Nb}\delta_{\mathbf{e}_n}(\cdot)$.
The construction proceeds by putting $\Lambda=\mathbf{c}\epsilon+\varrho_{\nu}$ and generating a Poisson point process on $[0,\infty)\times\partitionsN\times\Nb$ with intensity $dt\otimes\Lambda\otimes K$, where $K$ is counting measure on $\Nb$.
Bertoin \cite{Bertoin2001a} showed that every homogeneous fragmentation process can be constructed from this procedure for unique $\mathbf{c}\geq0$ and $\nu$ satisfying \eqref{eq:regularity-frag}.

\end{example}

Berestycki \cite{Berestycki2004} combined the dynamics from Examples \ref{example:coag} and \ref{example:frag} into {\em exchangeable fragmentation-coalescent} (EFC) processes.
We omit those details here.

\section{L\'evy--It\^o--Khintchine structure}\label{section:Levy-Ito}
When $\XN$ has $\odap$-DAP, we can refine Theorem \ref{thm:continuous-Lambda} by decomposing the characteristic measure $\Lambda$ into mutually singular pieces that capture different qualitative features of $\mathbf{X}$.
We call this the {\em L\'evy--It\^o--Khintchine representation} for combinatorial Markov processes because of its resemblance to the eponymous representation for classical L\'evy processes.
Examples \ref{example:coag} and \ref{example:frag} show that analogous decompositions may also hold in the absence of $\odap$-DAP, but in those cases the representation hinges on specific assumptions about the transition behavior of the given process.
Notice that both the coalescent and fragmentation processes evolve on the space of partitions of $\Nb$, but their constructions by the coagulation and fragmentation operators lead to fundamentally different descriptions as processes generated by iteratively applying random Lipschitz continuous functions.
By contrast, when $\mathbf{X}$ evolves on a space with $\odap$-DAP, we achieve a generic decomposition that applies in all cases, without any prior assumption about the transition behavior.

We set the stage for Theorem \ref{thm:Levy-Ito} by demonstrating its implications in specific cases.

\begin{example}\label{example:graph-valued}
  Consider a continuous time exchangeable Feller process $\mathbf{X}=\{\mathbf{X}_{\mathfrak{G}}: \mathfrak{G}\in\graphsN\}$ on the Fra\"iss\'e space of countable graphs $\graphsN$ from Example \ref{ex:graph}.
(The directed and undirected cases are similar, and so we discuss here the undirected case.)
The transition behavior of each $\mathbf{X}_{\mathfrak{G}}=(\mathfrak{X}_t)_{t\in[0,\infty)}$ is determined by an exchangeable measure $\Lambda$ on $\Lip(\XN)$ which decomposes into mutually singular components as follows.
At the atom times $t$ of a Poisson point process with intensity $dt\otimes\Lambda$ either
\begin{itemize}
	\item[($\emptyset$)] $\mathfrak{X}_{t-}$ undergoes a global change governed by some conjugation invariant Lipschitz continuous function just as in the discrete time setting of Theorem \ref{thm:conjugation-discrete},
	\item[(1)] a single vertex $i^*\in\Nb$ is chosen and the collection of edges $(\{i^*,j\})_{j\neq i}$ changes according to some exchangeable transition rule on $\{0,1\}$-valued sequences while all edges not involving $i^*$ stay fixed, or
	\item[(1,1)] a pair $i^*,j^*$, $i^*\neq j^*$, is chosen and the edge $\{i^*,j^*\}$ changes status while the rest of the graph stays fixed.
\end{itemize}


Here we see the first feature of our representation: there may be times at which we select some set of vertices and allow only changes in which all such vertices are included.  In case $(1)$, we select a single vertex $i^*$, while in the case $(1,1)$ we select two vertices $i^*$ and $j^*$.
\end{example}

Our indexing scheme ($\emptyset$), ($1$), and ($1,1$) conveys the structure of the jump behavior.
For example, we understand the case ($1,1$) to mean ``choose two distinct vertices, each of multiplicity $1$.''
Most generally, we classify jumps of $(\mathfrak{X}_t)_{t\in[0,\infty)}$ by a sequence ($\alpha_1,\ldots,\alpha_k$) with $\alpha_1\geq\cdots\geq\alpha_k\geq1$, which indicates that the transition is determined by first choosing a multiset with $k$ distinct elements $i_1,\ldots,i_k$ in which $i_1$ appears with multiplicity $\alpha_1$, $i_2$ appears with multiplicity $\alpha_2$, and so on and then modifying $\mathfrak{X}_{t-}$ in an exchangeable way such that only relations involving the chosen multiset of elements can change.
The rationale behind this indexing scheme becomes clearer with more elaborate examples and the formal exposition to follow.

\begin{example}\label{example:hypergraph-valued}
  Consider a continuous time exchangeable Feller process $\mathbf{X}=\{\mathbf{X}_{\mathfrak{G}}: \mathfrak{G}\in\graphsN\}$ on the Fra\"iss\'e space of anti-reflexive, countable $n$-ary hypergraphs from Example \ref{ex:hypergraph}.
The transition behavior of each $\mathbf{X}_{\mathfrak{G}}=(\mathfrak{X}_t)_{t\in[0,\infty)}$ is determined by an exchangeable measure $\Lambda$ on $\Lip(\XN)$ which decomposes into mutually singular components as follows.
At the atom times $t$ of a Poisson point process with intensity $dt\otimes\Lambda$ either
\begin{itemize}
	\item[($\emptyset$)] $\mathfrak{X}_{t-}$ undergoes a global change governed by some conjugation invariant Lipschitz continuous function just as in the discrete time setting of Theorem \ref{thm:conjugation-discrete},
	\item[(1)] a single vertex $i^*_1\in\Nb$ is chosen and $(\{i^*_1,j_2,\ldots,j_n\})_{j_l\neq i^*_1}$, with $j_2\neq j_3\neq\dots\neq j_n$, changes according to some exchangeable transition rule for $(n-1)$-ary hypergraphs while all edges not involving $i^*_1$ stay fixed,
	\item[(1,1)] a pair $i^*_1,i^*_2$, $i^*_1\neq i^*_2$, is chosen and $(\{i^*_i,i^*_2,j_3,\ldots,j_n\})_{j_l\neq i^*_1,i^*_2}$, with $j_3\neq\cdots\neq j_n$, changes according to some exchangeable transition rule for $(n-2)$-ary hypergraphs while all edges not involving \emph{both} $i^*_1$ and $i^*_2$ stay fixed,
        \item [$\vdots$]
        \item [(1,\ldots,1)] $n$ distinct vertices $i^*_1,\ldots,i^*_n$ are chosen and the edge $\{i^*_1,\ldots,i^*_n\}$ changes status while the rest of $\mathfrak{X}_{t-}$ stays fixed.
\end{itemize}

\end{example}

The above examples involve anti-reflexive structures, meaning each vertex can appear at most once in any given edge.
On spaces without this restriction, our representation captures the feature that transitions can be indexed by collections of vertices in which some vertices appear with multiplicity.

\begin{example}\label{example:PR}
Let $\XN$ be the Fra\"{i}ss\'e space of undirected graphs permitting self-loops.  
The transition behavior of any $(\mathfrak{X}_t)_{t\in[0,\infty)}$ on $\XN$ is determined by an exchangeable measure $\Lambda$ on $\Lip(\XN)$ which decomposes into mutually singular pieces indexed by ($\emptyset$), (1), (1,1), and (2) as follows.
At the atom times $t$ of a Poisson point process with intensity $dt\otimes\Lambda$ either
\begin{itemize}
	\item[($\emptyset$)] $\mathfrak{X}_{t-}$ undergoes a global change described by some conjugation invariant Lipschitz continuous function just as in the discrete time setting of Theorem \ref{thm:conjugation-discrete},
	\item[(1)] a single vertex $i^*\in\Nb$ is chosen and the collection of edges $(\{i^*,j\})_{j\neq i}$ changes according to some exchangeable transition rule on $\{0,1\}$-valued sequences, as in the discrete time case for $(1)$-structures, with the self-loop at $i^*$ possibly changing and any edge not involving $i^*$ remaining fixed,
	\item[(2)] a single $i^*\in\Nb$ is chosen and the self-loop at $i^*$ changes while the rest of $\mathfrak{X}_{t-}$ stays fixed, or
	\item[(1,1)] a pair $i^*,j^*\in\Nb$, $i^*\neq j^*$, is chosen and the edge $\{i^*,j^*\}$ changes while the rest of $\mathfrak{X}_{t-}$ stays fixed.
\end{itemize}

The new feature here is the distinction between the (1) case and the (2) case.  Both involve only a single vertex $i^*$, but in the (2) case the only relations considered are those in which the vertex $i^*$ appears at least twice.
We interpret the indexing of the jump labeled (2) to mean ``choose one vertex $i^*$ and change any edge in which $i^*$ appears with multiplicity 2.''

\end{example}

One last example further illustrates the distinction.

\begin{example}\label{example:PR_color}
Let $\XN$ be the Fra\"{i}ss\'e space of \emph{colored} undirected graphs with loops as in Example \ref{ex:colorgraph}.
Specifically, we take $\mathcal{L}=\{P,R\}$ with binary edge relation $R$ and unary relation $P$ which records the coloring.  
The transition behavior of any exchangeable $(\mathfrak{X}_t)_{t\in[0,\infty)}$ on $\XN$ is determined by an exchangeable measure $\Lambda$ on $\Lip(\XN)$ which decomposes into mutually singular components as follows.
At the atom times $t$ of a Poisson point process with intensity $dt\otimes\Lambda$ either
\begin{itemize}
	\item[(0)] $\mathfrak{X}_{t-}$ undergoes a global change described by some conjugation invariant Lipschitz continuous function just as in the discrete time setting of Theorem \ref{thm:conjugation-discrete}.
	\item[(1)] a single $i^*\in\Nb$ is chosen and the relations $((R^{\mathfrak{X}_{t-}}(i^*,j),R^{\mathfrak{X}_{t-}}(j,i^*))_{j\neq i^*}$ undergo an exchangeable transition as in the discrete time case unary relations, with the self-loop $R^{\mathfrak{X}_{t-}}(i^*,i^*)$ and coloring $P^{\mathfrak{X}_{t-}}(i^*)$ possibly changing while the rest of $\mathfrak{X}_{t-}$ stays fixed,
	\item[(2)] a single $i^*\in\Nb$ is chosen and the self-loop $R^{\mathfrak{X}_{t-}}(i^*,i^*)$ changes while the rest of $\mathfrak{X}_{t-}$, including $P^{\mathfrak{X}_{t-}}(i^*)$, stays fixed, or
	\item[(1,1)] a pair $i^*,j^*\in\Nb$, $i^*\neq j^*$, is chosen and $R^{\mathfrak{X}_{t-}}(i^*,j^*)$ changes while the rest of $\mathfrak{X}_{t-}$ stays fixed.
\end{itemize}

\end{example}

\subsection{Proof of L\'evy--It\^o--Khintchine decomposition}  

We fix a signature $\mathcal{L}=\{R_1,\ldots,R_r\}$ and and let $\XN\subseteq\lN$ be a Fra\"{i}ss\'e space.
We write $s\finite\Nb$ to indicate that $s$ is a finite multiset of $\Nb$, which we identify by its collection of multiplicities $\mathbf{m}(s)=(m_i(s))_{i\geq1}$, with each $m_i(s)\geq0$ indicating the multiplicity of $i$ in $s$.
In this way, we express $s=\{i^{m_i}\}_{i\geq1}$, usually with elements of multiplicity 0 omitted in specific examples.
We write $|s|=\sum_{i\geq1}m_i(s)$ to denote the cardinality of $s$ and $\rng s=\{i: m_i(s)\geq1\}$ to denote the set of distinct elements that appear at least once in $s$ (without multiplicity).

With this notation, the usual set theoretic relations and operations extend to multisets $s,s'\finite\Nb$, allowing us to write $s\subseteq s'$ to denote that $m_i(s)\leq m_i(s')$ for all $i\geq1$, $s\cap s':=\{i^{m_i(s\cap s')}\}_{i\geq1}$ with $m_i(s\cap s')=m_i(s)\wedge m_i(s')$ for every $i\geq1$, and $s\cup s':=\{i^{m_i(s\cup s')}\}_{i\geq1}$ with $m_i(s\cup s')=m_i(s)\vee m_i(s')$ for every $i\geq1$.
For example, the multiset $s=\{1,1,1,2,2,4\}$ has $\mathbf{m}=(3,2,0,1,0,0,\ldots)$, which we condense to $s=\{1^{3},2^2,4^1\}$.
For $s=\{1^3,2^2,4^1\}$ and $s'=\{1^2,2^3,3^2,5^3\}$, we have $s\cap s'=\{1^2,2^2\}$ and $s\cup s'=\{1^3,2^3,3^2,4^1,5^3\}$.


For any integer $k\geq1$, we write $\alpha\vdash k$ to denote that $\alpha$ is a {\em partition of the integer $k$}, meaning $\alpha=(\alpha_1,\alpha_2,\ldots)$ has $\alpha_1\geq\alpha_2\geq\cdots\geq0$ and $\sum_{i=1}^{\infty}\alpha_i=k$.
Every $\alpha\vdash k$ corresponds to a canonical multiset $s_{\alpha}:=\{i^{\alpha_i}\}_{i\geq1}$ for which $|s_{\alpha}|=k$.
(Note our convention to pad $\alpha$ with infinitely many trailing $0$s allows us to easily move between a partition $\alpha$ and its canonical multiset $s_{\alpha}$.  
We can and will omit the trailing 0s whenever convenient.)

For any $s=\{i^{m_i(s)}\}_{i\geq1}\finite\Nb$, the decreasing multiplicity vector, denoted $\mathbf{m}^{\downarrow}(s)=(m_i^{\downarrow}(s))_{i\geq1}$, lists the multiplicities of $\mathbf{m}(s)$ in decreasing order so that $m_i^{\downarrow}(s)\geq m_{i+1}^{\downarrow}(s)$.
This ranked reordering corresponds to a unique partition of the integer $|s|$, which we call the {\em partition type of $s$}.
We write $s^{\downarrow}=(s_1^{m_1^{\downarrow}(s)},s_2^{m_2^{\downarrow}(s)},\ldots)$ to denote the ordering of $s$ so that $m_i^{\downarrow}(s)\geq m_{i+1}^{\downarrow}(s)$ for all $i\geq1$ and $s_i>s_{i+1}$ whenever $m_i^{\downarrow}(s)=m_{i+1}^{\downarrow}(s)$.
For example, $s=\{1^2,2^3,4^1,5^3\}$ has $\mathbf{m}^{\downarrow}(s)=(3,3,2,1)\vdash9$ and $s^{\downarrow}=(5^3,2^3,1^2,4^1)$.


Every $\vec x=(x_1,\ldots,x_d)\in\Nb^d$ determines a multiset, written as $x:=\{i^{m_i(x)}\}_{i\geq1}$, by ignoring the order in which elements appear.
For example, $\vec x=(1,2,1,1,4,2,5)$ determines $x=\{1^3,2^2,4^1,5^1\}$ and $x^{\downarrow}=(1^3,2^2,5^1,4^1)$.


Theorems \ref{thm:discrete} and \ref{thm:continuous-Lambda} characterize the behavior of exchangeable Feller processes in terms of the action of randomly chosen Lipschitz continuous functions $F:\XN\to\XN$.
When $\XN$ has $\odap$-DAP, we further analyze the transitions induced by $F$ by considering the collection of tuples $\vec x$ for which $R_j^{F(\mathfrak{M})}(\vec x)\neq R_j^{\mathfrak{M}}(\vec x)$ for some $\mathfrak{M}\in\XN$.
In other words, we consider whether $F$ acts nontrivially in location $\vec x$ for some $\mathfrak{M}\in\XN$.
By Proposition \ref{prop:conj-inv}, conjugation invariant functions act locally on $\XN$.
Given any $F\in\Lip(\XN)$ and $\vec x\in\Nb^{\ar(R_j)}$, we write $R_j^{F(\bullet)}(\vec x)\neq R_j^{\bullet}(\vec x)$ to denote that $F$ does not fix the status of $R_j$ at $\vec x$ on all of $\XN$.
More precisely, we define the event
$\{R_j^{F(\bullet)}(\vec x)\neq R_j^{\bullet}(\vec x)\}=\bigcup_{\mathfrak{M}\in\XN}\{R_j^{F(\mathfrak{M})}(\vec x)\neq R_j^{\mathfrak{M}}(\vec x)\}$ for every $\vec x\in\Nb^{\ar(R_j)}$.

For any $j=1,\ldots,r$ and $s\finite\Nb$ with $0\leq |s|\leq \ar(R_j)$ and $0\leq i< \ar(R_j)-|s|$, we put
\begin{equation}\label{eq:L-j-i}
L_{j,i}(s,F):=\limsup_{n\to\infty}\frac{1}{n^{\ar(R_j)-|s|-i}}\sum_{\vec x\in[n]^{\ar(R_j)}: s\subseteq x,\ |\rng x\setminus \rng s|=\ar(R_j)-|s|-i}\mathbf{1}\{R_j^{F(\bullet)}(\vec x)\neq R_j^{\bullet}(\vec x)\}.\end{equation}
(Note that when $|s|=\ar(R_j)$, the condition $s\subseteq x$ immediately implies $s=x$ and, thus, $\rng x\setminus\rng s=\emptyset$.  In this case, we ignore the second condition on $|\rng x\setminus\rng s|$.  In all other cases, we require $|\rng x\setminus\rng s|>0$.)

From \eqref{eq:L-j-i}, we define
\begin{align}
\label{eq:L-j}L_j(s,F)&:=\max_{0\leq i<\ar(R_j)-|s|}L_{j,i}(s,F),\\
\label{eq:L}L(s,F)&:=\max_{1\leq j\leq r}L_j(s,F),\quad\text{and}\\
\label{eq:intersect}\Delta_F&:=\bigcap\{s\finite\Nb: L(s,F)>0\}.
\end{align}

The quantity in \eqref{eq:L-j-i} and the derivative objects in \eqref{eq:L-j}-\eqref{eq:intersect} detect the way in which $F$ acts on $\XN$.
As Examples \ref{ex:1}, \ref{ex:2}, and \ref{ex:3} illustrate, these quantities are designed so that our decomposition admits the interpretation described above in terms of first choosing a set of elements with multiplicity and then making an exchangeable transition by changing only relations involving the chosen multiset.

As we show in Proposition \ref{prop:key lemma}, exchangeability of the characteristic measure $\Lambda$ allows us to replace the limit superior in \eqref{eq:L-j-i} by a proper limit $\Lambda$-almost everywhere.
With the limit superior replaced by a proper limit, the quantity $L_{j,i}(s,F)$ in \eqref{eq:L-j-i} is the limiting fraction of locations $\vec x$ at which $F$ acts nontrivially on $\XN$, where $\vec x$ is any extension of $s$ such that $\rng x\setminus \rng s$ contains $\ar(R_j)-|s|-i$ distinct elements.
When $i=0$ and $s=\{s_1^{m_1},\ldots,s_k^{m_k}\}$, we choose $\vec x$ so that $x=s\cup\{y_1,\ldots,y_{\ar(R_j)-|s|}\}$, for $y_1,\ldots,y_{\ar(R_j)-|s|}$ all distinct and disjoint from $s$.
On the other extreme, with $i=\ar(R_j)-|s|-1$, we choose $\vec x$ so that $x=s\cup\{y\}$ for some $y\in\Nb\setminus s$.
For example, when $s=\{1^2,2^1\}$ and $\ar(R_j)=6$, the former considers all $\vec x$ of the form $(1,1,2,a,b,c)$ (and rearrangements thereof) for $a\neq b\neq c$ disjoint from $\{1,2\}$, while the latter considers all $\vec x$ of the form $(1,1,2,a,a,a)$ (and rearrangements thereof) for $a\not\in\{1,2\}$.

The quantity $L(s,F)$, therefore, records whether $F$ affects a limiting positive fraction of changes in locations $\vec x$ containing $s$.
For any $s$ and $0\leq i<\ar(R_j)-|s|$, there are $(n-|\rng(x)|)(n-|\rng(x)|-1)\cdots(n-|\rng(x)|-\ar(R_j)+|s|+i+1)$ choices of $y_1,\ldots,y_{\ar(R_j)-|s|-i}$.
We normalize by the asymptotically equivalent quantity $n^{\ar(R_j)-|s|-i}$ for a cleaner notation.

Some examples elucidate the interpretation of $\Delta_F$ as the common core of all locations at which $F$ acts nontrivially on $\XN$.
In general, we interpret $\Delta_F=s$ to mean that $F$ acts nontrivially on entries $\vec x$ containing $s$ and acts trivially elsewhere. 
As we show below, the behavior of exchangeable Feller processes on a space $\XN$ having $\odap$-DAP decomposes according to the structure of $\Delta_F$ over all $F\in\Lip(\XN)$.

\begin{example}\label{ex:1}
Consider a signature $\mathcal{L}=\{R_1\}$ with $\ar(R_1)=3$.
For this example we take $\XN=\lN$, so that $\XN$ consists of all $3$-ary relations on $\Nb$ and, thus, has $\odap$-DAP.
We define an exchangeable random conjugation invariant function $F:\XN\to\XN$ as follows.

Let $A=(A_{i,j})_{i,j\geq1}$ to be a random array with all entries independent Bernoulli random variables with success probability $1/2$.
Given $A$, we define $F\in\Lip(\XN)$ by
\[R_1^{F(\mathfrak{M})}((a,b,c))=\left\{\begin{array}{cc} A_{b,c},& a=1,\\
R_1^{\mathfrak{M}}((a,b,c)),& \text{otherwise}.\end{array}\right.\]
Therefore, $F$ replaces all entries $\vec x=(a,b,c)$ with $a=1$ by the outcome of a fair coin toss and leaves all entries with $a\neq1$ unchanged.
If we take $s=\{k\}$ for $k\neq1$, then $L_{1,0}(s,F)\leq \limsup_{n\to\infty}2n/n^2=0$ as $n\to\infty$ since the only changes can occur at locations of the form $(1,k,b)$ and $(1,b,k)$ for $b\in\Nb$.
With $s=\{1\}$, we have $L_{1,0}(\{1\},F)=1/3$ a.s.\ since $F$ acts nontrivially at any location of the form $\vec x=(1,b,c)$ but trivially at locations of form $(b,1,c)$ and $(b,c,1)$.
We likewise have that $L_{1,1}(\{1\},F)=1/3$ a.s.\ since $F$ acts nontrivially at locations of the form $(1,b,b)$ but trivially at $(b,1,b)$ and $(b,b,1)$.

We also have $L_{1,0}(\{1,a\},F)=1/3$ a.s., $L_{1,0}(\{a,b\},F)=0$ a.s.\ for $a,b\neq1$, and $L_{1,0}(\{a,b,c\},F)=1/3$ a.s.\ if and only if $1\in\{a,b,c\}$; whence,  $\Delta_F=\{1\}$.
\end{example}

\begin{example}\label{ex:2}
To see the need for considering all $0\leq i<\ar(R_j)-|s|$, we modify Example \ref{ex:1} as follows.
Once again, we have $\mathcal{L}=\{R_1\}$, $\ar(R_1)=3$, and $\XN=\lN$, but we now define $A=(A_i)_{i\geq1}$ as an i.i.d.\ sequence of Bernoulli random variables with success probability $1/2$.
Given $A$, we define a conjugation invariant $F\in\Lip(\XN)$ by
\[R_1^{F(\mathfrak{M})}((a,b,c))=\left\{\begin{array}{cc} A_c,& a=b=1,\\
R_1^{\mathfrak{M}}((a,b,c)), & \text{otherwise.}\end{array}\right.\]
Since $F$ only acts nontrivially on the diagonal strip $(1,1,c)$, we now have $L_{1,0}(\{1\},F)=0$ a.s.\ and $L_{1,1}(\{1\},F)=0$ a.s., but $L_{1,0}(\{1,1\},F)=1/3$ a.s.\ 
Thus, $\Delta_F=\{1,1\}$.
\end{example}

\begin{example}\label{ex:3}
With everything as defined in Example \ref{ex:2}, including the i.i.d.\ Bernoulli sequence $A=(A_i)_{i\geq1}$, we modify $F$ by
\[R_1^{F(\mathfrak{M})}((a,b,c))=\left\{\begin{array}{cc} A_b,& a=1,\ b=c,\\ R_1^{\mathfrak{M}}(a,b,c),& \text{otherwise.}\end{array}\right.\]
Here, $F$ only acts nontrivially on the diagonal strip $(1,b,b)$, giving $L_{1,0}(\{1\},F)=0$ a.s., $L_{1,1}(\{1\},F)=1/3$ a.s., and $\Delta_F=\{1\}$.
\end{example}

%
%
%

\begin{lemma}\label{lemma:Lambda-alpha-star}
For any $\alpha\vdash k$ with $1\leq k\leq\max_{1\leq j\leq r}\ar(R_j)$, let $\Lambda_{\alpha}$ be a measure on $\XN$ that satisfies \eqref{eq:regularity-Lambda}, is invariant with respect to all permutations $\sigma:\Nb\to\Nb$ that coincide with the identity on $s_{\alpha}$, satisfies 
\begin{equation}\label{eq:s-alpha}
\Delta_F=s_{\alpha}\quad\text{for }\Lambda_{\alpha}\text{-almost every }F\in\Lip(\XN),
\end{equation}
and, for all $j=1,\ldots,r$,
\begin{equation}\label{eq:substructures}
R_j^{F(\bullet)}(\vec x)=R_j^{\bullet}(\vec x)\quad\text{for all }\vec x\in\Nb^{\ar(R_j)}\text{ such that }s_{\alpha}\not\subseteq x
\end{equation}
for $\Lambda_{\alpha}$-almost every $F\in\Lip(\XN)$.
Then
\begin{equation}\label{eq:mu-alpha-star}
\Lambda_{\alpha}^*(\cdot):=\sum_{s\finite\Nb: \mathbf{m}^{\downarrow}(s)=\alpha}\Lambda_{\alpha}(\{F\in\Lip(\XN): F^{\phi_{s,\alpha}^{-1}}\in\cdot\}),
\end{equation}
satisfies \eqref{eq:regularity-Lambda} and is exchangeable, where we define $\phi_{s,\alpha}:\Nb\to\Nb$ as the bijection that sends $i\mapsto s_i^{\downarrow}$, for $(s_i^{\downarrow})_{i\geq1}$ the ordering of elements in $s^{\downarrow}$ defined above, and $F^{\phi^{-1}_{s,\alpha}}(\mathfrak{M})=F(\mathfrak{M}^{\phi_{s,\alpha}})$.

\end{lemma}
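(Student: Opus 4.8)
The plan is to recognize $\Lambda^*_\alpha$ as the measure \emph{induced} from $\Lambda_\alpha$ by summing its images under the relocations $\phi_{s,\alpha}$, one for each multiset $s$ of partition type $\alpha$, and to deduce full exchangeability of the total from the partial invariance of $\Lambda_\alpha$ under permutations fixing the canonical core $s_\alpha$. The map $F\mapsto F^{\phi^{-1}_{s,\alpha}}$ appearing in \eqref{eq:mu-alpha-star} is conjugation by $\phi_{s,\alpha}$; write $c_\psi$ for conjugation by a permutation $\psi$ and recall that conjugation is a left action, $c_\tau\circ c_\psi=c_{\tau\circ\psi}$, which is a bijection of $\Lip(\XN)$ fixing $\idN$. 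Since $\Lambda_\alpha$ concentrates on conjugation invariant functions (Proposition \ref{prop:conj-construct}), each $c_{\phi_{s,\alpha}}(F)$ again lies in $\Lip(\XN)$ by Definition \ref{defn:conjugation invariant}, so \eqref{eq:mu-alpha-star} reads $\Lambda^*_\alpha=\sum_{s:\,\mathbf{m}^{\downarrow}(s)=\alpha}(c_{\phi_{s,\alpha}})_*\Lambda_\alpha$. By the finite-restriction form of Definition \ref{defn:exch-measure} it suffices to prove invariance of $\Lambda^*_\alpha$ under conjugation by every finitely supported permutation $\tau$ of $\Nb$.

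I would first dispatch regularity \eqref{eq:regularity-Lambda}, which simultaneously shows $\Lambda^*_\alpha$ is well defined. For atomlessness, $c_{\phi_{s,\alpha}}(F)=\idN$ forces $F=\idN$ (conjugation by a bijection fixes only the identity), so every summand assigns $\{\idN\}$ the mass $\Lambda_\alpha(\{\idN\})=0$ and the countable sum vanishes. For the finiteness clause, fix $n$ and note that the locality hypothesis \eqref{eq:substructures} makes $c_{\phi_{s,\alpha}}(F)$ act only on tuples $\vec x$ with $s\subseteq x$; hence $(c_{\phi_{s,\alpha}}(F))^{[n]}\neq\idn$ can occur only when $\rng s\subseteq[n]$, and there are at most $n^{|\rng s_\alpha|}$ such multisets $s$. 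For each of these finitely many $s$ the event $\{(c_{\phi_{s,\alpha}}F)^{[n]}\neq\idn\}$ is determined by $F^{[m_s]}$ for a finite level $m_s=m_s(n)$, so its $\Lambda_\alpha$-mass is at most $\Lambda_\alpha(\{F:F^{[m_s]}\neq\mathrm{id}_{[m_s]}\})<\infty$ by the regularity of $\Lambda_\alpha$. Summing the finitely many finite contributions gives $\Lambda^*_\alpha(\{F:F^{[n]}\neq\idn\})<\infty$; in particular only finitely many summands charge any cylinder $\{F:F^{[n]}=F_0\}$ with $F_0\neq\idn$, so $\Lambda^*_\alpha$ is a bona fide measure.

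For exchangeability, the composition law yields $(c_\tau)_*\Lambda^*_\alpha=\sum_{s}(c_{\tau\circ\phi_{s,\alpha}})_*\Lambda_\alpha$, and the crux is the factorization
\[
\tau\circ\phi_{s,\alpha}=\phi_{\tau[s],\alpha}\circ g_s,
\]
where $\tau[s]$ is the image multiset (again of type $\alpha$) and $g_s$ is a permutation stabilizing $s_\alpha$. Granting this, invariance of $\Lambda_\alpha$ under $g_s$ gives $(c_{\tau\circ\phi_{s,\alpha}})_*\Lambda_\alpha=(c_{\phi_{\tau[s],\alpha}})_*\Lambda_\alpha$, and since $s\mapsto\tau[s]$ permutes the multisets of type $\alpha$, reindexing by $s'=\tau[s]$ returns exactly $\Lambda^*_\alpha$. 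Establishing the factorization is where the definitions must be used precisely: $\phi_{s,\alpha}$ carries the ordered support of $s_\alpha$ onto the decreasingly ordered support $(s_i^{\downarrow})_i$ of $s$, so $\tau\circ\phi_{s,\alpha}$ and $\phi_{\tau[s],\alpha}$ agree on $\rng s_\alpha$ up to a permutation of coordinates of equal multiplicity, whence $g_s$ lies in the setwise stabilizer of $s_\alpha$.

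The main obstacle I anticipate is exactly this last point. When $\alpha$ has repeated parts, the canonical tie-breaking in $s^{\downarrow}$ produces a residual $g_s$ that permutes the equal-multiplicity coordinates of $s_\alpha$ and therefore need \emph{not} fix $\rng s_\alpha$ pointwise; a small example ($\alpha=(1,1)$, $s=s_\alpha$, $\tau=(1\,3)$) already yields $g_s=(1\,2)$. The argument thus requires invariance of $\Lambda_\alpha$ under the full setwise stabilizer of $s_\alpha$, which is the content I read into the hypothesis that $\Lambda_\alpha$ is invariant under permutations ``coinciding with the identity on $s_\alpha$'', and one must verify carefully that $g_s$ never moves $s_\alpha$ off itself as a multiset. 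Confirming this compatibility between the ordering used to define $\phi_{s,\alpha}$ and the invariance group of $\Lambda_\alpha$ is the only genuinely delicate step; the regularity bookkeeping and the reindexing are routine once it is in place.
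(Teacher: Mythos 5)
Your argument is correct and, in outline, is the paper's own: regularity is handled exactly as you do it (countably many summands, each giving $\{\idN\}$ mass zero; by \eqref{eq:substructures} only the at most $n^{|s_\alpha|}$ multisets $s$ with $\rng s\subseteq[n]$ can contribute to $\{F:F^{[n]}\neq\idn\}$, each with finite mass), while exchangeability is dispatched by the paper in a single sentence --- ``plain since $\Delta_{\sigma F\sigma^{-1}}$ has partition type $\alpha$.'' Your factorization $\tau\circ\phi_{s,\alpha}=\phi_{\tau[s],\alpha}\circ g_s$ followed by reindexing over $s\mapsto\tau[s]$ is precisely the argument that sentence suppresses. (You also silently repaired a typo: as printed, $F^{\phi_{s,\alpha}^{-1}}(\mathfrak{M})=F(\mathfrak{M}^{\phi_{s,\alpha}})$ omits the outer relabeling by $\phi_{s,\alpha}^{-1}$; conjugation is clearly intended, as you assumed.)

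The ``delicate step'' you flag is not a weakness of your write-up but a genuine imprecision in the lemma itself, and your resolution is the only one possible: under the literal reading of the invariance hypothesis (pointwise fixing of $s_\alpha$) the conclusion is \emph{false} when $\alpha$ has repeated parts. Concretely, take $\mathcal{L}=\{R\}$ with $\ar(R)=2$, $\XN=\lN$, $\alpha=(1,1)$, $s_\alpha=\{1,2\}$, and let $F_{12}\in\Lip(\XN)$ be the conjugation invariant map that sets the relation at $(1,2)$ to $1$ and changes nothing else, so that $\Delta_{F_{12}}=\{1,2\}=s_\alpha$. Then $\Lambda_\alpha:=\delta_{F_{12}}$ satisfies \eqref{eq:regularity-Lambda}, \eqref{eq:s-alpha}, \eqref{eq:substructures} and is invariant under conjugation by every permutation fixing $1$ and $2$ pointwise; but because the tie-breaking in $s^{\downarrow}$ sends $1\mapsto\max\{a,b\}$ and $2\mapsto\min\{a,b\}$ for each $s=\{a,b\}$, the sum \eqref{eq:mu-alpha-star} charges exactly the maps that flip the entry $(b,a)$ with $b>a$, and conjugation by the transposition $(1\,2)$ carries the charged map at $(2,1)$ to the uncharged map at $(1,2)$. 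So $\Lambda^*_\alpha$ fails Definition \ref{defn:exch-measure}, and the hypothesis must indeed be strengthened to invariance under all permutations preserving $s_\alpha$ as a multiset (i.e., permuting its equal-multiplicity elements), which is exactly what your residual $g_s$ demands; your verification that $g_s$ maps $s_\alpha$ onto itself with multiplicities preserved closes that loop. This costs nothing downstream: in the lemma's only application, Proposition \ref{prop:key lemma} takes $\Lambda_\alpha=\Lambda\mathbf{1}_{\{F\in\Lip(\XN):\,\Delta_F=s_\alpha\}}$ with $\Lambda$ exchangeable, and since conjugation by any permutation in the multiplicity-preserving setwise stabilizer of $s_\alpha$ preserves both $\Lambda$ and the event $\{\Delta_F=s_\alpha\}$, that $\Lambda_\alpha$ enjoys the stronger invariance automatically and your proof applies verbatim there.
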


\begin{proof}
For any permutation $\sigma:\Nb\to\Nb$, $F^{\sigma}=\idN$ if and only if $F=\idN$.
Since $\Lambda_{\alpha}$ fulfills the lefthand side of \eqref{eq:regularity-Lambda} and there are countably many multisets $s\finite\Nb$ with $\mathbf{m}^{\downarrow}(s)=\alpha$, $\Lambda_{\alpha}^*$ also satisfies the lefthand side of \eqref{eq:regularity-Lambda}.

For every $n\in\Nb$, \eqref{eq:s-alpha} and \eqref{eq:substructures} together imply that 
\begin{eqnarray*}
\lefteqn{\Lambda_{\alpha}^*(\{F\in\Lip(\XN): F^{[n]}\neq\idn\})=}\\
&=&\sum_{s\finite\Nb: \mathbf{m}^{\downarrow}(s)=\alpha}\Lambda_{\alpha}(\{F\in\Lip(\XN): F^{\phi^{-1}_{s,\alpha}}|_{[n]}\neq\idn\})\\
&=&\sum_{s\finite[n]: \mathbf{m}^{\downarrow}(s)=\alpha}\Lambda_{\alpha}(\{F\in\Lip(\XN): F^{\phi^{-1}_{s,\alpha}}|_{[n]}\neq\idn\})\\
&\leq&n^{|s_{\alpha}|}\Lambda_{\alpha}(\{F\in\Lip(\XN): F^{[n]}\neq\idn\})\\
&<&\infty,
\end{eqnarray*}
since $\{F^{[n]}\neq\idn\}$ has positive measure under $\Lambda_{\alpha}^*$ only if $\Delta_F\finite[n]$.
Exchangeability is plain since $\Delta_{\sigma F\sigma^{-1}}$ has partition type $\alpha$ for every permutation $\sigma:\Nb\to\Nb$ and every $s\finite\Nb$ with $\mathbf{m}^{\downarrow}(s)=\alpha$ and $\Delta_F=s$.

\end{proof}


\begin{prop}\label{prop:key lemma}
Let $\Lambda$ be an exchangeable measure that satisfies \eqref{eq:regularity-Lambda} and for which $\Lambda$-almost every $F\in\Lip(\XN)$ is conjugation invariant and has $\Delta_F\neq\emptyset$.
Then $\Lambda=\sum_{k=1}^{\max_j\ar(R_j)}\sum_{\alpha\vdash k}\Lambda_{\alpha}^*$, with $\Lambda_{\alpha}^*$ as defined in \eqref{eq:mu-alpha-star} for each $\alpha\vdash k$, for some unique measure $\Lambda_{\alpha}$ which satisfies \eqref{eq:regularity-Lambda}, \eqref{eq:s-alpha}, and \eqref{eq:substructures} and is invariant with respect to all permutations that coincide with the identity on $s_{\alpha}$.
\end{prop}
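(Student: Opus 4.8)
The plan is to decompose $\Lambda$ according to the value of the core multiset $\Delta_F$ and then to group the resulting pieces by partition type. Write $E_s := \{F \in \Lip(\XN) : \Delta_F = s\}$ for each finite multiset $s \finite \Nb$. Since $\Delta_F \neq \emptyset$ for $\Lambda$-almost every $F$, and since $L(s,F) > 0$ can hold only for multisets with $|s| \leq \max_j \ar(R_j)$ (any tuple $\vec x$ with $s \subseteq x$ has $|s| \leq \ar(R_j) \leq \max_j \ar(R_j)$), we have $1 \leq |\Delta_F| \leq \max_j\ar(R_j)$ a.e.; hence $\{E_s\}$ covers $\Lip(\XN)$ up to a $\Lambda$-null set and $\Lambda = \sum_{s\, :\, 1 \leq |s| \leq \max_j\ar(R_j)} \Lambda|_{E_s}$ is a countable decomposition. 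Grouping by the partition type $\alpha = \mathbf{m}^{\downarrow}(s)$ and setting $\Lambda_\alpha := \Lambda|_{E_{s_\alpha}}$, the theorem reduces to showing (i) $\sum_{s : \mathbf{m}^{\downarrow}(s) = \alpha} \Lambda|_{E_s} = \Lambda_\alpha^*$ and (ii) $\Lambda_\alpha$ satisfies \eqref{eq:regularity-Lambda}, \eqref{eq:s-alpha}, \eqref{eq:substructures} and the invariance under permutations fixing $s_\alpha$. Given (ii), Lemma \ref{lemma:Lambda-alpha-star} supplies the regularity and exchangeability of each $\Lambda_\alpha^*$.

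The analytic heart of the argument is to upgrade the $\limsup$ in \eqref{eq:L-j-i} to a genuine limit and to establish a density dichotomy. Fix $\alpha$ and work under $\Lambda_\alpha$. By Proposition \ref{prop:conj-construct} and Theorem \ref{thm:CT2}, a conjugation invariant $F$ is represented locally, so the event $\{R_j^{F(\bullet)}(\vec x) \neq R_j^{\bullet}(\vec x)\}$ is measurable with respect to the finite family $(\xi_s)_{s \subseteq \rng\vec x}$ and $(\prec_{\vec y})_{\vec y \sqsubseteq \vec x}$ attached to $\rng \vec x$. Consequently, for tuples $\vec x$ of a fixed shape whose free parts are pairwise disjoint, the indicators $\mathbf{1}\{R_j^{F(\bullet)}(\vec x) \neq R_j^{\bullet}(\vec x)\}$ are conditionally i.i.d.\ given the finitely many variables indexed by subsets of the shared core, and exchangeability of $\Lambda$ fixes their common (conditional) success probability. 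The strong law of large numbers then shows that the normalized counts in \eqref{eq:L-j-i} converge $\Lambda_\alpha$-almost surely, so the $\limsup$ is a proper limit; moreover $L_{j,i}(s,F) > 0$ on a positive-measure set exactly when the corresponding shape carries positive action probability. This is the dichotomy: for each shape, either $F$ acts on a positive density of its locations, or it acts on none of them with positive probability.

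The main obstacle is to convert this density statement into the pointwise condition \eqref{eq:substructures}, ruling out sporadic (density-zero) action outside $s_\alpha$. Here the argument runs as follows: fix $j$ and a tuple $\vec x_0$ with $s_\alpha \not\subseteq x_0$, and consider $A_{\vec x_0} = \{F : R_j^{F(\bullet)}(\vec x_0) \neq R_j^{\bullet}(\vec x_0)\}$. If $\Lambda_\alpha(A_{\vec x_0}) > 0$, then by the dichotomy the shape of $\vec x_0$ has positive density on a set of positive measure, so $L(x_0,F) > 0$ there; but $L(x_0,F) > 0$ forces $\Delta_F \subseteq x_0$, contradicting $\Delta_F = s_\alpha \not\subseteq x_0$. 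Hence $\Lambda_\alpha(A_{\vec x_0}) = 0$ for every such $\vec x_0$, and summing over the countably many tuples with $s_\alpha \not\subseteq x$ gives \eqref{eq:substructures}. The other requirements on $\Lambda_\alpha$ are immediate: \eqref{eq:regularity-Lambda} is inherited from $\Lambda$ by restriction, \eqref{eq:s-alpha} holds by the definition $\Lambda_\alpha = \Lambda|_{E_{s_\alpha}}$, and invariance under permutations $\sigma$ fixing $s_\alpha$ follows from exchangeability of $\Lambda$ together with $\Delta_{\sigma F \sigma^{-1}} = \sigma(\Delta_F)$, which keeps $E_{s_\alpha}$ invariant.

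Finally, to identify $\sum_{s:\mathbf{m}^{\downarrow}(s)=\alpha}\Lambda|_{E_s}$ with $\Lambda_\alpha^*$ and to obtain uniqueness: since $\phi_{s,\alpha}$ is a bijection of $\Nb$ carrying $s_\alpha$ to $s$, the transformation $F \mapsto F^{\phi_{s,\alpha}^{-1}}$ sends $E_{s_\alpha}$ onto $E_s$ (using $\Delta_{F^{\phi_{s,\alpha}^{-1}}} = \phi_{s,\alpha}(\Delta_F)$) and preserves $\Lambda$ by exchangeability, so the pushforward of $\Lambda_\alpha$ under this map is exactly $\Lambda|_{E_s}$; summing over all $s$ of type $\alpha$ yields \eqref{eq:mu-alpha-star}, i.e.\ $\sum_{s:\mathbf{m}^{\downarrow}(s)=\alpha}\Lambda|_{E_s} = \Lambda_\alpha^*$. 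Summing over $\alpha \vdash k$ and $1 \leq k \leq \max_j\ar(R_j)$ then recovers $\Lambda$. For uniqueness, note that $\phi_{s_\alpha,\alpha}$ is the identity, so the $s=s_\alpha$ summand of $\Lambda_\alpha^*$ is $\Lambda_\alpha$ itself while every other summand (of any type) is supported off $E_{s_\alpha}$; restricting the identity $\Lambda = \sum_{k,\alpha}\Lambda_\alpha^*$ to $E_{s_\alpha}$ therefore forces $\Lambda_\alpha = \Lambda|_{E_{s_\alpha}}$, pinning down each $\Lambda_\alpha$.
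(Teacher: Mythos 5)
Your overall skeleton---partitioning $\Lambda$ over the events $E_s=\{F\in\Lip(\XN):\Delta_F=s\}$, grouping by partition type, identifying $\sum_{s:\mathbf{m}^{\downarrow}(s)=\alpha}\Lambda\mathbf{1}_{E_s}$ with $\Lambda_{\alpha}^*$ by pushing $\Lambda_{\alpha}$ forward along the relabelings $\phi_{s,\alpha}$, and pinning down uniqueness by restricting the identity to $E_{s_{\alpha}}$---is the same as the paper's, and that bookkeeping is fine. The genuine gap is in your second paragraph, where all the analytic content lives. You treat $\Lambda_{\alpha}$ as though it were an exchangeable probability measure: you invoke Theorem \ref{thm:CT2} and Proposition \ref{prop:conj-construct} (which are statements about probability laws of relatively exchangeable structures and about exchangeable transition probabilities), you speak of conditional success probabilities and conditionally i.i.d.\ indicators, and you apply the strong law of large numbers. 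But $\Lambda$ is in general an \emph{infinite} measure---condition \eqref{eq:regularity-Lambda} only makes it finite on the events $\{F: F^{[n]}\neq\idn\}$---and the piece $\Lambda_{\alpha}$ is not even exchangeable: it is invariant only under permutations fixing $s_{\alpha}$. None of the probabilistic tools you cite apply to such an object as stated, and making them applicable is precisely the hard part of the proposition. The paper's proof spends most of its length on exactly this reduction: it restricts to the finite measures $\Lambda_n=\Lambda\mathbf{1}_{\{F^{[n]}\neq\idn\}}$, observes that restriction destroys full exchangeability, repairs this by pushing $\Lambda_n$ forward along the $n$-shift $F\mapsto\overleftarrow{F}_n$ (an operation well defined only because $\Lambda$-a.e.\ $F$ is conjugation invariant), normalizes the resulting finite exchangeable measure to a probability measure $\mu_n$, runs a \emph{reverse martingale} convergence argument under $\mu_n$ (the indicators in \eqref{eq:L-j-i} range over overlapping tuples, so they are exchangeable rather than independent, and the SLLN is not directly available even there), and finally recovers statements about $\Lambda$ itself by monotone convergence, since $\Lambda_n\uparrow\Lambda$. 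Without some version of this restrict--shift--normalize step, the ``dichotomy'' at the end of your second paragraph, on which your third paragraph leans, is unsupported.

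Two smaller points. First, in your third paragraph the family you actually control via permutations fixing $s_{\alpha}$ is the orbit of $\vec x_0$, whose associated density is $L(s,F)$ for the sub-multiset $s=x_0\cap s_{\alpha}$, not $L(x_0,F)$; the contradiction survives (since $L(s,F)>0$ forces $s_{\alpha}=\Delta_F\subseteq s\subseteq x_0$, contradicting $s_{\alpha}\not\subseteq x_0$), but the step should be stated for the correct family. Second, under the reading of \eqref{eq:L-j-i} used in Example \ref{ex:1} (where $L_{1,0}(\{a,b,c\},F)$ is positive exactly when $F$ acts at some arrangement of the full-arity multiset $\{a,b,c\}$), any action of $F$ at a location $\vec x_0$ already gives $L(x_0,F)>0$ and hence $\Delta_F\subseteq x_0$ directly from definition \eqref{eq:intersect}; so ruling out sporadic action is much closer to definitional than your proposal suggests, and the real measure-theoretic burden of the proposition sits in justifying the limit and density claims for the infinite measure, which is exactly the part your argument skips.
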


\begin{proof}
By assumption that $\Delta_F\neq\emptyset$ for $\Lambda$-almost every $F\in\XN$ and the fact that $\Delta_F$ is well defined for $\Lambda$-almost every $F\in\Lip(\XN)$,  we can decompose $\Lambda$ according to
\[\Lambda=\sum_{k=1}^{\max_j\ar(R_j)}\sum_{\alpha\vdash k}\Lambda\mathbf{1}_{\{F\in\Lip(\XN): \mathbf{m}^{\downarrow}(\Delta_F)=\alpha\}}.\]
For each $\alpha\vdash k$, we claim that $\Lambda_{\alpha}:=\Lambda\mathbf{1}_{\{F\in\Lip(\XN): \Delta_F=s_{\alpha}\}}$ is the unique measure satisfying the above conditions for which $\Lambda\mathbf{1}_{\{F\in\Lip(\XN): \mathbf{m}^{\downarrow}(\Delta_F)=\alpha\}}=\Lambda_{\alpha}^*$.

First, we note that any $F\in\Lip(\XN)$ for which $\mathbf{m}^{\downarrow}(\Delta_F)=\alpha\vdash k$ implies that $L(s',F)=0$ for every $s'\finite\Nb$ with $|s'|<k$, since the definition of $\Delta_F$ in \eqref{eq:intersect} implies that $\Delta_F\subseteq s'$ and, thus, $|\Delta_F|<k$ and $\mathbf{m}^{\downarrow}(\Delta_F)$ partitions $|\Delta_F|<k$, a contradiction.

\begin{claim}
For any multiset $s\finite\Nb$ and $F\in\Lip(\XN)$, $\Delta_F\not\subseteq s$ implies $L(s,F)=0$.
Conversely, $L(s,F)>0$ implies $\Delta_F\subseteq s$.
\end{claim}

\begin{claimproof}
This is clear since $\Delta_F\not\subseteq s$ implies $R_j^{F(\bullet)}(\vec x)=R_j^{\bullet}(\vec x)$ for all $\vec x$ with $s\subseteq x$ and all $j=1,\ldots,r$.
\end{claimproof}

For $F\in\XN$, $\varepsilon,\delta>0$, $j=1,\ldots,r$, and $|\Delta_F|\leq m\leq\max_{1\leq i\leq r}\ar(R_i)$, we define
\begin{align*}
A_{F}^{j,m}(\varepsilon)&:=\{s'\finite\Nb: |s'|=m \text{ and }L_j(s',F)\geq\varepsilon\},\\
|A_F^{j,m}(\varepsilon)|&:=\limsup_{n\to\infty}{n^{-m}}\sum_{s'\finite[n]: |s'|=m}\mathbf{1}\{s'\in A_F^{j,m}(\varepsilon)\},\quad\text{and}\\
V^{j,m}(\varepsilon,\delta)&:=\{F\in\Lip(\XN): |A^{j,m}_F(\varepsilon)|\geq\delta\}.
\end{align*}


The function of the above quantities is to establish that if $|A_F^{j,m}(\varepsilon)|>0$ then the collection of $s\in A_F^{j,m}(\varepsilon)$ must have a nonempty intersection $s^*$ such that $L_j(s^*,F)>0$. 
If $|A_F^{j,m}(\varepsilon)|=0$, then we show that $A_F^{j,m}(\varepsilon)$ must be finite, in which case the intersection $s^*$ corresponds to the common core of all changes caused by $F$.
We rule out other cases as a consequence of exchangeability and the righthand side of \eqref{eq:regularity-Lambda}.

\begin{claim}
For $\Lambda$-almost every $F\in\Lip(\XN)$, we can replace limits superior by proper limits in our definitions of $L_{j,i}(s',F)$ and $|A_F^{j,m}(\varepsilon)|$ above.
\end{claim}
\begin{claimproof}
  To see this, we let $\Lambda_n:=\Lambda\mathbf{1}_{\{F\in\Lip(\XN): F^{[n]}\neq\idn\}}$, $n\in\Nb$, denote the restriction of $\Lambda$ to the event $F^{[n]}\neq\idn$.
By the righthand side of \eqref{eq:regularity-Lambda} and exchangeability of $\Lambda$, $\Lambda_n$ is a finite measure which is invariant with respect to all permutations that coincide with the identity on $[n]$.
For every $n\geq1$, we define the {\em $n$-shift} as the injection $\overrightarrow{\sigma}_n:\Nb\to\Nb$ for which $\overrightarrow{\sigma}_n(k)=k+n$ for every $k\geq1$.
For any conjugation invariant $F\in\Lip(\XN)$, we define the {\em image of $F$ by $\overrightarrow{\sigma}_n$}, alternatively the {\em $n$-shift of $F$}, by $\overleftarrow{F}_n:\XN\to\XN$, which satisfies $\overleftarrow{F}_n(\mathfrak{N}):=F(\mathfrak{M})^{\overrightarrow{\sigma}_n}$ for any $\mathfrak{M}\in\XN$ satisfying $\mathfrak{M}^{\overrightarrow{\sigma}_n}=\mathfrak{N}$.
This is well defined $\Lambda$-almost everywhere by Proposition \ref{prop:conj-inv} and the assumption that $\Lambda$-almost every $F\in\Lip(\XN)$ is conjugation invariant.

We write $\overleftarrow{\Lambda}_n$ to denote the image measure of $\Lambda_n$ by the $n$-shift, that is,
\[\overleftarrow{\Lambda}_n(\cdot):=\Lambda_n(\{F\in\Lip(\XN): \overleftarrow{F}_n\in\cdot\}),\]
which is an exchangeable, finite measure on $\Lip(\XN)$ and, therefore, is proportional to an exchangeable probability measure $\mu_n$ on $\Lip(\XN)$.
Let $\Phi\sim\mu_n$ be a random function distributed according to $\mu_n$.
For any $j=1,\ldots,r$ and $s'\finite\Nb$, with $|s'|<\ar(R_j)$ and $0\leq i< \ar(R_j)-|s'|$, we consider the random variables
\[Z_m^i:=\frac{1}{(m-|\rng s'|)^{\downarrow(\ar(R_j)-|s'|-i)}}\sum_{\vec x\in[m]^{\ar(R_j)}: s'\subseteq x,\ |\rng x\setminus \rng s'|=\ar(R_j)-|s'|-i}\mathbf{1}\{R_j^{\Phi(\bullet)}(\vec x)\neq R_j^{\bullet}(\vec x)\},\]
for $m\geq1$,
where $m^{\downarrow j}:=m(m-1)\cdots(m-j+1)$ is the falling factorial and recall that $x$ denotes the multiset determined by forgetting the ordering of elements in $\vec x$.

We define the reverse filtration $(\mathcal{F}_m^i)_{m\geq1}$ by $\mathcal{F}_m^i:=\sigma\langle Z_{m+1}^i,Z_{m+2}^i,\ldots\rangle$ so that $(Z_m^i)_{m\geq1}$ is a reverse martingale with respect to $(\mathcal{F}_m^i)_{m\geq1}$.
By the reverse martingale convergence theorem, $Z_m^i$ converges $\mu_n$-almost surely as $m\to\infty$, allowing us to replace the limit superior with a proper limit in \eqref{eq:L-j-i} for $\overleftarrow{\Lambda}_n$-almost every $F\in\Lip(\XN)$.
Since this limit depends on $F$ only through its $n$-shift, for every $n\geq1$, it follows that this limit exists for $\Lambda_n$-almost every $F\in\Lip(\XN)$, for all $n\geq1$.
Moreover, we note that the events $\{F\in\Lip(\XN): F^{[n]}\neq\idn\}$ increase to $\{F\in\Lip(\XN): F\neq\idN\}$ as $n\to\infty$ in the sense that
\[\{F\in\Lip(\XN): F\neq\idN\}=\bigcup_{n\geq1}\{F\in\Lip(\XN): F^{[n]}\neq\idn\},\]
implying that $\Lambda_n\uparrow\Lambda\mathbf{1}_{\{F\in\Lip(\XN): F\neq\idN\}}=\Lambda$ by the monotone convergence theorem and the lefthand side of \eqref{eq:regularity-Lambda}.
It follows that $\lim_{m\to\infty}Z_m^i=Z^i$ 
exists $\Lambda$-almost everywhere for all $s'\finite\Nb$ and all $j=1,\ldots,r$.
We can replace the upper limit with a proper limit in the definition of $L_{j,i}(s,F)$ by noting that $(m-|\rng s'|)^{\downarrow \ar(R_j)-|s'|-i}\sim m^{\ar(R_j)-|s'|-i}$ and, thus, $Z^i=L_{j,i}(s,F)$ for $\Lambda$-almost every $F$.

An analogous argument allows us to replace the limit superior by a proper limit in our definition of $|A_F^{j,m}(\varepsilon)|$ for $\Lambda$-almost every $F$.
\end{claimproof}

\begin{claim}\label{claim:3}
$\Lambda(V^{j,m}(\varepsilon,\delta))=0$ for all $\varepsilon,\delta>0$.  In other words, $|A^{j,m}_F(\varepsilon)|=0$ for $\Lambda$-almost every $F\in\Lip(\XN)$.
\end{claim}
\begin{claimproof}
By definition, $\mathbf{m}^{\downarrow}(\Delta_F)=\alpha\vdash k$ implies that, for every $s'\subsetneq\Delta_F$ with $|s'|=l<k$ and every $0\leq i<\ar(R_j)-|s'|$,
\begin{eqnarray*}
\lefteqn{0\leq\liminf_{n\to\infty}\frac{1}{n^{\ar(R_j)-l-i}}\sum_{\vec x\in[n]^{\ar(R_j)}: s'\subseteq x,\ |\rng x\setminus\rng s'|=\ar(R_j)-|s'|-i}\mathbf{1}\{R_j^{F(\bullet)}(\vec x)\neq R_j^{\bullet}(\vec x)\}\leq}\\
&\leq &\limsup_{n\to\infty}\frac{1}{n^{\ar(R_j)-l-i}}\sum_{\vec x\in[n]^{\ar(R_j)}: s'\subseteq x,\ |\rng x\setminus \rng s'|=\ar(R_j)-|s'|-i}\mathbf{1}\{R_j^{F(\bullet)}(\vec x)\neq R_j^{\bullet}(\vec x)\}\\
&=&0.
\end{eqnarray*}
By assumption, $\Lambda$-almost every $F$ has $\mathbf{m}^{\downarrow}(\Delta_F)=\alpha\vdash k$; whence, $L(s',F)=0$ for all $s'\subsetneq \Delta_F$ for $\Lambda$-almost every $F\in\Lip(\XN)$.

Also, for any $F\in\Lip(\XN)$ and  $l<|\Delta_F|$, we can write
\[A_F^{j,m}(\varepsilon)=\bigcup_{\tilde{s}\subseteq \Delta_F: |\tilde{s}|=l}\{s'\finite\Nb: \tilde{s}\subseteq s'\text{ and }s'\in A_F^{j,m}(\varepsilon)\};\]
whence, for every $\varepsilon,\delta>0$ and $\Lambda$-almost every $F\in V^{j,m}(\varepsilon,\delta)$,
\begin{eqnarray*}
\lefteqn{\delta\leq\limsup_{n\to\infty}n^{-m}\sum_{s'\finite[n]: |s'|=m}\mathbf{1}\{s'\in A_F^{j,m}(\varepsilon)\}=}\\
&=&\lim_{n\to\infty}n^{-m}\sum_{s'\finite[n]: |s'|=m}\mathbf{1}\{s'\in A_F^{j,m}(\varepsilon)\}=\\
&=&\lim_{n\to\infty}\lim_{p\to\infty}n^{-l}p^{l-m}\sum_{\tilde{s}\finite[n]: |\tilde{s}|=l}\sum_{s'\finite[p]: |s'|=m-l}\mathbf{1}\{s'\oplus \tilde{s}\in A_F^{j,m}(\varepsilon)\}\\
&=&\lim_{n\to\infty}n^{-l}\sum_{\tilde{s}\finite[n]: |\tilde{s}|=l}\lim_{p\to\infty}p^{l-m}\sum_{s'\finite[p]: |s'|=m-l}\mathbf{1}\{s'\oplus \tilde{s}\in A_F^{j,m}(\varepsilon)\},\\
\end{eqnarray*}
where for multisets $a=\{i^{m_i(a)}\}_{i\geq1}$ and $b=\{i^{m_i(b)}\}_{i\geq1}$ we write $a\oplus b:=\{i^{m_i(a)+m_i(b)}\}_{i\geq1}$.
The above calculation implies
\begin{equation}\label{eq:tilde-s}\limsup_{p\to\infty}\frac{1}{p^{m-l}}\sum_{s'\finite[p]: |s'|=m-l}\mathbf{1}\{s'\oplus \tilde{s}\in A_F^{j,m}(\varepsilon)\}\geq\delta>0\end{equation}
for some $\tilde{s}\finite\Nb$ with $|\tilde{s}|=l$.
By definition of $A_F^{j,m}(\varepsilon)$, for $\Lambda$-almost every $F\in V^{j,m}(\varepsilon,\delta)$ there is some $\tilde{s}$ with $|\tilde{s}|=l$ as in \eqref{eq:tilde-s} and some $0\leq i<\ar(R_j)-|\tilde{s}|$ such that (with $\widehat{\sum}$ indicating the sum over $\{\vec x\in[n]^{\ar(R_j)}: s'\subseteq x,\ |\rng x\setminus\rng\tilde{s}|=\ar(R_j)-|\tilde{s}|-i\}$)
\begin{eqnarray*}
\lefteqn{\limsup_{n\to\infty}\frac{1}{n^{\ar(R_j)-l-i}}\sum_{\vec x\in[n]^{\ar(R_j)}: \tilde{s}\subseteq x,\ |\rng x\setminus\rng\tilde{s}|=\ar(R_j)-|\tilde{s}|-i}\mathbf{1}\{R^{F(\bullet)}_j(\vec x)\neq R^{\bullet}_j(\vec x)\}\geq}\\
&\geq&\limsup_{n\to\infty}\frac{1}{n^{\ar(R_j)-l-i}}\sum_{s'\finite[n]: |s'|=m, \tilde{s}\subseteq s'}\frac{1}{\binom{\ar(R_j)}{m-|\tilde{s}|}}\widehat{\sum}\mathbf{1}\{R^{F(\bullet)}_j(\vec x)\neq R^{\bullet}_j(\vec x)\}\\
&=&\frac{1}{\binom{\ar(R_j)}{m-|\tilde{s}|}}\lim_{n\to\infty}\lim_{p\to\infty}\frac{1}{n^{m-l}}\frac{1}{p^{\ar(R_j)-m-i}}\sum_{s'\finite[n]: |s'|=m, \tilde{s}\subseteq s'}\widehat{\sum}\mathbf{1}\{R^{F(\bullet)}_j(\vec x)\neq R^{\bullet}_j(\vec x)\}\\
&=&\frac{1}{\binom{\ar(R_j)}{m-|\tilde{s}|}}\lim_{n\to\infty}\frac{1}{n^{m-l}}\sum_{s'\finite[n]: |s'|=m, \tilde{s}\subseteq s'}\lim_{p\to\infty}\frac{1}{p^{\ar(R_j)-m-i}}\widehat{\sum}\mathbf{1}\{R^{F(\bullet)}_j(\vec x)\neq R^{\bullet}_j(\vec x)\}\\
&\geq&\frac{1}{\binom{\ar(R_j)}{m-|\tilde{s}|}}\lim_{n\to\infty}\frac{1}{n^{m-l}}\sum_{s'\finite[n]: |s'|=m, \tilde{s}\subseteq s'}\varepsilon\mathbf{1}\{s'\in A_F^{j,m}(\varepsilon)\}\\
&=&\frac{1}{\binom{\ar(R_j)}{m-|\tilde{s}|}}\lim_{n\to\infty}\frac{1}{n^{m-l}}\sum_{s''\finite[n]: |s''|=m-l}\varepsilon\mathbf{1}\{s''\oplus\tilde{s}\in A_F^{j,m}(\varepsilon)\}\\
&\geq&\varepsilon{\delta}\frac{1}{\binom{\ar(R_j)}{m-|\tilde{s}|}}\\
&>&0,
\end{eqnarray*}
which contradicts the assumption $\tilde{s}\subsetneq \Delta_F$.
It follows that $|A_F^{j,m}(\varepsilon)|=0$ for all $\varepsilon>0$ for $\Lambda$-almost every on $F\in\Lip(\XN)$.
(In the above string, the first inequality follows by noting that for any $\vec x\in[n]^{\ar(R_j)}$ there are at most $\binom{\ar(R_j)}{m-|\tilde{s}|}$ unique multisets of $[n]$ embedded within $\vec x$, allowing for the possibility that $\vec x$ with $\tilde{s}\subseteq s'\subseteq x$ is counted $\binom{\ar(R_j)}{m-|\tilde{s}|}$ times instead of only once. 
The second line is permitted since we have shown that the limits exist for $\Lambda$-almost every $F\in\Lip(\XN)$.
The inequality in the fourth line is plain by our definition of the set $A_F^{j,m}(\varepsilon)$ above.
The fifth line is a rewriting of the fourth line in equivalent form.
The sixth line follows by our choice of $\tilde{s}$ that satisfies \eqref{eq:tilde-s}.)
\end{claimproof}



Claim \ref{claim:3} establishes that $|A_F^{j,m}(\varepsilon)|=0$ for $\Lambda$-almost every $F\in\Lip(\XN)$, for all $\varepsilon>0$, which allows the possibility that either $\#A_F^{j,m}(\varepsilon)=\infty$ or $\#A_F^{j,m}(\varepsilon)<\infty$.
We observe further that $\Lambda$-almost every $F\in\Lip(\XN)$ satisfies either $\#A_F^{j,m}(\varepsilon)<\infty$ or if $\#A_F^{j,m}(\varepsilon)=\infty$ then there is some $|\Delta_F|\leq m'<m$ such that $\#A_F^{j,m'}(\varepsilon)<\infty$.

Let $m^*=\max_{1\leq j\leq r}\ar(R_j)$ and, without loss of generality, suppose $\alpha\vdash k$ is such that $\rng s_{\alpha}\subseteq[m^*]$.
For any $s'\supseteq s_{\alpha}$ with $\rng s'\subseteq[m^*]$ and $|s'|\leq m^*$ and any $j=1,\ldots,r$, \eqref{eq:regularity-Lambda}  implies
\begin{eqnarray*}
\lefteqn{\infty>\Lambda(\{F\in\Lip(\XN): F^{[m^*]}\neq\id_{[m^*]}\})\geq}\\
&&\quad\geq\Lambda(\{F\in\Lip(\XN): F^{[m^*]}\neq\id_{[m^*]}, L_j(s',F)\geq\varepsilon\})\end{eqnarray*}
and, furthermore,
\begin{eqnarray*}
\lefteqn{\Lambda(\{F\in\Lip(\XN): F^{[m^*]}\neq\id_{[m^*]}\})\geq}\\
&\geq&\Lambda(\{F\in\Lip(\XN): F^{[m^*]}\neq\id_{[m^*]}, L_j(s',F)\geq\varepsilon\})\\
&=&\int_{\Lip(\XN)}\mathbf{1}\{F^{[m^*]}\neq\id_{[m^*]}\}\mathbf{1}\{L_j(s',F)\geq\varepsilon\}\Lambda(dF)\\
&=&\int_{\Lip(\XN)}\mathbf{1}\{F^{s'\cup s^*}\neq\id_{s'\cup s^*}\}\mathbf{1}\{L_j(s',F)\geq\varepsilon\}\Lambda(dF)
\end{eqnarray*}
for all $s^*\subseteq\Nb\setminus\rng s'$ with $|s^*|=m^*-|\rng s'|$ by exchangeability of $\Lambda$, where $F^{s'\cup s^*}$ is the restriction of $F$ to a function $\mathcal{X}_{s'\cup s^*}\to\mathcal{X}_{s'\cup s^*}$.
Thus,
\begin{eqnarray*}
\lefteqn{\Lambda(\{F\in\Lip(\XN): F^{[m^*]}\neq\id_{[m^*]}\})\geq}\\
&\geq&\Lambda(\{F\in\Lip(\XN): F^{[m^*]}\neq\id_{[m^*]}, L_j(s',F)\geq\varepsilon\})\\
&=&\frac{1}{\binom{n-|\rng s'|}{m^*-|\rng s'|}}\widehat{\sum}\int_{\Lip(\XN)}\mathbf{1}\{F^{s'\cup s^*}\neq\id_{s'\cup s^*}\}\mathbf{1}\{L_j(s',F)\geq\varepsilon\}\Lambda(dF)\\
&=&\int_{\Lip(\XN)}\mathbf{1}\{L_j(s',F)\geq\varepsilon\}\frac{1}{\binom{n-|\rng s'|}{m^*-|\rng s'|}}\widehat{\sum}\mathbf{1}\{F^{s'\cup s^*}\neq\id_{s'\cup s^*}\}\Lambda(dF),
\end{eqnarray*}
where we write $\widehat{\sum}$ for the sum over $\{s^*\subseteq[n]\setminus\rng s': |s^*|=m^*-|\rng s'|\}$.
Since the righthand side is constant we have
\begin{eqnarray*}
\lefteqn{\Lambda(\{F\in\Lip(\XN): F^{[m^*]}\neq\id_{[m^*]}\})\geq}\\
&\geq&\Lambda(\{F\in\Lip(\XN): F^{[m^*]}\neq\id_{[m^*]}, L_j(s',F)\geq\varepsilon\})\\
&=&\liminf_{n\to\infty}\int_{\Lip(\XN)}\mathbf{1}\{L_j(s',F)\geq\varepsilon\}\frac{1}{\binom{n-|\rng s'|}{m^*-|\rng s'|}}\widehat{\sum} \mathbf{1}\{F^{s'\cup s^*}\neq\id_{s'\cup s^*}\}\Lambda(dF)\\
&\geq&\int_{\Lip(\XN)}\mathbf{1}\{L_j(s',F)\geq\varepsilon\}\liminf_{n\to\infty}\frac{1}{\binom{n-|\rng s'|}{m^*-|\rng s'|}}\widehat{\sum}\mathbf{1}\{F^{s'\cup s^*}\neq\id_{s'\cup s^*}\}\Lambda(dF)\\
&\geq&\int_{\Lip(\XN)}\mathbf{1}\{L_j(s',F)\geq\varepsilon\}\mathbf{1}\{\lim_{n\to\infty}\frac{1}{\binom{n-|\rng s'|}{m^*-|\rng s'|}}\widehat{\sum}\mathbf{1}\{F^{s'\cup s^*}\neq\id_{s'\cup s^*}\}\text{ exists}\}\times\\
&&\quad\times\lim_{n\to\infty}\frac{1}{\binom{n-|\rng s'|}{m^*-|\rng s'|}}\widehat{\sum}\mathbf{1}\{F^{s'\cup s^*}\neq\id_{s'\cup s^*}\}\Lambda(dF)\\
&\geq&\varepsilon\Lambda(\{F\in\Lip(\XN): L_j(s',F)\geq\varepsilon\}).
\end{eqnarray*}
(The inequality in passing from the third line to the fourth line is a consequence of Fatou's lemma.)

The final inequality above together with \eqref{eq:regularity-Lambda} implies 
\[\Lambda(\{F\in\Lip(\XN): L_j(s',F)\geq\varepsilon\})<\infty\quad\text{for every }s'\finite\Nb.\]
For any $s'\finite\Nb$, countable additivity and \eqref{eq:regularity-Lambda} implies further that
\begin{eqnarray*}
\lefteqn{\Lambda(\{F\in\Lip(\XN): L_j(s',F)\geq\varepsilon\})\geq}\\
&\geq&\Lambda(\{F\in\Lip(\XN): L_j(s',F)\geq\varepsilon, \#A_F^{j,m}(\varepsilon)<\infty\})\\
&=&\sum_{k=1}^{\infty}\Lambda(\{F\in\Lip(\XN): L_j(s',F)\geq\varepsilon,\ \#A_F^{j,m}(\varepsilon)=k\}).
\end{eqnarray*}
We partition $A_F^{j,m}(\varepsilon)$ according to
\[A_F^{j,m}(\varepsilon)=\{s\in A_F^{j,m}(\varepsilon): \rng s\subseteq \rng s'\}\cup\{s\in A_F^{j,m}(\varepsilon): \rng s\not\subseteq\rng s'\}.\]
By exchangeability, $\Lambda$ assigns the same measure to the set of $F$ for which $L_j(s,F)\geq\varepsilon$ and $\rng s\not\subseteq\rng s'$ and the set of $F$ for which $L_j(s^{\sigma},F)\geq\varepsilon$ for any permutation $\sigma:\Nb\to\Nb$ that coincides with the identity on $\rng s'$, where $s^{\sigma}$ is the image of $s$ under the permutation $\sigma$.
As there are infinitely many such $s^{\sigma}$ whenever $\rng s\not\subseteq\rng s'$, condition \eqref{eq:regularity-Lambda} and $L_j(s',F)\geq\varepsilon$ force $\rng s\subseteq\rng s'$ for all $s\in A_F^{j,m}(\varepsilon)$; whence, the event $\#A_F^{j,m}(\varepsilon)<\infty$ implies that there exists $S\subseteq\Nb$ such that $\rng s=S$ for all $s\in A_F^{j,m}(\varepsilon)$.
Any such $S\subseteq\Nb$ necessarily satisfies $\rng\Delta_F\subseteq S$.

On the other hand, if $\#A_F^{j,m}(\varepsilon)=\infty$ for $m=|\Delta_F|$, then
\[\Delta_F\subseteq\bigcap\{s\finite\Nb: s\in A_F^{j,m}(\varepsilon)\}\]
implies $|\Delta_F|<|\Delta_F|$, a contradiction.
We can, therefore, assume $m>|\Delta_F|$ and define 
\[D_F:=\bigcap\{s'\finite\Nb: s'\in A_F^{j,m}\}\supseteq\Delta_F,\] 
which must satisfy $m'=|D_F|\geq|\Delta_F|>1$.
If we still have $\#A_F^{j,m'}=\infty$, then we define 
\[D'_F:=\bigcap\{s''\finite\Nb: s''\in A_F^{j,m'}\}\supseteq\Delta_F\]
and proceed as above.
If $\#A_F^{j,m''}<\infty$, then the above argument implies that $\rng s'=\rng s$ for all $s,s'\in A_F^{j,m''}$.



Now, for any $n\geq\max_j\ar(R_j)$, we define $\Lambda_n$ to be the restriction of $\Lambda$ to the event $\{F\in\Lip(\XN): F^{[n]}\neq\idn\}$ and we write $\overleftarrow{\Lambda}_n$ to denote the image of $\Lambda_n$ by the $n$-shift defined above, that is,
\[\overleftarrow{\Lambda}_n(\{F\in\Lip(\XN): F\in\cdot\})=\Lambda(\{F\in\Lip(\XN): F^{[n]}\neq\idn, \overleftarrow{F}_n\in\cdot\}),\]
which is finite and exchangeable by \eqref{eq:regularity-Lambda}.
We can regard $\overleftarrow{\Lambda}_n$ as a constant multiple of an exchangeable probability measure $\mu_n$ on $\Lip(\XN)$.  
By the Aldous--Hoover theorem, we have
\[|A_F^{j,m}(\varepsilon)|=\limsup_{k\to\infty}k^{-m}\sum_{s'\finite[k]: |s'|=m}\mathbf{1}\{L_j(s',F)\geq\varepsilon\}=0\quad\mu_n\text{-a.s.},\]
implying that $L_j(s',F)=0$ for all $s'\finite\Nb$ $\mu_n$-a.s.

Finally, the righthand side of condition \eqref{eq:regularity-Lambda} implies that, for every $j=1,\ldots,r$, the set 
\[\Delta_F^{(j)} = \bigcap\{s\finite\Nb: L_j(s,F)\geq\varepsilon\}\]
either has $\rng\Delta_F^{(j)}=\Delta_F$ or else $L_j(s,F)=0$ for all $s\finite\Nb$, for $\Lambda$-almost every $F\in\Lip(\XN)$.
For if there were positive measure assigned to the event $\Delta_F^{(j)},\Delta_F^{(j')}\neq\emptyset$ and $\rng\Delta_F^{(j)}\neq\rng\Delta_F^{(j')}$ then exchangeability would assign infinite measure to an event of the form $\{F^{[n]}\neq\idn\}$, contradicting the righthand side of \eqref{eq:regularity-Lambda}.

Exchangeability allows us to proceed as in the above argument under the assumption that $\rng\Delta_F=[n]$, from which we deduce that $R_j^{F(\bullet)}(\vec x)=R_j^{\bullet}(\vec x)$ for all $\vec x\in\Nb^{\ar(R_j)}$ such that $\Delta_F\not\subseteq x$ for $\Lambda$-almost every $F\in\Lip(\XN)$.

We now define $\Lambda_{\alpha}:=\Lambda\mathbf{1}_{\{F\in\Lip(\XN): \Delta_F=s_{\alpha}\}}$ and $\Lambda_{\alpha}^*$ as in \eqref{eq:mu-alpha-star}.
For any $s\finite\Nb$ with $\mathbf{m}^{\downarrow}(s)=\alpha$, let $\phi_{s,\alpha}$ be the bijection defined in Lemma \ref{lemma:Lambda-alpha-star}.
By definition, $\Lambda_{\alpha}$ satisfies \eqref{eq:regularity-Lambda}, \eqref{eq:s-alpha}, and \eqref{eq:substructures}.
Moreover, for $\Lambda$-almost every $F\in\Lip(\XN)$, $F^{\phi_{s,\alpha}^{-1}}$ also satisfies \eqref{eq:s-alpha} and \eqref{eq:substructures}.
Exchangeability of $\Lambda$ implies that $\Lambda\mathbf{1}_{\{F\in\Lip(\XN): \mathbf{m}^{\downarrow}(\Delta_F)=\alpha\}}=\Lambda_{\alpha}^*$.
The proof is complete.

\end{proof}

\begin{theorem}\label{thm:Levy-Ito}
Let $\mathbf{X}$ be a continuous time, exchangeable Markov process on a Fra\"iss\'e space $\XN$ that has $\odap$-DAP.
Then the characteristic measure $\Lambda$ from Theorem \ref{thm:continuous-Lambda} can be chosen so that it concentrates on conjugation invariant functions $\XN\to\XN$ and decomposes uniquely as
\begin{equation}\label{eq:decomp}
\Lambda=\Lambda_{\emptyset}^*+\sum_{k=1}^{\max_j\ar(R_j)}\sum_{\alpha\vdash k}\Lambda_{\alpha}^*,\end{equation}
where $\Lambda_{\emptyset}^*$ is an exchangeable measure on $\Lip(\XN)$ satisfying \eqref{eq:regularity-Lambda} and $\Lambda_{\emptyset}^*$-almost every $F\in\Lip(\XN)$ has $\Delta_F=\emptyset$ and for every $\alpha\vdash k$, $1\leq k\leq \max_j\ar(R_j)$, $\Lambda_{\alpha}$ is invariant with respect to permutations that coincide with the identity on $s_{\alpha}$ and satisfies \eqref{eq:regularity-Lambda}, \eqref{eq:s-alpha}, and \eqref{eq:substructures}, and $\Lambda_{\alpha}^*$ is as defined in \eqref{eq:mu-alpha-star}.
\end{theorem}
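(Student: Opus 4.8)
The plan is to combine the continuous-time existence result (Theorem \ref{thm:continuous-Lambda}) with the decomposition machinery of Proposition \ref{prop:key lemma}, the only genuinely new ingredient being the passage to a \emph{conjugation invariant} rate measure, which is exactly where $\odap$-DAP enters.

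First I would produce a characteristic measure $\Lambda$ that concentrates on conjugation invariant functions. By Theorem \ref{thm:continuous-Lambda} there is already an exchangeable $\Lambda$ satisfying \eqref{eq:regularity-Lambda} with $\mathbf{X}\equalinlaw\mathbf{X}^*_{\Lambda}$; to upgrade it I would rerun the construction in the proof of Theorem \ref{thm:existence}, feeding in the conjugation invariant building blocks $\Psi$ supplied by Proposition \ref{prop:conj-construct}, which is available precisely because $\odap$-DAP licenses the sharper representation of Theorem \ref{thm:CT2}, in place of the merely Lipschitz functions of Theorem \ref{thm:Lip-construct}. Concretely, for each $t>0$ the time-$t$ transition measure $\Lambda_t$ may be taken to concentrate on conjugation invariant functions by Theorem \ref{thm:conjugation-discrete}; since conjugation invariance is a local, level-by-level condition by Proposition \ref{prop:conj-inv}, it is preserved under the $n$-shift, subsequence, and Carath\'eodory extension operations used to assemble $\Lambda$ from the rescaled finite measures $\Lambda_t^{(n)}$. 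Thus without loss of generality $\Lambda$-almost every $F\in\Lip(\XN)$ is conjugation invariant.

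Next I would split $\Lambda$ according to the value of the core $\Delta_F$ from \eqref{eq:intersect}. Since each $L(s,F)$ is a measurable limit superior and there are only countably many multisets $s$, the map $F\mapsto\Delta_F$ is measurable, so I can set $\Lambda_{\emptyset}^*:=\Lambda\mathbf{1}_{\{F:\,\Delta_F=\emptyset\}}$ and $\Lambda':=\Lambda\mathbf{1}_{\{F:\,\Delta_F\neq\emptyset\}}$. Both inherit \eqref{eq:regularity-Lambda} as restrictions of $\Lambda$, and both are exchangeable because $\Delta_{\sigma F\sigma^{-1}}$ is the image of $\Delta_F$ under $\sigma$, making the events $\{\Delta_F=\emptyset\}$ and $\{\Delta_F\neq\emptyset\}$ conjugation invariant. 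By construction $\Lambda_{\emptyset}^*$-almost every $F$ has $\Delta_F=\emptyset$, giving the first summand of \eqref{eq:decomp}. The measure $\Lambda'$ now satisfies every hypothesis of Proposition \ref{prop:key lemma}—exchangeable, regular in the sense of \eqref{eq:regularity-Lambda}, conjugation invariant almost everywhere, and $\Delta_F\neq\emptyset$ almost everywhere—so that proposition yields $\Lambda'=\sum_{k=1}^{\max_j\ar(R_j)}\sum_{\alpha\vdash k}\Lambda_{\alpha}^*$, with each $\Lambda_{\alpha}:=\Lambda'\mathbf{1}_{\{F:\,\Delta_F=s_{\alpha}\}}$ invariant under permutations fixing $s_{\alpha}$ and satisfying \eqref{eq:regularity-Lambda}, \eqref{eq:s-alpha}, and \eqref{eq:substructures}, and each $\Lambda_{\alpha}^*$ given by \eqref{eq:mu-alpha-star}; Lemma \ref{lemma:Lambda-alpha-star} certifies that every $\Lambda_{\alpha}^*$ is itself exchangeable and regular. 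Adding back $\Lambda_{\emptyset}^*$ gives \eqref{eq:decomp}.

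For uniqueness I would observe that the summands are carried by the pairwise disjoint measurable events $\{\Delta_F=\emptyset\}$ and $\{\mathbf{m}^{\downarrow}(\Delta_F)=\alpha\}$, for $\alpha\vdash k$ with $1\leq k\leq\max_j\ar(R_j)$, which exhaust $\Lip(\XN)$ up to the $\Lambda$-null set where $\Delta_F$ is undefined; hence any decomposition of the asserted form must respect this partition, and within each $\alpha$-block the identification of $\Lambda_{\alpha}$ is the uniqueness clause of Proposition \ref{prop:key lemma}. I expect the main obstacle to be the first paragraph: carrying conjugation invariance through the limiting and extension steps of the existence argument, since $\Lambda$ is an infinite rate measure rather than a probability measure, so one must verify that the defining local condition \eqref{eq:conj-inv-cond} survives the subsequential limits at every finite level while respecting the finiteness bound on the right of \eqref{eq:regularity-Lambda}. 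All of the genuinely analytic content—the reverse-martingale stabilization of the densities $L_{j,i}(s,F)$ and the exchangeability argument forcing the common core to be finite—is already isolated in Proposition \ref{prop:key lemma}, so the remainder is bookkeeping.
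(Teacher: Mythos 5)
Your proposal is correct and follows essentially the same route as the paper's proof: restrict $\Lambda$ to the events $\{\Delta_F=\emptyset\}$ and $\{\mathbf{m}^{\downarrow}(\Delta_F)=\alpha\}$, invoke Proposition \ref{prop:key lemma} on each nonempty-core component, and read off uniqueness from the disjointness of these events. The one place you go beyond the paper's own (very terse) proof is your first paragraph, which explicitly justifies that $\Lambda$ can be taken to concentrate on conjugation invariant functions by rerunning the construction of Theorem \ref{thm:existence} with the inputs of Theorem \ref{thm:conjugation-discrete} and Proposition \ref{prop:conj-construct}; the paper assumes this upgrade implicitly, and your level-by-level preservation argument via Proposition \ref{prop:conj-inv} is a legitimate way to fill that gap.
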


\begin{proof}
We can decompose $\Lambda$ into mutually singular measures by
\[\Lambda=\Lambda\mathbf{1}_{\{F\in\Lip(\XN): \Delta_F=\emptyset\}}+\Lambda\mathbf{1}_{\{F\in\Lip(\XN): \Delta_F\neq\emptyset\}}.\]
We write $\Lambda_{\emptyset}$ to denote the lefthand term above, which is exchangeable and satisfies \eqref{eq:regularity-Lambda} by our assumptions on $\Lambda$.
The righthand term decomposes further as
\[\Lambda\mathbf{1}_{\{F\in\Lip(\XN): \Delta_F\neq\emptyset\}}=\sum_{k=1}^{\max_j\ar(R_j)}\sum_{\alpha\vdash k}\Lambda\mathbf{1}_{\{F\in\Lip(\XN): \mathbf{m}^{\downarrow}(\Delta_F)=\alpha\}}.\]
Under these circumstances, each component $\Lambda\mathbf{1}_{\{F\in\Lip(\XN): \mathbf{m}^{\downarrow}(\Delta_F)=\alpha\}}$ is exchangeable and satisfies the conditions of Proposition \ref{prop:key lemma}.
The decomposition of $\Lambda$ in \eqref{eq:decomp} follows.
\end{proof}

The representation simplifies when $\XN$ has additional structure.

\begin{cor}
Let $\mathbf{X}$ be a continuous time, exchangeable Markov process on a Fra\"iss\'e space $\XN$ that has $\odap$-DAP and is anti-reflexive, that is, $\vec x\in R_j^{\mathfrak{M}}$ implies $\mathbf{m}^{\downarrow}(x)=(1,\ldots,1)$ for every $\mathfrak{M}\in\XN$.
Then the characteristic measure $\Lambda$ from Theorem \ref{thm:Levy-Ito} decomposes uniquely as
\begin{equation}\label{eq:decomp}
\Lambda=\Lambda_{\emptyset}^*+\sum_{k=1}^{\max_j\ar(R_j)}\Lambda_{k}^*,\end{equation}
where $\Lambda_{\emptyset}^*$ is an exchangeable measure on $\Lip(\XN)$ satisfying \eqref{eq:regularity-Lambda} and $\Lambda_{\emptyset}^*$-almost every $F\in\Lip(\XN)$ has $\Delta_F=\emptyset$ and for every $1\leq k\leq \max_j\ar(R_j)$, $\Lambda_{k}$ is invariant with respect to permutations that coincide with the identity on $s_{(1,\ldots,1)}$ and satisfies \eqref{eq:regularity-Lambda}, \eqref{eq:s-alpha}, and \eqref{eq:substructures}, and $\Lambda_{k}^*$ is as defined in \eqref{eq:mu-alpha-star}.
\end{cor}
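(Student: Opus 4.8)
The plan is to deduce the corollary directly from the decomposition \eqref{eq:decomp} furnished by Theorem \ref{thm:Levy-Ito} by showing that the anti-reflexivity hypothesis kills every summand $\Lambda_{\alpha}^*$ whose partition $\alpha$ has a part of size at least two. Concretely, I would prove that for $\Lambda$-almost every $F\in\Lip(\XN)$ the core $\Delta_F$ is multiplicity-free, so that $\mathbf{m}^{\downarrow}(\Delta_F)=(1,\ldots,1)$ whenever $\Delta_F\neq\emptyset$. Relabelling the single surviving summand $\Lambda_{(1,\ldots,1)}^*$ for each $k$ as $\Lambda_k^*$ then collapses the double sum in Theorem \ref{thm:Levy-Ito} to the single sum claimed here, and the uniqueness, exchangeability, and invariance assertions are inherited verbatim.

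The key observation is that anti-reflexivity trivializes $F$ at every location with a repeated coordinate. Indeed, if $\vec x\in\Nb^{\ar(R_j)}$ has $\mathbf{m}^{\downarrow}(x)\neq(1,\ldots,1)$, then by hypothesis $\vec x\notin R_j^{\mathfrak{M}}$ for every $\mathfrak{M}\in\XN$, and applying this both to $\mathfrak{M}$ and to $F(\mathfrak{M})$ gives $R_j^{F(\mathfrak{M})}(\vec x)=R_j^{\mathfrak{M}}(\vec x)=0$ for all $\mathfrak{M}$. Hence the event $\{R_j^{F(\bullet)}(\vec x)\neq R_j^{\bullet}(\vec x)\}$ is empty at every such $\vec x$, and the sums defining $L_{j,i}(s,F)$ in \eqref{eq:L-j-i} receive contributions only from tuples $\vec x$ whose underlying multiset $x$ is multiplicity-free.

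From this I would conclude that every multiset $s\finite\Nb$ with $L(s,F)>0$ is itself multiplicity-free. If $L(s,F)>0$, then for some $j$ and $i$ the limit superior in \eqref{eq:L-j-i} is positive, so at least one $\vec x$ with $s\subseteq x$ satisfies $R_j^{F(\bullet)}(\vec x)\neq R_j^{\bullet}(\vec x)$; by the previous paragraph $x$ is multiplicity-free, whence $s\subseteq x$ forces $m_i(s)\leq m_i(x)\leq 1$ for every $i$. Since $\Delta_F=\bigcap\{s\finite\Nb: L(s,F)>0\}$ is an intersection of multiplicity-free multisets, it too is multiplicity-free, so $\mathbf{m}^{\downarrow}(\Delta_F)=(1,\ldots,1)$ with $|\Delta_F|=|\rng\Delta_F|$ whenever $\Delta_F\neq\emptyset$. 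Consequently $\Lambda\mathbf{1}_{\{F\in\Lip(\XN):\,\mathbf{m}^{\downarrow}(\Delta_F)=\alpha\}}$ is the zero measure for every $\alpha\vdash k$ other than $\alpha=(1,\ldots,1)$, and the desired decomposition follows upon setting $\Lambda_k^*:=\Lambda_{(1,\ldots,1)}^*$.

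The only point I would treat with care — and the mildest of obstacles — is the measure-theoretic bookkeeping: $\Delta_F$ is guaranteed well defined (with the limits superior in \eqref{eq:L-j-i} replaced by proper limits) only off a $\Lambda$-null set, by the corresponding claim established inside the proof of Proposition \ref{prop:key lemma}. Since anti-reflexivity is a deterministic feature of the state space $\XN$, however, the emptiness of $\{R_j^{F(\bullet)}(\vec x)\neq R_j^{\bullet}(\vec x)\}$ at repeated-coordinate tuples holds for \emph{every} $F$, so the conclusion that $\Delta_F$ is multiplicity-free persists on the full-$\Lambda$-measure set where $\Delta_F$ is defined, and no estimate beyond those already supplied by Theorem \ref{thm:Levy-Ito} and Proposition \ref{prop:key lemma} is needed.
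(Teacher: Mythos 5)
Your proposal is correct and follows exactly the route the paper intends: the corollary is stated without proof as an immediate specialization of Theorem \ref{thm:Levy-Ito}, and your argument supplies precisely the missing verification, namely that anti-reflexivity forces $R_j^{F(\bullet)}(\vec x)=R_j^{\bullet}(\vec x)$ at every tuple with a repeated coordinate, hence $L(s,F)=0$ for every multiset $s$ with a repeated element, hence $\Delta_F$ is multiplicity-free, so every $\Lambda_{\alpha}^*$ with $\alpha\neq(1,\ldots,1)$ vanishes and the double sum collapses to $\sum_k\Lambda_k^*$ with $\Lambda_k^*:=\Lambda_{(1,\ldots,1)}^*$. Your closing measure-theoretic remark is if anything overcautious, since $\Delta_F$ is defined for every $F$ via the limits superior and your key observation is deterministic, holding for all $F\in\Lip(\XN)$ rather than merely $\Lambda$-almost everywhere.
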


\section{Projection to limit structures}\label{section:limits}

In addition to the precise structural properties exhibited by the processes in Examples \ref{example:coag} and \ref{example:frag} and all those covered by Theorem \ref{thm:Levy-Ito}, the sample paths of exchangeable combinatorial Feller processes are well behaved when projected into an appropriate space of limit objects, regardless of whether $\XN$ satisfies $\odap$-DAP.

In the case of partition-valued processes, as in Examples \ref{example:coag} and \ref{example:frag}, the appropriate limit space is the ranked-simplex $\Delta^{\downarrow}$.
On the space of undirected graphs, the space of graph limits, as studied in \cite{Lovasz2013,LovaszSzegedy2006}, is appropriate.
Most generally, we consider the projection into the space of dense limits of relational structures, as studied recently in \cite{AroskarCummings2014}.

Let $\mathcal{L}$ be any signature and $\XN\subseteq\lN$ be a Fra\"{i}ss\'e space.
For any $\mathfrak{S},\mathfrak{S}'\in\age(\XN)$ with $\dom{\mathfrak{S}}=[m]$ and $\dom{\mathfrak{S}'}=[n]$, we define the {\em density of $\mathfrak{S}$ in $\mathfrak{S}'$} by
\[
\delta(\mathfrak{S},\mathfrak{S}'):=\frac{1}{n^{\downarrow m}}\sum_{\phi:[m]\to[n]}\mathbf{1}\{\mathfrak{S}'^{\phi}=\mathfrak{S}\},
\]
where each $\phi:[m]\to[n]$ is an injection and $n^{\downarrow m}:=n(n-1)\cdots(n-m+1)$.
For any $\mathfrak{M}\in\XN$, we define the {\em limiting density of $\mathfrak{S}$ in $\mathfrak{M}$} by
\begin{equation}\label{eq:density}
\delta(\mathfrak{S},\mathfrak{M}):=\lim_{n\to\infty}\delta(\mathfrak{S},\mathfrak{M}|_{[n]}),\quad\text{if it exists}.
\end{equation}

For any $\mathfrak{M}\in\XN$, we define the {\em combinatorial limit} of $\mathfrak{M}$ by $\xnorm{\mathfrak{M}}:=(\delta(\mathfrak{S},\mathfrak{M}))_{\mathfrak{S}\in\age(\XN)}$, provided $\delta(\mathfrak{S},\mathfrak{M})$ exists for all $\mathfrak{S}\in\age(\XN)$.

\begin{definition}
A random structure $\mathfrak{X}$ is {\em dissociated} if $\mathfrak{X}|_S$ and $\mathfrak{X}|_{S'}$ are independent whenever $S,S'\subseteq\Nb$ are disjoint.
\end{definition}

\begin{prop}\label{prop:dissociated}
Suppose $\xnorm{\mathfrak{M}}$ exists for some $\mathfrak{M}\in\XN$.
Then $\xnorm{\mathfrak{M}}$ determines a unique exchangeable, dissociated probability measure on $\XN$.
\end{prop}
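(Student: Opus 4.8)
The plan is to read off the finite-dimensional marginals of the desired measure directly from $\xnorm{\mathfrak{M}}$, assemble them by a projective limit, and then verify exchangeability and dissociatedness, with uniqueness forced by the marginals. Set $p_m(\mathfrak{S}):=\delta(\mathfrak{S},\mathfrak{M})$ for $\mathfrak{S}\in\Xm=\age_m(\XN)$. First I would check that each $p_m$ is a probability measure on the finite set $\Xm$: nonnegativity is immediate, and for every $n$ each injection $\phi\colon[m]\to[n]$ realizes exactly one $\mathfrak{S}$, so $\sum_{\mathfrak{S}\in\Xm}\delta(\mathfrak{S},\mathfrak{M}|_{[n]})=1$, and letting $n\to\infty$ gives $\sum_{\mathfrak{S}}p_m(\mathfrak{S})=1$. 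Consistency holds already at each finite level: extending $\phi\colon[m]\to[n]$ to an injection $\psi\colon[m+1]\to[n]$ in one of the $n-m$ available ways and using $(\mathfrak{M}|_{[n]})^{\psi}|_{[m]}=(\mathfrak{M}|_{[n]})^{\phi}$ yields
\[\sum_{\mathfrak{T}\in\mathcal{X}_{[m+1]}:\,\mathfrak{T}|_{[m]}=\mathfrak{S}}\delta(\mathfrak{T},\mathfrak{M}|_{[n]})=\delta(\mathfrak{S},\mathfrak{M}|_{[n]}),\]
which survives the limit $n\to\infty$ since the sum is finite.

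By the projective-limit (Kolmogorov extension) theorem for the inverse system $(\Xm,\,\cdot|_{[m]})$ of finite spaces, the consistent family $(p_m)$ extends to a unique probability measure $\mathbb{P}$ on $\lN$ with $\mathbb{P}\{\mathfrak{Y}|_{[m]}=\mathfrak{S}\}=\delta(\mathfrak{S},\mathfrak{M})$. Since each $p_m$ is supported on $\age_m(\XN)$, $\mathbb{P}$ concentrates on the closed set of countable structures all of whose finite restrictions lie in $\age(\XN)$, i.e.\ on $\XN$. For exchangeability it suffices, since the Borel field is generated by the restriction maps, to check invariance of every marginal under relabelings of initial segments: for a permutation $\sigma\colon[m]\to[m]$ the functoriality $(\mathfrak{M}^{\phi})^{\sigma^{-1}}=\mathfrak{M}^{\phi\circ\sigma^{-1}}$ lets me reindex the injections $\phi$ to obtain $\delta(\mathfrak{S}^{\sigma},\mathfrak{M}|_{[n]})=\delta(\mathfrak{S},\mathfrak{M}|_{[n]})$, hence $p_m(\mathfrak{S}^{\sigma})=p_m(\mathfrak{S})$; invariance of all cylinder probabilities under finite permutations then upgrades to invariance under all of $\symmetricN$.

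The substantive step is dissociatedness. By exchangeability it is enough to treat the disjoint blocks $S=[a]$ and $S'=\{a+1,\dots,a+b\}$, identifying $\mathcal{X}_{S'}$ with $\mathcal{X}_{[b]}$ order-preservingly (densities are unchanged by the relabeling invariance just established). I would expand the joint marginal
\[\mathbb{P}\{\mathfrak{Y}|_{S}=\mathfrak{S},\ \mathfrak{Y}|_{S'}=\mathfrak{S}'\}=\sum_{\mathfrak{T}\in\mathcal{X}_{[a+b]}:\,\mathfrak{T}|_{S}=\mathfrak{S},\ \mathfrak{T}|_{S'}=\mathfrak{S}'}\delta(\mathfrak{T},\mathfrak{M}),\]
and recognize the right-hand side as the limit, as $n\to\infty$, of $\frac{1}{n^{\downarrow(a+b)}}\#\{(\phi,\phi')\ \text{with disjoint images}:\ (\mathfrak{M}|_{[n]})^{\phi}=\mathfrak{S}\ \text{and}\ (\mathfrak{M}|_{[n]})^{\phi'}=\mathfrak{S}'\}$, since splitting an injection $\psi\colon[a+b]\to[n]$ into $\phi=\psi|_{S}$ and $\phi'=\psi|_{S'}$ is a bijection onto such disjoint pairs. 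On the other hand $\delta(\mathfrak{S},\mathfrak{M}|_{[n]})\,\delta(\mathfrak{S}',\mathfrak{M}|_{[n]})$ counts the same configurations over \emph{all} pairs $(\phi,\phi')$, normalized by $n^{\downarrow a}n^{\downarrow b}$. The pairs with $\im\phi\cap\im\phi'\neq\emptyset$ number $O(n^{a+b-1})$, a vanishing fraction, while $n^{\downarrow(a+b)}/(n^{\downarrow a}n^{\downarrow b})\to1$; hence both normalizations share the limit $\delta(\mathfrak{S},\mathfrak{M})\,\delta(\mathfrak{S}',\mathfrak{M})$, which is exactly $\mathbb{P}\{\mathfrak{Y}|_{S}=\mathfrak{S}\}\,\mathbb{P}\{\mathfrak{Y}|_{S'}=\mathfrak{S}'\}$. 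This is where the passage to the limit is indispensable---for a fixed finite $\mathfrak{M}|_{[n]}$ the densities do not factor---and I expect the bookkeeping of the negligible overlap to be the main, though elementary, obstacle.

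Finally, for uniqueness, the entries of $\xnorm{\mathfrak{M}}$ are precisely the marginals of $\mathbb{P}$, and since the Borel field on $\XN$ is generated by the restriction maps, any probability measure is determined by its finite-dimensional marginals; thus $\mathbb{P}$ is the only measure with these marginals. To confirm that $\mathbb{P}$ is genuinely the measure \emph{determined by} $\xnorm{\mathfrak{M}}$, note that a sample $\mathfrak{Y}\sim\mathbb{P}$ is exchangeable and dissociated, hence ergodic, so the law of large numbers forces the empirical densities $\delta(\mathfrak{S},\mathfrak{Y})$ to converge almost surely to the marginals $\delta(\mathfrak{S},\mathfrak{M})$; therefore $\xnorm{\mathfrak{Y}}=\xnorm{\mathfrak{M}}$ almost surely, and conversely any exchangeable, dissociated $\mathbb{Q}$ with $\xnorm{\mathfrak{Y}}=\xnorm{\mathfrak{M}}$ a.s.\ must have marginals $\delta(\cdot,\mathfrak{M})$ and so coincide with $\mathbb{P}$.
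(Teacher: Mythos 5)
Your proposal is correct and follows essentially the same route as the paper's proof: define the finite-dimensional marginals $p_m(\mathfrak{S})=\delta(\mathfrak{S},\mathfrak{M})$, verify that they form a consistent, exchangeable family of probability measures, extend by Carath\'eodory/Kolmogorov, and deduce dissociatedness from an asymptotic counting of injections. The only cosmetic difference is in that last step, where the paper factors the joint density via an iterated limit over injections with disjoint ranges while you compare disjoint-image pairs to all pairs and absorb the $O(n^{a+b-1})$ overlapping pairs into the normalization; these are the same argument.
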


\begin{proof}
Let $\xnorm{\mathfrak{M}}=(\delta(\mathfrak{S},\mathfrak{M}))_{\mathfrak{S}\in\age(\XN)}$ be the limiting densities of $\mathfrak{M}$.
For each $n\in\Nb$, we define 
\[\nu_n(\mathfrak{S}):=\delta(\mathfrak{S},\mathfrak{M}),\quad\mathfrak{S}\in\Xn.\]
By definition \eqref{eq:density}, $\nu_n(\mathfrak{S})\geq0$ for all $\mathfrak{S}\in\Xn$ and
\begin{eqnarray*}
\sum_{\mathfrak{S}\in\Xn}\nu_n(\mathfrak{S})&=&\sum_{\mathfrak{S}\in\Xn}\lim_{n\to\infty}\frac{1}{n^{\downarrow m}}\sum_{\phi:[m]\to[n]}\mathbf{1}\{\mathfrak{M}^{\phi}=\mathfrak{S}\}\\
&=&\lim_{n\to\infty}\sum_{\mathfrak{S}\in\Xn}\frac{1}{n^{\downarrow m}}\sum_{\phi:[m]\to[n]}\mathbf{1}\{\mathfrak{M}^{\phi}=\mathfrak{S}\}\\
&=&1\end{eqnarray*}
by the bounded convergence theorem.
Since $\delta(\mathfrak{S},\mathfrak{M})=\delta(\mathfrak{S}^{\sigma},\mathfrak{M})$ for all permutations $\sigma:[n]\to[n]$, $\nu_n$ is an exchangeable probability measure on $\Xn$.

Furthermore, for $k\leq m$ and $\mathfrak{S}\in\mathcal{X}_{[k]}$,
\begin{eqnarray*}
\lefteqn{\sum_{\mathfrak{S}'\in\mathcal{X}_{[m]}:\mathfrak{S}'|_{[k]}=\mathfrak{S}}\delta(\mathfrak{S}',\mathfrak{M})=}\\
&=&
\sum_{\mathfrak{S}'\in\mathcal{X}_{[m]}:\mathfrak{S}'|_{[k]}=\mathfrak{S}}\lim_{n\to\infty}\frac{1}{n^{\downarrow m}}\sum_{\phi:[m]\to[n]}\mathbf{1}\{\mathfrak{M}^{\phi}=\mathfrak{S}'\}\\
&=&\lim_{n\to\infty}\frac{1}{n^{\downarrow k}}\sum_{\phi':[k]\to[n]}\frac{1}{(n-k)^{\downarrow(m-k)}}\sum_{\text{extensions }\phi:[m]\to[n]\text{ of }\phi'}\sum_{\mathfrak{S}'\in\mathcal{X}_{[m]}:\mathfrak{S}'|_{[k]}=\mathfrak{S}}\mathbf{1}\{\mathfrak{M}^{\phi}=\mathfrak{S}'\}\\
&=&\lim_{n\to\infty}\frac{1}{n^{\downarrow k}}\sum_{\phi':[k]\to[n]}\mathbf{1}\{\mathfrak{M}^{\phi'}=\mathfrak{S}\}\\
&=&\delta(\mathfrak{S},\mathfrak{M}).
\end{eqnarray*}
Thus, the set function
\[\nu(\{\mathfrak{N}\in\XN: \mathfrak{N}|_{[k]}=\mathfrak{S}\})=\nu_k(\mathfrak{S}),\quad\mathfrak{S}\in\mathcal{X}_{[k]},\quad k\in\Nb,\]
is an additive pre-measure on $\XN$, which extends to a unique probability measure on $\XN$ by Carath\`eodory's extension theorem.
Since each $\nu_n$ measure is exchangeable and the sets of the form $\{\mathfrak{M}\in\XN: \mathfrak{M}|_{[k]}=\mathfrak{S}\}$, for $\mathfrak{S}\in\mathcal{X}_{[k]}$, $k\in\Nb$, constitute a generating $\pi$-system of the Borel $\sigma$-field on $\XN$, it follows that $\nu$ is exchangeable.

Finally, we observe that for $S,T\subseteq\Nb$ with $S\cap T=\emptyset$, $\mathfrak{S}\in\mathcal{X}_S$ and $\mathfrak{T}\in\mathcal{X}_T$,
\begin{eqnarray*}
\lefteqn{\nu(\{\mathfrak{N}\in\XN: \mathfrak{N}|_{S}=\mathfrak{S}\text{ and }\mathfrak{N}|_T=\mathfrak{T}\})=}\\
&=&\lim_{n\to\infty}\frac{1}{n^{\downarrow(|S|+|T|)}}\sum_{\phi:S\cup T\to[n]}\mathbf{1}\{\mathfrak{M}^{\phi}|_S=\mathfrak{S}\}\mathbf{1}\{\mathfrak{M}^{\phi}|_T=\mathfrak{T}\}\\
&=&\lim_{n\to\infty}\lim_{m\to\infty}\frac{1}{n^{\downarrow|S|}(m-|S|)^{\downarrow|T|}}\sum_{\varphi:S\to[n]}\sum_{\psi:T\to[m]\setminus\cod\varphi}\mathbf{1}\{\mathfrak{M}^{\varphi}=\mathfrak{S}\}\mathbf{1}\{\mathfrak{M}^{\psi}=\mathfrak{T}\}\\
&=&\lim_{n\to\infty}\frac{1}{n^{\downarrow|S|}}\sum_{\varphi:S\to[n]}\mathbf{1}\{\mathfrak{M}^{\varphi}=\mathfrak{S}\}\lim_{m\to\infty}\frac{1}{(m-|S|)^{\downarrow|T|}}\sum_{\psi:T\to[m]\setminus\cod\varphi}\mathbf{1}\{\mathfrak{M}^{\psi}=\mathfrak{T}\}\\
&=&\delta(\mathfrak{S},\mathfrak{M})\delta(\mathfrak{T},\mathfrak{M})\\
&=&\nu(\{\mathfrak{N}\in\XN: \mathfrak{N}|_S=\mathfrak{S}\})\nu(\{\mathfrak{N}\in\XN: \mathfrak{N}|_T=\mathfrak{T}\}),
\end{eqnarray*}
where $\cod\varphi$ denotes the codomain of $\varphi$.
Thus,  $\nu$ is dissociated.
\end{proof}

\begin{prop}\label{prop:limit-exists}
Let $\mathfrak{X}$ be an exchangeable random $\mathcal{L}$-structure.
Then $\xnorm{\mathfrak{X}}$ exists with probability 1.
\end{prop}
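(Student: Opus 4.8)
The plan is to fix a finite structure $\mathfrak{S}$ with $\dom\mathfrak{S}=[m]$ and to recognize the sequence $D_n:=\delta(\mathfrak{S},\mathfrak{X}|_{[n]})$, $n\geq m$, as a \emph{reverse (backward) martingale}, so that its almost sure convergence follows from the reverse martingale convergence theorem. Since the signature $\mathcal{L}$ has only finitely many relations, each $\ln$ is finite, and hence the collection of all finite $\mathcal{L}$-structures (in particular $\age(\XN)$) is countable; once $\lim_n D_n$ exists almost surely for each fixed $\mathfrak{S}$, a countable intersection of almost sure events shows that all the densities $\delta(\mathfrak{S},\mathfrak{X})$ exist simultaneously with probability $1$, which is exactly the assertion that $\xnorm{\mathfrak{X}}$ exists a.s.

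To set up the reverse martingale, for each $n\geq m$ let $\Sigma_n$ denote the group of permutations $\sigma:\Nb\to\Nb$ that fix every element of $\{n+1,n+2,\ldots\}$, and let $\mathcal{I}_n$ be the $\sigma$-field of events invariant under the action $\mathfrak{X}\mapsto\mathfrak{X}^{\sigma}$ for every $\sigma\in\Sigma_n$. Because $\Sigma_n\subseteq\Sigma_{n+1}$, the fields decrease: $\mathcal{I}_m\supseteq\mathcal{I}_{m+1}\supseteq\cdots$. The key computation is that $D_n=\mathbb{E}[\mathbf{1}\{\mathfrak{X}|_{[m]}=\mathfrak{S}\}\mid\mathcal{I}_n]$. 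First, $D_n$ is $\mathcal{I}_n$-measurable, since relabeling $[n]$ by any $\tau\in\Sigma_n$ merely permutes the index set of injections $\phi:[m]\to[n]$ and leaves the sum defining $D_n$ unchanged. Second, one averages: for any $\sigma\in\Sigma_n$ the identity $(\mathfrak{X}^{\sigma})|_{[m]}=\mathfrak{X}^{\sigma\upharpoonright[m]}$ holds, and as $\sigma$ ranges over $\Sigma_n$ its restriction $\sigma\upharpoonright[m]$ runs over every injection $[m]\to[n]$ exactly $(n-m)!$ times, so
\[\frac{1}{n!}\sum_{\sigma\in\Sigma_n}\mathbf{1}\{(\mathfrak{X}^{\sigma})|_{[m]}=\mathfrak{S}\}=\frac{(n-m)!}{n!}\sum_{\phi:[m]\to[n]}\mathbf{1}\{\mathfrak{X}^{\phi}=\mathfrak{S}\}=D_n.\]
Using exchangeability ($\mathfrak{X}\equalinlaw\mathfrak{X}^{\sigma}$) together with the $\mathcal{I}_n$-invariance of any bounded test function confirms that this symmetrized quantity is precisely the conditional expectation of $\mathbf{1}\{\mathfrak{X}|_{[m]}=\mathfrak{S}\}$ given $\mathcal{I}_n$.

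With this identification in hand, the tower property together with $\mathcal{I}_{n+1}\subseteq\mathcal{I}_n$ gives $\mathbb{E}[D_n\mid\mathcal{I}_{n+1}]=D_{n+1}$, so $(D_n)_{n\geq m}$ is a reverse martingale with respect to the decreasing filtration $(\mathcal{I}_n)_{n\geq m}$. The reverse martingale convergence theorem then yields $D_n\to\mathbb{E}[\mathbf{1}\{\mathfrak{X}|_{[m]}=\mathfrak{S}\}\mid\bigcap_n\mathcal{I}_n]$ almost surely (and in $L^1$); in particular $\delta(\mathfrak{S},\mathfrak{X})$ exists a.s. I expect the only genuinely delicate point to be the bookkeeping that establishes the conditional-expectation identity $D_n=\mathbb{E}[\mathbf{1}\{\mathfrak{X}|_{[m]}=\mathfrak{S}\}\mid\mathcal{I}_n]$, namely correctly matching the symmetrization over $\Sigma_n$ with the sum over injections and invoking exchangeability at the right place; everything after that is the standard backward-martingale machinery. (An alternative route would pass through the Aldous--Hoover representation of Theorem \ref{thm:CTasym} with trivial $\mathfrak{F}$ together with a law of large numbers, but the reverse-martingale argument is self-contained and avoids introducing the representing functions.)
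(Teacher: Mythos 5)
Your proof is correct, and although it shares the reverse-martingale core with the paper's argument, the implementation is genuinely different. The paper works with $Z_n=\delta(\mathfrak{S},\mathfrak{X}|_{[n]})$ and the filtration $\mathcal{F}_n:=\sigma\langle Z_{n+1},Z_{n+2},\ldots\rangle$ generated by the tail of the sequence itself; it asserts $\mathbb{E}(Z_n\mid\mathcal{F}_n)=Z_{n+1}$ ``by exchangeability'' without spelling out the computation, and it structures the proof in two stages: first assuming $\mathfrak{X}$ is \emph{dissociated} (so that the limit is moreover deterministic by a $0$--$1$ law), and then removing that assumption by invoking the Aldous--Hoover--Kallenberg theorem, under which an exchangeable structure is conditionally dissociated given its tail $\sigma$-field. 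You instead condition on the decreasing family of invariant $\sigma$-fields $\mathcal{I}_n$ and actually prove the identity $D_n=\mathbb{E}[\mathbf{1}\{\mathfrak{X}|_{[m]}=\mathfrak{S}\}\mid\mathcal{I}_n]$ by averaging over the finite group $\Sigma_n$ (your counting $(n-m)!/n!=1/n^{\downarrow m}$ is exactly right), after which the reverse martingale property is an immediate consequence of the tower property and the inclusion $\mathcal{I}_{n+1}\subseteq\mathcal{I}_n$. This buys two things: the martingale step is derived rather than asserted, and the argument applies to general exchangeable $\mathfrak{X}$ in one pass, with no detour through dissociation or the Aldous--Hoover--Kallenberg representation. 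What the paper's version buys in exchange is the additional observation, used in the neighboring discussion of dissociated measures (Proposition \ref{prop:dissociated} and the limit-space results), that for dissociated structures the limiting densities are deterministic constants; your route does not produce that refinement, but it is not needed for the statement at hand. The final step --- countability of $\age(\XN)$ because $\mathcal{L}$ has finitely many relations, hence a countable intersection of almost sure events --- is identical in both proofs.
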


\begin{proof}
Since $\age(\XN)$ is countable it suffices to show that $\delta(\mathfrak{S},\mathfrak{X})$ exists with probability 1 for every $\mathfrak{S}\in\age(\XN)$.
Let $\mathfrak{S}\in\mathcal{X}_{[m]}$, $m\in\Nb$, and suppose $\mathfrak{X}$ is dissociated.
For $n\geq1$, we define
\[Z_n:=\delta(\mathfrak{S},\mathfrak{M}|_{[n]})=\frac{1}{n^{\downarrow m}}\sum_{\phi:[m]\to[n]}\mathbf{1}\{\mathfrak{X}^{\phi}=\mathfrak{S}\}\]
and $\mathcal{F}_n:=\sigma\langle Z_{n+1},Z_{n+2},\ldots\rangle$, the reverse filtration generated by $(Z_n)_{n\geq1}$.
By exchangeability of $\mathfrak{X}$, we observe that $\mathbb{E}Z_n=\mathbb{P}\{\mathfrak{X}^{\phi}=\mathfrak{S}\}$ and $\mathbb{E}(Z_n\mid\mathcal{F}_n)=Z_{n+1}$ for every $n\geq1$.
Thus, $(Z_n)_{n\geq1}$ is a reverse martingale with respect to $(\mathcal{F}_n)_{n\geq1}$.
It follows that $Z_n\to Z_{\infty}=\delta(\mathfrak{S},\mathfrak{X})$ a.s.
Since $\mathfrak{X}$ is dissociated, $\delta(\mathfrak{S},\mathfrak{X})$ depends only on the tail $\sigma$-field generated by $(Z_n)_{n\geq1}$ and, therefore, is deterministic by the 0-1 law.
By the Aldous--Hoover--Kallenberg theorem, every exchangeable random structure is conditionally dissociated given its tail $\sigma$-field.
It follows that $\delta(\mathfrak{S},\mathfrak{X})$ exists almost surely whenever $\mathfrak{X}$ is exchangeable.
This completes the proof. 

\end{proof}

We write $\E(\XN)$ to denote the space of exchangeable probability measures on $\XN$.
Let $\mathbf{X}=\{\mathbf{X}_{\mathfrak{M}}: \mathfrak{M}\in\XN\}$ be an exchangeable Feller process on $\XN$.
For $D\in\E(\XN)$, we define $\mathbf{X}_{D}:=(\mathfrak{X}_t)_{t\geq0}$ as the process obtained by first taking $\mathfrak{X}_0\sim D$ and then putting $\mathbf{X}_D=\mathbf{X}_{\mathfrak{M}}$ on the event $\mathfrak{X}_0=\mathfrak{M}$.
We define $\xnorm{\mathbf{X}_{D}}:=(\xnorm{\mathfrak{X}_t})_{t\geq0}$, provided $\xnorm{\mathfrak{X}_t}$ exists for all $t\geq0$.
We now show that $\xnorm{\mathbf{X}_D}$ exists for every $D\in\E(\XN)$ and determines a Feller process $\xnorm{\mathbf{X}_{\E(\XN)}}:=\{\xnorm{\mathbf{X}_D}: D\in\E(\XN)\}$ on $\E(\XN)$, which we furnish with the topology and Borel $\sigma$-field induced by the metric
\begin{equation}\label{eq:E-metric}
\rho(\gamma,\gamma'):=\sum_{n\in\Nb}2^{-n}\sum_{\mathfrak{S}\in\XN}|\gamma(\{\mathfrak{M}: \mathfrak{M}|_{[n]}=\mathfrak{S}\})-\gamma'(\{\mathfrak{M}: \mathfrak{M}|_{[n]}=\mathfrak{S}\})|
\end{equation}
for any $\gamma,\gamma'\in\E(\XN)$.
(The metric \eqref{eq:E-metric} is analogous to the metric used by Diaconis and Janson \cite{MR2463439} in their discussion of graph limits.)

\begin{theorem}\label{thm:limit process}
Let $\XN$ be a Fra\"{i}ss\'e space and let $\mathbf{X}=\{\mathbf{X}_{\mathfrak{M}}: \mathfrak{M}\in\XN\}$ be an exchangeable Feller process on $\XN$.
Then $\xnorm{\mathbf{X}_{\E(\XN)}}$ exists almost surely and is a Feller process on $\E(\XN)$.
\end{theorem}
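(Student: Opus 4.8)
The plan is to first pin down the state of the projected process, then establish its Markov property through a lumping lemma, verify the Feller property on $(\E(\XN),\rho)$, and finally bootstrap almost-sure existence at a fixed time into existence along the whole trajectory.

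First I would record that for each fixed $t\geq0$ the structure $\mathfrak{X}_t$ is exchangeable, since $\mathfrak{X}_0\sim D\in\E(\XN)$ and the transition law is exchangeable; Proposition \ref{prop:limit-exists} then gives that $\xnorm{\mathfrak{X}_t}$ exists almost surely, and Proposition \ref{prop:dissociated} identifies it with a dissociated element of $\E(\XN)$. Consequently the finitely many values $\xnorm{\mathfrak{X}_{t_1}},\dots,\xnorm{\mathfrak{X}_{t_k}}$ are simultaneously well defined off a null set, so the finite-dimensional distributions of $\xnorm{\mathbf{X}_D}$ are meaningful. Because $\xnorm{\mathfrak{X}_0}$ is exactly the directing (de Finetti) measure of $\mathfrak{X}_0$, I would reduce the general case to a dissociated initial law: writing $D=\int\gamma\,\Xi_D(d\gamma)$ over dissociated $\gamma$ via the Aldous--Hoover--Kallenberg theorem, the process $\mathbf{X}_D$ is realized by drawing $\gamma\sim\Xi_D$ and running $\mathbf{X}_\gamma$, so $\xnorm{\mathbf{X}_D}$ starts from the random state $\xnorm{\mathfrak{X}_0}=\gamma$ and it suffices to analyze $\xnorm{\mathbf{X}_\gamma}$ for dissociated $\gamma$, each of which starts deterministically at $\gamma$.

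The heart of the argument is the \emph{lumping lemma}: if $\xnorm{\mathfrak{M}}=\xnorm{\mathfrak{M}'}$ then $\xnorm{\mathfrak{Y}}$ and $\xnorm{\mathfrak{Y}'}$ have the same law, where $\mathfrak{Y}\sim P_s(\mathfrak{M},\cdot)$ and $\mathfrak{Y}'\sim P_s(\mathfrak{M}',\cdot)$. I would prove it through the representation $\mathbf{X}\equalinlaw\mathbf{X}^*_\Lambda$ of Theorem \ref{thm:continuous-Lambda}. Realizing $\mathfrak{Y}=\mathfrak{X}^*_s$ by the Poisson flow of Lipschitz functions applied to $\mathfrak{M}$, the Lipschitz (consistency) property makes $\mathfrak{Y}|_I$ a function of $\mathfrak{M}|_I$ and of the flow restricted to $I$, for every finite $I\subseteq\Nb$. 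Computing $\delta(\mathfrak{S},\mathfrak{Y})=\lim_{n}(n^{\downarrow m})^{-1}\sum_{\phi:[m]\to[n]}\mathbf{1}\{\mathfrak{Y}^\phi=\mathfrak{S}\}$ thus amounts to averaging a functional of the local pattern $\mathfrak{M}|_{\phi([m])}$ over a uniformly chosen $m$-subset; as $n\to\infty$ the empirical law of these patterns converges, by the very definition of $\xnorm{\mathfrak{M}}$, to the $m$-point marginal of $\gamma:=\xnorm{\mathfrak{M}}$. Hence every limiting density, and therefore the whole random measure $\xnorm{\mathfrak{Y}}$, is a measurable functional of $\gamma$ together with the initial-structure-independent randomness of the flow, so its law depends on $\mathfrak{M}$ only through $\gamma$. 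This lets me define a transition semigroup $\widehat{\mathbf{P}}_t$ on $\E(\XN)$ by $\widehat{\mathbf{P}}_tg(\gamma):=\mathbb{E}\,g(\xnorm{\mathfrak{X}^\gamma_t})$ and to verify the Markov property directly: conditioning on the full history $\sigma\langle\mathfrak{X}_u\rangle_{u\leq t}$, the Markov property of $\mathbf{X}$ gives $\mathbb{E}(g(\xnorm{\mathfrak{X}_{t+s}})\mid\sigma\langle\mathfrak{X}_u\rangle_{u\leq t})=\mathbb{E}\,g(\xnorm{\mathfrak{Y}})$ with $\mathfrak{Y}\sim P_s(\mathfrak{X}_t,\cdot)$, which the lumping lemma collapses to $\widehat{\mathbf{P}}_sg(\xnorm{\mathfrak{X}_t})$; a final tower-property step over the coarser filtration $\sigma\langle\xnorm{\mathfrak{X}_u}\rangle_{u\leq t}$ then yields the Markov property of $\xnorm{\mathbf{X}_{\E(\XN)}}$.

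For the Feller property I would use that $\rho$-convergence in \eqref{eq:E-metric} is precisely convergence of all finite restriction-marginals. Continuity of $\gamma\mapsto\widehat{\mathbf{P}}_tg(\gamma)$ follows because the finite-state restrictions $\mathbf{X}^{[n]}$ depend continuously (indeed finitely) on their initial distribution, while the densities $\delta(\mathfrak{S},\cdot)$ are assembled from these; the $t\downarrow0$ condition follows from right-continuity of each finite chain $\mathbf{X}^{[n]}$ together with $\mathbb{E}\,\delta(\mathfrak{S},\mathfrak{X}_t)=\mathbb{P}\{\mathfrak{X}_t|_{[m]}=\mathfrak{S}\}\to\mathbb{P}\{\mathfrak{X}_0|_{[m]}=\mathfrak{S}\}$ and bounded convergence. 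Since the space of probability measures on the compact space $\XN$ is compact under $\rho$ and $\E(\XN)$ is a closed, hence compact, subspace, $\widehat{\mathbf{P}}_t$ is a Feller semigroup on a compact metric space; standard Feller theory then supplies a c\`adl\`ag realization whose finite-dimensional distributions coincide with the (well-defined) ones of $\xnorm{\mathbf{X}_{\E(\XN)}}$. This simultaneously upgrades almost-sure existence at each fixed time to almost-sure existence of the entire path and exhibits $\xnorm{\mathbf{X}_{\E(\XN)}}$ as a Feller process on $\E(\XN)$. I expect the lumping lemma to be the main obstacle: rigorously interchanging the $n\to\infty$ density limit with the dynamics, and correctly separating the ``global'' component of the Poisson flow that survives in $\xnorm{\mathfrak{Y}}$ (the shared, subset-indexed randomness in the representation of Theorem \ref{thm:CT2}) from the ``local'' component that averages out, is the delicate point, while everything else reduces to exchangeability, reverse-martingale existence of densities, and routine finite-state continuity.
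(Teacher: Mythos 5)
Your proposal has two genuine gaps, and they sit exactly at the points the paper identifies as the hard parts. The first and most serious is your final step: you claim that once the semigroup $\widehat{\mathbf{P}}_t$ is Feller, ``standard Feller theory'' supplies a c\`adl\`ag realization with the same finite-dimensional distributions, and that this ``upgrades almost-sure existence at each fixed time to almost-sure existence of the entire path.'' It does not. The theorem asserts that the pathwise-defined object $\xnorm{\mathbf{X}_D}=(\xnorm{\mathfrak{X}_t})_{t\geq 0}$ exists almost surely, i.e.\ that for almost every sample path of the \emph{given} process the density limits $\delta(\mathfrak{S},\mathfrak{X}_t)$ exist simultaneously at \emph{all} (uncountably many) times $t$. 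This event is not determined by finite-dimensional distributions, so producing a separate nice modification proves nothing about it: the limits could fail to exist at random times that any fixed countable skeleton misses almost surely. The paper's entire ``Proof of Existence'' is devoted to this: it views $(\mathfrak{X}_t)_{t\in[0,1]}$ as an exchangeable array valued in the path space $\mathcal{D}([0,1]\to\mathcal{X}_{[m^*]})$, invokes Aldous--Hoover--Kallenberg to reduce to the dissociated case, and then controls $\sup_{t\in[0,1]}|\delta^+(\mathfrak{S},\mathfrak{X}_t)-\delta^-(\mathfrak{S},\mathfrak{X}_t)|$ uniformly by partitioning $[0,1]$ into finitely many ``big-jump'' times $T_\varepsilon$ and countably many intervals on which each finite restriction jumps with probability less than $\varepsilon$, concluding via the law of large numbers for dissociated structures and letting $\varepsilon\downarrow 0$. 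Some argument of this genuinely path-level type is unavoidable, and your proposal contains no substitute for it.

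The second gap is in your proof of the lumping lemma. You assert that Lipschitz continuity of the flow makes $\mathfrak{Y}|_I$ a function of $\mathfrak{M}|_I$ and the flow restricted to $I$ ``for every finite $I\subseteq\Nb$,'' and you use this to identify $\delta(\mathfrak{S},\mathfrak{Y})$ with an average of local functionals of the $m$-point patterns of $\mathfrak{M}$. But Lipschitz continuity only gives this localization for initial segments $I=[n]$; locality at arbitrary finite subsets is precisely conjugation invariance (Proposition \ref{prop:conj-inv}), which by Theorems \ref{thm:conjugation-discrete} and \ref{thm:Levy-Ito} is only guaranteed when $\XN$ has $\odap$-DAP. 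Theorem \ref{thm:limit process} is stated for arbitrary Fra\"iss\'e spaces, including $\partitionsN$, where the coagulation operator of Example \ref{example:coalescent} is Lipschitz but explicitly not conjugation invariant; there $\mathfrak{Y}^{\phi}$ depends on $\mathfrak{M}|_{[\max\phi([m])]}$, not on $\mathfrak{M}|_{\phi([m])}$, and your empirical-pattern argument collapses. (A smaller but related soft spot: for the $t\downarrow 0$ part of the Feller property you only argue $\mathbb{E}\,\delta(\mathfrak{S},\mathfrak{X}_t)\to\mathbb{P}\{\mathfrak{X}_0|_{[m]}=\mathfrak{S}\}$, i.e.\ convergence in mean, which does not by itself give convergence of $\xnorm{\mathfrak{X}_t}$ to $D$ in probability; the paper instead shows $\Psi_t\to_P\idN$ for the flow $\mathbf{\Psi}$ built by composing the Poisson atoms, and pushes $D$ through the induced Lipschitz action on $\E(\XN)$.) Your overall architecture --- fixed-time existence, a lumping step, semigroup continuity --- is reasonable, but as it stands both the existence claim and the key lemma rest on steps that fail in the stated generality.
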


\begin{proof}

Let $\mathcal{L}=\{R_1,\ldots,R_r\}$ be any signature with $m^*:=\max_{1\leq j\leq r}\ar(R_j)$ and let $\XN\subseteq\lN$ be a Fra\"{i}ss\'e space.
Below we fix $D\in\E(\XN)$ and let $\mathbf{X}_D=(\mathfrak{X}_t)_{t\in T}$ be the process with initial distribution $D$ as defined above.
The proof covers both discrete time and continuous time processes, but we only state the proof in the continuous time case, which is harder.  The discrete time case follows by analogous, often much simpler arguments.

\begin{proof}[Proof of Existence]
Since we construct $\mathbf{X}_D$ to have an exchangeable initial distribution $D\in\E(\XN)$ and an exchangeable transition probability measure, the marginal distribution of $\mathfrak{X}_t$ is exchangeable for every $t\geq0$ and, by Proposition \ref{prop:limit-exists}, the limit $\xnorm{\mathfrak{X}_t}$ exists with probability 1 for all fixed times $t\geq0$.
Countable additivity of probability measures immediately implies the simultaneous existence of $\xnorm{\mathfrak{X}_t}$ for any countable collection of times $\mathcal{C}\subset[0,\infty)$.
In particular, $\xnorm{\mathbf{X}_D}$ exists for a discrete time process.
Technical difficulty arises when considering simultaneous existence of $\xnorm{\mathfrak{X}_t}$ at the uncountable collection of times $t\in[0,\infty)$.

The proof of existence follows a standard recipe whereby we show that the upper and lower densities,
\begin{align*}
	\delta^+(\mathfrak{S},\mathfrak{X}_t)&:=\limsup_{n\to\infty}\frac{1}{n^{\downarrow m}}\sum_{\phi:[m]\to[n]}\mathbf{1}\{\mathfrak{X}^{\phi}_t=\mathfrak{S}\}\quad\text{and}\\
	\delta^-(\mathfrak{S},\mathfrak{X}_t)&:=\liminf_{n\to\infty}\frac{1}{n^{\downarrow m}}\sum_{\phi:[m]\to[n]}\mathbf{1}\{\mathfrak{X}^{\phi}_t=\mathfrak{S}\},
\end{align*}
respectively, coincide at all times $t\in[0,\infty)$ with probability 1.

We write $\mathbf{X}_{[0,1]}:=(\mathfrak{X}_t)_{t\in[0,1]}$ to denote the evolution of $\mathbf{X}_D$ at times $t\in[0,1]$.
Since the processes we study are time homogeneous, it is sufficient to first prove existence of $\xnorm{\mathbf{X}_{[0,1]}}:=(\xnorm{\mathfrak{X}_t})_{t\in[0,1]}$ with probability 1 and then deduce existence of $\xnorm{\mathbf{X}_D}$ by countable additivity.
By the c\`adl\`ag paths property of $\mathbf{X}_D$, we can regard $\mathbf{X}_{[0,1]}=(\mathfrak{X}_{[0,1]}|_{\rng x})_{\vec x\in\Nb^{m^*}}$ as an $m^*$-dimensional array taking values in the Polish space $\mathcal{D}=\mathcal{D}([0,1]\to\mathcal{X}_{[m^*]})$ of c\`adl\`ag functions $[0,1]\to\mathcal{X}_{[m^*]}$.
Viewed in this way, $\mathbf{X}_{[0,1]}$ is an exchangeable, symmetric array valued in a Polish space and, thus, the Aldous--Hoover--Kallenberg theorem \cite{Aldous1981,Hoover1979,KallenbergSymmetries} for exchangeable random arrays characterizes the law of $\mathbf{X}_{[0,1]}$ in terms of a Borel measurable function $g:[0,1]^{2^{m^*}}\to\mathcal{X}_{[m^*]}$ such that $\mathbf{X}_{[0,1]}\equalinlaw\mathbf{Y}:=(\mathfrak{Y}_{\vec x})_{\vec x\in\Nb^{m^*}}$ for
\[\mathfrak{Y}_{\vec x}:=g((\xi_{s})_{s\subseteq\rng x}),\quad \vec x\in\Nb^{m^*},\]
where $(\xi_{s})_{s\subset\Nb: |s|\leq m^*}$ are i.i.d.\ Uniform$[0,1]$ random variables.
More specifically, $\mathbf{Y}$ is conditionally dissociated given its tail $\sigma$-field, and so we may proceed under the assumption that $\mathbf{X}_{[0,1]}$ is dissociated.

To show that $\xnorm{\mathfrak{X}_{t}}$ exists simultaneously for all $t\in[0,1]$, we show that the upper and lower densities $\delta^+(\mathfrak{S},\mathfrak{X}_t)$ and $\delta^-(\mathfrak{S},\mathfrak{X}_t)$ coincide for all $t\in[0,1]$ with probability 1 for all $\mathfrak{S}\in\age(\XN)$.
In particular, we show that for all $\mathfrak{S}\in\age(\XN)$,
\[\mathbb{P}\{\sup_{t\in[0,1]}|\delta^+(\mathfrak{S},\mathfrak{X}_t)-\delta^{-}(\mathfrak{S},\mathfrak{X}_t)|=0\}=1.\]

By the projective Markov property of $\mathbf{X}$, each entry $\mathbf{X}_{[0,1]}|_{\rng\vec x}$ of $\mathbf{X}_{[0,1]}$ has finitely many discontinuities for all $\vec x\in\Nb^{m^*}$ and, therefore, so must the restriction $\mathbf{X}^{[n]}_{[0,1]}:=(\mathfrak{X}_t|_{\rng \vec x})_{\vec x\in[n]^{m^*}}$ for every $n\in\Nb$.
For every $\varepsilon>0$, there is a finite subset $T_{\varepsilon}\subset[0,1]$ and an at most countable partition $I_1,I_2,\ldots$ of $[0,1]\setminus T_{\varepsilon}$ such that, for every $\vec x\in\Nb^{m^*}$,
\begin{align*}
	&\mathbb{P}\{\mathbf{X}_{[0,1]}|_{\rng\vec x}\text{ is discontinuous at }t\in T_{\varepsilon}\}\geq\varepsilon\quad\text{and}\\
	&\mathbb{P}\{\mathbf{X}_{[0,1]}|_{\rng\vec x}\text{ is discontinuous on }I_j\}<\varepsilon,\quad \text{ for every }j=1,2,\ldots.
\end{align*}
For suppose there were no such partition.
Then there would be some $t\not\in T_{\varepsilon}$ such that 
\[\mathbb{P}\{\mathbf{X}_{[0,1]}|_{\rng\vec x}\text{ is discontinuous on }(t-1/n,t+1/n)\}\geq\varepsilon\]
for every $n\in\Nb$, implying
\[\mathbb{P}\{\mathbf{X}_{[0,1]}|_{\rng\vec x}\text{ is discontinuous at }t\not\in T_{\varepsilon}\}\geq\varepsilon,\]
a contradiction.

The action of relabeling by any permutation $\sigma:\Nb\to\Nb$ is ergodic for exchangeable, dissociated $\mathcal{L}$-structures, implying
\begin{eqnarray*}
\lefteqn{\mathbb{P}\{\mathbf{X}_{[0,1]}|_{[m^*]}\text{ is discontinuous on }I_j\}=}\\
&=&\lim_{n\to\infty}\binom{n}{m^*}^{-1}\sum_{s\subseteq[n]: |s|=m^*}\mathbf{1}\{\mathbf{X}_{[0,1]}|_{s}\text{ is discontinuous on }I_j\}<\varepsilon\quad\text{a.s.}
\end{eqnarray*}
for every subinterval $I_1,I_2,\ldots$; thus, $(\delta^+(\mathfrak{S},\mathfrak{X}_t))_{t\in I_j}$ and $(\delta^-(\mathfrak{S},\mathfrak{X}_t))_{t\in I_j}$ cannot vary by more than $\varepsilon|\dom\mathfrak{S}|^{m^*}$ for any $j=1,2,\ldots$.
Since $\delta^+(\mathfrak{S},\mathfrak{X}_t)=\delta^-(\mathfrak{S},\mathfrak{X}_t)$ for the endpoints of $I_j$, we must have
\[\mathbb{P}\{\sup_{t\in I_j}|\delta^+(\mathfrak{S},\mathfrak{X}_t)-\delta^-(\mathfrak{S},\mathfrak{X}_t)|\leq 2\varepsilon|\dom\mathfrak{S}|^{m^*}\}=1\]
for all $\varepsilon>0$.

We can cover $[0,1]$ by the countable set of intervals $I_1,I_2,\ldots$ and a countable nonrandom set of times $S=\bigcup_{\varepsilon>0} T_{\varepsilon}$; whence,
\[\mathbb{P}\{\sup_{t\in[0,1]}|\delta^+(\mathfrak{S},\mathfrak{X}_t)-\delta^-(\mathfrak{S},\mathfrak{X}_t)|\leq 2\varepsilon|\dom\mathfrak{S}|^{m^*}\}=1\]
for all $\varepsilon>0$, for every $\mathfrak{S}\in\age(\XN)$, from which continuity from above implies
\begin{eqnarray*}
\lefteqn{\mathbb{P}\{\sup_{t\in[0,1]}|\delta^+(\mathfrak{S},\mathfrak{X}_t)-\delta^-(\mathfrak{S},\mathfrak{X}_t)|=0\}=}\\
&=&\lim_{\varepsilon\downarrow0}\mathbb{P}\{\sup_{t\in[0,1]}|\delta^+(\mathfrak{S},\mathfrak{X}_t)-\delta^-(\mathfrak{S},\mathfrak{X}_t)|\leq 2\varepsilon|\dom\mathfrak{S}|^{m^*}\}=1.\end{eqnarray*}
It follows that $\delta(\mathfrak{S},\mathfrak{X}_t)$ exists simultaneously for all $t\in[0,1]$ with probability 1 for every $\mathfrak{S}\in\age(\XN)$.
Since $\age(\XN)$ is countable, we conclude that $\xnorm{\mathbf{X}_{[0,1]}}$ exists with probability 1 and, thus, $\xnorm{\mathbf{X}_D}$ exists with probability 1 and determines a process on $\E(\XN)$.

\end{proof}

\begin{proof}[Proof of Feller property]
By Theorem \ref{thm:continuous-Lambda}, we can assume $\mathbf{X}$ is constructed from a Poisson point process $\Phi=\{(t,\Phi_t)\}\subseteq[0,\infty)\times\Lip(\XN)$ with intensity $dt\otimes\Lambda$ for some measure $\Lambda$ satisfying \eqref{eq:regularity-Lambda}.
As every $F\in\Lip(\XN)$ restricts to a unique $F^{[n]}\in\Lip(\Xn)$ as in \eqref{eq:restrict-Lip}, we can construct a process $\mathbf{\Psi}=(\Psi_t)_{t\in[0,\infty)}$ on $\Lip(\XN)$ from $\mathbf{\Phi}$ as follows.
For every $n\in\Nb$, we define $\mathbf{\Psi}^{[n]}=(\Psi_t^{[n]})_{t\in[0,\infty)}$ on $\Lip(\Xn)$ by putting 
\begin{itemize}
	\item $\Psi^{[n]}_0=\idn$, the identity $\Xn\to\Xn$, 
	\item $\Psi^{[n]}_t=\Phi_t^{[n]}\circ\Psi_{t-}^{[n]}$, if $t$ is an atom time of $\mathbf{\Phi}$ with $\Phi_t^{[n]}\neq\idn$, and
	\item $\Psi_t^{[n]}=\Psi_{t-}^{[n]}$, otherwise,
\end{itemize}
for $\Psi_{t-}^{[n]}:=\lim_{s\uparrow t}\Psi_s^{[n]}$ and $\Phi_t^{[n]}\circ\Psi_{t-}^{[n]}$ denoting the composition of Lipschitz continuous functions $\Xn\to\Xn$.
We define $\Psi$ as the projective limit of the collection $(\mathbf{\Psi}^{[n]})_{n\in\Nb}$, which is an exchangeable Feller process on $\Lip(\XN)$ under the topology induced by the metric $d_{\Lip(\XN)}$ defined in \eqref{eq:d-Lip}.
In this way, we can assume $\mathbf{X}=\{\mathbf{X}_{\mathfrak{M}}: \mathfrak{M}\in\XN\}$ is constructed from the same $\mathbf{\Psi}$ by $\mathbf{X}_{\mathfrak{M}}:=(\Psi_t(\mathfrak{M}))_{t\in[0,\infty)}$, for every $\mathfrak{M}\in\XN$.

Let $\Phi$ be an exchangeable Lipschitz continuous random function $\XN\to\XN$ governed by exchangeable measure $\varphi$.
Any such $\Phi\sim\varphi$ determines a unique exchangeable, consistent transition probability measure $P^{\Phi}$ on $\XN$ by
\begin{equation}\label{eq:P-Phi}
P^{\Phi}(\mathfrak{M},A):=\varphi(\{F\in\Lip(\XN): F(\mathfrak{M})\in A\}),\quad\mathfrak{M}\in\XN,\quad A\Borel\XN.
\end{equation}
Any exchangeable, consistent transition probability $P$ acts on $\E(\XN)$ by $\mu\mapsto P\mu$, where
\begin{equation}\label{eq:P-mu}P\mu(A):=\int_{\XN}P(\mathfrak{M},A)\mu(d\mathfrak{M}),\quad A\Borel\XN.\end{equation}
This map is Lipschitz continuous in the metric \eqref{eq:E-metric}.

If $\mathfrak{X}$ is an exchangeable random $\mathcal{L}$-structure and $\Phi\sim\varphi$ is an exchangeable Lipschitz continuous function $\XN\to\XN$, then $\Phi(\mathfrak{X})\equalinlaw\sigma\Phi\sigma^{-1}(\mathfrak{X}^{\sigma})=\Phi(\mathfrak{X})^{\sigma}$ for all permutations $\sigma:\Nb\to\Nb$ and Proposition \ref{prop:limit-exists} implies that $\xnorm{\Phi(\mathfrak{X})}$ exists with probability 1.

Consequently, for any $D\in\E(\XN)$, the marginal distribution of $\mathfrak{X}_t$ in $\mathbf{X}_{D}=(\mathfrak{X}_t)_{t\in[0,\infty)}$ coincides with that of $\Psi_t(\mathfrak{X}_0)$ for $\mathfrak{X}_0\sim D$ and, thus, 
\[\xnorm{\mathfrak{X}_t}\equalinlaw\xnorm{\Psi_t(\mathfrak{X}_0)}\equalinlaw P^{\Psi_t}\xnorm{\mathfrak{X}_0}=P^{\Psi_t}D.\]

To establish the Feller property for $\xnorm{\mathbf{X}_{\E(\XN)}}$, we let $\mathbf{P}=(\mathbf{P}_t)_{t\in[0,\infty)}$ be the semigroup of $\xnorm{\mathbf{X}_{\E(\XN)}}$, that is, each $\mathbf{P}_t$ acts on bounded, continuous functions $g:\E(\XN)\to\mathbb{R}$ by
\[\mathbf{P}_t g(D):=\mathbb{E}(g(\xnorm{\mathfrak{X}_t})\mid\xnorm{\mathfrak{X}_0}=D).\]
We need to show that, for all bounded, continuous $g:\E(\XN)\to\mathbb{R}$,
\begin{itemize}
	\item[(a)] $D\to\mathbf{P}_tg(D)$ is continuous for all $t>0$ and
	\item[(b)] $\lim_{t\downarrow0}\mathbf{P}_tg(D)=g(D)$ for all $D\in\E(\XN)$.
\end{itemize}
To establish (a), we let $g:\E(\XN)\to\mathbb{R}$ be any continuous function.
By compactness of $\E(\XN)$, any such $g$ is uniformly continuous and, therefore, bounded.
The dominated convergence theorem and Lipschitz continuity of the action defined in \eqref{eq:P-mu} implies (a).

For (b), we write $P^{\idN}$ to denote the transition probability on $\XN$ corresponding to the identity map $\idN:\XN\to\XN$, for which $P^{\idN}(\mathfrak{M},\mathfrak{M})\equiv1$.
Let $\psi_t$ be the law governing $\Psi_t$ and let $\mathbf{I}$ denote the measure that assigns probability 1 to $\idN\in\Lip(\XN)$.
We first show $\Psi_t\to_P\idN$ as $t\downarrow0$, where $\to_P$ denotes {\em convergence in probability}, that is,
\[\lim_{t\downarrow0}\mathbb{P}\{\Psi_t^{[n]}\neq\idn\}=0\quad\text{for all }n\geq1.\]

In the direction of a contradiction, we assume
\[\limsup_{t\downarrow0}\mathbb{P}\{\Psi_t^{[n]}\neq\idn \text{ for some }n\geq1\}>0,\]
by which there must be some $n\in\Nb$, $F\in\Lip(\Xn)\setminus\{\idn\}$, and $\varrho>0$ such that
\[\limsup_{t\downarrow0}\mathbb{P}\{\Psi_t^{[n]}=F\}\geq\varrho.\]
Given any such $F\in\Lip(\Xn)\setminus\{\idn\}$, we let $\mathfrak{S}\in\Xn$ be such that $F(\mathfrak{S})\neq\mathfrak{S}$.
For any $\mathfrak{M}\in\XN$ such that $\mathfrak{M}|_{[n]}=\mathfrak{S}$, we have
\begin{eqnarray}
\notag\mathbb{P}\{\mathfrak{X}_t^{[n]}\neq\mathfrak{S}\mid\mathfrak{X}_0=\mathfrak{M}\}&\leq&\mathbb{P}\{\mathbf{X}_{\mathfrak{M}}^{[n]}\text{ is discontinuous on }[0,t]\}\\
&\leq&1-\exp\{-t\Lambda(\{F\in\Lip(\XN): F^{[n]}\neq\idn\})\}\label{eq:leq}
\end{eqnarray}
and
\begin{eqnarray}
\mathbb{P}\{\mathfrak{X}_t^{[n]}\neq\mathfrak{S}\mid\mathfrak{X}_0=\mathfrak{M}\}&\geq&\mathbb{P}\{\Psi_t^{[n]}=F\}\label{eq:geq}
\end{eqnarray}
By  \eqref{eq:leq} and the righthand side of \eqref{eq:regularity-Lambda}, 
\begin{eqnarray*}
\lefteqn{\limsup_{t\downarrow0}\mathbb{P}\{\mathbf{X}_{\mathfrak{M}}^{[n]}\text{ is discontinuous on }[0,t]\}\leq}\\
&\leq&\limsup_{t\downarrow0}1-\exp\{-t\Lambda(\{F\in\Lip(\XN): F^{[n]}\neq\idn\})\\
&=&0.
\end{eqnarray*}
On the other hand, \eqref{eq:geq} implies
\[\limsup_{t\downarrow0}\mathbb{P}\{\mathbf{X}_{\mathfrak{M}}^{[n]}\text{ is discontinuous on }[0,t]\}\geq\limsup_{t\downarrow0}\mathbb{P}\{\Psi_t^{[n]}=F\}\geq\varrho>0,\]
establishing a contradiction.
We conclude that 
\[\limsup_{t\downarrow0}\mathbb{P}\{\Psi_t^{[n]}\neq\idn\text{ for some }n\geq1\}=0\]
and, therefore, $\Psi_t\to_P\idN$ as $t\downarrow0$.
It follows that 
\[\xnorm{\mathfrak{X}_t}\equalinlaw\xnorm{\Psi_t(\mathfrak{X}_0)}\equalinlaw P^{\Psi_t}\xnorm{\mathfrak{X}_0}=P^{\Psi_t}D\to_P P^{\idN} D=D\]
as $t\downarrow0$.
Part (b) of the Feller property follows, completing the proof.

\end{proof}

\end{proof}

\begin{theorem}\label{thm:disc}
Let $\mathbf{X}$ be a continuous time, exchangeable Feller process on a Fra\"{i}ss\'e space $\XN$ having $\odap$-DAP.
Then, for every $D\in\E(\XN)$, the sample paths of $\xnorm{\mathbf{X}_D}$ are continuous at all times except possibly those of jumps from the $\Lambda_{\emptyset}^*$ measure in Theorem \ref{thm:Levy-Ito}.
\end{theorem}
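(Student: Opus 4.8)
The plan is to realize $\mathbf{X}_D$ through the L\'evy--It\^o--Khintchine decomposition and to track the effect of each kind of jump on the limiting densities \eqref{eq:density}. By Theorem \ref{thm:Levy-Ito} I may take the characteristic measure to be $\Lambda=\Lambda_{\emptyset}^*+\sum_{k=1}^{\max_j\ar(R_j)}\sum_{\alpha\vdash k}\Lambda_{\alpha}^*$ and construct $\mathbf{X}_D$ from a Poisson point process $\mathbf{\Phi}=\{(t,\Phi_t)\}$ with intensity $dt\otimes\Lambda$. Writing $\mathbf{\Phi}=\mathbf{\Phi}_{\emptyset}\sqcup\mathbf{\Phi}_{\geq1}$ for the independent decomposition into the atoms governed by $\Lambda_{\emptyset}^*$ and by $\Lambda_{\geq1}:=\sum_{k\ge1}\sum_{\alpha\vdash k}\Lambda_{\alpha}^*$, the atom times of $\mathbf{\Phi}$ form an a.s.\ countable, a.s.\ simple set, since $dt$ is non-atomic and $\Lambda$ is $\sigma$-finite by \eqref{eq:regularity-Lambda}. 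By Theorem \ref{thm:limit process}, $\xnorm{\mathbf{X}_D}$ exists and is a c\`adl\`ag process on $(\E(\XN),\rho)$, so it has at most countably many discontinuities. Hence it suffices to show, for each fixed $\mathfrak{S}\in\Xm$, that $t\mapsto\delta(\mathfrak{S},\mathfrak{X}_t)$ is continuous at every time that is not an atom of $\mathbf{\Phi}_{\emptyset}$; since $\age(\XN)$ is countable and $\rho$ is defined by \eqref{eq:E-metric}, this yields continuity of $\xnorm{\mathbf{X}_D}$ off the atoms of $\mathbf{\Phi}_{\emptyset}$.

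The decisive observation is that local jumps contribute nothing instantaneously. If $\tau$ is an atom of $\mathbf{\Phi}_{\geq1}$ then $\mathfrak{X}_{\tau}=F(\mathfrak{X}_{\tau-})$ with $F=\Phi_{\tau}$ conjugation invariant and $\Delta_F=s$ of partition type $\alpha\vdash k\ge1$. By \eqref{eq:substructures} and Proposition \ref{prop:conj-inv}, $F$ alters $R_j^{\bullet}(\vec x)$ only at tuples with $s\subseteq x$, hence only at $m$-subsets whose range contains the fixed nonempty vertex set $\rng s$. Since the number of $\phi:[m]\to[n]$ with $\rng s\subseteq\im\phi$ is $O(n^{m-|\rng s|})=o(n^m)$, I obtain
\[
\delta(\mathfrak{S},\mathfrak{X}_{\tau})=\delta(\mathfrak{S},\mathfrak{X}_{\tau-}),
\]
so $\tau$ is a continuity time. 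To control accumulation of infinitely many such jumps I would estimate, for $t<t_0$, the limiting density of $m$-subsets altered during $(t,t_0]$ by $\mathbf{\Phi}_{\geq1}$. The crucial cancellation is that the $\Lambda_{\alpha}^*$-intensity of jumps affecting $[n]$ grows like $n^{|\rng s_{\alpha}|}$ (there are $\Theta(n^{|\rng s_{\alpha}|})$ placements $s$ with $\mathbf{m}^{\downarrow}(s)=\alpha$ inside $[n]$, each of finite $\Lambda_{\alpha}$-mass by \eqref{eq:regularity-Lambda}), while each such jump changes only a fraction $\Theta(n^{-|\rng s_{\alpha}|})$ of the $m$-subsets; the two powers cancel, and summing over the finitely many $\alpha$ with $k\le\max_j\ar(R_j)$ gives
\[
\mathbb{E}\bigl[\,\text{density of $m$-subsets changed by }\mathbf{\Phi}_{\geq1}\text{ on }(t,t_0]\,\bigr]\le C_{\mathfrak{S}}\,(t_0-t).
\]

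With this estimate I would finish as follows. Fix a reference time $a$ and let $J_t$ be the limiting density of embeddings $\phi:[m]\to\Nb$ for which $\mathfrak{X}^{\phi}_s$ is discontinuous for some $s\in(a,t]$; its existence, and the replacement of limits superior by genuine limits, follow from exchangeability and the reverse-martingale argument already used in the proofs of Proposition \ref{prop:limit-exists} and Theorem \ref{thm:limit process}. The functional $J$ is nondecreasing, dominates $|\delta(\mathfrak{S},\mathfrak{X}_{t_0})-\delta(\mathfrak{S},\mathfrak{X}_t)|$, and increases only at atom times of $\mathbf{\Phi}$. At each atom of $\mathbf{\Phi}_{\geq1}$ its increment is $0$ by the displayed identity, and the expectation bound shows its $\mathbf{\Phi}_{\geq1}$-compensator is absolutely continuous in $t$; hence the contribution of $\mathbf{\Phi}_{\geq1}$ to $J$ is a.s.\ continuous. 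The remaining small-global jumps of $\mathbf{\Phi}_{\emptyset}$ are likewise controlled by \eqref{eq:regularity-Lambda}, which forces $\Lambda_{\emptyset}^*(\{F:L(\emptyset,F)\ge\ell\})<\infty$ for every $\ell>0$ and so confines their effective, density-changing activity to the countable atom set; they accumulate continuously away from their own atoms. Consequently $t\mapsto\delta(\mathfrak{S},\mathfrak{X}_t)$ can jump only at atoms of $\mathbf{\Phi}_{\emptyset}$, and letting $\mathfrak{S}$ range over $\age(\XN)$ gives the theorem.

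The main obstacle is the passage from the single-jump computation to simultaneous control over the uncountable time axis: I must rule out any spurious discontinuity of the limit at an accumulation point of the infinitely many small jumps. This is precisely the difficulty met---and the technique developed---in the existence half of Theorem \ref{thm:limit process}, where ergodicity of relabeling for exchangeable dissociated structures bounded the density of entries discontinuous on a short interval; the genuinely new ingredient is the power-counting cancellation $n^{|\rng s_{\alpha}|}\cdot n^{-|\rng s_{\alpha}|}$, which assigns the whole of $\mathbf{\Phi}_{\geq1}$ an absolutely continuous, jump-free contribution to $\xnorm{\mathbf{X}_D}$ and thereby isolates $\Lambda_{\emptyset}^*$ as the sole possible source of discontinuities.
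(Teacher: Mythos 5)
Your single-jump computation is exactly the paper's: by \eqref{eq:substructures}, a $\Lambda_{\alpha}^*$-atom alters relations only at tuples containing a fixed multiset of partition type $\alpha$, so it disturbs a vanishing fraction of embeddings and leaves every density $\delta(\mathfrak{S},\cdot)$ unchanged; and your finite-$n$ power count (rate $\Theta(n^{|\rng s_{\alpha}|})$ times fraction $\Theta(n^{-|\rng s_{\alpha}|})$ per atom, then Fatou) does yield $\mathbb{E}[\text{density of disturbed embeddings on }(t,t_0]]\leq C(t_0-t)$. The gap is the step that converts these facts into control of the sample paths. First, the increment of your functional $J$ at an atom $\tau$ of $\mathbf{\Phi}_{\geq1}$ is \emph{not} the density of embeddings newly disturbed at $\tau$: limiting densities are not countably additive, so with $A_u$ the set of embeddings disturbed during $(a,u]$ one can have $\sup_{u<\tau}\mathrm{dens}(A_u)<\mathrm{dens}(\bigcup_{u<\tau}A_u)$ --- for instance, if the $i$th of infinitely many atoms accumulating at $\tau$ disturbs the (density-zero) set of embeddings whose range contains vertex $i$, then each $A_u$ is null while the union is everything. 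That deficit appears precisely as a jump of $J$ at $\tau$, and the displayed identity says nothing about it; ruling this out is the whole difficulty, not a corollary of the single-jump identity. Second, the inference ``the $\mathbf{\Phi}_{\geq1}$-compensator is absolutely continuous, hence the contribution to $J$ is a.s.\ continuous'' is false as a principle: a Poisson counting process has compensator $\lambda t$ yet jumps at every atom. An absolutely continuous compensator makes jump times totally inaccessible; it does not remove them. What your expectation bound genuinely gives is that for each \emph{fixed} $t$ there is a.s.\ no jump at $t$, but the candidate jump times here are random, so this does not conclude. The same unproved accumulation claim recurs for $\mathbf{\Phi}_{\emptyset}$, where you assert the atoms ``accumulate continuously away from their own atoms.''

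The paper closes this hole by proving a left-limit commutation statement rather than continuity of a monotone functional: almost surely $\lim_{s\uparrow t}\xnorm{\mathfrak{X}_s}=\xnorm{\mathfrak{X}_{t-}}$, where $\mathfrak{X}_{t-}$ is the left limit taken in $\XN$. This follows from exchangeability together with the crude estimate $\mathbb{P}\{\mathbf{X}_D^{\phi}\text{ is discontinuous on }[s,t)\}\leq 1-\exp\{-(t-s)\Lambda(\{F\in\Lip(\XN):F^{[m]}\neq\text{id}_{[m]}\})\}$ for every embedding $\phi$ of size $m$, so the expected density of disturbed embeddings on $[s,t)$ tends to $0$ as $s\uparrow t$; Fatou, bounded convergence, and Markov's inequality finish (note only \eqref{eq:regularity-Lambda} is used here --- no power counting and no $\odap$-DAP). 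With the commutation in hand, any discontinuity of $\xnorm{\mathbf{X}_D}$ at time $s$ forces $\xnorm{\mathfrak{X}_{s-}}\neq\xnorm{\mathfrak{X}_s}$, a genuine single-jump discrepancy; the ergodicity/SLLN step then yields a positive limiting density of $m^*$-subsets ($m^*=\max_j\ar(R_j)$) on which $\mathfrak{X}_{s-}$ and $\mathfrak{X}_s$ disagree, and your locality computation excludes every $\Lambda_{\alpha}^*$ atom, leaving $\Lambda_{\emptyset}^*$ as the only possible source. So your estimates are the right ingredients --- your $O(t_0-t)$ bound is in fact sharper than what the paper needs --- but they must be routed through the commutation of $\xnorm{\cdot}$ with left limits, which reduces everything to the single-jump case, rather than through a compensator argument.
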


\begin{proof}
Let $D\in\E(\XN)$ and $\mathbf{X}_D=(\mathfrak{X}_t)_{t\in[0,\infty)}$ have initial distribution $\mathfrak{X}_0\sim D$.
By Theorem \ref{thm:Levy-Ito}, we can construct $\mathbf{X}_D$ from a Poisson point process with intensity $dt\otimes\Lambda$ for some exchangeable measure $\Lambda$ which decomposes as
\[\Lambda=\Lambda_{\emptyset}^{*}+\sum_{k=1}^{\max_j\ar(R_j)}\sum_{\alpha\vdash k}\Lambda_{\alpha}^*,\]
where $\Lambda_{\emptyset}^*$ is an exchangeable measure satisfying \eqref{eq:regularity-Lambda} and for which $\Lambda_{\emptyset}^*$-almost every $F\in\Lip(\XN)$ has $\Delta_F=\emptyset$, where $\Delta_F$ is defined in \eqref{eq:intersect}, each $\Lambda_{\alpha}$, $\alpha\vdash k$, $1\leq k\leq\max_{j}\ar(R_j)$, satisfies \eqref{eq:regularity-Lambda}, \eqref{eq:intersect}, and \eqref{eq:s-alpha}, and $\Lambda_{\alpha}^*$ is defined as in \eqref{eq:mu-alpha-star}.

By Theorem \ref{thm:limit process}, $\xnorm{\mathbf{X}_D}$ is a Feller process and, therefore, has a version with c\`adl\`ag sample paths.
For this c\`adl\`ag version, $\lim_{s\uparrow t}\xnorm{\mathfrak{X}_s}$ exists for all $t>0$ with probability 1.
Although the map $\xnorm{\cdot}:\XN\to\E(\XN)$ is not continuous, we still have $\lim_{s\uparrow t}\xnorm{\mathfrak{X}_s}=\xnorm{\mathfrak{X}_{t-}}$ for all $t>0$ with probability 1, as we now show.

Suppose $t>0$ is a discontinuity time for $\mathbf{X}_D$.
Exchangeability of $\mathbf{X}_D$ and the Aldous--Hoover theorem implies that $\mathfrak{X}_{t-}$ is exchangeable for every $t>0$ and, thus, $\xnorm{\mathfrak{X}_{t-}}$ exists with probability 1.
If $\lim_{s\uparrow t}\xnorm{\mathfrak{X}_s}\neq\xnorm{\mathfrak{X}_{t-}}$, then there exists $\varepsilon>0$ such that
\[\rho(\lim_{s\uparrow t}\xnorm{\mathfrak{X}_s},\xnorm{\mathfrak{X}_{t-}})>\varepsilon,\]
where $\rho$ is the metric defined in \eqref{eq:E-metric}.
In particular, there exists $m\in\Nb$ and $\mathfrak{S}\in\mathcal{X}_{[m]}$ such that 
\[|\lim_{s\uparrow t}\delta(\mathfrak{S},\mathfrak{X}_s)-\delta(\mathfrak{S},\mathfrak{X}_{t-})|>\varepsilon.\]
These limits exist with probability 1, allowing us to replace limits with limits inferior to get
\begin{eqnarray*}
\lefteqn{0\leq|\liminf_{s\uparrow t}\liminf_{n\to\infty}\frac{1}{n^{\downarrow m}}\sum_{\phi:[m]\to[n]}\mathbf{1}\{\mathfrak{X}_s^{\phi}=\mathfrak{S}\}-\mathbf{1}\{\mathfrak{X}_{t-}^{\phi}=\mathfrak{S}\}|\leq}\\
&\leq&\liminf_{s\uparrow t}\liminf_{n\to\infty}\frac{1}{n^{\downarrow m}}\sum_{\phi:[m]\to[n]}|\mathbf{1}\{\mathfrak{X}_s^{\phi}=\mathfrak{S}\}-\mathbf{1}\{\mathfrak{X}_{t-}^{\phi}=\mathfrak{S}\}|\\
&\leq&2\liminf_{s\uparrow t}\liminf_{n\to\infty}\frac{1}{n^{\downarrow m}}\sum_{\phi:[m]\to[n]}\mathbf{1}\{\mathbf{X}_D^{\phi}\text{ is discontinuous on }[s,t)\}.
\end{eqnarray*}
Combining the bounded convergence theorem and Fatou's lemma with the exchangeability and projectivity properties of $\mathbf{X}_D$ and its construction from the process $\mathbf{\Psi}$ on $\Lip(\XN)$, we see
\begin{eqnarray*}
\lefteqn{\mathbb{E}\left[\liminf_{s\uparrow t}\liminf_{n\to\infty}\frac{1}{n^{\downarrow m}}\sum_{\phi:[m]\to[n]}\mathbf{1}\{\mathbf{X}_D^{\phi}\text{ is discontinuous on }[s,t)\}\right]\leq}\\
&\leq&\liminf_{s\uparrow t}\liminf_{n\to\infty}\frac{1}{n^{\downarrow m}}\sum_{\phi:[m]\to[n]}\mathbb{E}\left[\mathbf{1}\{\mathbf{X}_D^{\phi}\text{ is discontinuous on }[s,t)\}\right]\\
&\leq&\liminf_{s\uparrow t}\liminf_{n\to\infty}1-\exp\{-(t-s)\Lambda(\{F\in\Lip(\XN): F^{[m]}\neq\text{id}_{[m]}\})\}\\
&=&0.
\end{eqnarray*}
By Markov's inequality,
\[\mathbb{P}\{|\lim_{s\uparrow t}\delta(\mathfrak{S},\mathfrak{X}_s)-\delta(\mathfrak{S},\mathfrak{X}_{t-})|>\varepsilon\}=0\quad\text{ for all }\varepsilon>0\text{ and all }\mathfrak{S}\in\age(\XN);\]
whence,
\[\mathbb{P}\{\rho(\lim_{s\uparrow t}\xnorm{\mathfrak{X}_s},\xnorm{\mathfrak{X}_{t-}})>\varepsilon\}=0\quad\text{ for all }\varepsilon>0.\]

To see that the discontinuities in $\xnorm{\mathbf{X}_D}$ occur only at the times of discontinuities from the $\Lambda_{\emptyset}^*$ measure, suppose $s>0$ is a discontinuity time for $\xnorm{\mathbf{X}_D}$.
Since $\xnorm{\mathbf{X}_D}$ has c\`adl\`ag sample paths and $\lim_{s\uparrow t}\xnorm{\mathfrak{X}_s}=\xnorm{\mathfrak{X}_{t-}}$ a.s., a continuity at $s>0$ implies that $|\delta(\mathfrak{S},\mathfrak{X}_{s-})-\delta(\mathfrak{S},\mathfrak{X}_{s})|>\varepsilon$ for some $\mathfrak{S}\in\age(\XN)$ and some $\varepsilon>0$.
By the strong law of large numbers,
\[\lim_{n\to\infty}\binom{n}{m^*}^{-1}\sum_{S\subseteq[n]: |S|=m^*}\mathbf{1}\{\mathfrak{X}_{s-}|_{S}\neq\mathfrak{X}_s|_{S}\}>0,\]
since otherwise exchangeability would imply $\delta(\mathfrak{S},\mathfrak{X}_{s-})=\delta(\mathfrak{S},\mathfrak{X}_s)$.
For any $1\leq k\leq m^*$ and $\alpha\vdash k$, condition \eqref{eq:substructures} implies that $R_j^{F(\bullet)}(\vec x)=R_j^{\bullet}(\vec x)$ for all $\vec x$ such that $s_{\alpha}\not\subseteq x$, for $\Lambda_{\alpha}$-almost every $F\in\Lip(\XN)$.
It follows that for $\Lambda$-almost every $F\in\Lip(\XN)$, if the discontinuity at time $s>0$ comes from the $\Lambda_{\alpha}^*$ measure, then there exists some $T\subseteq\Nb$ of partition type $\alpha\vdash k$ such that 
\[\lim_{n\to\infty}\binom{n}{m^*}^{-1}\sum_{S\subseteq[n]: |S|=m^*, T\subseteq S}\mathbf{1}\{\mathfrak{X}_{s-}|_S=\mathfrak{X}_s|_S\}\leq\lim_{n\to\infty}\binom{n}{m^*}^{-1}\binom{n}{m^*-k} =0.\]
Implying discontinuities can only occur at the time of atoms from the $\Lambda_{\emptyset}^*$ measure.

\end{proof}

\bibliography{refs}
\bibliographystyle{abbrv}

\end{document}